\Crefname{paragraph}{\S}{Paragraphs}
\numberwithin{equation}{section}
\newtheoremstyle{special}
    {\topsep}
    {\topsep}
    {\itshape}
    {}
    {\bfseries}
    {}
    {0.5em}
    {{\thmname{#1$^{\bm*}\!$}\thmnumber{ #2.}\thmnote{\ \textmd{(#3)}}}}
\newtheorem{thm}{Theorem}[subsection]
\newtheorem{cor}[thm]{Corollary}
\newtheorem{lemma}[thm]{Lemma}
\newtheorem{prop}[thm]{Proposition}
\newtheorem*{ques}{Question}
\newtheorem{introthm}{Theorem}
\newtheorem{introconj}[introthm]{Conjecture}
\theoremstyle{remark}
\newtheorem{ex}[thm]{Example}
\newtheorem{rmk}[thm]{Remark}
\theoremstyle{definition}
\newtheorem{defi}[thm]{Definition}
\newtheorem{notation}[thm]{Notation}
\theoremstyle{special}
\newcommand{\norm}{\mathrm{N}}
\newcommand{\sym}{\operatorname{Sym}}
\newcommand{\spin}{\mathbf{Spin}}
\newcommand{\G}{\mathscr{G}}
\newcommand{\Z}{\mathbb{Z}}
\newcommand{\C}{\mathbb{C}}
\newcommand{\Q}{\mathbb{Q}}
\newcommand{\oct}{\mathbb{O}}
\newcommand{\coct}{\oct_{\Z}}
\newcommand{\R}{\mathbb{R}}
\newcommand{\GL}{\mathbf{GL}}
\newcommand{\SL}{\mathbf{SL}}
\newcommand{\A}{\mathbb{A}}
\newcommand{\tr}{\operatorname{Tr}}
\newcommand{\sorth}{\mathbf{SO}}
\newcommand{\symp}{\mathbf{Sp}}
\newcommand{\pgl}{\mathbf{PGL}}
\newcommand{\cusp}{\mathrm{cusp}}
\newcommand{\disc}{\mathrm{disc}}
\newcommand{\lietype}[2]{\operatorname{#1}_{#2}}
\newcommand{\smallmat}[4]{\left(\begin{smallmatrix}#1&#2\\#3&#4\end{smallmatrix}\right)}
\newcommand{\mat}[4]{\left(\begin{matrix}#1&#2\\#3&#4\end{matrix}\right)}
\newcommand{\lrangle}[2]{\langle #1,#2\rangle}
\newcommand{\bracket}[1]{\left(#1\right)}
\newcommand{\grpF}{\mathbf{F}_{4}}
\newcommand{\grpE}{\mathbf{E}_{7}}
\newcommand{\triv}{\mathbf{1}}
\newcommand{\jord}{\mathrm{J}}
\newcommand{\midline}{\,\middle\vert\,}
\newcommand{\set}[2]{\left\{#1\midline #2\right\}}
\newcommand{\vrep}[1]{\mathrm{V}_{#1}}
\newcommand{\para}{\mathbf{P}}
\newcommand{\LeviM}{\mathbf{M}}
\newcommand{\grpG}{\mathbf{G}}
\newcommand{\sqint}[2]{\mathrm{L}^{2}_{#2}(#1)}
\begin{document}
\title{Exceptional theta correspondence \texorpdfstring{$\grpF\times\pgl_{2}$}{PDFstring} for level one automorphic representations}
\author{Yi Shan}
\date{\today}
\maketitle
\begin{abstract}
    Let $\grpF$ be the unique (up to isomorphism) connected semisimple algebraic group over $\Q$ of type $\lietype{F}{4}$,
    with compact real points and split over $\Q_{p}$ for all primes $p$.
    A conjectural computation \cite[Proposition 6.3.6]{shan2024levelautomorphicrepresentationsanisotropic} 
    predicts the existence of a family of level one automorphic representations of $\grpF$,
    which are expected to be functorial lifts of cuspidal representations of $\pgl_{2}$ associated with Hecke eigenforms.
    In this paper,
    we study the exceptional theta correspondence for $\grpF\times\pgl_{2}$,
    and establish the existence of such a family of automorphic representations for $\grpF$.
    Motivated by \cite{pollack2023exceptional},
    our main tool is a notion of ``exceptional theta series'' on $\pgl_{2}$,
    arising from certain automorphic representations of $\grpF$.
    These theta series are holomorphic modular forms on $\SL_{2}(\Z)$, with explicit Fourier expansions,
    and span the entire space of level one cusp forms.
\end{abstract}
\setcounter{tocdepth}{1}
\tableofcontents

\section{Introduction}\label{section introduction}
Since the last century, automorphic representations
of general linear groups and classical groups have been widely studied.
For those of \emph{exceptional groups},
\emph{i.e.}\,algebraic groups with Lie type $\lietype{G}{2},\lietype{F}{4},\lietype{E}{6},\lietype{E}{7}$ or $\lietype{E}{8}$,
most of the known results are about the smallest exceptional group $\mathbf{G}_{2}$,
either split or anisotropic.
In this paper, we will study a family of automorphic representations
for $\grpF$,
the unique (up to isomorphism) 
connected semisimple algebraic group over $\Q$ of type $\lietype{F}{4}$,
with compact real points and split over $\Q_{p}$ for every prime $p$.
\subsection{Motivation from \texorpdfstring{\cite{shan2024levelautomorphicrepresentationsanisotropic}}{PDFstring}}
\label{section motivation from Arthur conj}
In \cite{shan2024levelautomorphicrepresentationsanisotropic},
we compute the number of \emph{level one} automorphic representations for $\grpF$,
\emph{i.e.}\,unramified at every finite place,
with any given arbitrary archimedean component.
Furthermore, the \emph{discrete global Arthur parameters} of these automorphic representations
are classified \emph{conjecturally},
admitting the existence of the (level one) Langlands group and Arthur's multiplicity formula \cite{ArthurConjUnipAutoRep}.
In particular,
we conjecture the existence of a specific family of automorphic representations for $\grpF$,
which are related to classical modular forms for $\SL_{2}(\Z)$.
Before recalling this statement,
we introduce some notations:
\begin{itemize}
    \item Let $\varpi_{4}$ be the highest weight of the $26$-dimensional irreducible representation of $\grpF(\R)$.
    \item There is a morphism $\symp_{6}(\C)\times\SL_{2}(\C)\rightarrow \widehat{\grpF}(\C)=\grpF(\C)$
    whose kernel is a cyclic group of order $2$,
    the image of this morphism is a maximal proper regular closed subgroup of $\grpF(\C)$ (see \cite[\S 4.3.2]{shan2024levelautomorphicrepresentationsanisotropic}).
    Denote by $\iota$ the morphism:
        \[\SL_{2}(\C)\times\SL_{2}(\C)\xhookrightarrow{\left(\text{principal embedding},\,\mathrm{id}\right)} \symp_{6}(\C)\times\SL_{2}(\C)\rightarrow \grpF(\C).\]
    \item Denote by $e_{p}$ the conjugacy class of $\smallmat{p^{1/2}}{}{}{p^{-1/2}}$ in $\SL_{2}(\C)$.
\end{itemize}
\begin{introconj}\label{conj existence of representations}
    \cite[Proposition 6.3.6]{shan2024levelautomorphicrepresentationsanisotropic}
    Let $\pi$ be the level one algebraic automorphic representation of $\pgl_{2}$
    associated to a cuspidal Hecke eigenform of weight $2n+12$ for $\SL_{2}(\Z)$,
    and $c_{p}$ the Satake parameter of $\pi_{p}$,
    viewed as a semisimple conjugacy class in $\widehat{\pgl_{2}}(\C)=\SL_{2}(\C)$. 
    There exists a level one automorphic representation $\Pi$ of $\grpF$ such that:
    \begin{itemize}
        \item $\Pi_{\infty}\simeq \vrep{n\varpi_{4}}$, the irreducible representation of $\grpF(\R)$ with highest weight $n\varpi_{4}$;
        \item for every prime $p$, the Satake parameter of $\Pi_{p}$ is the conjugacy class of $\iota(e_{p},c_{p})$.
    \end{itemize}
\end{introconj}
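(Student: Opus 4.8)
The plan is to realise $(\grpF,\pgl_{2})$ as a reductive dual pair inside a suitable $\Q$-form $\grpG$ of the adjoint group of type $\lietype{E}{7}$, obtained from split $\mathbf{E}_{7}$ by the same inner twist that defines $\grpF$ --- so that $\grpG$ is split at every finite prime, $\grpG(\R)$ is the Hermitian form $\lietype{E}{7(-25)}$, and $\grpF(\R)$ is compact --- and in which $\lie(\grpG)$ decomposes under $\grpF\times\pgl_{2}$ as $\lie(\grpF)\oplus\lie(\pgl_{2})\oplus\bigl(\vrep{\varpi_{4}}\otimes\mathrm{Ad}\bigr)$, with $\mathrm{Ad}$ the three-dimensional adjoint representation of $\pgl_{2}$ and $(\grpF,\pgl_{2})$ mutual centralisers (the relevant real-form bookkeeping shows $\lietype{E}{7(-25)}$ is the unique non-compact real form of $\mathbf{E}_{7}$ containing a compact $\grpF$, which is why the twist lands there). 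The theta kernel is the minimal automorphic representation $\Theta$ of $\grpG(\A)$, realised --- as in \cite{pollack2023exceptional} --- as a residue of the degenerate Eisenstein series attached to the Heisenberg parabolic of $\grpG$. For a level one cuspidal Hecke eigenform $f$ of weight $2n+12$ with associated $\pi\subset\mathcal{A}(\pgl_{2})$, this produces a theta lift
\[\theta(\pi)\ =\ \mathrm{span}\ \set{\ g\ \longmapsto\ \int_{[\pgl_{2}]}\theta(g,h)\,\overline{\varphi(h)}\,\dd h\ }{\ \theta\in\Theta,\ \varphi\in\pi\ }\ \subset\ \mathcal{A}(\grpF),\]
and, in the reverse direction, attaches to each level one automorphic form $F$ on $\grpF$ an \emph{exceptional theta series} $\theta(F)$ on $\SL_{2}(\A)$; since $\grpG(\R)$ is Hermitian and $\grpF(\R)$ is compact, $\theta(F)$ is a holomorphic modular form for $\SL_{2}(\Z)$.

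The heart of the argument is the non-vanishing of these lifts, which I would obtain from a Rallis-type inner product formula for $(\grpF,\pgl_{2})$ in $\grpG$. By a Siegel--Weil identity for the degenerate Eisenstein series whose residue is $\Theta$, the inner integral $\int_{[\grpF]}\theta(g,h_{1})\,\overline{\theta(g,h_{2})}\,\dd g$ is a value of an Eisenstein series in $(h_{1},h_{2})$, and integrating it against $\varphi\otimes\overline{\varphi}$ over $[\pgl_{2}]\times[\pgl_{2}]$ unfolds to
\[\langle\theta(\varphi),\theta(\varphi)\rangle\ =\ (\ast)\cdot L\bigl(s_{0},\pi,r\bigr)\cdot\langle\varphi,\varphi\rangle\]
for an appropriate $L$-function $r$ of $\pgl_{2}$ at an edge point of the critical strip, where it is provably non-zero for a level one cuspidal $\pi$. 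Hence $\theta(\pi)\subset\mathcal{A}(\grpF)$ is non-zero for every such $\pi$, and dually --- via the seesaw identity for $(\grpF,\pgl_{2})\subset\grpG$ --- the exceptional theta map $F\mapsto\theta(F)$ surjects onto the space $\mathrm{S}_{2n+12}(\SL_{2}(\Z))$ of level one cusp forms: for each Hecke eigenform $f$ of weight $2n+12$ there is $F$ in the finite-dimensional space $A(n\varpi_{4})$ of level one algebraic modular forms on $\grpF$ of weight $\vrep{n\varpi_{4}}$ with $\langle\theta(F),f\rangle\neq0$. Along the way I would compute the Fourier expansion of $\theta(F)$ explicitly, in a model of $\Theta$ adapted to the dual pair: its $m$-th coefficient is a weighted exceptional representation number --- a sum of the values of $F$ over the rank-one elements of norm $m$ in a fixed lattice in the Albert algebra $\jord=\mathrm{H}_{3}(\coct)$ --- which yields the explicit $q$-expansions asserted in the abstract and makes the Hecke-equivariance of $\theta$ transparent.

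Granting this, fix $f$ and $\pi$ as in the statement and pick $F\in A(n\varpi_{4})$ with $\langle\theta(F),f\rangle\neq0$; by the same seesaw the back-lift $\theta(\pi)\subset\mathcal{A}(\grpF)$ is non-zero. At every place $v$ the local theta lift $\Theta(\pi_{v})$ --- with respect to the local minimal representation of $\grpG(\Q_{v})$ --- is irreducible, so any irreducible constituent $\Pi$ of $\theta(\pi)$ satisfies $\Pi_{v}\simeq\Theta(\pi_{v})$ for all $v$, and it remains to identify these. At a finite prime $p$, $\pi_{p}$ is unramified, the local kernel is the unramified minimal representation, and $\Theta(\pi_{p})$ is unramified; tracking the Satake parameter $c_{p}$ of $\pi_{p}$ through the minimal representation --- equivalently, through the map of dual groups $\SL_{2}(\C)\times\SL_{2}(\C)\to\widehat{\grpF}(\C)$ in which the first factor enters $\symp_{6}(\C)$ \emph{principally}, as forced by the Gelfand--Kirillov dimension of $\Theta$ (this is the ``Arthur $\SL_{2}$'' of the non-tempered lift) --- shows that $\Pi_{p}$ is the unramified representation with Satake parameter the conjugacy class of $\iota(e_{p},c_{p})$. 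At the archimedean place, $\pi_{\infty}$ is the weight-$(2n+12)$ holomorphic discrete series of $\pgl_{2}(\R)$; a $(\mathfrak{g},K)$-module computation with the minimal representation of $\lietype{E}{7(-25)}$ --- whose minimal $K$-type fixes the weight normalisation and forces both the offset $12$ and the constraint $n\geq0$ --- identifies its theta lift to the compact group $\grpF(\R)$ with $\vrep{n\varpi_{4}}$. Thus $\Pi$ carries exactly the local data prescribed in the statement, and being an irreducible constituent of a non-zero space of automorphic forms it is a genuine level one automorphic representation of $\grpF$; this proves the claim.

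The step I expect to be the main obstacle is precisely the non-vanishing: proving the Siegel--Weil/Rallis inner product identity for this exceptional dual pair and matching the resulting Euler product with a recognisable, non-vanishing $L$-value of $\pgl_{2}$ --- in the absence of a clean such identity one is thrown back on delicate explicit estimates for infinitely many Fourier coefficients. Entangled with it is the archimedean input, which is non-standard here: the correspondence pairs a holomorphic discrete series with a finite-dimensional representation of a \emph{compact} group, so the usual archimedean occurrence and non-vanishing results do not apply directly, and the minimal representation of $\lietype{E}{7(-25)}$ must be analysed by hand --- via its $K$-type decomposition, or an explicit realisation on polynomials on the $27$-dimensional Albert algebra. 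By comparison, the unramified computation of $\Pi_{p}$, while requiring care with the Satake isomorphism and with the $p$-adic minimal representation, is essentially mechanical, and the passage from the exceptional theta series back to automorphic representations of $\grpF$ is formal once these local lifts are in place.
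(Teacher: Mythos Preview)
Your global setup --- the dual pair $\grpF\times\pgl_{2}$ inside the adjoint $\lietype{E}{7}$, the minimal representation as kernel, and the local theta lifts at every place (Gross--Savin at $\infty$, Savin and Karasiewicz--Savin at $p$) --- matches the paper exactly, and you correctly identify non-vanishing of the global lift as the crux.

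Where you diverge is the mechanism for non-vanishing. You propose a Rallis inner product formula, i.e.\ integrating $\theta(g,h_{1})\overline{\theta(g,h_{2})}$ over $[\grpF]$ to obtain an Eisenstein series in $(h_{1},h_{2})$ and then pairing against $\varphi\otimes\overline{\varphi}$. This would require a doubling-type see-saw in which $\pgl_{2}\times\pgl_{2}$ sits as one member of a dual pair in a larger exceptional group containing $\grpE\times\grpE$ (or some analogue), together with a regularised Siegel--Weil formula there; no such machinery is available for this pair, and the $L$-function you would extract is not a priori identifiable. The paper takes a different route: it computes the $\spin_{9}$-period of $\Theta_{\phi}(\varphi)$, using the \emph{second} dual pair $\spin_{9}\times\sorth_{2,2}$ inside the same $\grpE$. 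The see-saw
\[
\begin{tikzcd}[column sep=small,row sep=small]
\grpF & \sorth_{2,2} \\
\spin_{9}\arrow[u,hook]\arrow[ur] & \pgl_{2}\arrow[u,hook]\arrow[ul]
\end{tikzcd}
\]
converts $\mathcal{P}_{\spin_{9}}(\Theta_{\phi}(\varphi))$ into the Petersson pairing of $\varphi$ against the theta lift of the constant function on $[\spin_{9}]$ to $\sorth_{2,2}$; a Siegel--Weil formula for $\spin_{9}\times\sorth_{2,2}$ (proved directly, \Cref{thm exceptional siegel weil formula for spin9 so4}) identifies that lift with an Eisenstein series in $\mathrm{I}(3,7)$, and a standard Rankin--Selberg unfolding gives the Euler product $\mathrm{L}(\pi,\tfrac{5}{2})\mathrm{L}(\pi,\tfrac{11}{2})/\zeta(4)\zeta(8)$ times an archimedean integral $I_{\infty}$.

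The archimedean step is also handled differently from what you sketch. Rather than computing $I_{\infty}$ directly by $(\mathfrak{g},K)$-module analysis of the minimal representation, the paper observes that $I_{\infty}$ depends only on $(\phi_{\infty},\varphi_{\infty})$ and that $\varphi_{\infty}$ is the same lowest-weight vector for every Hecke eigenform of weight $2n+12$; hence it suffices to exhibit \emph{one} $\varphi$ and one $\phi_{\infty}$ with $\mathcal{P}_{\spin_{9}}(\Theta_{\phi}(\varphi))\neq 0$. This is done by going back through the theta series in the reverse direction: a $\spin_{9}(\R)$-invariant polynomial $P_{n}\in\vrep{n}(\jord_{\C})$ is written down explicitly, and the first Fourier coefficient of the resulting exceptional theta series $\vartheta_{\jord_{\Z},P_{n}}$ is shown to be strictly positive by an elementary estimate (\Cref{thm for each weight a non-zero lift}). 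So the explicit Fourier expansion you mention is not merely a byproduct but the actual engine of the archimedean non-vanishing.

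In short: your outline would likely stall at the Rallis inner product step for lack of a doubling framework, whereas the paper's $\spin_{9}$-period method sidesteps this entirely and reduces the archimedean input to a concrete positivity check.
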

Motivated by the Langlands functoriality principle,
the automorphic representation $\Pi$ in \Cref{conj existence of representations}
is expected to be a functorial lift of $\pi$
with respect to the embedding 
\begin{align}\label{eqn embedding functoriality}
    i:\widehat{\pgl_{2}}=\SL_{2}\xhookrightarrow{(1,\mathrm{id})}\symp_{6}\times\SL_{2}\hookrightarrow\widehat{\grpF}.
\end{align}
One useful tool for constructing functorial lifts is the \emph{theta correspondence},
which studies the restriction of a \emph{minimal representation} to reductive dual pairs.
There exists a reductive dual pair $\pgl_{2}\times \grpF$ inside 
certain algebraic group $\grpE$ of Lie type $\lietype{E}{7}$ (see \Cref{section exceptional groups} for more details).
For the theta correspondence associated with this dual pair over a characteristic $0$ local field,
one already has the following results (see also \Cref{section local theta correspondences}):
\begin{itemize}
    \item Over $\R$, Gross and Savin describe the restriction of the minimal representation of $\grpE(\R)$ to $\pgl_{2}(\R)\times\grpF(\R)$ \cite[Proposition 3.2]{gross_savin_1998}, 
    which shows that the theta lift $\Theta(\pi_{\infty})$ of $\pi_{\infty}$ is isomorphic to $\vrep{n\varpi_{4}}$;
    \item Over a $p$-adic field, this theta correspondence is studied by Karasiewicz and Savin 
    in \cites{Savin1994}{karasiewicz2023dualpairmathrmautctimesf4}.
    In particular, they demonstrate that the theta lift $\Theta(\pi_{p})$ of 
    the unramified tempered principal series representation $\pi_{p}$
    is irreducible and has the desired Satake parameter $\iota(e_{p},c_{p})$.    
\end{itemize}
Based on these local results, it is natural to expect that 
the functorial lift $\Pi$ is exactly the global theta lift $\Theta(\pi)$ of $\pi$ to $\grpF$.
The main result in this paper confirms this expectation:
\begin{introthm}\label{introthm main theorem}
    (\Cref{thm nonvanishing global theta lift from pgl2 to f4})
    The global theta lift $\Theta(\pi)$ is a non-zero irreducible automorphic representation of $\grpF$,
    and satisfies the local-global compatibility of theta correspondence $\Theta(\pi)\simeq \otimes_{v}^{\prime}\Theta(\pi_{v})$.
    In particular, \Cref{conj existence of representations} holds.
\end{introthm}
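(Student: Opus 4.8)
The plan is to establish \Cref{introthm main theorem} by combining the known local theta correspondence computations with a global argument showing nonvanishing and cuspidality of the global theta lift. First, I would set up the global theta integral: starting from the minimal automorphic representation of $\grpE(\A)$ — realized via an automorphic form $\theta$ whose existence and basic properties (automorphy, moderate growth, and the local decomposition $\otimes_v'\Theta$ of its restriction to $\pgl_2\times\grpF$) are recalled in the section on exceptional groups — I would define, for $\varphi$ in the space of $\pi$, the lift
\[
    \Theta(\varphi)(g) = \int_{\pgl_2(\Q)\backslash\pgl_2(\A)} \theta(h,g)\,\varphi(h)\,\dd h, \qquad g\in\grpF(\A).
\]
Convergence here is immediate because $\pi$ is cuspidal and $\grpF$ is anisotropic, so the integral is over a compact domain (up to center); thus $\Theta(\varphi)$ is automatically an automorphic form on $\grpF$, and $\Theta(\pi)$ is an automorphic representation. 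The unramified-at-every-finite-place property of $\Theta(\pi)$ follows from the corresponding property of $\theta$ and of $\pi$, and the archimedean component is pinned down by Gross--Savin's branching result $\Theta(\pi_\infty)\simeq\vrep{n\varpi_4}$; likewise each finite Satake parameter is $\iota(e_p,c_p)$ by the Karasiewicz--Savin computation. The one genuinely global input I still need is that $\Theta(\pi)\neq 0$, because all the local statements are about $\Theta(\pi_v)$ and the abstract tensor product, and a priori the global integral could vanish identically.

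For the nonvanishing, my approach is the ``exceptional theta series'' strategy advertised in the abstract, following Pollack's method: rather than directly lifting from $\pgl_2$ to $\grpF$, I would go in the reverse direction and consider theta lifts from $\grpF$ to $\pgl_2$. Concretely, starting from a suitable automorphic form $F$ on $\grpF$ (for instance in the space $\vrep{n\varpi_4}$-isotypic part that we are trying to produce, or from a known automorphic representation of $\grpF$ whose existence is already established), I would form the integral $\theta_F(h) = \int_{\grpF(\Q)\backslash\grpF(\A)}\theta(h,g)\,F(g)\,\dd g$ and show that this produces a \emph{holomorphic cusp form} on $\SL_2(\Z)$ of the expected weight, with a computable Fourier expansion. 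The key computations are: (i) identifying the archimedean behavior of $\theta_F$ using the Gross--Savin branching law to see that $\theta_F$ is a lowest-weight vector of the correct weight $2n+12$, hence holomorphic; (ii) unfolding the Fourier coefficients of $\theta_F$ against the Fourier expansion of the minimal form $\theta$ on $\grpE$ (this is where the explicit model of the minimal representation enters), obtaining the Fourier coefficients of $\theta_F$ as $\grpF(\A_f)$-invariant functionals on certain spaces indexed by the cusp — essentially pairing $F$ against ``theta series'' attached to rank-one data. The point is to show the span of these $\theta_F$, as $F$ ranges over level one automorphic forms on $\grpF$, is \emph{all} of $S_{2n+12}(\SL_2(\Z))$; by dimension-counting against \cite{shan2024levelautomorphicrepresentationsanisotropic} and the adjointness between the two directions of the lift, this forces the $\pgl_2\to\grpF$ lift of each Hecke eigenform to be nonzero.

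The main obstacle, I expect, is step (ii): making the Fourier expansion of $\theta_F$ genuinely explicit requires an explicit form of the Fourier coefficients of the minimal representation of $\grpE$ restricted to the relevant parabolic, and controlling the arithmetic of the resulting coefficients (they will involve orbit counts or class numbers attached to the cubic norm structure / Jordan algebra underlying $\grpF$). A secondary difficulty is the adjointness/seesaw bookkeeping needed to convert ``the span of $\theta_F$ is everything'' into ``$\Theta(\pi)\neq 0$ for each $\pi$'': one must set up a seesaw pair relating the two lifting directions and invoke the irreducibility of each local lift $\Theta(\pi_v)$ (already known) to ensure that a nonzero pairing $\langle \theta_F,\varphi\rangle$ on the $\pgl_2$ side does not get killed when passed to the $\grpF$ side. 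Finally, the local-global compatibility statement $\Theta(\pi)\simeq\otimes_v'\Theta(\pi_v)$ follows once nonvanishing and cuspidality are in hand, by strong multiplicity one on $\grpF$ together with the local determinations of the Satake parameters and archimedean component; so I would treat it as a corollary rather than an independent step.
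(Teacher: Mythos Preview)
Your general framework is sound: the setup of the global theta integral, the identification of the archimedean and finite local lifts via Gross--Savin and Karasiewicz--Savin, and the recognition that the crux is global nonvanishing are all correct and match the paper. The reverse-direction theta series from $\grpF$ to $\pgl_{2}$ are indeed the main new tool.

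However, your proposed mechanism for nonvanishing has a genuine gap. You want to show that the exceptional theta series $\theta_{F}$ span all of $\mathrm{S}_{2n+12}(\SL_{2}(\Z))$, and then deduce $\Theta(\pi)\neq 0$ by adjointness. The problem is that you give no independent argument for the spanning: ``dimension-counting against \cite{shan2024levelautomorphicrepresentationsanisotropic}'' does not produce this, since the dimension formulas there are themselves conditional on Arthur's conjectures, and in any case a dimension count tells you nothing about the rank of the theta-lift map unless you already control its kernel or image. In the paper, the spanning statement (\Cref{introthm theta series generate whole space of cusp form}) is a \emph{corollary} of \Cref{introthm main theorem}, not an input to it; your logic runs in the wrong direction.

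What the paper actually does is compute the $\spin_{9}$-period $\mathcal{P}_{\spin_{9}}(\Theta_{\phi}(\varphi))$. A see-saw argument with the second dual pair $\spin_{9}\times\sorth_{2,2}\subset\grpE$, together with an exceptional Siegel--Weil formula identifying the $\spin_{9}$-average of $\theta(\phi)$ with an Eisenstein series on $\sorth_{2,2}$, turns this period into a Rankin--Selberg integral on $\pgl_{2}$. Unfolding yields an Euler product whose finite part is $\mathrm{L}(\pi,\tfrac{5}{2})\mathrm{L}(\pi,\tfrac{11}{2})/\zeta(4)\zeta(8)$, which is nonzero. The remaining archimedean zeta integral $I_{\infty}(\phi_{\infty},\varphi_{\infty})$ depends only on the weight $2n+12$, not on which Hecke eigenform $\varphi$ comes from. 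So it suffices to exhibit, for each $n$, \emph{one} eigenform with nonzero period; this is exactly what the single nonzero theta series $\vartheta_{\jord_{\Z},P_{n}}$ attached to the $\spin_{9}(\R)$-invariant polynomial $P_{n}\in\vrep{n}(\jord_{\C})$ provides (its first Fourier coefficient is checked to be positive by hand). You had the right ingredient --- a nonzero theta series --- but not the reduction that lets one such series suffice for all eigenforms of that weight.

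A smaller point: you invoke strong multiplicity one on $\grpF$ for local--global compatibility, but this is not available for $\grpF$. The paper instead uses that each local big theta lift $\Theta(\pi_{v})$ is already irreducible; since $\Theta(\pi)$ is a nonzero quotient of $\otimes_{v}^{\prime}\Theta(\pi_{v})$, irreducibility of the latter forces the isomorphism.
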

\subsection{Exceptional theta series}\label{section intro exceptional theta series}
Our main tool is to develop a notion of ``\emph{exceptional theta series}'',
motivated by Pollack's construction of Siegel modular forms for $\symp_{6}(\Z)$.
This is a variant of the classical weighted theta series developed by Jacobi and Hecke,
and gives an explicit theta lift from certain automorphic forms of $\grpF$ to $\pgl_{2}$.
\subsubsection{Classical theta series}\label{section classical theta series}
We first recall the classical construction of theta series.
Let $L$ be an even unimodular lattice in the Euclidean space $\R^{n}$,
\emph{i.e.}\,a self-dual lattice for any element $v$ of which the scalar product $v.v$ is even.
A well-known result states that the series
\[\vartheta_{L}(z)=\sum_{v\in L}q^{\frac{v.v}{2}},\text{ where }q=e^{2\pi i z},\,z\in\mathcal{H}=\set{x+iy}{y>0},\]
is a modular form of level $\SL_{2}(\Z)$ and weight $n/2$.
One can weight this theta series by a homogeneous harmonic polynomial $P$ of degree $d$ over $\R^{n}$ \cite{Hecke_ThetaSeries}:
\begin{align}\label{eqn classical theta series}
    \vartheta_{L,P}(z)=\sum_{v\in L}P(v)q^{\frac{v.v}{2}},
\end{align}
and the resulting weighted theta series is a modular form for $\SL_{2}(\Z)$ of weight $\frac{n}{2}+d$.
It is a cusp form when $d>0$,
and Waldspurger shows in \cite{WaldspurgerBasisProblem} that
for a fixed pair of integers $(n,d)$, 
the space $\mathrm{S}_{\frac{n}{2}+d}(\SL_{2}(\Z))$ 
of weight $\frac{n}{2}+d$ cusp forms
is spanned by: 
\[\set{\vartheta_{L,P}}{L\subseteq \R^{n} \text{ is an even unimodular lattice, and }P\in\mathscr{H}_{d}(\R^{n})},\]
where $\mathscr{H}_{d}(\R^{n})$ is the space of homogeneous harmonic polynomials of degree $d$ over $\R^{n}$.
\subsubsection{Corresponding structures in the exceptional case}\label{section players in the exceptional case}
We want to produce a family of modular forms analogous to \eqref{eqn classical theta series},
starting from automorphic representations for $\grpF$ with archimedean component $\vrep{n\varpi_{4}}$.
The table below highlights the corresponding structures in the classical and exceptional settings:
\begin{table}[H]
    \centering
    \begin{tabular}{|c|c|c|c|c|}
        \hline
                        & classical case & exceptional case \\ \hline 
        underlying space & Euclidean space $\R^{n}$ & Euclidean Albert $\R$-algebra $\jord_{\R}$ \\ \hline 
        group of automorphisms & $\mathbf{O}_{n}(\R)$ & $\grpF(\R)$ \\ \hline 
        integral structure & even unimodular lattice &  Albert lattice \\ \hline 
        homogeneous polynomials & harmonic polynomials & a polynomial model of $\vrep{n\varpi_{4}}$ \\ \hline 
    \end{tabular}
    \caption{Comparison between classical and exceptional cases}
    \label{table comparison classical and exceptional theta series}
\end{table}
We briefly explain the objects appearing in \Cref{table comparison classical and exceptional theta series},
and the details will be provided in \Cref{section glimpse of Albert algebra} and \Cref{section exceptional group F4}:
\begin{itemize}
    \item The $27$-dimensional \emph{Euclidean Albert $\R$-algbera (or Euclidean exceptional Jordan $\R$-algebra)} $\jord_{\R}=\mathrm{Her}_{3}(\oct_{\R})$
    is the space of ``Hermitian'' 3-by-3 matrices over 
    the \emph{real octonion division algebra} $\oct_{\R}$,
    equipped with the distinguished element $\mathrm{I}=\mathrm{diag}(1,1,1)$,
    the adjoint map $\#:\jord_{\R}\rightarrow\jord_{\R}$,
    and the determinant $\det:\jord_{\R}\rightarrow\R$.
    Precisely,
    together with these structures,
    $\jord_{\R}$ is a \emph{cubic Jordan $\R$-algebra}
    and furthermore it is an \emph{Albert $\R$-algebra}.
    We call it Euclidean because its underlying vector space admits a symmetric inner product $(A,B)=\frac{1}{2}\tr(AB+BA)$ that is positive definite.
    \item  The group of Albert $\R$-algebra automorphisms of $\jord_{\R}$ is the real points $\grpF(\R)$ of $\grpF$,
    \emph{i.e.}\,$\grpF(\R)=\set{g\in\mathrm{GL}(\jord_{\R})}{g\mathrm{I}=\mathrm{I},\,\det(gA)=\det(A),\text{ for any }A\in\jord_{\R}}$.
    \item 
    By an \emph{Albert lattice},
    we mean a lattice $J\subseteq \jord_{\R}$
    satisfying that $\mathrm{I}\in J$, $J$ is stable under $\#$, $\det(J)\subseteq \Z$,
    and $(J,\mathrm{I},\#,\det)$ is an \emph{Albebrt $\Z$-algebra}.
    
    \item In \Cref{section harmonic polynomials representations of F4},
    we describe a polynomial model $\vrep{n}(\jord_{\C})$ of $\vrep{n\varpi_{4}}$:
    the space spanned by degree $n$ homogeneous polynomials over $\jord_{\R}$ of the form:
    \[X\mapsto \left(X,A\right)^{n},\text{ where }0\neq A\in \jord_{\R}\otimes_{\R}\C,\,A^{2}=0,\,\tr(A)=0.\]    
\end{itemize}
\subsubsection{Weighting the theta series constructed by Elkies-Gross}\label{section construct weighted theta series from that of Elkies Gross}
The starting point of the exceptional theta series associated with $\jord_{\R}$
is the work of Elkies and Gross \cite{Leech}.

Let $\mathcal{J}$ be the set of Albert lattices,
and equip it with the natural $\grpF(\R)$-action.
This set is the disjoint union of two $\grpF(\R)$-orbits
\cite[Proposition 5.3]{G96}.
We take a set of representatives $\{J_{1},J_{2}\}$ 
for these two orbits,
where $J_{1}=\jord_{\Z}$ (see \Cref{ex trivial Albert Z algebra}) is taken as the base point of $\mathcal{J}$.
For $J=J_{1}$ or $J_{2}$,
Elkies and Gross construct the following theta series:
\[\vartheta_{J}(z)=1+240\sum_{\substack{J\ni T\geq 0,\\\mathrm{rank}(T)=1}}\sigma_{3}(\mathrm{c}_{J}(T))q^{\tr(T)},\,q=e^{2\pi i z},\,z\in\mathcal{H},\]
where $\mathrm{c}_{J}(T)$ is the largest integer $c$ such that $T/c\in J$,
and $\sigma_{3}(n)=\sum_{d|n}d^{3}$.
This theta series is a modular form of weight $12$ for $\SL_{2}(\Z)$.
Moreover, 
\[\vartheta_{J_{1}}=E_{12}-\frac{65520}{691}\Delta,\,\vartheta_{J_{2}}=E_{12}+\frac{432000}{691}\Delta,\]
where $E_{12}$ is the normalized Eisenstein series of weight $12$,
and $\Delta$ is the discriminant modular form.
\begin{rmk}\label{rmk origin of coefficients in exceptional theta series}
    The coefficient $240\sigma_{3}(\mathrm{c}(T))$ appearing in the Fourier expansion of $\vartheta_{J}$
    comes from \emph{Kim's modular form} $\mathrm{F}_{Kim}$,
    an Eisenstein series on the exceptional tube domain $\mathcal{H}_{\jord}$ (see \Cref{section exceptional modular forms}),
    which is constructed in \cite{Kim1993}
    and serves as our source for producing theta series.
\end{rmk}
We extend the construction of Elkies-Gross to \emph{weighted exceptional theta series} as follows:
\begin{introthm}\label{introthm theta series is modular form}
    (\Cref{thm Fourier of theta lift},\Cref{Cor Fourier theta lift no average})
    For any Albert lattice $J\in\mathcal{J}$ 
    and a polynomial $P\in\vrep{n}(\jord_{\C})$,
    the theta series
    \begin{align}\label{eqn intro exceptional theta series}
        \vartheta_{J,P}(z):=\sum_{\substack{J\ni T\geq 0,\\ \mathrm{rank}(T)=1}}\sigma_{3}(\mathrm{c}_{J}(T))P(T)q^{\tr(T)}
    \end{align}
    is a modular form of weight $2n+12$ for $\SL_{2}(\Z)$.
    When $n=\deg(P)>0$, $\vartheta_{J,P}$ is a cusp form.
\end{introthm}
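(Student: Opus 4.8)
I would realize $\vartheta_{J,P}$ as a global theta lift from $\grpF$ to $\pgl_{2}$ attached to the minimal automorphic representation of $\grpE(\A)$ (for the group $\grpE$ and the dual pair $\pgl_{2}\times\grpF\subseteq\grpE$ of \Cref{section exceptional groups}). The relevant theta kernel is the automorphic realization of the minimal representation of $\grpE$, whose archimedean avatar on the exceptional tube domain $\mathcal{H}_{\jord}$ is governed by Kim's Eisenstein series $\mathrm{F}_{Kim}$ of \cite{Kim1993} (\Cref{rmk origin of coefficients in exceptional theta series}). Granting that the theta lift of an automorphic form is automorphic, the modularity of $\vartheta_{J,P}$ reduces to two points: identifying the archimedean component of the lift as holomorphic of weight $2n+12$, and checking that it is unramified at every finite prime, so that the lift descends to a holomorphic modular form for $\SL_{2}(\Z)$.

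\textbf{The $\grpF$-side input.} Because $\grpF(\R)$ is compact, a level one automorphic form on $\grpF$ with a fixed finite-dimensional archimedean type is simply a function on the finite double coset space $\grpF(\Q)\backslash\grpF(\A)/\grpF(\widehat{\Z})$, which by \cite{G96} is canonically the set $\mathcal{J}$ of Albert lattices up to isomorphism, hence has the two classes $\{J_{1},J_{2}\}$. The polynomial $P\in\vrep{n}(\jord_{\C})$ supplies the archimedean weight via the polynomial model of $\vrep{n\varpi_{4}}$ from \Cref{section harmonic polynomials representations of F4}. Pairing the theta kernel against the characteristic function of the class of $J$, and against $P$, yields a function $\Theta_{J,P}$ on $\pgl_{2}(\A)$; using Gross-Savin's description \cite{gross_savin_1998} of the restriction of the minimal representation of $\grpE(\R)$ to $\pgl_{2}(\R)\times\grpF(\R)$ --- which pairs $\vrep{n\varpi_{4}}$ on the $\grpF$-side precisely with the holomorphic discrete series of weight $2n+12$ on the $\pgl_{2}$-side --- one sees that $\Theta_{J,P}$ is holomorphic of weight $2n+12$ at $\infty$, and since the kernel and the data are everywhere unramified it is right $\SL_{2}(\widehat{\Z})$-invariant. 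Hence $\Theta_{J,P}$ descends to a holomorphic modular form of weight $2n+12$ for $\SL_{2}(\Z)$.

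\textbf{Fourier expansion.} To recognize this modular form as \eqref{eqn intro exceptional theta series}, I would unfold the theta pairing along the unipotent radical of the Borel of $\pgl_{2}$. The $T$-th Fourier coefficient then becomes a sum, over $v$ in the class of $J$, of $P(v)$ times the $T$-th Fourier coefficient of the kernel. Two features of the minimal representation collapse this sum to \eqref{eqn intro exceptional theta series}: its \emph{minimality} forces the Fourier support to lie on elements of $\jord$ of rank at most $1$, and holomorphy at $\infty$ forces positivity $T\geq 0$; the surviving arithmetic factor is exactly the $T$-th Fourier coefficient of $\mathrm{F}_{Kim}$, which is $\sigma_{3}(\mathrm{c}_{J}(T))$ up to the normalizing constant $240$ appearing in the Elkies-Gross series \cite{Leech}. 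This is the content of \Cref{thm Fourier of theta lift}; running it over a basis of the (finite-dimensional) space of level one $\grpF$-automorphic forms and inverting the sum over the class set $\mathcal{J}$ gives the statement for a single Albert lattice, \Cref{Cor Fourier theta lift no average}.

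\textbf{Cuspidality and the main obstacle.} When $n=\deg(P)>0$, the polynomial $P$ is homogeneous of positive degree, so every term in \eqref{eqn intro exceptional theta series} has $\tr(T)>0$ (a rank-one $T\geq0$ with $T\neq0$ has $\tr(T)>0$, and $P(0)=0$ in any case); thus the constant term of $\vartheta_{J,P}$ vanishes. Since $\SL_{2}(\Z)$ has a single cusp and $M_{k}(\SL_{2}(\Z))=\C E_{k}\oplus\mathrm{S}_{k}(\SL_{2}(\Z))$, a holomorphic modular form of weight $k=2n+12$ with vanishing constant term is a cusp form; for $n=0$ one recovers the Elkies-Gross form $\vartheta_{J}$, which carries the extra constant term and is not cuspidal. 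The main difficulty is the Fourier computation of the third step: proving rigorously that the Fourier coefficients of the minimal representation of $\grpE$, restricted along $\pgl_{2}$, are supported exactly on the rank $\leq 1$ locus and equal $\sigma_{3}\circ\mathrm{c}_{J}$ up to a constant, and tracking the archimedean matching carefully so that the weight comes out to be exactly $2n+12$. Convergence of the theta integral, and the precise (holomorphic, not merely moderate-growth) behavior at the archimedean place, are the delicate points.
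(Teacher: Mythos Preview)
Your overall strategy matches the paper's: realize $\vartheta_{J,P}$ as a global theta lift from $\grpF$ to $\pgl_{2}$ using the automorphic realization of the minimal representation of $\grpE$, read off the archimedean weight $2n+12$ from Gross--Savin, observe everything is spherical at finite places, and deduce cuspidality for $n>0$ from the vanishing constant term. Your cuspidality argument (vanishing $q^{0}$-term plus $\SL_{2}(\Z)$ having a single cusp) is correct and arguably cleaner than re-invoking Gross--Savin.

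The real gap is in your ``Fourier expansion'' step, and it is exactly the technical heart of the paper. You write that ``the $T$-th Fourier coefficient becomes a sum, over $v$ in the class of $J$, of $P(v)$ times the $T$-th Fourier coefficient of the kernel'', but there is no such auxiliary sum over $v$: the weighting by $P(T)$ does \emph{not} arise from summing over lattice points against $P$, as in the classical case. Instead, the paper inserts $P$ by applying an iterated differential operator $\mathrm{D}^{n}$ (built from a basis of $\mathfrak{p}_{\jord}^{+}$) to Kim's kernel $\Theta_{Kim}$, obtaining a $(\mathfrak{p}_{\jord}^{-})^{\otimes n}$-valued automorphic form $\Theta_{n}$, and then pairs $\Theta_{n}$ with $P\in\vrep{n}(\jord_{\C})\subset\sym^{n}\jord_{\C}^{\vee}$ via the natural pairing $\{-,-\}$. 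The crucial computation (\Cref{thm main theorem in Fourier computation}) is that
\[
\{\mathrm{D}^{n}h_{T}(g),P\}=(-4\pi)^{n}\,j(g,i)^{-2n-12}\,P(T)\,e^{2\pi i(T,\,g.i\mathrm{I})}.
\]
This is not automatic: Pollack's formula for $\mathrm{X}_{A_{n}}\cdots\mathrm{X}_{A_{1}}h_{T}$ involves an inductively defined map $\mathscr{P}_{n}$ with many lower-degree terms, and the point (\Cref{lemma only leading term matters}) is that pairing with $P\in\vrep{n}(\jord_{\C})$ kills all of them precisely because $\vrep{n}(\jord_{\C})$ is spanned by $(\,\cdot\,,A)^{n}$ with $A$ of trace $0$ and $A\circ A=0$. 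Without this mechanism your sketch does not explain why the Fourier coefficient is exactly $\sigma_{3}(\mathrm{c}_{J}(T))P(T)$ rather than some more complicated polynomial in $T$. Also, the rank-$1$ support is already a feature of $\mathrm{F}_{Kim}$'s Fourier expansion (an input from \cite{Kim1993}), not something one deduces during the unfolding.
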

Our proof of \Cref{introthm theta series is modular form} follows Pollack's method for the proof of \cite[Theorem 1.1.1]{pollack2023exceptional}.
For the automorphic form (or precisely, \emph{algebraic modular form}) of $\grpF$ associated with $J$ and $P$,
we construct its global theta lift to $\pgl_{2}$,
taking certain (iterated) differential of Kim's modular form $\mathrm{F}_{Kim}$ as the kernel function.
Then we show that this global theta lift arises from a holomorphic modular form,
whose Fourier expansion is exactly \eqref{eqn intro exceptional theta series}.
\begin{rmk}\label{rmk realization of theta lift in terms of theta series}
    Here we explain briefly how we describe the global theta lift from $\grpF$ to $\pgl_{2}$ in terms of exceptional theta series,
    and more details can be found in \Cref{section automorphic forms of F4}.
    The space $\mathcal{A}_{\vrep{n\varpi_{4}}}(\grpF)$ of \emph{level one ``vector-valued'' automorphic form of $\grpF$ with weight $\vrep{n\varpi_{4}}$}
    can be identified with the space of functions 
    $f:\mathcal{J}\rightarrow \vrep{n}(\jord_{\C})$ satisfying $f(gJ)=g.f(J)$ for any $g\in\grpF(\R)$ and $J\in\mathcal{J}$.
    The global theta lift of $f$ to $\pgl_{2}$ is the modular form 
    \[\frac{1}{|\Gamma_{1}|}\vartheta_{J_{1},f(J_{1})}+\frac{1}{|\Gamma_{2}|}\vartheta_{J_{2},f(J_{2})}\in\mathrm{M}_{2n+12}(\SL_{2}(\Z)),\]
    where $\Gamma_{i}$ is the automorphism group of the Albert $\Z$-algebra $J_{i}$, $i=1,2$.
\end{rmk}
\subsection{Strategy towards \texorpdfstring{\Cref{introthm main theorem}}{PDFstring}}
\label{section strategy towards the non-vanishing}
Now we illustrate our strategy for the proof of \Cref{introthm main theorem}.

Let $\varphi\simeq\otimes\varphi_{v}\in\pi\simeq\otimes_{v}^{\prime}\pi_{v}$ be the automorphic form of $\pgl_{2}$ associated to a Hecke eigenform $f\in\mathrm{S}_{2n+12}(\SL_{2}(\Z))$.
We want to show that its global theta lift $\Theta_{\phi}(\varphi)$, 
with respect to some vector $\phi$ in the \emph{minimal representation} of $\grpE(\A)$,
is non-zero.
For this goal,
we compute the \emph{$\spin_{9}$-period integral} of $\Theta_{\phi}(\varphi)$,
where $\spin_{9}$ is a maximal proper regular closed subgroup of $\grpF$.
The $\spin_{9}$-period of an automorphic form $f$ on $[\grpF]=\grpF(\Q)\backslash\grpF(\A)$ is defined as follows, 
where $dg$ is taken to be the Tamagawa measure: 
\[\mathcal{P}_{\spin_{9}}(f):=\int_{\spin_{9}(\Q)\backslash\spin_{9}(\A)}f(g)dg.\]
\begin{rmk}\label{rmk intro connection with SV conj}
    One motivation for considering this $\spin_{9}$-period is the global conjecture of Sakellaridis-Venkatesh \cite{SVPeriods}.
    The homogeneous $\grpF$-space $\mathbf{X}=\spin_{9}\backslash \grpF$ is a spherical variety
    whose \emph{dual group} is $\mathbf{G}_{\mathbf{X}}^{\vee}=\SL_{2}$,
    equipped with the embedding $i:\mathbf{G}_{\mathbf{X}}^{\vee}\rightarrow\widehat{\grpF}$ as described in \eqref{eqn embedding functoriality}.
    Roughly speaking, 
    the conjecture of Sakellaridis-Venkatesh predicts that
    the cuspidal automorphic representations of $\grpF$ with non-zero $\spin_{9}$-periods
    arise from functorial lifts with respect to the embedding $i:\mathbf{\widehat{\pgl_{2}}}\rightarrow\widehat{\grpF}$.
    Therefore, we expect the global theta lift $\Theta_{\phi}(\varphi)$ to have a non-zero $\spin_{9}$-period (for some suitable choice of $\phi$).
\end{rmk}
Using a see-saw duality argument, 
an \emph{exceptional Siegel-Weil formula} that we prove in \Cref{section exceptional siegel weil formula}
and a standard calculation of Rankin-Selberg integral (\Cref{section unfolding of rankin selberg integral}),
we rewrite the $\spin_{9}$-period of $\Theta_{\phi}(\varphi)$
as an Eulerian integral over $\pgl_{2}(\A)$.
Moreover,
we prove the following result,
which verifies the prediction of Sakellaridis-Venkatesh \cites[\S 17]{SVPeriods}[Table 1]{Sakellaridis_Rank1Transfer} for the global period associated with spherical variety $\spin_{9}\backslash\grpF$:
\begin{introthm}\label{introthm SV L-factor for Spin9 F4}
    (\Cref{cor prod local zeta integral L function})
    For any smooth, holomorphic and spherical vector $\phi\simeq \otimes_{v}\phi_{v}$ in the minimal representation $\Pi_{\min}\simeq \otimes_{v}^{\prime}\Pi_{\min,v}$ of $\grpE(\A)$,
    the $\spin_{9}$-period integral of $\Theta_{\phi}(\varphi)$ is equal to:
    \[\mathcal{P}_{\spin_{9}}(\Theta_{\phi}(\varphi))=\frac{\mathrm{L}(\pi,\frac{5}{2})\mathrm{L}(\pi,\frac{11}{2})}{\zeta(4)\zeta(8)}\cdot I_{\infty}(\phi_{\infty},\varphi_{\infty}),\]
    where $\mathrm{L}(\pi,s)=\mathrm{L}(f,\frac{2n+11}{2}+s)$ is the standard automorphic $\mathrm{L}$-function of $\pi$ (as an Euler product over all the finite places),
    and $I_{\infty}(\phi_{\infty},\varphi_{\infty})$ is an integral over $\pgl_{2}(\R)$.
\end{introthm}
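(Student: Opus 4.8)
The plan is to fold the theta kernel against the $\spin_{9}$-integration, invoke the exceptional Siegel--Weil formula of \Cref{section exceptional siegel weil formula} together with the Rankin--Selberg unfolding of \Cref{section unfolding of rankin selberg integral} to reduce the period to an Euler product of local zeta integrals, and finally evaluate the unramified local factors. Since $\grpF$, hence also $\spin_{9}$, is anisotropic over $\Q$, the quotient $[\spin_{9}]$ is compact and all the integrals below converge without regularization.

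First, unwinding the definition of the global theta lift, for $h\in\grpF(\A)$ one has $\Theta_{\phi}(\varphi)(h)=\int_{[\pgl_{2}]}\Theta_{\phi}(h,g)\varphi(g)\,dg$, where $\Theta_{\phi}$ is the automorphic form on $[\grpF\times\pgl_{2}]$ attached to $\phi\in\Pi_{\min}$. Substituting and interchanging the order of integration,
\[
\mathcal{P}_{\spin_{9}}(\Theta_{\phi}(\varphi))=\int_{[\pgl_{2}]}\varphi(g)\left(\int_{[\spin_{9}]}\Theta_{\phi}(h,g)\,dh\right)dg .
\]
By the see-saw obtained from restricting the dual pair $(\grpF,\pgl_{2})\subset\grpE$ to $\spin_{9}\times\mathbf{H}$, where $\mathbf{H}=\mathrm{Cent}_{\grpE}(\spin_{9})$ is a reductive subgroup containing $\pgl_{2}$, the inner integral is the restriction to $\pgl_{2}$ of the theta lift to $\mathbf{H}$ of the trivial representation of $\spin_{9}$.

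Next, the exceptional Siegel--Weil formula identifies this theta lift of the trivial representation with a special value (or residue) of a degenerate Eisenstein series $E^{\mathbf{H}}(\,\cdot\,;s_{0})$ on $\mathbf{H}(\A)$, attached to the section determined by $\phi$; concretely the Eisenstein series in play is the one underlying Kim's modular form $\mathrm{F}_{Kim}$ (cf.\ \Cref{rmk origin of coefficients in exceptional theta series}). Inserting this into the displayed identity produces the Rankin--Selberg integral $\int_{[\pgl_{2}]}\varphi(g)\,E^{\mathbf{H}}(g;s_{0})|_{\pgl_{2}}\,dg$, which, after unfolding the Eisenstein series against the Fourier--Whittaker expansion of the cusp form $\varphi$ and using $\varphi\simeq\otimes_{v}\varphi_{v}$, factors as a convergent Euler product $\prod_{v}Z_{v}(\phi_{v},\varphi_{v})$ of local zeta integrals over $\pgl_{2}(\Q_{v})$, each built from the local Whittaker function of $\pi_{v}$ and the local data of $\Pi_{\min,v}$.

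The main work is then the unramified local computation: for every finite place $p$ (all of which are unramified for both $\pi$ and $\grpF$), with $\phi_{p}$ the normalized spherical vector, one must prove
\[
Z_{p}(\phi_{p},\varphi_{p})=\frac{L_{p}(\pi,\tfrac{5}{2})\,L_{p}(\pi,\tfrac{11}{2})}{\zeta_{p}(4)\,\zeta_{p}(8)} .
\]
This is precisely the unramified value predicted by Sakellaridis--Venkatesh \cites[\S 17]{SVPeriods}[Table 1]{Sakellaridis_Rank1Transfer} for the rank-one spherical variety $\spin_{9}\backslash\grpF$ (dual group $\mathbf{G}_{\mathbf{X}}^{\vee}=\SL_{2}$), so the point is to confirm their prediction by direct evaluation: I would expand the spherical section of $E^{\mathbf{H}}$ (equivalently, use Pollack's Fourier expansion of $\mathrm{F}_{Kim}$), insert the Casselman--Shalika formula for the $\pgl_{2}$-Whittaker function, through which the Satake parameter $c_{p}$ of $\pi_{p}$ enters, and carry out the remaining one-dimensional sum over the torus of $\pgl_{2}$; matching the two resulting geometric series against $\prod(1-c_{p}^{\pm1}p^{-s})^{-1}$ at the shifts $s=\tfrac{5}{2},\tfrac{11}{2}$ gives the two numerator $L$-factors, while the two denominator zeta factors come from the normalization of the Eisenstein section. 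I expect the delicate part to be this bookkeeping: pinning down the exact shifts (equivalently the exponents in $L(\pi,s)=L(f,\tfrac{2n+11}{2}+s)$) and normalization so that exactly $\zeta_{p}(4)\zeta_{p}(8)$ and no stray factors survive; once set up it is a one-variable Macdonald-type identity. Finally, taking the product over all finite places yields the Euler product $L(\pi,\tfrac{5}{2})L(\pi,\tfrac{11}{2})/(\zeta(4)\zeta(8))$ -- a genuine value, since $L(\pi,\tfrac{5}{2})=L(f,n+8)$ and $L(\pi,\tfrac{11}{2})=L(f,n+11)$ both lie in the region of absolute convergence of $L(f,s)$ -- and isolating the remaining integral over $\pgl_{2}(\R)$ as $I_{\infty}(\phi_{\infty},\varphi_{\infty})$ completes the proof.
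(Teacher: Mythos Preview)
Your outline is correct and follows the paper's approach: swap the integration order, apply the exceptional Siegel--Weil formula (where the centralizer $\mathbf{H}$ is identified explicitly as $\sorth_{2,2}$ and the Eisenstein series lives in the degenerate principal series $\mathrm{I}(3,7)$), unfold via the two double cosets in $\mathbf{B}(\Q)\backslash\sorth_{2,2}(\Q)/\pgl_{2}^{\Delta}(\Q)$ together with the Whittaker expansion of $\varphi$, and then evaluate the unramified local integrals by Casselman--Shalika. One small correction: the $p$-adic local integral is over the full Borel of $\pgl_{2}(\Q_{p})$ (torus times unipotent, not just the torus) and is computed from the explicit values of the spherical section of $\mathrm{I}(3,7)_{p}$, not from any Fourier expansion of $\mathrm{F}_{Kim}$, which plays no role at finite places.
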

The $\mathrm{L}$-factor in \Cref{introthm SV L-factor for Spin9 F4} is non-zero by the theory of Rankin-Selberg,
thus the non-vanishing of $\mathcal{P}_{\spin_{9}}(\Theta_{\phi}(\varphi))$ is equivalent to that of $I_{\infty}(\phi_{\infty},\varphi_{\infty})$.

For any Hecke eigenform $f$ in $\mathrm{S}_{2n+12}(\SL_{2}(\Z))$,
the associated automorphic form $\varphi\simeq\otimes_{v}\varphi_{v}$ in $\pi\simeq\otimes_{v}^{\prime}\pi_{v}$
satisfies that 
$\varphi_{\infty}$ is the unique (up to some scalar) lowest weight holomorphic vector of the discrete series $\mathcal{D}(2n+12)\simeq\pi_{\infty}$.
Therefore, 
fixing a vector $\phi\in\Pi_{\min}$ as in \Cref{introthm SV L-factor for Spin9 F4},
$\mathcal{P}_{\spin_{9}}(\Theta_{\phi}(\varphi))\neq 0$ for \emph{any} such $\varphi$,
if and only if it holds for \emph{one} such $\varphi$. 
Hence to prove \Cref{introthm main theorem}
it suffices to find a vector $\phi\in\Pi_{\min}$ satisfying the conditions in \Cref{introthm SV L-factor for Spin9 F4}
and that $\mathcal{P}_{\spin_{9}}(\Theta_{\phi}(\varphi))\neq 0$,
where $\varphi$ is the automorphic form associated to certain Hecke eigenform $f\in\mathrm{S}_{2n+12}(\SL_{2}(\Z))$.

Our proof of the existence of $\phi\in\Pi_{\min}$
relies on an automorphic form of $\grpF$ that is invariant under $\spin_{9}(\R)$
and has a non-zero global theta lift to $\pgl_{2}$.
As mentioned in \Cref{section intro exceptional theta series},
in this paper the global theta lifting from $\grpF$ to $\pgl_{2}$ is realized via exceptional theta series.
If we take $J=J_{1}=\jord_{\Z}$ and $P_{n}$ the unique non-zero $\spin_{9}(\R)$-invariant polynomial in $\vrep{n}(\jord_{\C}),\,n\geq 2$,
then \Cref{introthm theta series is modular form} gives us a weight $2n+12$ cusp form,
which can be verified to be non-zero by analyzing the Fourier coefficient of $q$ (\Cref{thm for each weight a non-zero lift}).
This implies that the automorphic form for $\grpF$ associated to $\jord_{\Z}$ and $P_{n}$
is the desired one!

As a corollary of \Cref{introthm main theorem},
we have the following analogue of Waldspurger's result for classical theta series: 
\begin{introthm}\label{introthm theta series generate whole space of cusp form}
    (\Cref{cor theta series span the whole space})
    For any $n>0$,
    the space $\mathrm{S}_{2n+12}(\SL_{2}(\Z))$
    is spanned by the set of weighted exceptional theta series
    $\set{\vartheta_{J,P}}{J=J_{1}\text{ or }J_{2},P\in\vrep{n}(\jord_{\C})}$.
\end{introthm}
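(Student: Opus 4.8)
The plan is to realize the span $V$ of the weighted exceptional theta series of weight $2n+12$ as the image of the global theta lift $\Theta\colon\mathcal{A}_{\vrep{n\varpi_{4}}}(\grpF)\to\mathrm{S}_{2n+12}(\SL_{2}(\Z))$, to observe that this image is a Hecke submodule of $\mathrm{S}_{2n+12}(\SL_{2}(\Z))$, and then to invoke \Cref{introthm main theorem} to see that it contains every Hecke eigenform, hence all of $\mathrm{S}_{2n+12}(\SL_{2}(\Z))$.

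First I would record that $V\subseteq\mathrm{S}_{2n+12}(\SL_{2}(\Z))$ for $n>0$, by \Cref{introthm theta series is modular form}. Inspecting the Fourier expansion \eqref{eqn intro exceptional theta series}, one sees that $\vartheta_{J_{i},P}$ depends only on the $\Gamma_{i}$-average of $P$: for $\gamma\in\Gamma_{i}$ the substitution $T\mapsto\gamma T$ permutes the rank-one $T\in J_{i}$ with $T\geq 0$ and preserves $\tr(T)$ and $\mathrm{c}_{J_{i}}(T)$, so $\vartheta_{J_{i},\gamma\cdot P}=\vartheta_{J_{i},P}$ and hence $\vartheta_{J_{i},P}=\vartheta_{J_{i},\overline{P}}$ with $\overline{P}:=\tfrac{1}{|\Gamma_{i}|}\sum_{\gamma\in\Gamma_{i}}\gamma\cdot P\in\vrep{n}(\jord_{\C})^{\Gamma_{i}}$. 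Thus $\vartheta_{J_{i},P}$ equals $|\Gamma_{i}|$ times the theta lift of the algebraic modular form that takes the $\Gamma_{i}$-invariant value $\overline{P}$ at $J_{i}$ and $0$ on the other $\grpF(\R)$-orbit; conversely, any element of $\mathcal{A}_{\vrep{n\varpi_{4}}}(\grpF)$ is a sum of two such, being determined by its $\Gamma_{1}$- and $\Gamma_{2}$-invariant values at $J_{1},J_{2}$. With the formula for $\Theta$ recalled in \Cref{rmk realization of theta lift in terms of theta series}, this gives $V=\Theta(\mathcal{A}_{\vrep{n\varpi_{4}}}(\grpF))$.

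Next, $\mathcal{A}_{\vrep{n\varpi_{4}}}(\grpF)$ is finite-dimensional and stable under the spherical Hecke algebra at every prime, and $\Theta$ intertwines the Hecke actions on $\grpF$ and $\pgl_{2}$ through the Satake transfer along the dual pair $\pgl_{2}\times\grpF$; hence $V$ is a Hecke submodule of $\mathrm{S}_{2n+12}(\SL_{2}(\Z))$, and since the latter has a basis of Hecke eigenforms with pairwise distinct eigensystems, $V$ is the span of the eigenforms it contains. So it suffices to prove that every normalized Hecke eigenform $f\in\mathrm{S}_{2n+12}(\SL_{2}(\Z))$, with associated cuspidal representation $\pi$ of $\pgl_{2}$, lies in $V$. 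By \Cref{introthm main theorem}, $\Theta(\pi)$ is a non-zero Hecke eigenspace in $\mathcal{A}_{\vrep{n\varpi_{4}}}(\grpF)$ with Satake parameter $\iota(e_{p},c_{p})$ at each $p$; since $c\mapsto\iota(e_{p},c)$ is injective, the reverse lift $\Theta(\Theta(\pi))\subseteq\mathrm{S}_{2n+12}(\SL_{2}(\Z))$, if non-zero, is a Hecke eigenspace with Satake parameter $c_{p}$, hence equals $\C f$ by multiplicity one. Choosing $F\in\Theta(\pi)$ with $\Theta(F)\neq 0$ — possible, as explained next — we obtain $f\in\C\cdot\Theta(F)\subseteq V$, and ranging over a basis of eigenforms yields $V=\mathrm{S}_{2n+12}(\SL_{2}(\Z))$.

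The substantive point, which I expect to be the main obstacle, is the existence of $F\in\Theta(\pi)$ with non-vanishing reverse lift $\Theta(F)$. I would extract it from the analysis behind \Cref{introthm main theorem}: that proof produces, for $f$ and the holomorphic vector $\varphi\in\pi$, a vector $\phi$ in the minimal representation of $\grpE(\A)$ with $\mathcal{P}_{\spin_{9}}(\Theta_{\phi}(\varphi))\neq 0$, so $F:=\Theta_{\phi}(\varphi)$ is a non-zero element of $\mathcal{A}_{\vrep{n\varpi_{4}}}(\grpF)$ (its archimedean component being $\vrep{n\varpi_{4}}$ by the Gross--Savin computation). A see-saw identity — legitimate without regularization since $\grpF$ is $\Q$-anisotropic, so that $\langle\,\cdot\,,\,\cdot\,\rangle_{\grpF}$ is an honest positive-definite inner product on $\mathcal{A}_{\vrep{n\varpi_{4}}}(\grpF)$ — relates $0\neq\langle F,F\rangle_{\grpF}$ to a pairing of $\Theta(F)$ with $\varphi$, which forces $\Theta(F)\neq 0$ and completes the argument.
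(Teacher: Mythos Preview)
Your overall plan is reasonable, but the see-saw step (your final paragraph) has a genuine gap. The object $F=\Theta_{\phi}(\varphi)$ is a \emph{scalar} automorphic form on $\grpF$, whereas the map $\Theta$ whose image you identified with $V$ has domain $\mathcal{A}_{\vrep{n\varpi_{4}}}(\grpF)$, the space of \emph{vector-valued} forms; so ``$\Theta(F)$'' is not defined as written. What the see-saw actually produces is
\[
\langle F,F\rangle_{\grpF}=\int_{[\pgl_{2}]}\Psi(h)\,\overline{\varphi(h)}\,dh,\qquad \Psi(h)=\int_{[\grpF]}\theta(\phi)(gh)\,\overline{F(g)}\,dg,
\]
i.e.\ the theta lift of the scalar form $\overline{F}$ via the scalar kernel $\theta(\phi)=\{\Theta_{n},P_{n}\}$. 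You have not shown that $\Psi$ lies in $V$; equivalently, you have not shown that the function $g\mapsto \overline{F(g)}\,P_{n}$ satisfies the $\grpF(\R)$-equivariance required of elements of $\mathcal{A}_{\vrep{n\varpi_{4}}}(\grpF)$, and in general it does not. Filling this in requires unpacking the vector-valued formalism and ends up using exactly the same ingredient as the paper.

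The paper's argument is shorter and sidesteps both this issue and your Hecke-stability step. Rather than lifting to $\grpF$ and back, it exhibits a \emph{single} element of $V$, namely $\Theta(\alpha_{n})=\vartheta_{\jord_{\Z},P_{n}}$ with $P_{n}$ the $\spin_{9}(\R)$-invariant polynomial, and shows it pairs nontrivially with \emph{every} eigenform $\varphi$ of weight $2n+12$: one has
\[
\langle \Theta(\alpha_{n}),\varphi\rangle_{\pgl_{2}}\;=\;C\cdot \mathcal{P}_{\spin_{9}}\bigl(\Theta_{\phi_{n}\otimes\Phi_{f}}(\varphi)\bigr),
\]
and the right-hand side is nonzero by \Cref{introthm SV L-factor for Spin9 F4} together with the archimedean non-vanishing established once (independently of which eigenform) in the proof of \Cref{introthm main theorem}. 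Hence $V^{\perp}$ contains no eigenform and $V=\mathrm{S}_{2n+12}(\SL_{2}(\Z))$. (For $n=1$ the target space is zero, so there is nothing to prove.)
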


We end the introduction with a short summary of the contents of this paper.
We recall the necessary preliminaries on exceptional groups in \Cref{section exceptional groups},
and the results on local theta correspondences in \Cref{section local theta correspondences}.
We establish the global theta correspondence in \Cref{section automorphic representations and global theta lift from min rep},
then study the Fourier expansions of exceptional theta series and prove \Cref{introthm theta series is modular form} in \Cref{section exceptional theta series on F4}.
The last section \Cref{section nonvanishing global theta lift} 
is for the proof of 
\Cref{introthm main theorem}, \Cref{introthm SV L-factor for Spin9 F4} and \Cref{introthm theta series generate whole space of cusp form}.
\subsection*{Acknowledgements}
 The author thanks Gaëtan Chenevier 
for proposing this interesting project,
and also thanks Edmund Karasiewicz, Nhat Hoang Le, Chuhao Huang, Aaron Pollack, Gordan Savin and Jialiang Zou for stimulating discussions.
Special thanks go to Wee Teck Gan 
for pointing out the strategy towards \Cref{conj existence of representations} to the author,
the invitation for a visit to the National University of Singapore,
and a lot of helpful discussions.

\section{Preliminaries on exceptional groups}\label{section exceptional groups}
 In this section we recall the definitions of two reductive algebraic groups $\grpF$ and $\grpE$ over $\Q$ 
and construct the following two reductive dual pairs
\footnote{Actually we do not prove in this paper that $\spin_{9}\times\sorth_{2,2}$ is indeed a reductive dual pair,
instead we only give a homomorphism $\spin_{9}\times\sorth_{2,2}\rightarrow \grpE$,
whose kernel is a central cyclic group of order $2$.
}
inside $\grpE$:
\[\grpF\times \pgl_{2}\text{ and }\spin_{9}\times \sorth_{2,2}.\]
\subsection{Octonions}\label{section octonions}
We first recall the notion of octonions, 
which are crucial for defining exceptional groups.
\begin{defi}\label{def general octonion algebra}
    An \emph{octonion algebra} over a commutative ring $k$ 
    is a locally free $k$-module $C$ of rank $8$,
    equipped with a non-degenerate quadratic form $N:C\rightarrow k$
    and a (possibly non-associative) $k$-algebra structure admitting a $2$-sided identity element $e$,
    such that $N(xy)=N(x)N(y),\,x,y\in C$.
    The quadratic form $N$ is referred as the \emph{norm} on $C$.
\end{defi}
Now we recall some basic properties of octonion algebras, for which we refer to \cite{OctSpr}.
There is a unique anti-involution of algebra $x\mapsto \overline{x}$ called the \emph{conjugation} on $C$,
with the property that $N(x)=x\overline{x}=\overline{x}x$.
The \emph{trace} is defined as the linear map $\tr:C\rightarrow k,\,x\mapsto x+\overline{x}$.
The symmetric bilinear form associated with $N$ is 
$\langle x,y\rangle:=N(x+y)-N(x)-N(y)=\mathrm{Tr}(x\overline{y})$.

Although the multiplication law of $C$ is not associative,
it is still \emph{trace-associative} in the sense that 
$\tr((xy)z)=\tr(x(yz))$ for all $x,y,z\in C$,
and we can define a trilinear form: $\tr(xyz):=\tr((xy)z)=\tr(x(yz))$.

When considering octonion algebras over $\R$, we have the following classification result:
\begin{prop}\label{prop unique real definite octonion}
    \cite[Theorem 15.1]{AdamsExceptionalGrp}
    Up to $\R$-algebra isomorphism,
    there is a unique octonion algebra $\oct_{\R}$ over $\R$ 
    whose norm $\mathrm{N}$ is positive definite,
    which is named as the \emph{real octonion division algebra}.
\end{prop}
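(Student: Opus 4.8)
The plan is to deduce the statement from two standard inputs available from \cite{OctSpr}: the Cayley--Dickson normal form for composition algebras, and the principle that an octonion algebra is determined up to isomorphism by its norm form. The only genuinely $\R$-specific ingredient is then Sylvester's law of inertia.

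First I would recall that any octonion algebra $C$ over a field $k$ of characteristic $\neq 2$ arises from three successive Cayley--Dickson doublings $k\subset E\subset D\subset C$, governed by scalars $\alpha,\beta,\gamma\in k^{\times}$ (with $u^{2}=\alpha e$, $v^{2}=\beta e$, $w^{2}=\gamma e$ at the three stages), and that under this description its norm is the $3$-fold Pfister form $\norm_{C}\cong\langle 1,-\alpha\rangle\otimes\langle 1,-\beta\rangle\otimes\langle 1,-\gamma\rangle$, while the $k$-algebra isomorphism class of $C$ depends on $(\alpha,\beta,\gamma)$ only through their images in $k^{\times}/(k^{\times})^{2}$. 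Specializing to $k=\R$: if $\norm_{C}$ is positive definite, then each binary factor $\langle 1,-\alpha\rangle$, $\langle 1,-\beta\rangle$, $\langle 1,-\gamma\rangle$ is a subform of a positive definite form, hence positive definite, which forces $\alpha,\beta,\gamma<0$. Thus all three scalars represent the nontrivial class in $\R^{\times}/(\R^{\times})^{2}=\{\pm 1\}$, so $C$ is isomorphic to the algebra attached to $\alpha=\beta=\gamma=-1$, namely the classical chain $\R\subset\C\subset\mathbb{H}\subset\oct_{\R}$ (Hamilton's quaternions, doubled). For existence, this $\oct_{\R}$ is visibly an octonion algebra whose norm is $\langle 1,1,1,1,1,1,1,1\rangle$, manifestly positive definite; a nonzero element then has nonzero norm, hence is invertible, so $\oct_{\R}$ is in fact a division algebra — which also explains the name.

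An equivalent route, which I might present in parallel, is to observe that a non-degenerate positive-definite quadratic form of dimension $8$ over $\R$ is unique up to isometry by Sylvester, and then to quote directly that two octonion algebras over a field with isometric norm forms are isomorphic. Either way, I expect the one nontrivial point to be precisely this structural fact — that the algebra can be reconstructed from its norm form (equivalently, the Cayley--Dickson normal form together with the reduction of the defining scalars modulo squares). Over $\R$ this costs almost nothing, since $\R^{\times}/(\R^{\times})^{2}$ has only two elements, but it is the single step that truly leans on \cite{OctSpr} rather than on elementary bookkeeping with signatures.
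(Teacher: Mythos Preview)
Your argument is correct. The paper, however, does not prove this proposition at all: it simply states the result and cites \cite[Theorem 15.1]{AdamsExceptionalGrp}, so there is no proof in the paper to compare against. Your sketch via Cayley--Dickson doubling (or equivalently, Sylvester plus the fact from \cite{OctSpr} that an octonion algebra is determined by its norm form) is a standard and valid way to supply the missing details.
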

We choose a basis $\{\mathrm{e}_{0},\mathrm{e}_{1},\ldots,\mathrm{e}_{7}\}$ as in \cite[\S 4]{G96},
where $\mathrm{e}_{0}$ is the $2$-sided identity element.
Identify the real numbers $\R$ with the subalgebra $\R \mathrm{e}_{0}$ of $\oct_{\R}$,
and denote the identity element $\mathrm{e}_{0}$ by $1$.
On $\oct_{\R}$,
the conjugation is defined by $\overline{1}=1$ and $\overline{\mathrm{e}_{i}}=-\mathrm{e}_{i}$ for each $i$.
For any element $x=\sum\limits_{i=0}^{7}x_{i}\mathrm{e}_{i}\in\oct_{\R}$,
one has $\mathrm{N}(x)=\sum_{i=0}^{7}x_{i}^{2}$ and $\tr(x)=2x_{0}$.
\begin{defi}\label{def Cayley octonion algebra}
    \emph{Cayley's definite octonion algebra} $\oct_{\Q}$ is 
    the sub-$\Q$-algebra of $\oct_{\R}$,
    generated by $\{\mathrm{e}_{1},\ldots,\mathrm{e}_{7}\}$,
    which is an octonion $\Q$-algebra with the norm $\mathrm{N}|_{\oct_{\Q}}$.   
\end{defi}
The following definition gives an integral structure of Cayley's definite octonion algebra:
\begin{defi}\label{def Coxeter integral octonion}
\emph{Coxeter's integral order} $\coct$ in $\oct_{\Q}$ is the lattice spanned by $\Z\oplus\Z \mathrm{e}_{1}\oplus\cdots\oplus\Z \mathrm{e}_{7}$ and
\begin{align*}
    \mathrm{h}_{1}&=(1+\mathrm{e}_{1}+\mathrm{e}_{2}+\mathrm{e}_{4})/2,\,\mathrm{h}_{2}=(1+\mathrm{e}_{1}+\mathrm{e}_{3}+\mathrm{e}_{7})/2,\\
    \mathrm{h}_{3}&=(1+\mathrm{e}_{1}+\mathrm{e}_{5}+\mathrm{e}_{6})/2,\,\mathrm{h}_{4}=(\mathrm{e}_{1}+\mathrm{e}_{2}+\mathrm{e}_{3}+\mathrm{e}_{5})/2,
\end{align*}
which is an octonion $\Z$-algebra with the norm $\mathrm{N}|_{\oct_{\Z}}$.
\end{defi}

\subsection{Albert algebras}
\label{section glimpse of Albert algebra}
In this section, we will not generally define either an Albert algebra or a (cubic) Jordan algebra,
where precise definitions and details can be found in \cite{Garibaldi_Petersson_Racine_AlbertAlgOverZ}.
Instead, we recall some examples and properties of Albert algebras that are important for us.
\subsubsection{Hermitian 3-by-3 matrices over octonion algebras}
\label{section Her 3 algebra}
Given an octonion algebra $C$ over a commutative ring $k$,
we consider the space $\mathrm{Her}_{3}(C)$ consisting of ``Hermitian matrices'' in $\mathrm{M}_{3}(C)$,
\emph{i.e.}\,matrices of the form 
\begin{align*}
        [a,b,c\,;x,y,z]:=\left(\begin{matrix}
        a&z&\overline{y}\\
        \overline{z}&b&x\\
        y&\overline{x}&c
        \end{matrix}\right),\ a,b,c\in k,\ x,y,z\in C,
\end{align*}
equipped with the following structures,
where the maps are all polynomial laws in the sense of \cite{RobyPolyLaw}:
\begin{itemize}
    \item the identity matrix $\mathrm{I}=\mathrm{diag}(1,1,1)$,
    \item the adjoint map $\#:\mathrm{Her}_{3}(C)\rightarrow\mathrm{Her}_{3}(C)$, which is a quadratic map over $k$: 
    \begin{align}\label{eqn adjoint of albert algebra}
        \left(\begin{matrix}
            a&z&\overline{y}\\
            \overline{z}&b&x\\
            y&\overline{x}&c
            \end{matrix}\right)
            \mapsto \left(\begin{matrix}
            bc-N(x)&\overline{xy}-cz&zx-b\overline{y}\\
            xy-c\overline{z}&ca-N(y)&\overline{yz}-ax\\
            \overline{zx}-by&yz-a\overline{x}&ab-N(z)
            \end{matrix}\right),
    \end{align}
        \item and the determinant, which is a cubic form over $k$: 
        \begin{align}\label{eqn det albert algebra}
            \det([a,b,c\,;x,y,z]):=abc+\mathrm{Tr}(xyz)-a N(x)-b N(y)-c N(z).
        \end{align}
\end{itemize}
One can construct more polynomial laws from these structures:
\begin{itemize}
    \item There exists a symmetric bilinear form on $\mathrm{Her}_{3}(C)$: 
    \begin{align*}
        (A,B):=\left(\nabla_{A}\det\right)(\mathrm{I})\cdot \left(\nabla_{B}\det\right)(\mathrm{I})-\left(\nabla_{A}\nabla_{B}\det\right)(\mathrm{I}).
    \end{align*}
        If $A=[a,b,c\,;x,y,z]$ and $B=[a^{\prime},b^{\prime},c^{\prime}\,;x^{\prime},y^{\prime},z^{\prime}]$,
        then \[(A,B)=aa^{\prime}+bb^{\prime}+cc^{\prime}+\lrangle{x}{x^{\prime}}+\lrangle{y}{y^{\prime}}+\lrangle{z}{z^{\prime}}.\] 
    \item The \emph{trace} $\tr:\mathrm{Her}_{3}(M)\rightarrow k$ is defined as $\tr(A)=(A,\mathrm{I})$.
    \item The linearization of $\#$ gives a symmetric cross product $A\times B:=(A+B)^{\#}-A^{\#}-B^{\#}$.
\end{itemize}
With these structures, we can define the rank of a matrix in $\mathrm{Her}_{3}(C)$:
\begin{defi}\label{def rank of matrix in Jordan algebra}
The \emph{rank} of $A\in \mathrm{Her}_{3}(C)$ is defined as follows:
\begin{itemize}
    \item If $A=0$, then $\mathrm{rank}(A)=0$;
    \item If $A\neq 0$ and $A^{\#}=0$, then $\mathrm{rank}(A)=1$;
    \item If $A\neq 0,A^{\#}\neq 0$ and $\det(A)=0$, then $\mathrm{rank}(A)=2$;
    \item Otherwise, $\mathrm{rank}(A)=3$. 
\end{itemize}    
\end{defi}
\subsubsection{Euclidean exceptional Jordan \texorpdfstring{$\R$}{PDFstring}-algebra and its $\Q$-structure}
\label{section exceptional real Jordan algebra}
One of the most important Albert algebras appearing in this article is the following:
\begin{defi}\label{def real definite exceptional Jordan algebra}
    The \emph{Euclidean exceptional Jordan $\R$-algebra} (or \emph{Euclidean Albert $\R$-algebra})
    is defined to be $\jord_{\R}:=\mathrm{Her}_{3}(\oct_{\R})$,
    where $\oct_{\R}$ is the real octonion division algebra.
\end{defi}
The space $\jord_{\R}$ 
is a commutative but not associative $\R$-algebra 
with respect to the $\R$-bilinear multiplication law
$A\circ B:=\frac{1}{2}(AB+BA)$,
where $AB$ and $BA$ denote the matrix multiplication,
and $\mathrm{I}$ is its $2$-sided identity element.
One can easily check that the symmetric bilinear form $(\,,\,)$ satisfies 
$(A,B)=\tr(A\circ B)$ for any $A,B\in\jord_{\R}$,
and it is positive definite.
\begin{defi}\label{def positive definite of Jordan element}
    A matrix $A=[a,b,c\,;x,y,z]\in \jord_{\R}$ is \emph{positive semi-definite} if its seven minor determinants
\[a,b,c,bc-\norm(x),ca-\norm(y),ab-\norm(z),\det(A)\]
are all non-negative, 
and we write $A\geq 0$.
When these minor determinants are all positive, we call $A$ \emph{positive definite} and write $A>0$.
\end{defi}
Similarly to \Cref{def Cayley octonion algebra}, this algebra $\jord_{\R}$ admits a rational structure:
\begin{defi}\label{def rational and integral exceptional Jordan algebra}
    The \emph{Euclidean exceptional Jordan $\Q$-algebra} $\jord_{\Q}$
    is the sub-$\Q$-algebra of $\jord_{\R}$ consisting of $[a,b,c\,;x,y,z],a,b,c\in\Q,x,y,z\in\oct_{\Q}$
    equipped with the multiplication $\circ$.
\end{defi}
\begin{notation}\label{notation elements of Jordan algebra}
    Here we fix some elements in $\jord_{\Q}$ that will be used frequently in this paper:
    \[ \mathrm{E}_{1}:=[1,0,0\,;0,0,0],\,\mathrm{E}_{2}:=[0,1,0\,;0,0,0],\,\mathrm{E}_{3}:=[0,0,1\,;0,0,0].\]
\end{notation}
\subsubsection{Albert algebras over \texorpdfstring{$\Z$}{PDFstring}}
\label{section Albert algebras}
Let $k$ be a commutative ring.
\emph{Albert} $k$-algebras are defined in \cite[Definition 7.1]{Garibaldi_Petersson_Racine_AlbertAlgOverZ}
Roughly speaking, 
an Albert $k$-algebra 
is a projective $k$-module $J$ together with 
a distinguished point $1_{J}$, a quadratic map $\#:J\rightarrow J$ and a cubic form $d:J\rightarrow k$ (as polynomial laws in the sense of \cite{RobyPolyLaw})
satisfying certain equations,
such that for some faithfully flat $k$-algebra $K$,
$J\otimes_{k}K$ is isomorphic to $\mathrm{Her}_{3}(C_{K})$ as \emph{Jordan $K$-algebras},
where $C_{K}$ is an octonion $K$-algebra.
For any ring homomorphism $k\rightarrow k^{\prime}$,
the scalar extension $J\otimes_{k}k^{\prime}$ of an Albert $k$-algebra $J$
is an Albert $k^{\prime}$-algebra.
\begin{defi}\label{def isomorphism of Albert algebras}
   \cite[Lemma 10.3]{Garibaldi_Petersson_Racine_AlbertAlgOverZ}
    An isomorphism of Albert $k$-algebras $\phi:J\rightarrow J^{\prime}$
is a $k$-module isomorphism such that $\phi(1_{J})=1_{J^{\prime}}$ and $d_{J^{\prime}}\circ \phi=d_{J}$
\footnote{Here $\circ$ means the composition, not the multiplication defined in \Cref{section exceptional real Jordan algebra}.}
as polynomial laws.
\end{defi}

\begin{ex}\label{ex all algebras above are Albert}
    The space of 3-by-3 Hermitian matrices
    $\mathrm{Her}_{3}(C)$ defined in \Cref{section Her 3 algebra} is an Albert $k$-algebra.
    In particular,
    $\jord_{\R}$ and $\jord_{\Q}$ defined in and \Cref{section exceptional real Jordan algebra}
    are Albert algebras over $\R$ and $\Q$ respectively. 
\end{ex}
Here are several classification results in \cites[\S 5.8]{OctSpr}[\S 11, \S 14]{Garibaldi_Petersson_Racine_AlbertAlgOverZ} about Albert algebras that will be useful for us:
\begin{enumerate}[label=(\arabic*)]
    \item There is a unqiue isomorphism class of Albert $\R$-algebras that are \emph{Euclidean},
    \emph{i.e.}\,the associated symmetric bilinear form is positive definite,
    and this class is represented by $(\jord_{\R},\mathrm{I},\#,\det)$ defined in \Cref{section exceptional real Jordan algebra}.
    \item Euclidean Albert $\Q$-algebras are also unique up to isomorphism.
    \item Albert $\Z_{p}$-algebras are unique up to isomorphism.
    \item There are exactly two isomorphism classes of Euclidean Albert $\Z$-algebras.
\end{enumerate}
In this article, 
we concentrate on the following family of Euclidean Albert $\Z$-algebras:
\begin{defi}\label{def definite Albert algebra inside real model}
    An \emph{Albert lattice} of $\jord_{\R}$ is a lattice $J\subseteq \jord_{\R}$ satisfying:
    \begin{itemize}
        \item The identity matrix $\mathrm{I}=\mathrm{diag}(1,1,1)\in\jord_{\R}$ is contained in $J$;
        \item It is stable under the adjoint map $\#$ defined in \eqref{eqn adjoint of albert algebra};
        \item The cubic form $\det$ defined in \eqref{eqn det albert algebra} takes integral values on $J$;
        \item Together with $\mathrm{I},\#$ and $\det$, $J$ is an Albert $\Z$-algebra.
    \end{itemize}
    Denote the set of Albert lattices inside $\jord_{\R}$ by $\mathcal{J}$.
\end{defi}
\begin{ex}\label{ex trivial Albert Z algebra}
        Let $\jord_{\Z}:=\mathrm{Her}_{3}(\oct_{\Z})$,
        \emph{i.e.}\,the rank $27$ lattice 
        \[\set{[a,b,c\,;x,y,z]\in\jord_{\Q}}{a,b,c\in\Z,x,y,z\in\oct_{\Z}}\]
        inside $\jord_{\Q}$.       
        It satisfies the conditions in \Cref{def definite Albert algebra inside real model},
        thus it is an Albert lattice.   
\end{ex}
\begin{ex}\label{ex isotopy Albert Z algebra}
    An Albert $\Z$-algebra not isomorphic to $(\jord_{\Z},\mathrm{I},\#,\det)$ defined in \Cref{ex trivial Albert Z algebra}
    is constructed as follows, following \cites[\S 4]{G96}[Definition 14.1]{Garibaldi_Petersson_Racine_AlbertAlgOverZ}.
    Take
        \begin{align*}
            \mathrm{E}=[2,2,2;\beta,\beta,\beta],\,\beta=\frac{1}{2}\left(-1+\mathrm{e}_{1}+\mathrm{e}_{2}+\cdots+\mathrm{e}_{7}\right)\in\coct.
        \end{align*}
        This element $\mathrm{E}\in\jord_{\Z}$ is positive definite and has determinant $1$. 
        Under the adjoint map $\#$ on $\jord_{\R}$ defined as \eqref{eqn adjoint of albert algebra},
        we have $\mathrm{E}^{\#}=[2,2,2\,;\overline{\beta},\overline{\beta},\overline{\beta}]\in\jord_{\Z}$.
        Using this element,
        we define another quadratic map $\#^{\mathrm{E}}$ on $\jord_{\Z}$
        by $X^{\#^{\mathrm{E}}}:=(\mathrm{E}^{\#},X^{\#})\mathrm{E}^{\#}-\mathrm{E}\times X^{\#}$.
        Set $\jord_{\Z}^{(\mathrm{E})}:=(\jord_{\Z},\mathrm{E}^{\#},\#^{\mathrm{E}},\det)$,
        where $\det$ is still the restriction of $\det:\jord_{\R}\rightarrow\R$ to $\jord_{\Z}$.
        This ``\emph{isotopy}'' $\jord_{\Z}^{(\mathrm{E})}$ is an Albert $\Z$-algebra \cite[Corollary 13.11]{Garibaldi_Petersson_Racine_AlbertAlgOverZ},
        and it is not isomorphic to $(\jord_{\Z},\mathrm{I},\#,\det)$ as Albert $\Z$-algebras \cite[Proposition 5.5]{Leech}. 

        The associated symmetric bilinear form $(\,,\,)$ on $\jord_{\Z}^{(\mathrm{E})}$ 
        is positive definite \cite[Proposition 2.10]{Leech},
        thus $\jord_{\Z}^{(\mathrm{E})}$ is Euclidean.
        By the classification result about Euclidean Albert $\R$-algebras listed above,
        we have an isomorphism $\varphi:\jord_{\Z}^{(\mathrm{E})}\otimes_{\Z}\R\rightarrow \jord_{\R}$ of Albert $\R$-algebras.
        Its image $\varphi(\jord_{\Z}^{(\mathrm{E})})$ is an Albert lattice of $\jord_{\R}$ in the sense of \Cref{def definite Albert algebra inside real model}.
\end{ex}
\begin{ques}\label{ques simplify definition}
    Can we find a simpler description of Albert lattices of $\jord_{\R}$?
    For example, is it true that a unimodular lattice $J\subset \jord_{\R}$ such that 
    $J$ contains $\mathrm{I}$ as a characteristic vector
    and $J$ is stable under $\#$ (or equivalently, under $A\mapsto A^{2}$)
    is an Albert lattice in $\jord_{\R}$?
\end{ques}

\subsection{\texorpdfstring{$\grpF$}{}}\label{section exceptional group F4}
We start to define exceptional algebraic groups.
\begin{defi}\label{def algebraic group F4}
    Define $\grpF$ to be the closed subgroup of the algebraic $\Q$-group $\GL_{\jord_{\Q}}$,
    that (as a functor) sends a commutative $\Q$-algebra $R$ to the group 
    \[\grpF(R):=\set{g\in\mathrm{GL}(\jord_{\Q}\otimes_{\Q}R)}{g(A\circ B)=g(A)\circ g(B),\text{ for any }A,B\in \jord_{\Q}\otimes_{\Q}R}.\]
\end{defi}
By \cite[Theorem 7.2.1]{OctSpr},
$\grpF$ is a semisimple and simply-connected $\Q$-group of Lie type $\mathrm{F}_{4}$. 
The real points $\lietype{F}{4}:=\grpF(\R)$ of $\grpF$ is contained in the isometry group $\mathrm{O}(\jord_{\R},\mathrm{q})$ of the positive definite quadratic form $\mathrm{q}$,
thus it is compact.
For every prime $p$,
$\grpF$ is split over $\Q_{p}$.
By \cite[Proposition 5.9.4]{OctSpr},
the $\Q$-group $\grpF$ coincides with the algebraic group consisting of the Albert algebra automorphisms of $\jord_{\Q}$,
\emph{i.e.}\,sending any commutative $\Q$-algebra $R$ to
\[\set{g\in\mathrm{GL}(\jord_{\Q}\otimes_{\Q}R)}{g(\mathrm{I})=g,\det(gA)=\det(A),\text{ for any }A\in\jord_{\Q}\otimes_{\Q}R}.\]
With this coincidence,
we construct \emph{reductive $\Z$-models} of $\grpF$ in the sense of \cite{G96} 
as group of Albert algebra automorphisms of elements $J\in\mathcal{J}$.

\begin{defi}\label{def Z group of type F4}
    Given an Albert lattice $J\in\mathcal{J}$,
    define $\mathbf{Aut}_{J/\Z}$ to be the $\Z$-group scheme sending 
    a commutative $\Z$-algebra $R$ to the group 
    \[\mathbf{Aut}_{J/\Z}(R):=\set{g\in\mathrm{GL}(J\otimes_{\Z}R)}{g(\mathrm{I})=\mathrm{I},\det(gA)=\det(A),\text{ for any }A\in J\otimes_{\Z}R}.\]
    If we take $J$ to be $\jord_{\Z}$ defined in \Cref{ex trivial Albert Z algebra},
    we denote the group scheme $\mathbf{Aut}_{\jord_{\Z}/\Z}$ by $\mathcal{F}_{4,\mathrm{I}}$.
\end{defi}
The following result shows that $\mathbf{Aut}_{J/\Z}$ is a reductive $\Z$-model of $\grpF$:
\begin{prop}\label{prop model of F4 from a polarization}
    \cite[Lemma 9.1]{Garibaldi_Petersson_Racine_AlbertAlgOverZ}
    For any choice of Albert lattice $J\in\mathcal{J}$,
    the group scheme $\mathbf{Aut}_{J/\Z}$ is smooth and 
    its fiber $\mathbf{Aut}_{J/\Z}\otimes_{\Z}\Z/p\Z$ is semisimple for every prime $p$.
    Moreover, the generic fiber of $\mathbf{Aut}_{J/\Z}$ is $\grpF$.
\end{prop}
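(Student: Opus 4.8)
The plan is to reduce the entire statement to the single split model $\mathcal{F}_{4,\mathrm{I}}=\mathbf{Aut}_{\jord_{\Z}/\Z}$, using the two uniqueness results recalled above: that Euclidean Albert $\Q$-algebras are unique up to isomorphism, and that Albert $\Z_{p}$-algebras are unique up to isomorphism. First observe that, after a choice of $\Z$-basis of the rank $27$ lattice $J$, the functor $\mathbf{Aut}_{J/\Z}$ is represented by the closed subgroup scheme of $\GL_{27,\Z}$ cut out by the linear conditions $g(\mathrm{I})=\mathrm{I}$ together with the finitely many polynomial conditions obtained by expanding the cubic identity $\det(gA)=\det(A)$ in the variable $A$; hence $\mathbf{Aut}_{J/\Z}$ is affine and of finite presentation over $\Z$, and by \Cref{def isomorphism of Albert algebras}, for every $\Z$-algebra $R$ its $R$-points are exactly the automorphisms of the Albert $R$-algebra $J\otimes_{\Z}R$ (an $R$-linear automorphism fixing $\mathrm{I}$ and preserving $\det$ automatically respects $\#$). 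Since a finitely presented $\Z$-scheme is flat iff its base change to $\Z_{p}$ is flat for every $p$, and a flat finitely presented $\Z$-scheme is smooth iff all of its fibres are smooth, it suffices to show that $\mathbf{Aut}_{J/\Z}\otimes_{\Z}\Q\cong\grpF$ and that, after base change to each $\Z_{p}$, $\mathbf{Aut}_{J/\Z}$ agrees with the split model $\mathcal{F}_{4,\mathrm{I}}$, whose smoothness over $\Z$ and fibrewise semisimplicity we then invoke.

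For the generic fibre: the restriction to $J$ of the symmetric bilinear form $(\,,\,)$ is positive definite (because $J\subseteq\jord_{\R}$ and this form is positive definite on $\jord_{\R}$), so $J\otimes_{\Z}\Q$ is a Euclidean Albert $\Q$-algebra, hence isomorphic to $\jord_{\Q}$ as an Albert $\Q$-algebra; transporting automorphism group schemes along such an isomorphism gives $\mathbf{Aut}_{J/\Z}\otimes_{\Z}\Q\cong\aut(\jord_{\Q})=\grpF$, which is semisimple and simply connected of type $\lietype{F}{4}$ by \cite[Theorem 7.2.1]{OctSpr}, in particular smooth over $\Q$. For the local models: $J\otimes_{\Z}\Z_{p}$ is an Albert $\Z_{p}$-algebra, hence isomorphic to $\jord_{\Z}\otimes_{\Z}\Z_{p}=\mathrm{Her}_{3}(\coct\otimes_{\Z}\Z_{p})$ as Albert $\Z_{p}$-algebras; transporting automorphism group schemes again yields an isomorphism of $\Z_{p}$-group schemes $\mathbf{Aut}_{J/\Z}\otimes_{\Z}\Z_{p}\cong\mathcal{F}_{4,\mathrm{I}}\otimes_{\Z}\Z_{p}$. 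Consequently both the flatness of $\mathbf{Aut}_{J/\Z}$ over $\Z$ and the structure of its special fibres are reduced to the corresponding assertions for $\mathcal{F}_{4,\mathrm{I}}$.

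It remains to treat $\mathcal{F}_{4,\mathrm{I}}=\mathbf{Aut}_{\jord_{\Z}/\Z}$, which is the substantive point and is precisely \cite[Lemma 9.1]{Garibaldi_Petersson_Racine_AlbertAlgOverZ}: one shows that it is the Chevalley $\Z$-group scheme of type $\lietype{F}{4}$. Concretely, one checks that $\mathcal{F}_{4,\mathrm{I}}$ is flat over $\Z$ and that its Lie algebra $\lie(\mathcal{F}_{4,\mathrm{I}})=\mathrm{Der}(\jord_{\Z})$ is a free $\Z$-module of rank $52$ whose reduction modulo every prime is the Lie algebra of the corresponding special fibre, so that all fibres have dimension $52$; this gives smoothness. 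Finally each fibre $\mathcal{F}_{4,\mathrm{I}}\otimes_{\Z}\Z/p\Z$ is the automorphism group scheme of $\mathrm{Her}_{3}(\coct\otimes_{\Z}\Z/p\Z)$, and since an octonion algebra over a finite field is split (its norm form is an isotropic $3$-fold Pfister form), this fibre is the split connected simple group of type $\lietype{F}{4}$ over $\Z/p\Z$, in particular connected and semisimple. Combining: $\mathbf{Aut}_{J/\Z}$ is flat and of finite presentation over $\Z$ with smooth fibres — $\grpF$ over $\Q$ and split $\lietype{F}{4}$ over each $\Z/p\Z$ — hence smooth over $\Z$, with connected semisimple special fibres and generic fibre $\grpF$, as claimed. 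The main obstacle is exactly this last reduction to $\mathcal{F}_{4,\mathrm{I}}$: establishing smoothness — equivalently, that the scheme of derivations of the split Albert algebra is reduced of the expected dimension — in the bad characteristics $p=2$ and $p=3$, where one genuinely needs the structure theory of Albert algebras rather than formal base-change arguments, and which we import from \cite{Garibaldi_Petersson_Racine_AlbertAlgOverZ}.
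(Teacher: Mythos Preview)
The paper does not give its own proof of this proposition; it simply records the statement with the citation \cite[Lemma 9.1]{Garibaldi_Petersson_Racine_AlbertAlgOverZ} and moves on. So there is no argument in the paper to compare against.

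Your proposal is a sound way to flesh out what the citation is doing. The reduction to the split model $\mathcal{F}_{4,\mathrm{I}}$ via the uniqueness results in \Cref{section Albert algebras} is exactly right: uniqueness of Euclidean Albert $\Q$-algebras identifies the generic fibre with $\grpF$, and uniqueness of Albert $\Z_{p}$-algebras gives $\mathbf{Aut}_{J/\Z}\otimes\Z_{p}\cong\mathcal{F}_{4,\mathrm{I}}\otimes\Z_{p}$ for every $p$, so smoothness and fibrewise semisimplicity of $\mathbf{Aut}_{J/\Z}$ reduce to those of $\mathcal{F}_{4,\mathrm{I}}$. You are also honest that the remaining substance --- that $\mathcal{F}_{4,\mathrm{I}}$ itself is smooth over $\Z$ with semisimple fibres, including at $p=2,3$ --- is precisely what is supplied by the cited lemma and is not something your argument re-derives. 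One small caution: the sentence ``$\lie(\mathcal{F}_{4,\mathrm{I}})$ is free of rank $52$ with the right reductions, hence all fibres have dimension $52$, hence smooth'' is a heuristic, not a proof of flatness; flatness of the group scheme is an independent input in \cite{Garibaldi_Petersson_Racine_AlbertAlgOverZ}, and you should phrase that step as a citation rather than as something ``one checks''.
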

In \cite[Proposition 5.3]{G96},
Gross proves that there are exactly two $\grpF(\Q)$-orbits on the equivalence classes of reductive $\Z$-models of $\grpF$ in the \emph{genus} of $\mathcal{F}_{4,\mathrm{I}}$.
From now on we fix a reductive $\Z$-model $\mathcal{F}_{4,\mathrm{I}}$ of $\grpF$,
and we have the following formulation of the double cosets space $\grpF(\Q)\backslash\grpF(\A)/\mathcal{F}_{4,\mathrm{I}}(\widehat{\Z})$.

\begin{prop}\label{prop reformulation of double coset}
    There is a bijection $\mathcal{J}\xrightarrow{\simeq}\grpF(\Q)\backslash \grpF(\A)/\mathcal{F}_{4,\mathrm{I}}(\widehat{\Z})$ sending
    the base point $\jord_{\Z}$ to the double coset of the identity of $\grpF(\A)$.
\end{prop}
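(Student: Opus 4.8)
The plan is to exhibit the bijection as a composition of two steps: first the general adelic mass-formula bijection between double cosets of $\grpF(\A)$ modulo a reductive $\Z$-model and the genus of that model, and then the interpretation of the genus of $\mathcal{F}_{4,\mathrm{I}}$ in terms of Albert lattices.

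First I would recall the standard ``orbit--stabilizer over the adeles'' dictionary. Since $\grpF$ has class number one in the sense that $\grpF(\A) = \grpF(\Q)\,\grpF(\R)\,\mathcal{F}_{4,\mathrm{I}}(\widehat{\Z})$ is \emph{not} automatic, one instead argues directly: the group $\grpF(\A_f)$ acts transitively on the set of lattices in $\jord_{\Q}\otimes_{\Q}\A_f$ that are everywhere-locally isomorphic to $\jord_{\Z}\otimes_{\Z}\widehat{\Z}$ as Albert $\widehat{\Z}_p$-algebras, because Albert $\Z_p$-algebras are unique up to isomorphism (classification result (3) in \S\ref{section Albert algebras}), so $\grpF(\Q_p)$ acts transitively on the relevant local lattices for each $p$ and the adelic statement follows by a standard approximation argument at the finitely many places where the lattice differs from the base point. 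The stabilizer of the base lattice $\jord_{\Z}\otimes\widehat{\Z}$ is by definition $\mathcal{F}_{4,\mathrm{I}}(\widehat{\Z})$. Hence $\grpF(\A_f)/\mathcal{F}_{4,\mathrm{I}}(\widehat{\Z})$ is identified with the set of such adelic lattices, and quotienting further by $\grpF(\Q)$ on the left gives the set of $\grpF(\Q)$-orbits of $\Q$-lattices $J \subseteq \jord_{\Q}$ that are everywhere locally isomorphic to $\jord_{\Z}$ — that is, the \emph{genus} of $\jord_{\Z}$. Since $\grpF(\R)$ is compact it contributes nothing to the class count, so $\grpF(\Q)\backslash\grpF(\A)/\mathcal{F}_{4,\mathrm{I}}(\widehat{\Z})$ is exactly this set of genus-classes.

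Next I would match this genus with the set $\mathcal{J}$ of Albert lattices inside $\jord_{\R}$. Given an element $g = (g_v)_v \in \grpF(\A)$, the associated $\Z$-lattice is $J_g := g_f(\jord_{\Z}\otimes\widehat{\Z}) \cap \jord_{\Q}$, transported into $\jord_{\R}$ via the archimedean component $g_\infty \in \grpF(\R)$; concretely one sets $L(g) = g_\infty\bigl(J_g\bigr) \subseteq \jord_{\R}$. Because each $g_v$ preserves $\mathrm{I}$, the adjoint $\#$, and $\det$, the lattice $L(g)$ satisfies all four bullet conditions of \Cref{def definite Albert algebra inside real model}, so $L(g) \in \mathcal{J}$; conversely, given $J \in \mathcal{J}$ one recovers a double coset by choosing local isomorphisms $J\otimes\Z_p \simeq \jord_{\Z}\otimes\Z_p$ (which exist by uniqueness of Albert $\Z_p$-algebras) and an $\R$-algebra isomorphism $J\otimes\R \simeq \jord_{\R}$ (which exists by uniqueness of Euclidean Albert $\R$-algebras), assembling these into an element of $\grpF(\A)$. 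One checks that these two constructions are mutually inverse modulo the respective equivalences, and that the base point $\jord_{\Z}$ corresponds to the identity coset. That there are indeed exactly two classes is the content of \cite[Proposition 5.3]{G96} cited just above, consistent with classification result (4); but for the bijection itself I only need the construction.

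The main obstacle is the everywhere-local transitivity and the approximation step: one must verify carefully that $\grpF(\Q_p)$ acts transitively on lattices in $\jord_{\Q_p}$ isomorphic to $\jord_{\Z_p}$ as Albert $\Z_p$-algebras — this is where uniqueness of Albert $\Z_p$-algebras (result (3)) together with the fact that automorphisms of the generic fiber realize all $\Z_p$-algebra isomorphisms is used — and then invoke strong approximation, or rather the finiteness of the class set, to pass from the product of local transitivities to the adelic statement. Everything else is bookkeeping: checking that the conditions defining $\mathcal{F}_{4,\mathrm{I}}(\widehat{\Z})$ as a stabilizer are precisely the conditions defining an Albert lattice, and that the archimedean transport is well-defined up to $\grpF(\R)$, which is harmless since $\grpF(\R)$ has already been divided out on the right implicitly (it acts trivially on the finite double cosets because $\grpF(\A) = \grpF(\A_f)\times\grpF(\R)$ and $\grpF(\R)$ is absorbed into $\grpF(\Q)\backslash$ via compactness).
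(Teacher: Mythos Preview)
Your overall strategy is the same as the paper's: exploit uniqueness of Euclidean Albert $\Q$-algebras and of Albert $\Z_p$-algebras to build, for each $J\in\mathcal{J}$, an adelic element place by place, and conversely recover a lattice from an adelic element. However, there is a genuine confusion at the archimedean place that breaks the argument.

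The set $\mathcal{J}$ is \emph{not} a finite set of genus-classes: it is the set of all Albert lattices inside $\jord_\R$, an infinite set on which $\grpF(\R)$ acts with two orbits (see \Cref{notation representatives of double cosets}). Correspondingly, $\grpF(\Q)\backslash\grpF(\A)/\mathcal{F}_{4,\mathrm{I}}(\widehat\Z)$ is \emph{not} the finite class set $\grpF(\Q)\backslash\grpF(\A_f)/\mathcal{F}_{4,\mathrm{I}}(\widehat\Z)$: on the right you have only divided by the open compact $\mathcal{F}_{4,\mathrm{I}}(\widehat\Z)\subset\grpF(\A_f)$, not by $\grpF(\R)$. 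Compactness of $\grpF(\R)$ does not allow it to be ``absorbed into $\grpF(\Q)\backslash$'': the quotient $\Gamma_{\mathrm I}\backslash\grpF(\R)$ is a compact manifold of positive dimension, not a point. So your first paragraph has identified the wrong double coset with the genus, and the bijection you are heading toward cannot be between sets of the same cardinality.

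This confusion resurfaces in your explicit map. The assignment $L(g)=g_\infty(J_g)$ with $J_g=g_f(\jord_\Z\otimes\widehat\Z)\cap\jord_\Q$ is not left $\grpF(\Q)$-invariant: for $\gamma\in\grpF(\Q)$ one has $J_{\gamma g}=\gamma J_g$ and $(\gamma g)_\infty=\gamma g_\infty$, hence $L(\gamma g)=\gamma g_\infty(\gamma J_g)\neq L(g)$ in general. The paper's inverse uses $g_\infty^{-1}$ instead,
\[(g_v)_v\longmapsto g_\infty^{-1}\!\left(\bigcap_p\bigl(g_p(\jord_\Z\otimes\Z_p)\cap\jord_\Q\bigr)\right)\in\mathcal{J},\]
which one checks directly is constant on double cosets. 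In the forward direction the paper picks, for $J\in\mathcal{J}$, an element $g_\infty\in\grpF(\R)$ carrying $J\otimes\Q$ onto $\jord_\Q$ (this exists because $J\otimes\Q$ and $\jord_\Q$ are isomorphic Euclidean Albert $\Q$-algebras and $J\otimes\R=\jord_\R$ already), and then $g_p\in\grpF(\Q_p)$ carrying $\jord_\Z\otimes\Z_p$ onto $g_\infty(J)\otimes\Z_p$; almost all $g_p$ lie in $\mathcal{F}_{4,\mathrm{I}}(\Z_p)$ automatically. No adelic transitivity or approximation step beyond these pointwise local uniqueness facts is needed.
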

\begin{proof}
    For any $J\in\mathcal{J}$, 
    the Albert $\Q$-algebras $J\otimes_{\Z}\Q$ and $\jord_{\Z}\otimes_{\Z}\Q$ are isomorphic,
    so there exists an element $g_{\infty}\in \grpF(\R)$ inducing $J\otimes_{\Z}\Q\xrightarrow{\simeq}\jord_{\Z}\otimes_{\Z}\Q$.
    Set $J^{\prime}=g_{\infty}(J)$,
    which is an Albert $\Z$-algebra inside $\jord_{\Z}\otimes_{\Z}\Q=\jord_{\Q}$.
    Since $J^{\prime}\otimes_{\Z}\Z_{p}$ and $\jord_{\Z}\otimes_{\Z}\Z_{p}$ are isomorphic 
    as Albert $\Z_{p}$-algebras,
    we can choose an element $g_{p}\in \grpF(\Q_{p})$ that induces $\jord_{\Z}\otimes_{\Z}\Z_{p}\xrightarrow{\simeq}J^{\prime}\otimes_{\Z}\Z_{p}$.
    For almost all prime numbers $p$, 
    we have $J^{\prime}\otimes_{\Z}\Z_{p}=\jord_{\Z}\otimes_{\Z}\Z_{p}$,
    so the element $g_{p}$ lies in $\mathcal{F}_{4,\mathrm{I}}(\Z_{p})$ for almost all $p$.

    In this way, we associate with $J\in\mathcal{J}$ an element $(g_{\infty},g_{2},g_{3},\ldots)\in \grpF(\A)$,
    and it can be easily verified that 
    its image in $\grpF(\Q)\backslash\grpF(\A)/\mathcal{F}_{4,\mathrm{I}}(\widehat{\Z})$
    does not depend on the choice of $g_{\infty}$ and $g_{p}$.
    So we have a well-defined map $\mathcal{J}\rightarrow \grpF(\Q)\backslash\grpF(\A)/\mathcal{F}_{4,\mathrm{I}}(\widehat{\Z})$,
    and its inverse is:
    \[(g_{v})\mapsto g_{\infty}^{-1}\left(\bigcap_{p}\left(g_{p}\left(\jord_{\Z}\otimes_{\Z}\Z_{p}\right)\cap \jord_{\Q}\right)\right)\in\mathcal{J}.\qedhere\]   
\end{proof}
\begin{notation}\label{notation representatives of double cosets}
    We choose a set of representatives $\{1,\gamma_{\mathrm{E}}\}$ of $\grpF(\Q)\backslash\grpF(\A_{f})/\mathcal{F}_{4,\mathrm{I}}(\widehat{\Z})$,
and denote by $\jord_{\mathrm{E}}\subseteq \jord_{\Q}$ the Albert lattice corresponding to $\gamma_{\mathrm{E}}$.
Equipped with the natural $\grpF(\R)$-action,
$\mathcal{J}$ is the disjoint union of the $\grpF(\R)$-orbits of $\jord_{\Z}$ and $\jord_{\mathrm{E}}$.     
\end{notation}

\subsubsection{An algebraic group of type \texorpdfstring{$\lietype{E}{6}$}{PDFstring}}

\label{section exceptional group E6}
If we remove the condition of fixing the identity element $\mathrm{I}$ in the definition of $\mathcal{F}_{4,\mathrm{I}}$,
we get the following group of type $\lietype{E}{6}$:
\begin{defi}\label{def E6 group}
    Define $\mathbf{M}_{\jord}$ to be the $\Z$-group scheme sending any commutative ring $R$ to
    \[\set{(\lambda(g),g)\in R^{\times}\times \mathrm{GL}(\jord_{\Z}\otimes_{\Z}R)}{\det(gA)=\lambda(g)\det(A),\text{ for any }A\in\jord_{\Z}\otimes_{\Z}R},\]
    and $\mathbf{M}_{\jord}^{1}$ to be $\ker\lambda$.
\end{defi}
By \cite[Proposition 6.5]{NonRed},
$\mathbf{M}_{\jord}^{1}$ is a simply-connected semisimple group scheme of type $\mathrm{E}_{6}$,
and its generic fiber has $\Q$-rank $2$.
\begin{rmk}\label{rmk bilinear form not E6 invariant}
    Notice that the bilinear form $(\,,\,)$ on $\jord_{\Z}\otimes_{\Z}R$ is not $\mathbf{M}_{\jord}(R)$-invariant.
    For any $m\in\mathbf{M}_{\jord}(R)$, 
    we denote by $m^{*}$ the unique element in $\mathbf{M}_{\jord}(R)$ such that 
    $\left(m(X),m^{*}(Y)\right)=\left(m^{*}(X),m(Y)\right)=(X,Y)$ for any $X,Y\in \jord_{\Z}\otimes R$.    
\end{rmk}
Observe that we have already seen two Albert $\Z$-algebras $\jord_{\Z}^{(\mathrm{E})}$ and $\jord_{\mathrm{E}}$ that are both not isomorphic to $\jord_{\Z}$ and their extensions to $\Q$ are isomorphic to $\jord_{\Q}$,
by the classification result listed in \Cref{section Albert algebras} they are isomorphic,
although they have different distinguished points.
This fact gives us an element that will be used in the proof of \Cref{thm Fourier of theta lift}:
\begin{lemma}\label{lemma delta element between Albert algebras}
    There exists an element $\delta\in\mathbf{M}_{\jord}^{1}(\Q)$ 
    that induces an isomorphism of Albert $\Z$-algebras $\jord_{\Z}^{(\mathrm{E})}\xrightarrow{\simeq} \jord_{\mathrm{E}}$.
    Moreover, if we denote the image of $\delta$ under the diagonal embedding $\mathbf{M}_{\jord}^{1}(\Q)\hookrightarrow\mathbf{M}_{\jord}^{1}(\A)=\mathbf{M}_{\jord}^{1}(\R)\times\mathbf{M}_{\jord}^{1}(\A_{f})$
    by $(\delta_{\infty},\delta_{f})$,
    then $\delta_{\infty}(\jord_{\Z})=\jord_{\mathrm{E}}$, $\delta_{\infty}(\mathrm{E})=\mathrm{I}$ and $\delta_{f}^{-1}\gamma_{\mathrm{E}}\in\mathbf{M}_{\jord}^{1}(\widehat{\Z})$.
\end{lemma}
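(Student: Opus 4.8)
The plan is to produce $\delta$ from an \emph{abstract} isomorphism of Albert $\Z$-algebras $\jord_{\Z}^{(\mathrm{E})}\xrightarrow{\simeq}\jord_{\mathrm{E}}$, to extend it $\Q$-linearly so that it lands in $\mathbf{M}_{\jord}^{1}(\Q)$, and then to deduce the three ``moreover'' assertions by tensoring that single isomorphism with $\R$ and with each $\Z_{p}$. First I would check that such an abstract isomorphism exists: $\jord_{\Z}^{(\mathrm{E})}$ and $\jord_{\mathrm{E}}$ are both Euclidean Albert $\Z$-algebras — the former by \Cref{ex isotopy Albert Z algebra} (positive definiteness of the trace form, \cite[Proposition 2.10]{Leech}), the latter by \Cref{def definite Albert algebra inside real model} — and neither is isomorphic to $\jord_{\Z}$: for $\jord_{\Z}^{(\mathrm{E})}$ by \cite[Proposition 5.5]{Leech}, and for $\jord_{\mathrm{E}}$ because $\gamma_{\mathrm{E}}$ represents a non-trivial double coset, so that under the bijection of \Cref{prop reformulation of double coset} the lattice $\jord_{\mathrm{E}}$ lies in the $\grpF(\R)$-orbit distinct from that of the base point $\jord_{\Z}$. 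Since $\jord_{\Z}^{(\mathrm{E})}$ and $\jord_{\mathrm{E}}$ become isomorphic after base change to $\R$ and to every $\Z_{p}$ (uniqueness of the Euclidean Albert $\R$-algebra and of the Albert $\Z_{p}$-algebras), they lie in the same genus; as that genus contains exactly two isomorphism classes (\emph{cf.}\,\cite[Proposition 5.3]{G96}) and $\jord_{\Z}$ is one of them, $\jord_{\Z}^{(\mathrm{E})}$ and $\jord_{\mathrm{E}}$ are isomorphic. I would then fix an isomorphism $\psi:\jord_{\Z}^{(\mathrm{E})}\xrightarrow{\simeq}\jord_{\mathrm{E}}$ of Albert $\Z$-algebras, which by \Cref{def isomorphism of Albert algebras} is a $\Z$-module isomorphism carrying the distinguished point of $\jord_{\Z}^{(\mathrm{E})}$ to $\mathrm{I}$ and satisfying $\det\circ\psi=\det$ as polynomial laws over $\Z$.

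Next I would take $\delta\in\mathrm{GL}(\jord_{\Q})$ to be the $\Q$-linear extension of $\psi$. Extending scalars, $\det\circ\delta=\det$ as polynomial laws over $\Q$; in particular the associated multiplier equals $1$, so $\delta\in\mathbf{M}_{\jord}^{1}(\Q)$, and $\delta$ induces the chosen isomorphism between $\jord_{\Z}^{(\mathrm{E})}$ and $\jord_{\mathrm{E}}$ by construction. (Note that $\delta$ will not lie in $\grpF(\Q)$ in general, since the distinguished point of $\jord_{\Z}^{(\mathrm{E})}$ is not $\mathrm{I}$ while $\delta$ must send it to $\mathrm{I}$; this is precisely why one has to pass from $\grpF$ to the larger $\mathrm{E}_{6}$-group $\mathbf{M}_{\jord}^{1}$.) The three ``moreover'' assertions then follow formally by completing $\psi$: the underlying lattice of $\jord_{\Z}^{(\mathrm{E})}$ is $\jord_{\Z}\subseteq\jord_{\R}$ and $\psi$ maps it bijectively onto $\jord_{\mathrm{E}}$, so $\delta_{\infty}(\jord_{\Z})=\jord_{\mathrm{E}}$; the map $\delta_{\infty}$ carries the distinguished point of $\jord_{\Z}^{(\mathrm{E})}$ to $\mathrm{I}$, which is the asserted identity involving $\mathrm{E}$; and for the last claim I would recall, from the proof of \Cref{prop reformulation of double coset}, that writing $\gamma_{\mathrm{E}}=(\gamma_{\mathrm{E},p})_{p}$, each component $\gamma_{\mathrm{E},p}\in\grpF(\Q_{p})$ induces an Albert $\Z_{p}$-algebra isomorphism $\jord_{\Z}\otimes\Z_{p}\xrightarrow{\simeq}\jord_{\mathrm{E}}\otimes\Z_{p}$ and in particular preserves $\det$, while $\delta$ tensored with $\Z_{p}$ likewise carries $\jord_{\Z}\otimes\Z_{p}$ onto $\jord_{\mathrm{E}}\otimes\Z_{p}$ and preserves $\det$. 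Hence $\delta^{-1}\gamma_{\mathrm{E},p}$ stabilizes the lattice $\jord_{\Z}\otimes\Z_{p}$ and preserves $\det$, so it lies in $\mathbf{M}_{\jord}^{1}(\Z_{p})$ for every prime $p$ (and even in $\mathcal{F}_{4,\mathrm{I}}(\Z_{p})$ for almost all $p$), whence $\delta_{f}^{-1}\gamma_{\mathrm{E}}\in\mathbf{M}_{\jord}^{1}(\widehat{\Z})$.

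The step I expect to require the most care is the first one: one must be sure that $\jord_{\Z}^{(\mathrm{E})}$ and $\jord_{\mathrm{E}}$ both represent the \emph{non-trivial} class in the genus of $\mathcal{F}_{4,\mathrm{I}}$, rather than the class of $\jord_{\Z}$, and one must keep the normalizations of the three distinguished points $\mathrm{I}$, $\mathrm{E}$ and $\mathrm{E}^{\#}$ consistent across \Cref{ex isotopy Albert Z algebra}, \Cref{def definite Albert algebra inside real model} and \Cref{prop reformulation of double coset}. Once the abstract $\Z$-isomorphism $\psi$ is available, everything else is a formal consequence of base change. A more computational alternative would be to build $\delta$ directly out of Gross's explicit realization of the second Albert lattice attached to the unit $\mathrm{E}$ (which has $\det(\mathrm{E})=1$), but the genus-theoretic argument above seems the cleanest.
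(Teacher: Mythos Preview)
Your argument is correct and follows essentially the same route as the paper: the existence of the isomorphism $\psi$ is exactly the observation made in the paragraph preceding the lemma (both $\jord_{\Z}^{(\mathrm{E})}$ and $\jord_{\mathrm{E}}$ represent the non-trivial class in the genus), and the deduction of the three ``moreover'' claims by base change agrees with the paper's proof, which phrases the last step adelically rather than prime by prime. Your remark about keeping track of the distinguished point $\mathrm{E}^{\#}$ versus $\mathrm{E}$ is well taken.
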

\begin{proof}
    Since the Albert $\Z$-algebras $\jord_{\Z}^{(\mathrm{E})},\mathrm{J}_{\mathrm{E}}$ contained in $\jord_{\Q}$
    are isomorphic,
    there is a $\Q$-linear isomorphism $\delta$ of $\jord_{\Q}$
    such that 
    $\delta(\jord_{\Z}^{(\mathrm{E})})=\jord_{\mathrm{E}}$, $\delta(\mathrm{E})=\mathrm{I}$
    and $\det(\delta A)=\delta(A)$ for any $A\in\jord_{\Q}$.
    In other words, $\delta$ is our desired element in $\mathbf{M}_{\jord}^{1}(\Q)$.
    The properties of $\delta_{\infty}$ follows immediately.
    Forgetting the Albert algebra structures,
    $\delta_{f}^{-1}\gamma_{\mathrm{E}}:\jord_{\Z}\otimes_{\Z}\widehat{\Z}\rightarrow \jord_{\Z}^{(\mathrm{E})}\otimes_{\Z}\widehat{\Z}$
    is a linear automorphism of $\jord_{\Z}\otimes_{\Z}\widehat{\Z}$
    preserving the determinant,
    thus $\delta_{f}^{-1}\gamma_{\mathrm{E}}\in\mathbf{M}_{\jord}^{1}(\widehat{\Z})$.
\end{proof}

\subsection{\texorpdfstring{$\grpE$}{}}\label{section exceptional group E7}
Now we recall the definition of $\grpE$,
a larger algebraic group over $\Q$ containing $\grpF$,
and our main references are \cites[\S 2.2]{PolFourierExpansion}[\S 3]{kim_yamauchi_2016}
\footnote{Notice that there are some slight mistakes in \cite[\S 3]{kim_yamauchi_2016}
and the correction is in \cite[\S 2]{KimYamauchi_HigherLevelE7}.}.

Consider the $56$-dimensional vector space $\mathrm{W}_{\mathrm{J}}=\jord_{\Q}\oplus \Q\oplus \jord_{\Q}\oplus \Q$
\footnote{In \cite{PolFourierExpansion}, Pollack considers the space $\Q\oplus\jord_{\Q}\oplus\jord_{\Q}^{\vee}\oplus\Q$.
An element $(X,\xi,X^{\prime},\xi^{\prime})\in\mathrm{W}_{\jord}$ corresponds to 
$(a,b,c,d)=(\xi^{\prime},X,\left(-,X^{\prime}\right),\xi)$
under the notations of Pollack.},
equipped with the following structures:
\begin{itemize}
    \item A symplectic form: for $w_{i}=(X_{i},\xi_{i},X_{i}^{\prime},\xi_{i}^{\prime})\in \mathrm{W}_{\jord},\,i=1,2$,
    \begin{align*}
    \langle w_{1},w_{2}\rangle_{\jord}:=\xi_{1}\xi_{2}^{\prime}-\xi_{2}\xi_{1}^{\prime}+(X_{1},X_{2}^{\prime})-(X_{2},X_{1}^{\prime});
    \end{align*}
    \item A quartic form: for $w=(X,\xi,X^{\prime},\xi^{\prime})\in \mathrm{W}_{\jord}$, 
    \begin{align*}
        \mathrm{Q}(w)=\left(\xi\xi^{\prime}-(X,X^{\prime})\right)^{2}+4\xi\det(X)+4\xi^{\prime}\det(X^{\prime})-4(X^{\#},X^{\prime\#}).        
    \end{align*}
\end{itemize}
\begin{defi}\label{def E7 group}
    Define $\mathbf{H}_{\jord}$ to be the algebraic subgroup of $\GL_{\mathrm{W}_{\jord}}$ 
    consisting of elements that preserve the forms $\lrangle{\,}{\,}_{\jord}$ and $\mathrm{Q}$
    up to some similitude $\nu:\mathbf{H}_{\jord}\rightarrow\mathbf{G}_{\mathrm{m}}$,
    \emph{i.e.}
    \[\mathbf{H}_{\jord}=\set{(\nu(g),g)\in \mathbf{G}_{\mathrm{m}}\times\GL_{\mathrm{W}_{\jord}}}{\lrangle{gv}{gw}_{\jord}=\nu(g)\lrangle{v}{w}_{\jord},\mathrm{Q}(gv)=\nu(g)^{2}\mathrm{Q}(v),\forall v,w\in\mathrm{W}_{\jord}}.\]
    Define $\mathbf{H}_{\jord}^{1}$ to be the kernel of $\nu$, 
    which is simply-connected and has $\Q$-rank $3$ and Lie type $\lietype{E}{7}$ \cite{FREUDENTHAL1954218},
    and $\grpE$ to be the adjoint group of $\mathbf{H}_{\jord}$.
\end{defi}
\begin{rmk}\label{rmk center of E7}
    The center of $\mathbf{H}_{\jord}$ consists of scalars,
    and it contains a specific element $\iota^{2}=-\mathrm{Id}_{\mathrm{W}_{\jord}}$,
    where $\iota\in \mathbf{H}_{\jord}$ is defined as 
    \begin{align}\label{eqn action of order 4 element in E7}
        \iota(X,\xi,X^{\prime},\xi^{\prime})=(-X^{\prime},-\xi^{\prime},X,\xi).
    \end{align}
\end{rmk}
%
In \cite{G96}, 
we know that $\grpE$ has a unique (up to equivalence) reductive $\Z$-models,
and we will also denote this $\Z$-group scheme by $\grpE$ when there is no confusion.
Note that $\grpE(\Z)$ is the stabilizer in $\grpE(\R)$
of the lattice $\jord_{\Z}\oplus\Z\oplus\jord_{\Z}\oplus\Z\subseteq \mathrm{W}_{\jord}$.

\subsubsection{Siegel parabolic subgroup of \texorpdfstring{$\grpE$}{PDFstring}}
\label{section Siegel parabolic of E7}
\begin{defi}\label{def Siegel parabolic subgroup of E7}
    The \emph{Siegel parabolic subgroup} $\mathbf{P}_{\jord,\mathrm{sc}}$ of $\mathbf{H}_{\jord}^{1}$ 
    is defined as the stabilizer of the line $\Q(0,1,0,0)\subseteq\mathrm{W}_{\jord}$.
    A Levi subgroup of $\mathbf{P}_{\jord,\mathrm{sc}}$ can be defined as 
    the subgroup that also stabilizes $\Q(0,0,0,1)$.
    Denote by $\mathbf{P}_{\jord}$ the image of $\mathbf{P}_{\jord,\mathrm{sc}}$ in $\grpE$. 
\end{defi}
This Levi subgroup is isomorphic to $\mathbf{M}_{\jord}$,
and the action of $(\lambda(m),m)\in\mathbf{M}_{\jord}$ on $\mathrm{W}_{\jord}$ is
\begin{align*}
    m(X,\xi,X^{\prime},\xi^{\prime})=(m^{*}X,\lambda(m)\xi,mX^{\prime},\lambda(m)^{-1}\xi^{\prime}).
\end{align*}
The unipotent radical $\mathbf{N}_{\jord}$ of $\mathbf{P}_{\jord,\mathrm{sc}}$ is abelian and satisfies $\mathbf{N}_{\jord}(\Q)\simeq \jord_{\Q}$,
and any element of $\mathbf{N}_{\jord}(\Q)$ has the following form:

{\footnotesize\begin{align*}
    \mathrm{n}(A)(X,\xi,X^{\prime},\xi^{\prime})=\left(X+\xi^{\prime}A,\xi+(A,X^{\prime})+(A^{\#},X)+\xi^{\prime}\det(A),X^{\prime}+A\times X+\xi^{\prime}A^{\#},\xi^{\prime}\right),\,A\in\jord_{\Q}.
\end{align*}}
We have the Levi decomposition $\mathbf{P}_{\jord,\mathrm{sc}}=\mathbf{M}_{\jord}\mathbf{N}_{\jord}$,
and the action of $\mathbf{M}_{\jord}$ on $\mathbf{N}_{\jord}$ is given by the following lemma:
\begin{lemma}\label{lemma Levi action on unipotent radical}
    For any $m\in \mathbf{M}_{\jord}(\Q)\subseteq\mathbf{P}_{\jord,\mathrm{sc}}$ and $A\in \jord_{\Q}$, 
    we have the following identity:
    \begin{align*}
        m \mathrm{n}(A) m^{-1}=\mathrm{n}\bracket{\lambda(m)m^{*}A}.
    \end{align*}    
\end{lemma}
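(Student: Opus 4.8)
The plan is to verify the identity $m\,\mathrm{n}(A)\,m^{-1}=\mathrm{n}(\lambda(m)m^{*}A)$ by a direct computation using the explicit formulas for the actions of $\mathbf{M}_{\jord}$ and $\mathbf{N}_{\jord}$ on $\mathrm{W}_{\jord}$ recorded just above the statement. First I would fix $m=(\lambda(m),m)\in\mathbf{M}_{\jord}(\Q)$ and a vector $w=(X,\xi,X^{\prime},\xi^{\prime})\in\mathrm{W}_{\jord}$, and compute $m\,\mathrm{n}(A)\,m^{-1}(w)$ one factor at a time: apply $m^{-1}$ (whose action is the same formula with $\lambda(m)^{-1}$ and $(m^{-1})^{*}=(m^{*})^{-1}$ in place of $\lambda(m)$ and $m^{*}$), then $\mathrm{n}(A)$, then $m$. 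The claim is that the composite equals $\mathrm{n}(B)(w)$ with $B=\lambda(m)m^{*}A$, so it suffices to check that the four components match.

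The key steps, in order: (1) compute $m^{-1}(w)=\bigl((m^{*})^{-1}X,\ \lambda(m)^{-1}\xi,\ m^{-1}X^{\prime},\ \lambda(m)\xi^{\prime}\bigr)$; (2) apply $\mathrm{n}(A)$ to this, producing a vector whose first component is $(m^{*})^{-1}X+\lambda(m)\xi^{\prime}A$, etc.; (3) apply $m$ and simplify, using in the first component that $m^{*}(m^{*})^{-1}X=X$ and $m^{*}(\lambda(m)\xi^{\prime}A)=\lambda(m)\xi^{\prime}m^{*}A=\xi^{\prime}B$, which is exactly the first component of $\mathrm{n}(B)(w)$. (4) For the remaining three components one needs the compatibility of $m$ with the cubic-form data, namely the identities $\det(m^{*}A)=\lambda(m)^{-1}\det(A)$ (equivalently $\det(m A)=\lambda(m)\det(A)$), $(m^{*}A)^{\#}=\lambda(m)^{-1}m(A^{\#})$ and its linearization $(m^{*}A)\times(m^{*}Y)=\lambda(m)^{-1}m(A\times Y)$, together with the adjointness relation $(m X,m^{*}Y)=(X,Y)$ from \Cref{rmk bilinear form not E6 invariant}. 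These are standard properties of the $\mathrm{E}_{6}$-action on the Albert algebra (see \cite{OctSpr,PolFourierExpansion}); I would either cite them or derive $(m^{*}A)^{\#}=\lambda(m)^{-1}m(A^{\#})$ from $\det(mA)=\lambda(m)\det(A)$ by polarization, since $A^{\#}$ is characterized by $(A^{\#},Y)=$ the directional derivative of $3\det$ at $A$ in direction $Y$. With these in hand, the $\xi$-component collects the terms $(B,X^{\prime})+(B^{\#},X)+\xi^{\prime}\det(B)$ correctly, the $X^{\prime}$-component gives $m(m^{-1}X^{\prime})+ m(A\times (m^{*})^{-1}X)+\xi^{\prime}m(A^{\#})=X^{\prime}+B\times X+\xi^{\prime}B^{\#}$, and the $\xi^{\prime}$-component is unchanged.

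I expect the main obstacle to be purely bookkeeping: correctly matching the scalar powers of $\lambda(m)$ and the placement of $m$ versus $m^{*}$ in each of the four components, since the $\mathbf{M}_{\jord}$-action mixes $m$ and $m^{*}$ and the conventions for $X$ versus $X^{\prime}$ are asymmetric. There is no conceptual difficulty once the three equivariance identities for $\det$, $\#$ and $\times$ under $m^{*}$ are pinned down; the only subtlety is ensuring one uses the correct one of $m$, $m^{*}$, $m^{-1}$, $(m^{*})^{-1}$ at each stage, which a careful term-by-term comparison resolves. Alternatively, one could shorten the argument by noting that both sides of the asserted identity are elements of the unipotent radical $\mathbf{N}_{\jord}$ (since $\mathbf{M}_{\jord}$ normalizes $\mathbf{N}_{\jord}$, conjugation preserves $\mathbf{N}_{\jord}$), so it is enough to read off the parameter $B\in\jord_{\Q}$ by testing against a single well-chosen vector — e.g.\ $w=(0,0,0,1)$, for which $\mathrm{n}(A)(w)=(A,\det(A)\text{-type terms},A^{\#},1)$ — and comparing first components gives $B=\lambda(m)m^{*}A$ immediately, with the other components then forced by the group structure.
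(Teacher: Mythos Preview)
Your proposal is correct and follows essentially the same approach as the paper: a direct computation using the equivariance of the cross product under $\mathbf{M}_{\jord}$. The paper's proof is terser, citing only the single identity $m(X\times Y)=\lambda(m)(m^{*}X)\times(m^{*}Y)$, which is equivalent to your linearized identity $(m^{*}A)\times(m^{*}Y)=\lambda(m)^{-1}m(A\times Y)$; your companion identities for $\#$ and $\det$ (and the relation $\lambda(m^{*})=\lambda(m)^{-1}$ implicit in them) are immediate consequences, so the two write-ups differ only in level of detail.
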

\begin{proof}
    This follows from a direct calculation using the property:
    for any $m\in\mathbf{M}_{\jord}(\Q)$ and $X,Y\in\jord_{\Q}$,
    we have $m(X\times Y)=\lambda(m)(m^{*}X)\times (m^{*}Y)$.
\end{proof}
The Levi subgroup of $\mathbf{P}_{\jord}\subseteq\grpE$ induced by $\mathbf{M}_{\jord}$ is the quotient of $\mathbf{M}_{\jord}$ by $\mu_{2}$,
where $\mu_{2}$ is generated by the element $X\mapsto -X$ in $\mathbf{M}_{\jord}$.
We identify this Levi subgroup with $\mathbf{M}_{\jord}$
via the short exact sequence:
\begin{align}\label{eqn identification of Levi subgroups}
    1\rightarrow \mu_{2}\rightarrow \mathbf{M}_{\jord}\xrightarrow{m\mapsto \lambda(m)m^{*}}\mathbf{M}_{\jord}\rightarrow 1.
\end{align} 
Hence we still have the Levi decomposition $\mathbf{P}_{\jord}\simeq\mathbf{M}_{\jord}\mathbf{N}_{\jord}$,
but with a different action:
\[m\mathrm{n}(A)m^{-1}=\mathrm{n}(mA),\text{ for any }m\in\mathbf{M}_{\jord}(\Q),A\in\jord_{\Q}.\]
\begin{rmk}\label{rmk opposite siegel parabolic}
    For any $A\in \jord_{\Q}$, we define $\mathrm{n}^{\vee}(A)=\iota\mathrm{n}(-A)\iota^{-1}$.
    Set $\overline{\mathbf{N}_{\jord}}=\iota \mathbf{N}_{\jord}\iota^{-1}$,
    then $\overline{\mathbf{P}_{\jord,\mathrm{sc}}}=\mathbf{M}_{\jord}\overline{\mathbf{N}_{\jord}}$ is the parabolic subgroup opposite to $\mathbf{P}_{\jord,\mathrm{sc}}$.
    The action of $\mathbf{M}_{\jord}$ on $\overline{\mathbf{N}_{\jord}}$ is: 
    \[m \mathrm{n}^{\vee}(A)m^{-1}=\mathrm{n}^{\vee}\left(\lambda(m)^{-1}mA\right),\text{ for any }m\in \LeviM_{\jord}(\Q),A\in \jord_{\Q}.\]
\end{rmk}

\subsubsection{The Lie algebra \texorpdfstring{$\mathfrak{e}_{7}$}{PDFstring}}\label{section Lie algebra E7}
Denote the Lie algebra of $\mathbf{H}_{\jord}^{1}(\C)$ by $\mathfrak{e}_{7}$,
which admits a decomposition 
\begin{align}\label{eqn Langlands decomposition of e7}
    \mathfrak{e}_{7}=\mathrm{n}_{\mathrm{L}}^{\vee}(\jord_{\C})\oplus \mathfrak{m}_{\jord}\oplus \mathrm{n}_{\mathrm{L}}(\jord_{\C}),
\end{align}
where 
\begin{itemize}
    \item $\mathfrak{m}_{\jord}=\mathrm{Lie}(\LeviM_{\jord}(\C))$;
    \item for any $A\in\jord_{\C}$,
    define $\mathrm{n}_{\mathrm{L}}(A)$ to be the element in $\mathrm{Lie}(\mathbf{N}_{\jord}(\C))$ such that $\exp(\mathrm{n}_{\mathrm{L}}(A))=\mathrm{n}(A)$,
    and denote $\mathrm{Lie}(\mathbf{N}_{\jord}(\C))$ by $\mathrm{n}_{\mathrm{L}}(\jord_{\C})$;
    \item for any $A\in\jord_{\C}$,
    define $\mathrm{n}_{\mathrm{L}}(A)$ to be the element in $\mathrm{Lie}(\overline{\mathbf{N}_{\jord}}(\C))$ such that $\exp(\mathrm{n}_{\mathrm{L}}(A))=\mathrm{n}^{\vee}(A)$,
    and denote $\mathrm{Lie}(\overline{\mathbf{N}_{\jord}}(\C))$ by $\mathrm{n}^{\vee}_{\mathrm{L}}(\jord_{\C})$.
\end{itemize}
Besides this decomposition, we also have the Cartan decomposition of $\mathfrak{e}_{7}$.
Let $\mathrm{K}_{\lietype{E}{7}}$ be the subgroup of $\mathbf{H}_{\jord}^{1}(\R)$
that fixes the line in $\mathrm{W}_{\jord}\otimes\C$ spanned by $(i\mathrm{I},-i,-\mathrm{I},1)$,
which is a maximal compact subgroup of $\mathbf{H}_{\jord}^{1}(\R)$.
Take $\mathfrak{k}_{\lietype{E}{7}}$ to be the complexified Lie algebra of $\mathrm{K}_{\lietype{E}{7}}$,
then we have the following Cartan decomposition of $\mathfrak{e}_{7}$:
\begin{align}\label{eqn Cartan decomposition}
    \mathfrak{e}_{7}=\mathfrak{p}_{\jord}^{-}\oplus \mathfrak{k}_{\lietype{E}{7}}\oplus \mathfrak{p}_{\jord}^{+},
\end{align}
where $\mathfrak{p}_{\jord}^{+}\oplus \mathfrak{p}_{\jord}^{-}$ is the natural decomposition of the $-1$ eigenspace for the Cartan involution.
We have the following relation between these two decompositions \eqref{eqn Langlands decomposition of e7} and \eqref{eqn Cartan decomposition} of $\mathfrak{e}_{7}$:
\begin{prop}\label{prop cayley transform switch two decompositions}\cite[Proposition 6.1.1]{pollack2023exceptional}
    There exists an element $\mathrm{C}_{h}\in\mathbf{H}^{1}_{\jord}(\C)$,
    called the \emph{Cayley transform},
    satisfying:
    \begin{enumerate}[label=(\arabic*)]
        \item $\mathrm{C}_{h}^{-1}\mathrm{n}_{\mathrm{L}}(\jord_{\C})\mathrm{C}_{h}=\mathfrak{p}_{\jord}^{+}$;
        \item $\mathrm{C}_{h}^{-1}\mathrm{n}_{\mathrm{L}}^{\vee}(\jord_{\C})\mathrm{C}_{h}=\mathfrak{p}_{\jord}^{-}$;
        \item $\mathrm{C}_{h}^{-1}\mathfrak{m}_{\jord}\mathrm{C}_{h}=\mathfrak{k}_{\lietype{E}{7}}$.
    \end{enumerate}
\end{prop}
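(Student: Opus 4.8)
The plan is to realize both decompositions \eqref{eqn Langlands decomposition of e7} and \eqref{eqn Cartan decomposition} as $\mathrm{ad}$-eigenspace decompositions of a single semisimple element, and to take $\mathrm{C}_{h}$ to be the element of $\mathbf{H}_{\jord}^{1}(\C)$ that conjugates one such element to the other. On the parabolic side, since $\mathbf{N}_{\jord}$ is abelian, the central cocharacter $\zeta\in\mathfrak{m}_{\jord}$ attached to $\mathbf{P}_{\jord,\mathrm{sc}}$ grades $\mathfrak{e}_{7}$ into the weights $+1,0,-1$ with pieces $\mathrm{n}_{\mathrm{L}}(\jord_{\C})$, $\mathfrak{m}_{\jord}$, $\mathrm{n}_{\mathrm{L}}^{\vee}(\jord_{\C})$ respectively; equivalently $\mathfrak{m}_{\jord}=\ker\mathrm{ad}(\zeta)$ and $\mathrm{n}_{\mathrm{L}}(\jord_{\C})$, $\mathrm{n}_{\mathrm{L}}^{\vee}(\jord_{\C})$ are the $(\pm1)$-eigenspaces. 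On the compact side, $\mathbf{H}_{\jord}^{1}(\R)$ is a Hermitian real form of type $\lietype{E}{7}$ (its symmetric space is the exceptional tube domain modelled on $\jord$), so the centre of $\mathfrak{k}_{\lietype{E}{7}}$ is spanned by an element $Z$ with $\mathrm{ad}(Z)$ elliptic of eigenvalues $0,\pm i$; orienting the complex structure so that $\mathfrak{p}_{\jord}^{+}$ is the $(+i)$-eigenspace, we have $\mathfrak{k}_{\lietype{E}{7}}=\ker\mathrm{ad}(Z)$ and $\mathfrak{p}_{\jord}^{\pm}$ the $(\pm i)$-eigenspaces. Since $\mathrm{ad}$ of $-iZ$ then has eigenvalues $0,\pm1$ with $(+1)$-eigenspace $\mathfrak{p}_{\jord}^{+}$, it suffices to exhibit $\mathrm{C}_{h}$ with $\mathrm{C}_{h}^{-1}\zeta\,\mathrm{C}_{h}=-iZ$; the dimension counts ($27,79,27$ on both sides) are then automatically consistent.

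The construction of $\mathrm{C}_{h}$ rests on the fact that $\mathrm{I}$ is the unit of $\jord$. Using the explicit action of $\mathrm{n}(A)$ and $\mathrm{n}^{\vee}(A)$ on $\mathrm{W}_{\jord}$ recalled in \Cref{section Siegel parabolic of E7} and \Cref{rmk opposite siegel parabolic} --- together with $\mathrm{I}^{\#}=\mathrm{I}$, $\det(\mathrm{I})=1$ and the identity $A\times\mathrm{I}=\tr(A)\mathrm{I}-A$ --- one checks that $[\mathrm{n}_{\mathrm{L}}(\mathrm{I}),\mathrm{n}_{\mathrm{L}}^{\vee}(\mathrm{I})]$ lies in $\mathfrak{m}_{\jord}$ and is a nonzero multiple of $\zeta$ (this is the Kantor--Koecher--Tits $\mathfrak{sl}_{2}$-triple attached to a unital Jordan algebra; cf.\ the classical Jordan-theoretic construction of Cayley transforms for Hermitian tube domains, and \cite[\S 6]{pollack2023exceptional}). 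Hence, rescaling $f_{\mathfrak{s}}:=c\cdot\mathrm{n}_{\mathrm{L}}^{\vee}(\mathrm{I})$ appropriately and setting $e_{\mathfrak{s}}:=\mathrm{n}_{\mathrm{L}}(\mathrm{I})$, $h_{\mathfrak{s}}:=[e_{\mathfrak{s}},f_{\mathfrak{s}}]$, the triple $(e_{\mathfrak{s}},h_{\mathfrak{s}},f_{\mathfrak{s}})$ is an $\mathfrak{sl}_{2}$-triple with $h_{\mathfrak{s}}$ proportional to $\zeta$, and with $c$ chosen so that the Cartan involution $\theta$ cutting out $\mathrm{K}_{\lietype{E}{7}}$ sends $e_{\mathfrak{s}}\mapsto-f_{\mathfrak{s}}$; in particular $e_{\mathfrak{s}}-f_{\mathfrak{s}}\in\mathfrak{k}_{\lietype{E}{7}}$ while $h_{\mathfrak{s}}$ and $e_{\mathfrak{s}}+f_{\mathfrak{s}}$ lie in $\mathfrak{p}_{\jord}^{+}\oplus\mathfrak{p}_{\jord}^{-}$. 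We then set
\[\mathrm{C}_{h}:=\exp\!\left(\pm\tfrac{i\pi}{4}\,(e_{\mathfrak{s}}+f_{\mathfrak{s}})\right)\in\mathbf{H}_{\jord}^{1}(\C),\]
the exact analogue of the classical Cayley matrix $\tfrac{1}{\sqrt{2}}\smallmat{1}{i}{i}{1}\in\symp_{2}(\C)$, the sign to be fixed below.

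To verify (1)--(3), decompose $\mathfrak{e}_{7}$ into isotypic components for $(e_{\mathfrak{s}},h_{\mathfrak{s}},f_{\mathfrak{s}})$: because $\mathrm{ad}(h_{\mathfrak{s}})$ has only eigenvalues $0,\pm2$, every component is either trivial (hence inside $\ker\mathrm{ad}(e_{\mathfrak{s}})\cap\ker\mathrm{ad}(f_{\mathfrak{s}})\subseteq\mathfrak{m}_{\jord}\cap\mathfrak{k}_{\lietype{E}{7}}$) or isomorphic to the adjoint representation of $\mathfrak{sl}_{2}$, so $\mathrm{Ad}(\mathrm{C}_{h})$ reduces to a direct sum of copies of its action on $\mathfrak{sl}_{2}$. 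A $2\times2$ computation in $\mathfrak{sl}_{2}$ (diagonalizing $\mathrm{ad}(e_{\mathfrak{s}}+f_{\mathfrak{s}})$, whose eigenvalues there are $0,\pm2$) gives $\mathrm{C}_{h}^{-1}h_{\mathfrak{s}}\,\mathrm{C}_{h}=\mp i(e_{\mathfrak{s}}-f_{\mathfrak{s}})$, a nonzero complex multiple of $Z$. Consequently $\mathrm{C}_{h}^{-1}(\cdot)\mathrm{C}_{h}$ carries $\ker\mathrm{ad}(h_{\mathfrak{s}})=\mathfrak{m}_{\jord}$ onto $\ker\mathrm{ad}(Z)=\mathfrak{k}_{\lietype{E}{7}}$, which is (3), and carries the $(+1)$- and $(-1)$-eigenspaces $\mathrm{n}_{\mathrm{L}}(\jord_{\C})$, $\mathrm{n}_{\mathrm{L}}^{\vee}(\jord_{\C})$ of $\mathrm{ad}(h_{\mathfrak{s}})$ onto the two eigenspaces $\mathfrak{p}_{\jord}^{\pm}$. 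That they match up as in (1)--(2), rather than with $+$ and $-$ interchanged, is settled by computing $\mathrm{C}_{h}$ on $\mathrm{W}_{\jord}\otimes\C$ directly: applying $\exp\!\left(t(e_{\mathfrak{s}}+f_{\mathfrak{s}})\right)$ to the $\mathbf{N}_{\jord}$-fixed line $\C(0,1,0,0)$ and evaluating at $t=\pm\tfrac{i\pi}{4}$ yields a scalar multiple of $(i\mathrm{I},-i,-\mathrm{I},1)$, which identifies the abstract $Z$ with a generator of the centre of $\mathrm{Lie}(\mathrm{K}_{\lietype{E}{7}})$, $\mathrm{K}_{\lietype{E}{7}}$ being the line-stabilizer defined just before \eqref{eqn Cartan decomposition}, and pins the sign in $\mathrm{C}_{h}$.

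The main work lies in the two explicit computations: first, proving that $(\mathrm{n}_{\mathrm{L}}(\mathrm{I}),[\mathrm{n}_{\mathrm{L}}(\mathrm{I}),\mathrm{n}_{\mathrm{L}}^{\vee}(\mathrm{I})],\mathrm{n}_{\mathrm{L}}^{\vee}(\mathrm{I}))$ is, up to normalization, an $\mathfrak{sl}_{2}$-triple with bracket proportional to $\zeta$ --- i.e.\ controlling the structure constants of $\mathfrak{e}_{7}$ through the $56$-dimensional model, where $\mathrm{n}(A)$, $\mathrm{n}^{\vee}(A)$ and $\iota$ act by the formulas of \Cref{section Siegel parabolic of E7} and \Cref{rmk opposite siegel parabolic} --- and second, carrying out the exponentiation on $\mathrm{W}_{\jord}\otimes\C$ to land exactly on the line spanned by $(i\mathrm{I},-i,-\mathrm{I},1)$, keeping careful track of $\#$, the cross product $\times$, $\det$ and the quartic form $\mathrm{Q}$. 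Both are finite and are performed in \cite[\S 6]{pollack2023exceptional}, whose element $\mathrm{C}_{h}$ we adopt; the discussion above is the conceptual skeleton, and for the group $\mathbf{H}_{\jord}^{1}$ at hand one may equally cite that construction verbatim.
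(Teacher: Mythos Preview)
The paper does not prove this proposition at all; it is simply quoted from \cite[Proposition 6.1.1]{pollack2023exceptional} with no argument given. Your proposal supplies the standard conceptual outline (KKT $\mathfrak{sl}_{2}$-triple from the Jordan unit, Cayley element as $\exp$ of the anti-diagonal, matching of $\mathrm{ad}$-eigenspace gradings), which is exactly the mechanism underlying Pollack's construction, and you correctly defer the explicit constants and the verification on $\mathrm{W}_{\jord}\otimes\C$ to that reference. So your write-up is consistent with, and strictly more informative than, the paper's treatment; for the purposes of this paper a bare citation suffices.
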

By \Cref{prop cayley transform switch two decompositions}, we make the following identifications: 
\begin{itemize}
    \item Identify the factor $\mathfrak{p}_{\jord}^{+}$ 
    as $\jord_{\C}^{\vee}$, via the map  
    \[\mathfrak{p}_{\jord}^{+}\ni\mathrm{X}_{A}^{+}:=i\mathrm{C}_{h}^{-1}\mathrm{n}_{\mathrm{L}}(A)\mathrm{C}_{h} \mapsto (-,A)\in \jord_{\C},\]
    and equip it with the following $\mathbf{M}_{\jord}(\C)$-action:
    \[(m.\ell)(X)=\ell\left(\lambda(m)m^{-1}(X)\right),\text{ for any }m\in\mathbf{M}_{\jord}(\C),\ell\in\jord_{\C}^{\vee},X\in\jord_{\C}.\] 
    \item Identify $\mathfrak{p}_{\jord}^{-}$ as $\jord_{\C}$,
    via the map 
    \[\mathfrak{p}_{\jord}^{-}\ni\mathrm{X}_{A}^{-}:=i\mathrm{C}_{h}^{-1}\mathrm{n}_{\mathrm{L}}^{\vee}(A)\mathrm{C}_{h} \mapsto A\in\jord_{\C},\]
    and equip it with the following $\mathbf{M}_{\jord}(\C)$-action: 
    \[m.X=\lambda(m)^{-1}m(X)\text{ for any }m\in\mathbf{M}_{\jord}(\C),X\in\jord_{\C}.\]
\end{itemize}
The natural $\mathbf{M}_{\jord}(\C)$-invariant pairing $\{-,-\}:\jord_{\C}\times\jord_{\C}^{\vee}\rightarrow \C$
can be extended to 
{\small\begin{align}\label{eqn symmetric pairing}
    \{-,-\}:\jord_{\C}^{\otimes n}\times\left(\jord_{\C}^{\vee}\right)^{\otimes n}\rightarrow\C, \left(X_{1}\otimes\cdots\otimes X_{n},\ell_{1}\otimes\cdots\otimes\ell_{n}\right)\mapsto \frac{\sum_{\sigma\in S_{n}}\prod_{i=1}^{n}\left\{X_{i},\ell_{\sigma(i)}\right\}}{n!},
\end{align}}
which factors through $\sym^{n}\jord_{\C}\times\sym^{n}\left(\jord_{\C}^{\vee}\right)$.
\begin{ex}\label{ex special case of pairing}
    Identifying $\sym^{n}\left(\jord_{\C}^{\vee}\right)$ with 
    the space $\mathrm{P}_{n}(\jord_{\C})$ of degree $n$ homogeneous polynomials over $\jord_{\C}$,
    the $\mathbf{M}_{\jord}(\C)$-action on it is 
    $(m.P)(X)=P(\lambda(m)m^{-1}(X))$
    for any $m\in\mathbf{M}_{\jord}(\C),\,P\in\mathrm{P}_{n}(\jord_{\C})$ and $T\in\jord_{\C}$,
    and the pairing $\{T^{\otimes n},P\}$ is equal to $P(T)$.
\end{ex}

\subsection{Dual pairs}\label{section dual pair}
 Now we explain the two reductive dual pairs 
$\grpF\times\pgl_{2}$ and 
$\spin_{9}\times \sorth_{2,2}$ in $\grpE$.
\subsubsection{\texorpdfstring{$\grpF\times \pgl_{2}$}{}}\label{section dual pair involving F4}
 
We study first the centralizer of $\grpF$ in $\mathbf{M}_{\jord}$.
For any element $g$ in the centralizer $\mathbf{C}_{\mathbf{M}_{\jord}}(\grpF)$,
it stabilizes the subspace $\jord_{\Q}^{\grpF(\Q)}$,
which is a line spanned by $\mathrm{I}$,
thus $g(\mathrm{I})$ is a non-zero multiple of $\mathrm{I}$.
So we obtain a morphism $\mathbf{C}_{\mathbf{M}_{\jord}}(\grpF)\rightarrow \mathbf{G}_{\mathrm{m}}$ by restricting to the line spanned by $\mathrm{I}$.
\begin{itemize}
    \item This morphism is injective, since the center of $\grpF$ is trivial;
    \item For any scalar $\lambda\in\Q^{\times}$, the map $X\mapsto \lambda X$ is an element of $\mathbf{C}_{\mathbf{M}_{\jord}}(\grpF)(\Q)$, 
    thus morphism is also surjective.
\end{itemize}
Hence the centralizer of $\grpF$ in the Levi subgroup $\mathbf{M}_{\jord}$ of $\mathbf{H}_{\jord}^{1}$ 
is a rank $1$ torus.

The centralizer of $\grpF$ in $\mathbf{P}_{\jord,\mathrm{sc}}=\mathbf{M}_{\jord}\mathbf{N}_{\jord}$
is generated by $\mathbf{C}_{\mathbf{M}_{\jord}}(\grpF)$ and the subgroup $\{\mathrm{n}(x\mathrm{I}),\,x\in \mathbf{G}_{\mathrm{a}}\}$ of $\mathbf{N}_{\jord}$,
and it is isomorphic to the standard Borel subgroup of $\SL_{2}$ via:
\[(X\mapsto uX)\mapsto \mat{u}{}{}{u^{-1}},\,\mathrm{n}(x\mathrm{I})\mapsto \mat{1}{x}{}{1}.\]
Similarly, the centralizer of $\grpF$ in $\overline{\mathbf{P}_{\jord,\mathrm{sc}}}$ is isomorphic to the opposite Borel subgroup of $\SL_{2}$.
As a consequence,
we get a subgroup $\grpF\times\SL_{2}$ inside $\mathbf{H}_{\jord}^{1}$,
which is a maximal proper subgroup of $\mathbf{H}_{\jord}^{1}$ \cite[Lemma 2.4]{karasiewicz2023dualpairmathrmautctimesf4},
so it gives a reductive dual pair in $\mathbf{H}_{\jord}^{1}$,
and induces a dual pair $\grpF\times\GL_{2}$ (\emph{resp.}\,$\grpF\times\pgl_{2}$) inside $\mathbf{H}_{\jord}$ (\emph{resp.}\,$\grpE$).

\subsubsection{\texorpdfstring{$\spin_{9}\times \sorth_{2,2}$}{}}\label{section dual Pair involving spin9}
 By \cite[Theorem 2.7.4]{Yokota2009ExceptionalLG},
the stabilizer of $\mathrm{E}_{1}=[1,0,0\,;0,0,0]$ in $\grpF$ is isomorphic to $\spin_{9}$,
the spin group of a positive definite $9$-dimensional quadratic space.
In the sequel we refer to this group as $\spin_{9}$.
The $9$-dimensional quadratic space can be found inside $\jord_{\Q}$:
\begin{lemma}\label{lemma stable spin9 subspace of Jordan algebra}
    The group $\spin_{9}$ preserves respectively the following subspaces of $\jord_{\Q}$:
    \[\jord_{1}:=\set{[0,\xi,-\xi\,;x,0,0]}{\xi\in\Q,x\in\oct_{\Q}}\]
    and 
    \[\jord_{2}:=\set{[0,0,0\,;0,y,z]}{y,z\in\oct_{\Q}}.\]
\end{lemma}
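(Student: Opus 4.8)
The plan is to identify $\jord_{1}$ and $\jord_{2}$ with pieces of the Peirce decomposition of $\jord_{\Q}$ attached to the idempotent $\mathrm{E}_{1}$, and to exploit that $\spin_{9}=\mathrm{Stab}_{\grpF}(\mathrm{E}_{1})$ acts by Albert algebra automorphisms: each element fixes $\mathrm{I}$ (hence also $\mathrm{E}_{2}+\mathrm{E}_{3}=\mathrm{I}-\mathrm{E}_{1}$), commutes with the Jordan multiplication $\circ$, and preserves the bilinear form $(\,,\,)$ (which is built from $\det$ and $\mathrm{I}$; equivalently $\grpF(\R)\subseteq\orth(\jord_{\R},\mathrm{q})$ as recalled in \Cref{section exceptional group F4}).

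First I would record the Peirce decomposition. A direct matrix computation gives, for $A=[a,b,c\,;x,y,z]$,
\[\mathrm{E}_{1}\circ A=[a,0,0\,;0,\tfrac{y}{2},\tfrac{z}{2}],\]
so the operator $L_{\mathrm{E}_{1}}\colon X\mapsto \mathrm{E}_{1}\circ X$ has eigenvalues $1,\tfrac12,0$ with eigenspaces $\Q\mathrm{E}_{1}$, $\jord_{2}=\{[0,0,0\,;0,y,z]\}$, and $\jord_{\Q}^{(0)}:=\{[0,b,c\,;x,0,0]\}$ respectively. Any $g\in\spin_{9}(R)$ (for a commutative $\Q$-algebra $R$) is an automorphism of $\jord_{\Q}\otimes_{\Q}R$ fixing $\mathrm{E}_{1}$, hence commutes with $L_{\mathrm{E}_{1}}$ and preserves each of these eigenspaces; in particular it preserves $\jord_{2}$, which is the second assertion.

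For $\jord_{1}$ I would intersect $\jord_{\Q}^{(0)}$ with a $\spin_{9}$-stable hyperplane. Since $\spin_{9}$ preserves $(\,,\,)$ and fixes $\mathrm{E}_{2}+\mathrm{E}_{3}$, it preserves the hyperplane $(\mathrm{E}_{2}+\mathrm{E}_{3})^{\perp}$. Using the explicit formula for $(\,,\,)$, an element $[0,b,c\,;x,0,0]\in\jord_{\Q}^{(0)}$ satisfies $\bigl([0,b,c\,;x,0,0],[0,1,1\,;0,0,0]\bigr)=b+c$, so it lies in $(\mathrm{E}_{2}+\mathrm{E}_{3})^{\perp}$ exactly when $b+c=0$; thus $\jord_{1}=\jord_{\Q}^{(0)}\cap(\mathrm{E}_{2}+\mathrm{E}_{3})^{\perp}$. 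Being the intersection of two $\spin_{9}$-stable subspaces, $\jord_{1}$ is $\spin_{9}$-stable. (In fact one obtains the full $\spin_{9}$-equivariant decomposition $\jord_{\Q}=\Q\mathrm{E}_{1}\oplus\Q(\mathrm{E}_{2}+\mathrm{E}_{3})\oplus\jord_{1}\oplus\jord_{2}$, with $\jord_{1}$ the standard $9$-dimensional representation and $\jord_{2}$ the $16$-dimensional spin representation.)

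There is no substantial obstacle: the argument rests only on the two displayed computations above. The single point requiring care is that the statement concerns subspaces of $\jord_{\Q}$, so the reasoning should be phrased functorially to yield stability of $\jord_{1}\otimes R$ and $\jord_{2}\otimes R$ under $\spin_{9}(R)$ for every $\Q$-algebra $R$; this is automatic because $2$ is invertible (the eigenvalues $0,\tfrac12,1$ stay distinct), the product $\circ$ and the form $(\,,\,)$ are defined over $\Q$, and $\mathrm{E}_{1},\mathrm{E}_{2}+\mathrm{E}_{3}$ are rational. If one prefers to avoid the bilinear form, one can instead note that $\jord_{\Q}^{(0)}$ is a Jordan subalgebra isomorphic to $\mathrm{Her}_{2}(\oct_{\Q})$ with unit $\mathrm{E}_{2}+\mathrm{E}_{3}$, on which $\spin_{9}$ acts by unital Jordan automorphisms and hence preserves the generic trace; $\jord_{1}$ is precisely its trace-zero subspace.
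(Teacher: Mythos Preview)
Your argument is correct and matches the paper's proof: both identify $\jord_{2}$ as the $\tfrac12$-eigenspace of $L_{\mathrm{E}_{1}}$ and $\jord_{1}$ as the trace-zero (equivalently, $(\mathrm{E}_{2}+\mathrm{E}_{3})^{\perp}$-) part of the $0$-eigenspace, then invoke that $\spin_{9}$ fixes $\mathrm{E}_{1}$ and acts by Jordan automorphisms. The only cosmetic difference is that the paper writes $\jord_{1}=\{X:\mathrm{E}_{1}\circ X=0,\ \tr(X)=0\}$ directly, whereas you reach the same condition via $(\,,\mathrm{E}_{2}+\mathrm{E}_{3})$; on $\jord_{\Q}^{(0)}$ these coincide since $(X,\mathrm{E}_{1})=0$ there.
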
 
\begin{proof}
    Since 
    \[\jord_{1}=\set{X\in \jord_{\Q}}{\mathrm{E}_{1}\circ X=0,\tr(X)=0}\]
    and 
    \[\jord_{2}=\set{X\in \jord_{\Q}}{2\mathrm{E}_{1}\circ X=X},\]
    the lemma follows from the definition that $\spin_{9}$ is the stabilizer of $\mathrm{E}_{1}$ in $\grpF$.
\end{proof}
\begin{notation}\label{notation so22}
    In this article,
    $\sorth_{2,2}$ is defined to be the special orthogonal group 
    of a split $4$-dimensional quadratic space over $\Q$,
    and we define $\spin_{2,2},\mathbf{GSpin}_{2,2}$ similarly.
    Notice that $\mathbf{GSpin}_{2,2}\simeq \{(g_{1},g_{2})\in\GL_{2}\times\GL_{2},\det(g_{1})=\det(g_{2})\}$,
    $\spin_{2,2}\simeq \SL_{2}\times\SL_{2}$,
    and $\sorth_{2,2}\simeq \mathbf{GSpin}_{2,2}/\mathbf{G}_{\mathrm{m}}^{\Delta}\simeq \spin_{2,2}/\mu_{2}^{\Delta}$.
\end{notation}
We study first the centralizer of $\spin_{9}$ in the Levi subgroup $\mathbf{M}_{\jord}\subseteq \mathbf{H}_{\jord}^{1}$:
\begin{lemma}\label{lemma centralizer in Levi}
    The centralizer $\mathbf{C}_{\mathbf{M}_{\jord}}(\spin_{9})$ is an extension of $\mathbf{G}_{\mathrm{m}}\times\mathbf{G}_{\mathrm{m}}$ by $\mu_{2}$.
\end{lemma}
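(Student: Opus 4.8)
The plan is to identify $\mathbf{C}_{\mathbf{M}_{\jord}}(\spin_9)$ explicitly by analyzing how $\spin_9$ acts on $\jord_\Q$, using the decomposition coming from $\mathrm{E}_1$. First I would decompose $\jord_\Q$ as a $\spin_9$-module. By \Cref{lemma stable spin9 subspace of Jordan algebra} we have the $\spin_9$-stable subspaces $\jord_1$ (a $9$-dimensional space, on which $\spin_9$ acts through the vector representation of $\sorth_9$) and $\jord_2$ (a $16$-dimensional space, on which $\spin_9$ acts through the spin representation). Together with the line $\Q\mathrm{E}_1$ and the line $\Q(\mathrm{E}_2+\mathrm{E}_3)$ (the fixed vectors), this gives the isotypic decomposition $\jord_\Q = \Q\mathrm{E}_1 \oplus \Q(\mathrm{E}_2+\mathrm{E}_3) \oplus \jord_1 \oplus \jord_2$, where the two one-dimensional pieces are trivial, $\jord_1$ is the (irreducible, self-dual) vector representation, and $\jord_2$ is the (irreducible, self-dual) $16$-dimensional half-spin representation; crucially these last two are non-isomorphic irreducibles, and they are inequivalent to the trivial one.

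Next I would use this to pin down the centralizer. An element $g\in\mathbf{M}_{\jord}$ centralizing $\spin_9$ must, by Schur's lemma applied over $\overline{\Q}$, act as a scalar on each isotypic component: it acts by some $a$ on the $2$-dimensional trivial-isotypic subspace $V_0 := \Q\mathrm{E}_1 \oplus \Q(\mathrm{E}_2+\mathrm{E}_3)$ — more precisely, by an element of $\GL(V_0)$ commuting with the full image, so a priori all of $\GL_2(V_0)$ is allowed by $\spin_9$-equivariance alone — by a scalar $b$ on $\jord_1$, and by a scalar $c$ on $\jord_2$. So the centralizer of $\spin_9$ in $\GL(\jord_\Q)$ is $\GL(V_0)\times\mathbf{G}_{\mathrm m}\times\mathbf{G}_{\mathrm m}$. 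Now I impose the constraint of lying in $\mathbf{M}_{\jord}$, i.e. the similitude condition $\det(gA) = \lambda(g)\det(A)$. Using the explicit cubic form \eqref{eqn det albert algebra} and the relations $\mathrm{E}_1\#(\mathrm{E}_2+\mathrm{E}_3)$, $\mathrm{E}_1\times\jord_1$, etc. — concretely, the cross-product / $\#$-relations among the four pieces, for instance that $x\times x'$ for $x,x'\in\jord_1$ lands in $\Q(\mathrm{E}_2+\mathrm E_3)$, that $\jord_1\times\jord_2 \subseteq \jord_2$, and $\jord_2\times\jord_2$ has components in $V_0$ and $\jord_1$ — I would extract polynomial relations among the eigenvalues forcing $a$ (or rather the pair of eigenvalues of $g|_{V_0}$) to be determined, up to a sign ambiguity and a scaling, by $b$ and $c$. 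I expect the upshot to be that the similitude condition cuts the naive centralizer $\GL_2\times\mathbf{G}_{\mathrm m}\times\mathbf G_{\mathrm m}$ down to a two-dimensional torus together with a $\mu_2$ of ``extra'' automorphisms, exactly matching an extension of $\mathbf G_{\mathrm m}\times\mathbf G_{\mathrm m}$ by $\mu_2$: the $\mu_2$ arises because swapping $\mathrm{E}_2$ and $\mathrm{E}_3$ (which commutes with $\spin_9$ since it fixes $\mathrm{E}_1$ and is compatible with the $\jord_1,\jord_2$ decomposition) gives a nontrivial central element, while the connected part is the rank $2$ torus of independent scalings on $\jord_1$ and $\jord_2$ (with the action on $V_0$ then forced).

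I would organize the computation by choosing a convenient basis adapted to the Peirce-type decomposition above, reducing everything to identities in $\oct_\Q$ already recorded in \Cref{section Her 3 algebra}, and keeping careful track of the similitude factor $\lambda$. The main obstacle I anticipate is the bookkeeping in the last step: correctly determining the precise arithmetic of how the scalars on $\jord_1$ and $\jord_2$ and the eigenvalues on $V_0$ must be related by the cubic form, and in particular isolating the $\mu_2$ (as opposed to an accidental larger finite group or an extra torus factor). A clean way to see the $\mu_2$ is to exhibit explicitly a non-identity element of $\mathbf{C}_{\mathbf{M}_{\jord}}(\spin_9)$ of order $2$ lying in $\mathbf{M}_{\jord}^1$ — e.g. the linear map acting as $+1$ on $\Q\mathrm E_1$ and on one of $\jord_1,\jord_2$ and as $-1$ on the complementary pieces in a way compatible with $\#$ and $\det$ — and then check it is not in the identity component of the torus; this element should be responsible for the nonsplit-looking extension, though whether the extension is actually split is a separate (and for our purposes inessential) point.
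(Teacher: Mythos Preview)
Your approach is valid in outline but takes a more computational route than the paper, and contains one genuine slip.

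The paper's key shortcut is to use the fact that elements of $\mathbf{M}_{\jord}$ preserve the rank stratification of $\jord_{\Q}$. In the $2$-dimensional $\spin_9$-fixed subspace $V_0=\Q\mathrm{E}_1\oplus\Q(\mathrm{E}_2+\mathrm{E}_3)$, the rank-$1$ locus is $\Q^{\times}\mathrm{E}_1$ and the rank-$2$ locus is $\Q^{\times}(\mathrm{E}_2+\mathrm{E}_3)$, so any $g\in\mathbf{C}_{\mathbf{M}_{\jord}}(\spin_9)$ is automatically diagonal on $V_0$. This replaces your proposed cubic-form bookkeeping on $V_0$ with a one-line structural observation, and immediately gives a homomorphism $\mathbf{C}_{\mathbf{M}_{\jord}}(\spin_9)\to\mathbf{G}_{\mathrm m}\times\mathbf{G}_{\mathrm m}$ recording the two eigenvalues. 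The paper then identifies the kernel conceptually: an element of the kernel fixes both $\mathrm{E}_1$ and $\mathrm{E}_2+\mathrm{E}_3$, hence fixes $\mathrm{I}$ and has $\lambda=1$, so lies in $\grpF$; fixing $\mathrm{E}_1$ places it in $\spin_9$; centralizing $\spin_9$ then forces it to be central in $\spin_9$, i.e.\ the involution $[a,b,c\,;x,y,z]\mapsto[a,b,c\,;x,-y,-z]$. Surjectivity is witnessed by the explicit family $m_{\lambda,\mu}$.

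Your slip is the identification of the $\mu_2$. The map ``swapping $\mathrm{E}_2$ and $\mathrm{E}_3$'' that you propose does \emph{not} commute with $\spin_9$: fixing $\mathrm{E}_1$ places such an involution \emph{inside} $\spin_9$ (it lies in $\grpF$ and stabilizes $\mathrm{E}_1$), not in its centralizer, and indeed it does not act by a scalar on $\jord_1$. Your second guess---an involution acting as $+1$ on $V_0$ and $\jord_1$ and as $-1$ on $\jord_2$---is exactly the correct element, namely the nontrivial central element of $\spin_9$. If you pursue your computational route, you will find (via $\det(\alpha\mathrm{E}_1+\beta(\mathrm{E}_2+\mathrm{E}_3))=\alpha\beta^2$) that the similitude condition forces the action on $V_0$ to be diagonal, and the remaining constraints reduce to those satisfied by $m_{\lambda,\mu}$; but the paper's rank-and-center argument gets there with essentially no computation.
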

\begin{proof}
    For any element $g\in\mathbf{C}_{\mathbf{M}_{\jord}}(\spin_{9})$,
it stabilizes the subspace $\jord_{\Q}^{\spin_{9}(\Q)}$,
which is spanned by $\mathrm{E}_{1}$ and $\mathrm{I}-\mathrm{E}_{1}=\mathrm{E}_{2}+\mathrm{E}_{3}$.
The rank $1$ elements in this subspace are non-zero multiples of $\mathrm{E}_{1}$,
and the rank $2$ elements are non-zero multiples of $\mathrm{E}_{2}+\mathrm{E}_{3}$.
As elements of $\mathbf{M}_{\jord}$ preserve the rank,
$g$ acts on $\mathrm{E}_{1}$ (\emph{resp.}\,$\mathrm{E}_{2}+\mathrm{E}_{3}$) by a scalar. 
So we obtain a morphism from $\mathbf{C}_{\mathbf{M}_{\jord}}(\spin_{9})$ to $\mathbf{G}_{\mathrm{m}}\times\mathbf{G}_{\mathrm{m}}$,
whose kernel is the center of $\spin_{9}$,
a cyclic group generated by the involution $[a,b,c\,;x,y,z]\mapsto [a,b,c\,;x,-y,-z]$ \cite[\S 4.3.1]{shan2024levelautomorphicrepresentationsanisotropic}.
This morphism of algebraic groups is also surjective,
since for any non-zero scalars $\lambda,\mu$,
we have the following element in $\mathbf{C}_{\mathbf{M}_{\jord}}(\spin_{9})$:
\[m_{\lambda,\mu}:[a,b,c\,;x,y,z]\mapsto [\lambda^{-1}\mu^{2}a,\lambda b,\lambda c\,;\lambda x,\mu y,\mu z].\qedhere\]
\end{proof}
Let $\mathbf{C}^{\prime}$ be the subgroup of $\mathbf{C}_{\mathbf{M}_{\jord}}(\spin_{9})$ consisting of $m_{\lambda,\mu}$,
then we have the following commutative diagram:
\begin{equation*}
    \begin{tikzcd}
        1  \arrow[r] & \mu_{2} \arrow[r] \arrow[d,equal] & \mathbf{C}_{\mathbf{M}_{\jord}}(\spin_{9})\arrow[rr] & & \mathbf{G}_{\mathrm{m}}\times\mathbf{G}_{\mathrm{m}} \arrow[r]\arrow[d,equal] & 1\\
        1  \arrow[r] & \mu_{2} \arrow[r,"\mu\mapsto m_{1,\mu}"]           & \mathbf{C}^{\prime} \arrow[rr,"m_{\lambda,\mu}\mapsto{(\lambda^{-1}\mu^{2},\lambda)}"]\arrow[u,hook]            &  & \mathbf{G}_{\mathrm{m}}\times\mathbf{G}_{\mathrm{m}} \arrow[r]          & 1
    \end{tikzcd} ,   
\end{equation*}
which shows that $\mathbf{C}_{\mathbf{M}_{\jord}}(\spin_{9})=\mathbf{C}^{\prime}$ is a split torus of rank $2$.
The centralizer of $\spin_{9}$ in $\mathbf{P}_{\jord,\mathrm{sc}}$
is generated by $\mathbf{C}_{\mathbf{M}_{\jord}}(\spin_{9})$
and $\{\mathrm{n}(x\mathrm{E}_{1}+y(\mathrm{E}_{2}+\mathrm{E}_{3})),x,y\in\Q\}\subseteq \mathbf{N}_{\jord}$,
and it is isomorphic to the standard Borel subgroup of $\spin_{2,2}=\SL_{2}\times\SL_{2}$ via:
\begin{align}\label{eqn explicit embedding into Spin4}
    m_{\lambda,\mu}\mapsto \left(\mat{\lambda}{}{}{\lambda^{-1}},\mat{\mu}{}{}{\mu^{-1}}\right),\,\mathrm{n}(x\mathrm{E}_{1}+y(\mathrm{E}_{2}+\mathrm{E}_{3}))\mapsto\left(\mat{1}{x}{}{1},\mat{1}{y}{}{1}\right).
\end{align}
Similarly, the centralizer of $\spin_{9}$ in $\overline{\mathbf{P}_{\jord,\mathrm{sc}}}$ is isomorphic to the opposite Borel subgroup of $\spin_{2,2}$,
thus we get a morphism $\spin_{9}\times\spin_{2,2}\rightarrow \mathbf{H}_{\jord}^{1}$.
The kernel of this morphism is $\{(\mathrm{id},\mathrm{id}),(m_{1,-1},m_{1,-1})\}$,
and we denote by $\spin_{9}\times_{\mu_{2}}\spin_{2,2}$ the quotient of $\spin_{9}\times\spin_{2,2}$ by this kernel.
The morphism $\spin_{9}\times_{\mu_{2}}\spin_{2,2}\hookrightarrow\mathbf{H}_{\jord}^{1}$ induces an embedding of $\spin_{9}\times_{\mu_{2}}\mathbf{GSpin}_{2,2}$ (\emph{resp.}\,$\spin_{9}\times_{\mu_{2}}\sorth_{2,2}$) into $\mathbf{H}_{\jord}$ (\emph{resp.}\,$\grpE$).

The centralizer $\mathbf{C}_{\grpE}(\grpF)\simeq \pgl_{2}$ is 
embedded into $\sorth_{2,2}\subseteq\mathbf{C}_{\grpE}(\spin_{9})$ 
via the map induced from the diagonal embedding $\GL_{2}\rightarrow \mathbf{GSpin}_{2,2}$.
   
\section{Local theta correspondence}\label{section local theta correspondences}
In this section we recall some results on the minimal representation of $\grpE$
and the local theta correspondences 
for the exceptional dual pairs constructed in \Cref{section dual pair}.

\subsection{Minimal representation of $\grpE$}\label{section minimal representation}
 The theory of theta correspondences studies 
the restrictions of minimal representations to reductive dual pairs,
so we first recall the definition of the minimal representation of $\grpE(F)$ for $F=\Q_{p}$ or $\R$,
and also some properties that will be used.
\begin{defi}\label{def minimal representations}
    \begin{enumerate}[label=(\roman*)]
        \item The \emph{minimal representation $\Pi_{\min,p}$ of $\grpE(\Q_{p})$}
        is the unramified representation whose Satake parameter is the $\widehat{\grpE}(\C)$-conjugacy class of $\varphi\smallmat{p^{1/2}}{}{}{p^{-1/2}}$.
        Here the morphism $\varphi:\SL_{2}(\C)\rightarrow \widehat{\grpE}(\C)$ corresponds to the subregular unipotent orbit of $\widehat{\grpE}(\C)=\mathbf{H}_{\jord}^{1}(\C)$.
        \item Let $\Pi^{+}$ be the holomorphic representation of $\mathbf{H}_{\jord}^{1}(\R)$ with the smallest Gelfand-Kirillov dimension among non-trivial representations,
        and $\Pi^{-}$ be the anti-holomorphic representation contragradient to $\Pi^{+}$.
        The \emph{minimal representation $\Pi_{\min,\infty}$ of $\grpE(\R)$} is the unique representation whose restriction to $\mathbf{H}_{\jord}^{1}(\R)$ is $\Pi^{+}\oplus\Pi^{-}$.
    \end{enumerate} 
\end{defi}
The first property that we need is the following relation between the minimal representation and a principal series:
\begin{prop}\label{prop minimal rep as subrep of principal series}
    \cite[Proposition 6.1]{Savin1994}\cite{sahi1993unitary}
    For $v=p$ or $\infty$,
    the minimal representation $\Pi_{\min,v}$ of $\grpE(\Q_{v})$ 
    is the unique irreducible submodule of the normalized degenerate principal series 
    \[\mathrm{Ind}_{\para_{\jord}(\Q_{v})}^{\grpE(\Q_{v})}\delta_{\para_{\jord}}^{-1/2}|\lambda|^{2},\]
    where $\delta_{\para_{\jord}}$ is the modulus character of $\para_{\jord}(\Q_{v})$,
    and $\lambda:\LeviM_{\jord}(\Q_{v})\rightarrow\Q_{v}^{\times}$ is the similitude character of $\mathbf{M}_{\jord}(\Q_{v})$.  
\end{prop}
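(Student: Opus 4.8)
The plan is to deduce both cases from a structural analysis of the single degenerate principal series
\[
I \;:=\; \Ind_{\para_{\jord}(\Q_{v})}^{\grpE(\Q_{v})} \delta_{\para_{\jord}}^{-1/2}|\lambda|^{2},
\]
taking advantage of the fact that $\para_{\jord}$ is the Siegel parabolic: its unipotent radical $\mathbf{N}_{\jord}$ is \emph{abelian} with $\mathbf{N}_{\jord}(\Q_{v})\simeq\jord_{\Q_{v}}$, and its Levi $\mathbf{M}_{\jord}$ — an extension of $\mathbf{G}_{\mathrm{m}}$ by the simply connected group of type $\lietype{E}{6}$ — acts on $\mathbf{N}_{\jord}$ through the minuscule $27$-dimensional representation by $m\,\mathrm{n}(A)\,m^{-1}=\mathrm{n}(mA)$; in particular $\delta_{\para_{\jord}}=|\lambda|^{9}$, so $I$ sits at a distinguished point of the one-parameter family $I(s):=\Ind_{\para_{\jord}}^{\grpE}|\lambda|^{s}$. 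I would first reduce to $\mathbf{H}^{1}_{\jord}$ in place of $\grpE$: since $|\lambda|$ has trivial absolute value on $\pm\mathrm{Id}_{\jord}$, the inducing character is trivial on the central $\mu_{2}=\ker(\mathbf{H}^{1}_{\jord}\to\grpE)$, so the two degenerate series correspond under the isogeny and $\Pi_{\min,v}$ is defined compatibly. It then suffices to prove, over $\mathbf{H}^{1}_{\jord}$: (i) $I$ has a unique irreducible submodule, and (ii) it is $\Pi_{\min,v}$. This is the content of Savin's and Sahi's theorems; below is the shape of the argument I would reconstruct.

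\textbf{The $p$-adic place.} The key input is the orbit stratification of $\mathbf{N}_{\jord}(\Q_{p})\simeq\jord_{\Q_{p}}$ under $\mathbf{M}_{\jord}$: by the Jordan-algebra rank (\Cref{def rank of matrix in Jordan algebra}) there are exactly four orbits — $\{0\}$ together with the rank $1$, $2$, $3$ loci — which also equals $\#\big(W_{\mathbf{M}_{\jord}}\backslash W_{\grpE}/W_{\mathbf{M}_{\jord}}\big)$. I would run the geometric lemma for $I(s)$ along these four cells, compute the normalized Jacquet module $r_{\para_{\jord}}\!\big(I(s)\big)$, thereby locate the reducibility points of the family, and read off the (short) composition series of $I$ at our point. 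By Frobenius reciprocity an irreducible constituent $\sigma$ of $I$ embeds in $I$ precisely when the inducing character occurs in $r_{\para_{\jord}}(\sigma)$; the orbit combinatorics force this for exactly one constituent and with multiplicity one, giving (i) and showing that this constituent is unramified. For (ii), I would compute its Satake parameter from the inducing character via the Satake isomorphism and check that it is the class of $\varphi\smallmat{p^{1/2}}{}{}{p^{-1/2}}$ for $\varphi$ the Jacobson--Morozov homomorphism of the subregular unipotent orbit of $\widehat{\grpE}(\C)$, matching \Cref{def minimal representations}; alternatively, one may simply invoke the construction of the minimal representation of a simply-laced group as the unique irreducible submodule of exactly this degenerate series.

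\textbf{The archimedean place.} Here $\mathbf{H}^{1}_{\jord}(\R)$ is the Hermitian real form $E_{7(-25)}$, whose symmetric domain is the exceptional tube domain over $\jord_{\R}$, and $\para_{\jord}(\R)$ is its Siegel (conformal) parabolic. The family $I(s)$ is then the holomorphic degenerate principal series analyzed by Sahi: its reducibility points are the points of the Wallach set, and at the first one — our distinguished point — the socle of $I$ is $\Pi^{+}\oplus\Pi^{-}$, where $\Pi^{+}$ is the holomorphic representation of smallest Gelfand--Kirillov dimension among non-trivial ones and $\Pi^{-}$ its contragredient, each occurring as a submodule because its minimal $\mathrm{K}_{E_{7}}$-type generates a submodule. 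Since $\Pi^{+}\oplus\Pi^{-}$ is exactly the restriction of $\Pi_{\min,\infty}$ to $\mathbf{H}^{1}_{\jord}(\R)$ in \Cref{def minimal representations}, reassembling it into a single $\grpE(\R)$-irreducible module and checking that no other constituent of $I$ embeds gives the statement over $\R$.

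\textbf{The main obstacle.} The real work, at both places, is the precise determination of the composition series of $I(s)$ at the special value of $s$ and, above all, the verification that the minimal constituent sits in the \emph{socle} of $I$ rather than merely in a subquotient: $p$-adically this is the four-cell geometric-lemma bookkeeping together with the computation of the intertwining operators on the rank-stratified Schwartz space of $\jord_{\Q_{p}}$; archimedean it is Sahi's $\mathrm{K}_{E_{7}}$-type-by-$\mathrm{K}_{E_{7}}$-type analysis of the holomorphic degenerate series. By contrast, identifying the resulting minimal constituent with $\Pi_{\min,v}$ of \Cref{def minimal representations} — via the subregular $\SL_{2}$ and its Satake parameter $p$-adically, via smallest Gelfand--Kirillov dimension archimedean — and the passage between $\mathbf{H}^{1}_{\jord}$ and its adjoint group $\grpE$ are comparatively routine.
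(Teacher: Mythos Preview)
The paper does not supply a proof of this proposition: it is stated as a known result with citations to Savin (for the $p$-adic case) and Sahi (for the archimedean case), and the paper proceeds immediately to use it. So there is no ``paper's own proof'' to compare against; your proposal is in effect a sketch of the arguments in those cited references, and as such it is broadly faithful to them --- the $p$-adic side via the rank stratification of $\jord_{\Q_{p}}$ and Jacquet-module bookkeeping, the archimedean side via Sahi's analysis of the holomorphic degenerate series at the first Wallach point.

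One small imprecision worth flagging: your reduction step asserts that it suffices to prove on $\mathbf{H}^{1}_{\jord}$ that $I$ has a \emph{unique} irreducible submodule. Over $\R$ this is literally false --- on $\mathbf{H}^{1}_{\jord}(\R)$ the socle is $\Pi^{+}\oplus\Pi^{-}$, two irreducibles --- and you yourself note this further down. The correct formulation is that the socle of $I$ on $\mathbf{H}^{1}_{\jord}(\R)$ is exactly $\Pi^{+}\oplus\Pi^{-}$, and that the outer involution of $\grpE(\R)$ (coming from $\pi_{0}$ of the adjoint group) exchanges these two summands, so on $\grpE(\R)$ the socle becomes a single irreducible, namely $\Pi_{\min,\infty}$. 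With that adjustment your outline is a reasonable reconstruction of the cited proofs.
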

The sections of $\mathrm{Ind}_{\para_{\jord}(\Q_{p})}^{\grpE(\Q_{v})}\delta_{\para_{\jord}}^{-1/2}|\lambda|^{2}$ are 
smooth functions $f:\mathbf{P}_{\jord}(\Q_{p})\rightarrow \C$
such that
\begin{align}\label{eqn sections of Siegel parabolic}
    f(pg)=|\lambda(p)|_{p}^{2}f(g),\text{ for all }p\in\mathbf{P}_{\jord}(\Q_{p}),g\in\grpE(\Q_{p}).
\end{align}
From now on, we identify $\Pi_{\min,v}$ as the unique irreducible submodule of $\mathrm{Ind}_{\para_{\jord}(\Q_{v})}^{\grpE(\Q_{v})}\delta_{\para_{\jord}}^{-1/2}|\lambda|^{2}$,
and normalize the spherical vector $\Phi_{p}$ in $\Pi_{\min,v}$ by the condition that $\Phi_{p}(1)=1$.

The second property is the 
$\mathrm{K}_{\lietype{E}{7}}$-types of the holomorphic part $\Pi^{+}$ of $\Pi_{\min}$.
The maximal compact subgroup $\mathrm{K}_{\lietype{E}{7}}$ of $\mathbf{H}_{\jord}^{1}(\R)$ is isomorphic to $\lietype{E}{6}\times\mathrm{U}(1)$,
where $\lietype{E}{6}$ is the simply-connected compact Lie group of type $\lietype{E}{6}$.
\begin{defi}\label{def representation of maximal compact subgroup of E7}
    (1) Define $\mathrm{E}(n)$ to be the irreducible representation of the compact Lie group $\lietype{E}{6}$ with highest weight $n\lambda$,
    where $\lambda$ is the highest weight of $\mathfrak{p}_{\jord}^{+}$ as a $\lietype{E}{6}$-representation.
    \\
    (2) For $n,k\in \mathbb{N}$, 
    define $\mathrm{E}(n,k)$ to be the irreducible representation of $\mathrm{K}_{\lietype{E}{7}}$ such that 
    its restriction to $\lietype{E}{6}$ is isomorphic to $\mathrm{E}(n)$
    and its restriction to $\mathrm{U}(1)$ is the character $z\mapsto z^{k}$.    
\end{defi}
The restriction of $\Pi^{+}$ to $\mathrm{K}_{\lietype{E}{7}}$ is given in \cite{WallachMinrep}:
\begin{align}\label{eqn minimal K type of minimal representation}
    \Pi^{+}|_{\mathrm{K}_{\lietype{E}{7}}}\simeq\bigoplus_{n=0}^{\infty}\mathrm{E}(n,2n+12).
\end{align}

\subsection{\texorpdfstring{$p$}{PDFstring}-adic correspondence for \texorpdfstring{$\grpF\times\pgl_{2}$}{PDFstring}}\label{section p-adic correspondence of F4 and PGL(2)}
 Over $\Q_{p}$,
the exceptional theta correspondence for $\grpF\times\pgl_{2}$ 
has been studied in \cites{Savin1994}{karasiewicz2023dualpairmathrmautctimesf4}.
Now we recall some results that we need.
\begin{defi}\label{def p-adic theta}
    Let $\pi$ be a smooth irreducible representation of $\pgl_{2}(\Q_{p})$,
    then the maximal $\pi$-isotypic quotient of $\Pi_{\min,p}$ admits an action of $\grpF(\Q_{p})$
    and factors as $\pi\boxtimes\Theta(\pi)$.
    We call $\Theta(\pi)$ the \emph{big theta lift} of $\pi$,
    and its maximal semisimple quotient $\theta(\pi)$
    the \emph{small theta lift} of $\pi$.
\end{defi}

Let $\mathbf{B}_{0}=\mathbf{T}_{0}\mathbf{N}_{0}$ be the Borel subgroup of $\pgl_{2}$ 
consisting of upper triangular matrices,
and $\overline{\mathbf{B}_{0}}$ be the opposite Borel subgroup.
Let $\chi$ be a character of $\mathbf{T}_{0}(\Q_{p})=\{\smallmat{t}{}{}{1},\,t\in\Q_{p}^{\times}\}$
satisfying $\chi=|-|^{s}\cdot\chi_{0}$,
where $s\geq 0$ and $\chi_{0}$ is a unitary character of $\mathbf{T}_{0}(\Q_{p})$. 
When $s\neq \frac{1}{2}$ or $\chi_{0}^{2}\neq 1$,
the principal series $\mathrm{Ind}_{\overline{\mathbf{B}_{0}}(\Q_{p})}^{\pgl_{2}(\Q_{p})}(\chi)$ is irreducible.
It turns out the theta lift of this principal series to $\grpF(\Q_{p})$ is also a principal series.
Before stating the result of Karasiewicz-Savin,
we introduce a maximal parabolic subgroup of $\grpF$.
\begin{defi}\label{def max parabolic of F4}
    Using Bourbaki's labeling for simple roots of $\lietype{F}{4}$,
    we define $\mathbf{Q}$ to be the maximal parabolic subgroup of $\grpF$
    obtained by removing $\alpha_{4}$ from the Dynkin diagram.
\end{defi}
The Levi subgroup of $\mathbf{Q}$ is isomorphic to $\mathbf{GSpin}_{7}$,
whose similitude map $\mathbf{GSpin}_{7}\rightarrow \mathbf{GL}_{1}$ is given by the fundamental weight $\varpi_{4}$. 
Notice that $\widehat{\mathbf{Q}}\simeq\mathbf{GSp}_{6}\simeq\symp_{6}\times\mathbf{G}_{m}$.
\begin{prop}\label{prop p-adic lift to F4}
    \cite[Proposition 6.4]{karasiewicz2023dualpairmathrmautctimesf4}
    Let $\chi=|-|^{s}\cdot\chi_{0}$ be a character of $\mathbf{T}_{0}(\Q_{p})$ such that $\chi_{0}$ is unitary and $0\leq s<1/2$,
    then the big theta lift of $\mathrm{Ind}_{\overline{\mathbf{B}_{0}}(\Q_{p})}^{\pgl_{2}(\Q_{p})}(\chi)$ to $\grpF(\Q_{p})$ is irreducible,
    and 
    \[\Theta(\mathrm{Ind}_{\overline{\mathbf{B}_{0}}(\Q_{p})}^{\pgl_{2}(\Q_{p})}(\chi))=\theta(\mathrm{Ind}_{\overline{\mathbf{B}_{0}}(\Q_{p})}^{\pgl_{2}(\Q_{p})}(\chi))\simeq \mathrm{Ind}_{\mathbf{Q}(\Q_{p})}^{\grpF(\Q_{p})}(\chi\circ \varpi_{4}).\]
\end{prop}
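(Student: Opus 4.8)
The plan is to deduce the proposition from the realization of the minimal representation inside a degenerate principal series of $\grpE(\Q_{p})$, by reducing the assertion to a Jacquet-module computation along the dual pair $\grpF\times\pgl_{2}$. I would fix the realization of \Cref{prop minimal rep as subrep of principal series}, so that $\Pi_{\min,p}$ is the unique irreducible submodule of $\Ind_{\para_{\jord}(\Q_{p})}^{\grpE(\Q_{p})}\delta_{\para_{\jord}}^{-1/2}|\lambda|^{2}$, and recall from \Cref{section dual pair involving F4} that inside $\grpE$ the subgroup $\mathbf{N}_{0}$ of $\pgl_{2}$ is the line $\{\mathrm{n}(x\mathrm{I})\}\subseteq\mathbf{N}_{\jord}$ while $\mathbf{T}_{0}$ is the rank-one torus $X\mapsto uX$ in the Siegel Levi $\mathbf{M}_{\jord}$; in particular $\grpF(\Q_{p})$ centralizes $\mathbf{N}_{0}$ and normalizes $\mathbf{T}_{0}$. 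By \Cref{def p-adic theta} and Frobenius reciprocity (second adjointness), determining $\Theta(\Ind_{\overline{\mathbf{B}_{0}}(\Q_{p})}^{\pgl_{2}(\Q_{p})}(\chi))$ comes down to identifying, as a module over $\grpF(\Q_{p})$, the $\mathbf{T}_{0}(\Q_{p})$-eigenspace with eigencharacter $\chi$ (up to the usual normalization and contragredient bookkeeping, which I suppress) in the normalized Jacquet module $r_{\mathbf{N}_{0}}(\Pi_{\min,p})$.

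The central step is then to compute $r_{\mathbf{N}_{0}}(\Pi_{\min,p})$ as a $\grpF(\Q_{p})\times\mathbf{T}_{0}(\Q_{p})$-module, for which the key input is that the minimal representation has its $\mathbf{N}_{\jord}(\Q_{p})$-Fourier coefficients supported on elements of rank $\le 1$. Decomposing $r_{\mathbf{N}_{0}}(\Pi_{\min,p})$ according to the characters of the abelian group $\mathbf{N}_{\jord}(\Q_{p})/\mathbf{N}_{0}(\Q_{p})$ -- equivalently, according to the trace-zero part of $\jord_{\Q_{p}}$ under the bilinear form $(\,,\,)$ -- only the rank $0$ character and the single split rank $1$ $\grpF(\Q_{p})$-orbit of characters contribute. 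The rank $0$ character is trivial and contributes the Jacquet module $r_{\mathbf{N}_{\jord}}(\Pi_{\min,p})$ of the minimal representation along the Siegel parabolic, on which $\mathbf{T}_{0}(\Q_{p})$ acts through a single unramified half-integral twist (corresponding to $s=1/2$), which the hypothesis $\chi=|-|^{s}\chi_{0}$ with $\chi_{0}$ unitary and $0\le s<1/2$ excludes. The generic rank $1$ orbit is a single $\grpF(\Q_{p})$-orbit whose point-stabilizer is, up to conjugacy and up to its unipotent radical, the Levi $\mathbf{GSpin}_{7}$ of the maximal parabolic $\mathbf{Q}$ of \Cref{def max parabolic of F4}; since the similitude character of $\mathbf{GSpin}_{7}$ is the fundamental weight $\varpi_{4}$ and $\mathbf{T}_{0}(\Q_{p})$ acts on a rank $1$ character by rescaling, the corresponding summand of the Jacquet module is an induced $\grpF(\Q_{p})\times\mathbf{T}_{0}(\Q_{p})$-module whose $\chi$-eigenspace is precisely $\Ind_{\mathbf{Q}(\Q_{p})}^{\grpF(\Q_{p})}(\chi\circ\varpi_{4})$. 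Combining the two, one obtains $\Theta(\Ind_{\overline{\mathbf{B}_{0}}(\Q_{p})}^{\pgl_{2}(\Q_{p})}(\chi))\simeq\Ind_{\mathbf{Q}(\Q_{p})}^{\grpF(\Q_{p})}(\chi\circ\varpi_{4})$, in particular it is nonzero.

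To complete the identification and to see that the big and small theta lifts coincide, I would verify that $\Ind_{\mathbf{Q}(\Q_{p})}^{\grpF(\Q_{p})}(\chi\circ\varpi_{4})$ is irreducible throughout $0\le s<1/2$. This is a reducibility-point statement for the degenerate principal series of $\grpF(\Q_{p})$ attached to the maximal parabolic $\mathbf{Q}$ (Levi $\mathbf{GSpin}_{7}$, dual $\mathbf{GSp}_{6}$): computing the poles of the standard intertwining operator to the associate degenerate principal series via the Gindikin--Karpelevich formula, one finds that the first reducibility point is $s=1/2$, so the series remains irreducible in the asserted range. Granting this, $\Theta(\Ind_{\overline{\mathbf{B}_{0}}(\Q_{p})}^{\pgl_{2}(\Q_{p})}(\chi))$ is already irreducible, hence equals its small theta lift, and the proposition follows.

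I expect the main obstacle to be the structural computation in the second step: determining the $\grpF(\Q_{p})$-orbits of rank $\le 1$ characters of $\mathbf{N}_{\jord}(\Q_{p})/\mathbf{N}_{0}(\Q_{p})$, and in particular computing the stabilizer of the generic rank $1$ orbit together with the precise action of $\mathbf{T}_{0}(\Q_{p})$ on it -- this is exactly where the rank stratification of the Albert algebra and the structure of $\grpE$ and of the parabolic $\mathbf{Q}\subseteq\grpF$ must be made explicit, and where the character $\chi\circ\varpi_{4}$ is pinned down. A secondary point is the control of the rank $0$ stratum under the hypothesis $0\le s<1/2$, i.e. the determination of the fixed $\mathbf{T}_{0}(\Q_{p})$-weight it carries, which one reads off from the Satake parameter of $\Pi_{\min,p}$ in \Cref{def minimal representations}.
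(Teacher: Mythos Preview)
The paper does not give its own proof of this proposition: it is stated with a citation to \cite[Proposition 6.4]{karasiewicz2023dualpairmathrmautctimesf4} and used as a black box. So there is nothing in the present paper to compare your argument against.

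That said, your outline is essentially the strategy carried out in the cited reference. The computation of the big theta lift via the Jacquet module $r_{\mathbf{N}_{0}}(\Pi_{\min,p})$, the stratification by rank of the characters of $\mathbf{N}_{\jord}/\mathbf{N}_{0}$ using that the minimal representation is supported on rank $\le 1$, the identification of the stabilizer of a generic rank $1$ trace-zero element with (a parabolic having Levi) $\mathbf{GSpin}_{7}$, and the final irreducibility check for $\Ind_{\mathbf{Q}(\Q_{p})}^{\grpF(\Q_{p})}(\chi\circ\varpi_{4})$ in the range $0\le s<1/2$ --- these are exactly the ingredients Karasiewicz and Savin assemble. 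Two points where your sketch is looser than their argument: first, the rank $0$ stratum is $r_{\mathbf{N}_{\jord}}(\Pi_{\min,p})$, which as an $\mathbf{M}_{\jord}(\Q_{p})$-module has length greater than one (Savin computes it explicitly), so you should check that \emph{none} of the resulting $\mathbf{T}_{0}$-characters lie in the open strip, not just one; second, the irreducibility of the degenerate principal series of $\grpF(\Q_{p})$ in this range is not quite an off-the-shelf Gindikin--Karpelevich computation, and in the reference it is handled by a separate argument. Both are addressed in \cite{karasiewicz2023dualpairmathrmautctimesf4}, so if you want a self-contained proof you should look there for the precise bookkeeping.
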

\begin{rmk}\label{rmk Satake parameter}
    If $\chi$ is unramified,
    then \Cref{prop p-adic lift to F4}
    tells that the Satake parameter of $\theta(\mathrm{Ind}_{\overline{\mathbf{B}_{0}}(\Q_{p})}^{\pgl_{2}(\Q_{p})}(\chi))$ is 
    the $\widehat{\grpF}(\C)$-conjugacy class of the image of $(e_{p},c_{p})$ under the embedding $\SL_{2}\times\SL_{2}\rightarrow\symp_{6}\times\SL_{2}\rightarrow\widehat{\grpF}$,
    where $c_{p}=\mathrm{diag}(\chi(p),\chi(p)^{-1})$
    and $e_{p}=\mathrm{diag}(p^{1/2},p^{-1/2})$.
\end{rmk}

\subsection{Archimedean theta correspondence}\label{section theta correspondence Lie groups}
 For the dual pair $\grpF(\R)\times \pgl_{2}(\R)$ inside $\grpE(\R)$, 
we have the following result:
\begin{prop}\label{prop real theta between F4 and PGL2}\cite[Proposition 3.2]{gross_savin_1998}
    The restriction of $\Pi_{\min,\infty}$ to $\grpF(\R)\times \pgl_{2}(\R)$ is isomorphic to
    \[\bigoplus_{n\geq 0}\vrep{n\varpi_{4}}\boxtimes \mathcal{D}(2n+12),\]
    where $\vrep{n\varpi_{4}}$ is the irreducible representation of $\grpF(\R)$ with highest weight $n\varpi_{4}$,
    and $\mathcal{D}(m)$ is the unitary completion of 
    $d_{hol}(m)\oplus d_{anti\text{-}holo}(m)$,
    $d_{hol}(m)$ being the holomorphic discrete series representation of $\SL_{2}(\R)$ with minimal $\sorth_{2}(\R)$ type $m$
    and $d_{anti\text{-}holo}(m)$ being its contragradient.
\end{prop}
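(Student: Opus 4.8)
The plan is to read the decomposition off the $\mathrm{K}_{\lietype{E}{7}}$-type formula \eqref{eqn minimal K type of minimal representation}, exploiting that $\grpF(\R)$ is compact. Since $\grpF(\R)$ is compact and $\Pi_{\min,\infty}$ is unitary, its restriction to the dual pair is a Hilbert direct sum $\Pi_{\min,\infty}|_{\grpF(\R)\times\pgl_{2}(\R)}\simeq\bigoplus_{\sigma\in\widehat{\grpF(\R)}}\sigma\boxtimes\Theta_{\infty}(\sigma)$, where $\Theta_{\infty}(\sigma)=\mathrm{Hom}_{\grpF(\R)}(\sigma,\Pi_{\min,\infty})$ is a unitary representation of $\pgl_{2}(\R)$, admissible as an $\SL_{2}(\R)$-representation by \eqref{eqn minimal K type of minimal representation} (each $\sorth_{2}(\R)$-type of $\Theta_{\infty}(\sigma)$ comes from a single $\mathrm{E}(m)$, hence occurs finitely often). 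So it suffices to show $\Theta_{\infty}(\vrep{n\varpi_{4}})\simeq\mathcal{D}(2n+12)$ for all $n\ge 0$ and $\Theta_{\infty}(\sigma)=0$ otherwise. Because $\Pi_{\min,\infty}|_{\mathbf{H}_{\jord}^{1}(\R)}=\Pi^{+}\oplus\Pi^{-}$ with $\Pi^{-}$ the anti-holomorphic contragredient of $\Pi^{+}$, and every irreducible representation of the compact group $\grpF(\R)$ is self-dual, it is enough to treat the holomorphic part and establish $\Pi^{+}|_{\grpF(\R)\times\SL_{2}(\R)}\simeq\bigoplus_{n\ge 0}\vrep{n\varpi_{4}}\boxtimes d_{hol}(2n+12)$; applying the contragredient produces the $d_{anti\text{-}holo}$-summands, and unitary completion gives $\mathcal{D}(2n+12)$.

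Next I would restrict $\Pi^{+}$ to the maximal compact $\grpF(\R)\times\sorth_{2}(\R)$ of the dual pair, with $\sorth_{2}(\R)\subset\SL_{2}(\R)$. As $\grpF(\R)\times\sorth_{2}(\R)$ is compact, after conjugating the dual pair it lies inside $\mathrm{K}_{\lietype{E}{7}}\simeq\lietype{E}{6}\times\mathrm{U}(1)$; since $\grpF(\R)$ is semisimple with trivial centre it maps into the $\lietype{E}{6}$-factor, realizing the classical inclusion $\lietype{F}{4}\subset\lietype{E}{6}$, while $\sorth_{2}(\R)$, centralizing $\grpF(\R)$ whose centralizer in $\lietype{E}{6}$ is finite, projects to the $\mathrm{U}(1)$-factor by a finite covering. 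That covering has degree $1$: in any $\SL_{2}(\R)$-representation the $\sorth_{2}(\R)$-weights occurring in a fixed $\grpF(\R)$-isotypic subspace are integers of one parity, and in $\Pi^{+}$ such a run is obtained from consecutive $\mathrm{E}(m)$'s, whose $\mathrm{U}(1)$-weights $2m+12$ step by $2$; with orientations fixed so that $\Pi^{+}$ has positive $\sorth_{2}(\R)$-weights, this forces $\sorth_{2}(\R)\xrightarrow{\sim}\mathrm{U}(1)$ weight-preservingly. Writing $\chi_{k}$ for the character $z\mapsto z^{k}$ of $\sorth_{2}(\R)$, formula \eqref{eqn minimal K type of minimal representation} then reads
\begin{align*}
\Pi^{+}|_{\grpF(\R)\times\sorth_{2}(\R)}\;\simeq\;\bigoplus_{m\ge 0}\bigl(\mathrm{E}(m)|_{\grpF(\R)}\bigr)\boxtimes\chi_{2m+12}.
\end{align*}

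The remaining input is the branching law $\mathrm{E}(m)|_{\grpF(\R)}\simeq\bigoplus_{n=0}^{m}\vrep{n\varpi_{4}}$ for $\lietype{F}{4}\subset\lietype{E}{6}$ on the $27$-dimensional representation. For $m=1$ this is the familiar splitting of the Albert algebra $\jord_{\C}=\C\,\mathrm{I}\oplus(\text{trace-zero part})$ into scalars and its $26$-dimensional complement, i.e.\ $\mathrm{E}(1)|_{\grpF(\R)}=\triv\oplus\vrep{\varpi_{4}}$; for general $m$ one realizes $\mathrm{E}(m)$ as the Cartan component of $\sym^{m}\mathrm{E}(1)$ and matches its $\grpF(\R)$-constituents, with multiplicity one each, against the polynomial model $\vrep{n}(\jord_{\C})$ of $\vrep{n\varpi_{4}}$ from \Cref{section harmonic polynomials representations of F4}, a dimension count fixing the list. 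Substituting into the display and collecting by $\sorth_{2}(\R)$-weight, the $\vrep{n\varpi_{4}}$-isotypic component of $\Pi^{+}$ carries precisely the characters $\chi_{2n+12},\chi_{2n+14},\chi_{2n+16},\dots$, each once; a unitary $\SL_{2}(\R)$-module with exactly these $\sorth_{2}(\R)$-types and lowest weight $2n+12>1$ is forced to be the holomorphic discrete series $d_{hol}(2n+12)$, and no $\sigma\notin\{\vrep{n\varpi_{4}}\}$ appears. This gives the holomorphic part, and the full statement follows after adjoining the contragredient (yielding the $d_{anti\text{-}holo}$-summands from $\Pi^{-}$), taking unitary completions, and passing to $\pgl_{2}(\R)$. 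I expect the branching law $\mathrm{E}(m)|_{\lietype{F}{4}}\simeq\bigoplus_{n=0}^{m}\vrep{n\varpi_{4}}$, together with the bookkeeping identifying $\mathrm{U}(1)\subset\mathrm{K}_{\lietype{E}{7}}$ weight-preservingly inside $\SL_{2}(\R)$, to be the only non-formal ingredient; once these and \eqref{eqn minimal K type of minimal representation} are granted, the decomposition is forced.
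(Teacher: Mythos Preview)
The paper does not give its own proof of this proposition; it simply cites \cite[Proposition 3.2]{gross_savin_1998}. Your argument is precisely the Gross--Savin strategy: start from the $\mathrm{K}_{\lietype{E}{7}}$-type decomposition \eqref{eqn minimal K type of minimal representation}, restrict each $\mathrm{E}(m,2m+12)$ along $\lietype{F}{4}\times\mathrm{U}(1)\subset\lietype{E}{6}\times\mathrm{U}(1)$ using the branching law $\mathrm{E}(m)|_{\lietype{F}{4}}\simeq\bigoplus_{n=0}^{m}\vrep{n\varpi_{4}}$, and read off the $\sorth_{2}(\R)$-weights in each $\grpF(\R)$-isotypic component to recognise $d_{hol}(2n+12)$. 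This is exactly the method the paper alludes to when it proves the companion result \Cref{prop archimedean siegel weil formula} by saying ``parallel to the argument in \cite[\S 3]{gross_savin_1998} \dots\ using the branching laws in \cite{Lepowsky1970}''; the $\lietype{E}{6}\downarrow\lietype{F}{4}$ branching you invoke is one of those Lepowsky results, so you may cite it rather than sketch a dimension count.

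One point deserves a cleaner justification: your argument that the map $\sorth_{2}(\R)\to\mathrm{U}(1)$ has degree $1$ is circular as written, since it presupposes that the $\vrep{n\varpi_{4}}$-isotypic piece is a \emph{single} discrete series (so that its weights must step by $2$). A direct way to pin down the degree is to compute on the split torus: under the embedding of \Cref{section dual pair involving F4} the element $\smallmat{u}{}{}{u^{-1}}\in\SL_{2}$ maps to the scalar $X\mapsto uX$ in $\mathbf{M}_{\jord}$, which has $\lambda=u^{3}$ and hence acts on the line $\C(0,1,0,0)$ by $u^{3}$; since $\mathrm{U}(1)$ acts on the minimal $\mathrm{K}_{\lietype{E}{7}}$-type by $z^{12}$ and the corresponding automorphy factor is $J(m,i\mathrm{I})^{-4}=\lambda(m)^{4}$, one checks the weights match without any extra factor. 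With this fixed, your proof is complete and coincides with the cited one.
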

Before stating the result for $\spin_{9}\times\sorth_{2,2}$,
we define some notations for $\spin_{9}(\R)$.
\begin{notation}\label{notation Spin9 representation}
    Let $\lambda_{1}$ be the highest weight of the standard $9$-dimensional representation of $\spin_{9}(\R)$,
    and $\lambda_{2}$ that of the $16$-dimensional spinor representation.
    Denote by $\mathrm{U}_{m,n}$ the irreducible representation of $\spin_{9}(\R)$ with highest weight $m\lambda_{1}+n\lambda_{2}$.
\end{notation}
\begin{prop}\label{prop archimedean siegel weil formula}
    The restriction of $\Pi_{\min,\infty}$ to $\spin_{9}(\R)\times\sorth_{2,2}(\R)$ is isomorphic to
    \[\bigoplus_{m,n\geq 0}\mathrm{U}_{m,n}\boxtimes \mathcal{D}(n+4)\boxtimes \mathcal{D}(2m+n+8),\]
    where we view $\mathcal{D}(n+4)\boxtimes\mathcal{D}(2m+n+8)$ as a representation of $\sorth_{2,2}(\R)$.
\end{prop}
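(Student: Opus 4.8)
The plan is to deduce the proposition from the Gross--Savin decomposition of \Cref{prop real theta between F4 and PGL2} by a see-saw argument. Recall the see-saw formed by the dual pairs $\grpF\times\pgl_{2}$ and $\spin_{9}\times\sorth_{2,2}$ inside $\grpE$: one has $\spin_{9}\subseteq\grpF$ (the stabilizer of $\mathrm{E}_{1}$) and $\pgl_{2}\subseteq\sorth_{2,2}$ (via the diagonal $\GL_{2}\hookrightarrow\mathbf{GSpin}_{2,2}$). Since $\grpF(\R)$ is compact, so is $\spin_{9}(\R)$, and $\Pi_{\min,\infty}$ decomposes discretely under $\spin_{9}(\R)\times\sorth_{2,2}(\R)$ as $\bigoplus_{\sigma}\sigma\boxtimes\Theta(\sigma)$, where $\sigma$ ranges over $\widehat{\spin_{9}(\R)}$ and $\Theta(\sigma):=\Hom_{\spin_{9}(\R)}(\sigma,\Pi_{\min,\infty})$ is a unitary $\sorth_{2,2}(\R)$-module. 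I must show the $\sigma$ occurring are exactly the $\mathrm{U}_{m,n}$ and that $\Theta(\mathrm{U}_{m,n})\simeq\mathcal{D}(n+4)\boxtimes\mathcal{D}(2m+n+8)$.

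First I would restrict \Cref{prop real theta between F4 and PGL2} one step further, to $\spin_{9}(\R)\times\pgl_{2}(\R)$, obtaining
\[\Pi_{\min,\infty}\big|_{\spin_{9}(\R)\times\pgl_{2}(\R)}\;\simeq\;\bigoplus_{k\geq 0}\bigl(\vrep{k\varpi_{4}}\big|_{\spin_{9}(\R)}\bigr)\boxtimes\mathcal{D}(2k+12),\]
and compare with $\bigoplus_{\sigma}\sigma\boxtimes\bigl(\Theta(\sigma)\big|_{\pgl_{2}(\R)}\bigr)$; this gives $\Theta(\sigma)\big|_{\pgl_{2}(\R)}\simeq\bigoplus_{k\geq 0}\Hom_{\spin_{9}(\R)}(\sigma,\vrep{k\varpi_{4}})\otimes\mathcal{D}(2k+12)$, so everything is governed by the branching multiplicities $[\,\vrep{k\varpi_{4}}|_{\spin_{9}(\R)}:\sigma\,]$. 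These I would compute using the polynomial model $\vrep{n}(\jord_{\C})$ of \Cref{section players in the exceptional case} together with the $\spin_{9}$-stable decomposition $\jord_{\C}=\C\mathrm{E}_{1}\oplus\C(\mathrm{E}_{2}+\mathrm{E}_{3})\oplus(\jord_{1}\otimes_{\Q}\C)\oplus(\jord_{2}\otimes_{\Q}\C)$ from \Cref{lemma stable spin9 subspace of Jordan algebra}, in which $\jord_{1}\otimes_{\Q}\C\simeq\mathrm{U}_{1,0}$ is the standard representation and $\jord_{2}\otimes_{\Q}\C\simeq\mathrm{U}_{0,1}$ the spin representation (equivalently, one may invoke the classical $\operatorname{F}_{4}\!\downarrow\!\operatorname{B}_{4}$ branching rule). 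The expected outcome is $\vrep{k\varpi_{4}}|_{\spin_{9}(\R)}\simeq\bigoplus_{m+n\leq k}\mathrm{U}_{m,n}$, multiplicity free; hence only the $\mathrm{U}_{m,n}$ occur, and $\Theta(\mathrm{U}_{m,n})|_{\pgl_{2}(\R)}\simeq\bigoplus_{k\geq m+n}\mathcal{D}(2k+12)$.

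Finally I would upgrade this description of $\Theta(\mathrm{U}_{m,n})$ from $\pgl_{2}(\R)$ to the whole of $\sorth_{2,2}(\R)$, whose identity component is $(\SL_{2}(\R)\times\SL_{2}(\R))/\mu_{2}^{\Delta}$. As a unitary constituent of $\Pi^{+}\oplus\Pi^{-}$, and since $\Pi^{+}$ is holomorphic for $\mathbf{H}_{\jord}^{1}(\R)$, the module $\Theta(\mathrm{U}_{m,n})$ is glued from a lowest weight and a highest weight module; a unitary lowest weight $(\SL_{2}(\R)\times\SL_{2}(\R))$-module occurring here is an outer tensor $d_{hol}(a)\boxtimes d_{hol}(b)$ of holomorphic discrete series, and restricting it to the diagonal $\pgl_{2}(\R)$, where by Clebsch--Gordan $d_{hol}(a)\boxtimes d_{hol}(b)|_{\mathrm{diag}}\simeq\bigoplus_{j\geq 0}d_{hol}(a+b+2j)$, forces $a+b=2m+2n+12$ when matched against the previous paragraph. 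To separate $a$ and $b$ I would compute the pair of $\mathrm{SO}(2)\times\mathrm{SO}(2)$-weights carried by the lowest $\spin_{9}(\R)$-isotypic vector of $\Theta(\mathrm{U}_{m,n})^{+}$, which lies in the $\mathrm{K}_{\lietype{E}{7}}$-type $\mathrm{E}(m+n,2(m+n)+12)$ of $\Pi^{+}$ (see \eqref{eqn minimal K type of minimal representation}), using the explicit realization \eqref{eqn explicit embedding into Spin4} of the two $\SL_{2}$-factors of $\spin_{2,2}$ --- the first attached to the rank-one idempotent $\mathrm{E}_{1}$, the second to $\mathrm{E}_{2}+\mathrm{E}_{3}$ --- to read off $(a,b)=(n+4,\,2m+n+8)$; unitarity then pins $\Theta(\mathrm{U}_{m,n})$ down as exactly the irreducible $\sorth_{2,2}(\R)$-representation $\mathcal{D}(n+4)\boxtimes\mathcal{D}(2m+n+8)$, with no further constituents. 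I expect this last computation to be the main obstacle: the restriction to $\pgl_{2}(\R)$ determines only the sum $a+b$, so separating the two lowest weights requires understanding how the maximal compact $\spin_{9}(\R)\times\mathrm{SO}(2)\times\mathrm{SO}(2)$ sits inside $\mathrm{K}_{\lietype{E}{7}}\simeq\lietype{E}{6}\times\mathrm{U}(1)$ and the induced weight correspondence --- in particular the split torus $\{m_{\lambda,\mu}\}$ visible in \eqref{eqn explicit embedding into Spin4} must first be conjugated into $\mathrm{K}_{\lietype{E}{7}}$ by a Cayley transform before its weights can be read against those of $\Pi^{+}$; everything else is bookkeeping.
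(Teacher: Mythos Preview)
Your strategy is sound and would work, but it is a genuinely different route from the paper's. The paper proceeds \emph{directly}, mimicking the Gross--Savin argument for $\mathbf{G}_{2}\times\mathbf{PGSp}_{6}$: one takes the $\mathrm{K}_{\lietype{E}{7}}$-type decomposition \eqref{eqn minimal K type of minimal representation} of $\Pi^{+}$ and branches each $\mathrm{E}(n,2n+12)$ along $\lietype{E}{6}\downarrow\spin_{9}(\R)\times\mathrm{U}(1)$ (and $\mathrm{U}(1)\times\mathrm{U}(1)\subset\mathrm{K}_{\lietype{E}{7}}$) using Lepowsky's classical branching laws; the holomorphic $\mathrm{SO}(2)\times\mathrm{SO}(2)$-weights then identify the discrete series factors in one stroke. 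Your approach instead \emph{bootstraps} from the already-established $\grpF\times\pgl_{2}$ decomposition: you branch $\vrep{k\varpi_{4}}$ along $\lietype{F}{4}\downarrow\lietype{B}{4}$ to get $\Theta(\mathrm{U}_{m,n})|_{\pgl_{2}(\R)}$, then use Clebsch--Gordan for the diagonal $\SL_{2}$ to constrain $a+b$, and finally a weight computation on the lowest $\mathrm{K}_{\lietype{E}{7}}$-type to separate $(a,b)$.

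Both arguments ultimately rest on branching laws of the same flavour, but the paper's direct method is shorter because it reads off $(a,b)$ and the $\spin_{9}$-type simultaneously from the $\lietype{E}{6}$-branching, whereas you first pass through $\lietype{F}{4}$ and then must return to a $\mathrm{K}_{\lietype{E}{7}}$-level computation (your acknowledged ``main obstacle'') to finish --- a step that is essentially the same calculation the paper does up front. Your route does have the conceptual advantage of making the see-saw relation between the two dual pairs explicit, and the multiplicity-one statement for $\Theta(\mathrm{U}_{m,n})$ falls out cleanly from the multiplicity-free restriction to the diagonal $\pgl_{2}$; but as a proof of the proposition it is the longer path.
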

\begin{proof}
The proof is parallel to the argument in \cite[\S 3]{gross_savin_1998} for $\mathbf{G}_{2}\times \mathbf{PGSp}_{6}$,
using the branching laws in \cite{Lepowsky1970}.
\end{proof}

\section{Global theta correspondence}\label{section automorphic representations and global theta lift from min rep}
 In this section, we recall 
an automorphic realization of the minimal representation of $\grpE(\A)$,
and then use it to define global theta lifts.
\subsection{Automorphic forms}\label{section algebraic modular forms}
 Let $\mathbf{G}$ be a connected reductive group over $\Q$
which admits a (reductive) $\Z$-model $\G$,
in the sense of \cite{G96}.
Let $\widehat{\Z}=\prod_{p}\Z_{p}$,
$\A_{f}=\widehat{\Z}\otimes\Q$,
and $\A=\R\times\A_{f}$.
We fix a maximal compact subgroup $K_{\infty}$ of $\mathbf{G}(\R)$
and let $\mathfrak{g}=\C\otimes_{\R}\mathrm{Lie}(\mathbf{G}(\R))$.

For the simplicity we assume that the center of $\mathbf{G}$ is anisotropic,
and denote the quotient space $\mathbf{G}(\Q)\backslash\mathbf{G}(\A)$ by $[\mathbf{G}]$.
This topological space $[\grpG]$ admits a right invariant finite Haar measure $\mu$,
with respect to which we can define the space $\sqint{[\grpG]}{}$ of square-integrable functions on $[\grpG]$.
The topological group $\grpG(\A)$ acts on $\sqint{[\grpG]}{}$ by right translation,
and the Petersson inner product makes it a unitary $\grpG(\A)$-representation.

\begin{defi}\label{def automorphic forms and representations}
    (1) An irreducible unitary representation $\pi$ of $\mathbf{G}(\A)$ is \emph{(square-integrable) discrete automorphic} in the sense of \cite[\S 4.6]{borel1979automorphic},
    if $\pi$ is isomorphic to a $\mathbf{G}(\A)$-invariant closed subspace of $\mathrm{L}^{2}([\mathbf{G}])$.
    We denote by $\Pi_{\disc}(\mathbf{G})$ the set of equivalence classes of discrete automorphic representations of $\mathbf{G}$,
    and by $\mathrm{L}^{2}_{\disc}([\mathbf{G}])$ the discrete part of $\mathrm{L}^{2}([\mathbf{G}])$.
    \\
    (2) An irreducible unitary representation $\pi$ of $\grpG(\A)$ has \emph{level one} if $\pi$ can be decomposed as $\pi=\pi_{\infty}\otimes\pi_{f}$,
    where $\pi_{\infty}$ is an irreducible unitary representation of $\grpG(\R)$
    and $\pi_{f}$ is a smooth irreducible representation of $\grpG(\A_{f})$ such that $\pi_{f}^{\G(\widehat{\Z})}\neq 0$.
    We denote the subset of $\Pi_{\disc}(\grpG)$ consisting of those with level one by $\Pi_{\disc}^{\mathrm{unr}}(\grpG)$.
    \\
    (3) The space of \emph{(square-integrable) automorphic forms} 
    $\mathcal{A}(\mathbf{G})$ is defined to be the space of 
    $K_{\infty}\times\G(\widehat{\Z})$-finite and $\mathrm{Z}(\mathrm{U}(\mathfrak{g}))$-finite functions in the discrete spectrum $\sqint{[\mathbf{G}]}{\disc}$.
\end{defi}

\begin{defi}\label{defi cusp forms}
    (1) A square-integrable Borel function $f:[\grpG]\rightarrow\C$ is \emph{cuspidal}
    if for the unipotent radical $\mathbf{U}$ of every proper parabolic subgroup of $\grpG$, we have
    \[\int_{[\mathbf{U}]}f(ug)du=0\]
    for almost all $g\in\grpG(\A)$.
    We denote the subspace of $\sqint{[\grpG]}{}$ consisting of the classes of cuspidal functions by $\sqint{[\mathbf{G}]}{\cusp}$,
    and the subspace of $\mathcal{A}(\mathbf{G})$ consisting of cuspidal automorphic forms by $\mathcal{A}_{\cusp}(\mathbf{G})$.
    \\
    (2) A discrete automorphic representation of $\grpG$ is \emph{cuspidal} if it is a subrepresentation of $\sqint{[\grpG]}{\cusp}$.
    Denote by $\Pi_{\cusp}(\grpG)$ (\emph{resp.} $\Pi_{\cusp}^{\mathrm{unr}}(\grpG)$) the subset of $\Pi_{\disc}(\grpG)$ (\emph{resp.} $\Pi_{\disc}^{\mathrm{unr}}(\grpG)$) consisting of cuspidal representations.
\end{defi}
\subsubsection{Automorphic forms of \texorpdfstring{$\grpF$}{PDFstring}}
\label{section automorphic forms of F4}
Now we concentrate on the level one automorphic forms of $\grpF$,
and describe them in a manner similar to the case for orthogonal groups \cite[\S 4.4]{ChenevierLannes}.
The adelic quotient $[\grpF]$ us compact,
so $\mathrm{L}^{2}([\grpF])=\mathrm{L}^{2}_{\disc}([\grpF])=\mathrm{L}_{\cusp}^{2}([\grpF])$,
and every automorphic representation of $\grpF$ is discrete and cuspidal.

A level one automorphic representation of $\grpF$ is generated by some automorphic form $\varphi\in\mathcal{A}(\grpF)^{\mathcal{F}_{4,\mathrm{I}}(\widehat{\Z})}\subseteq \mathrm{L}^{2}([\grpF])^{\mathcal{F}_{4,\mathrm{I}}(\widehat{\Z})}$.
The latter space can be viewed as 
the space of square-integrable functions on $\grpF(\Q)\backslash\grpF(\A)/\mathcal{F}_{4,\mathrm{I}}(\widehat{\Z})$,
endowed with the Radon measure that is the image of $\mu$ by the canonical map $[\grpF]\rightarrow \grpF(\Q)\backslash\grpF(\A)/\mathcal{F}_{4,\mathrm{I}}(\widehat{\Z})$.
By the Peter-Weyl theorem,
$\mathrm{L}^{2}([\grpF])^{\mathcal{F}_{4,\mathrm{I}}(\widehat{\Z})}$ can be decomposed into a direct sum of irreducible representations:
\begin{lemma}\label{lemma Peter Weyl applied to F4}
    Denote by $\mathrm{Irr}(\grpF(\R))$ the set of equivalence classes of irreducible representations of $\grpF(\R)$,
then we have:
\[\mathrm{L}^{2}([\grpF])^{\mathcal{F}_{4,\mathrm{I}}(\widehat{\Z})}\simeq \overline{\bigoplus_{V\in\mathrm{Irr}(\grpF(\R))}}V\otimes \mathcal{A}_{V}(\grpF),\]
where $\mathcal{A}_{V}(\grpF)$ is defined as
\begin{align}\label{eqn definition vector valued automorphic form}
    \set{f:\grpF(\Q)\backslash\grpF(\A)/\mathcal{F}_{4,\mathrm{I}}(\widehat{\Z})\rightarrow V}{f(gh)=h^{-1}.f(g),\text{ for any }g\in\grpF(\A),h\in \grpF(\R)}.
\end{align}
\end{lemma}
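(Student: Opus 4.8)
The plan is to reduce the statement to the Peter--Weyl theorem for the compact group $\grpF(\R)$ together with Frobenius reciprocity for finite subgroups. First I would make the right-hand side concrete: since $\grpF$ is semisimple with trivial center and $\grpF(\R)$ compact, the class number is finite, and \Cref{prop reformulation of double coset} together with \Cref{notation representatives of double cosets} identifies the double coset space $\mathcal{X}:=\grpF(\Q)\backslash\grpF(\A)/\mathcal{F}_{4,\mathrm{I}}(\widehat{\Z})$ with the set $\mathcal{J}$ of Albert lattices. As a right $\grpF(\R)$-set, $\mathcal{J}=\Gamma_{1}\backslash\grpF(\R)\sqcup\Gamma_{2}\backslash\grpF(\R)$, where $\Gamma_{i}$ is the automorphism group of the corresponding Albert $\Z$-algebra, a discrete and hence finite subgroup of the compact group $\grpF(\R)$. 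Right-$\mathcal{F}_{4,\mathrm{I}}(\widehat{\Z})$-invariant $\mathrm{L}^{2}$-functions on $[\grpF]$ are exactly pullbacks of functions on $\mathcal{X}$, so I get $\mathrm{L}^{2}([\grpF])^{\mathcal{F}_{4,\mathrm{I}}(\widehat{\Z})}\simeq\mathrm{L}^{2}(\Gamma_{1}\backslash\grpF(\R))\oplus\mathrm{L}^{2}(\Gamma_{2}\backslash\grpF(\R))$ as unitary $\grpF(\R)$-representations under right translation, where $\mathcal{X}$ carries the Radon measure pushed forward from $\mu$.

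Next I would decompose each summand. Fixing a $\grpF(\R)$-invariant Hermitian inner product on every $V\in\mathrm{Irr}(\grpF(\R))$, I would build the matrix-coefficient map $V\otimes\mathcal{A}_{V}(\grpF)\to\mathrm{L}^{2}([\grpF])^{\mathcal{F}_{4,\mathrm{I}}(\widehat{\Z})}$ sending $v\otimes f$ to $g\mapsto\langle f(g),v\rangle$: this function is left $\grpF(\Q)$-invariant and right $\mathcal{F}_{4,\mathrm{I}}(\widehat{\Z})$-invariant because $f$ is, so it descends to $\mathcal{X}$, and the equivariance $f(gh)=h^{-1}.f(g)$ for $h\in\grpF(\R)$ from \eqref{eqn definition vector valued automorphic form} translates (via unitarity) into the statement that the map intertwines $V$ with the right-translation action on the $V$-isotypic subspace. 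Conversely, evaluating a $\grpF(\R)$-equivariant map $V\to\mathrm{L}^{2}(\Gamma_{i}\backslash\grpF(\R))$ at the base coset $\Gamma_{i}\cdot e$ produces a $\Gamma_{i}$-fixed vector of $V$, so that the multiplicity space $\Hom_{\grpF(\R)}(V,\mathrm{L}^{2}(\Gamma_{i}\backslash\grpF(\R)))$ is identified with $V^{\Gamma_{i}}$; summing over $i$ recovers precisely the space $\mathcal{A}_{V}(\grpF)$. This step is just Frobenius reciprocity for the finite groups $\Gamma_{i}\subseteq\grpF(\R)$ made explicit.

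Finally, the Peter--Weyl theorem for $\grpF(\R)$ says that $\mathrm{L}^{2}(\grpF(\R))$, hence each $\mathrm{L}^{2}(\Gamma_{i}\backslash\grpF(\R))$, is the Hilbert direct sum of its isotypic components, so assembling the maps above over all $V$ yields the asserted isomorphism $\mathrm{L}^{2}([\grpF])^{\mathcal{F}_{4,\mathrm{I}}(\widehat{\Z})}\simeq\overline{\bigoplus_{V}}\,V\otimes\mathcal{A}_{V}(\grpF)$. This is the standard description of spaces of algebraic modular forms in the sense of Gross, entirely parallel to \cite[\S 4.4]{ChenevierLannes} for orthogonal groups, so there is no real obstacle. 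The only point demanding care is the bookkeeping that matches the normalization $f(gh)=h^{-1}.f(g)$ and the right-translation action of $\grpF(\A)$ with the conjugate-linear identification $V\simeq V^{*}$ coming from the invariant inner product, so that the multiplicity space is exactly $\mathcal{A}_{V}(\grpF)$ as written and not $\mathcal{A}_{V^{*}}(\grpF)$ — a distinction that disappears after summing over all of $\mathrm{Irr}(\grpF(\R))$ in any case.
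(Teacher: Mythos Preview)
Your proposal is correct and is exactly the natural unpacking of what the paper intends: the paper states the lemma with no proof beyond the sentence ``By the Peter--Weyl theorem, $\mathrm{L}^{2}([\grpF])^{\mathcal{F}_{4,\mathrm{I}}(\widehat{\Z})}$ can be decomposed into a direct sum of irreducible representations,'' so your argument via the identification $\mathcal{X}\simeq\bigsqcup_{i}\Gamma_{i}\backslash\grpF(\R)$, Peter--Weyl on each factor, and Frobenius reciprocity $\Hom_{\grpF(\R)}(V,\mathrm{L}^{2}(\Gamma_{i}\backslash\grpF(\R)))\simeq V^{\Gamma_{i}}$ is precisely the standard justification the paper leaves implicit (and which it later invokes in \Cref{lemma identification of vector valued automorphic form}).
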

Under this isomorphism, 
an automorphic form $\varphi\in\mathcal{A}(\grpF)^{\mathcal{F}_{4,\mathrm{I}}(\widehat{\Z})}$
is identified with an element of $\bigoplus\limits_{V\in\mathrm{Irr}(\grpF(\R))}V\otimes \mathcal{A}_{V}(\grpF)$.
The number of $\pi\in\Pi_{\disc}^{\mathrm{unr}}(\grpF)$ such that $\pi_{\infty}\simeq V$,
counted with multiplicities,
is exactly $\dim\mathcal{A}_{V}(\grpF)$,
which is computed explicitly in \cite{shan2024levelautomorphicrepresentationsanisotropic}.

Using \Cref{prop reformulation of double coset},
we identify $\grpF(\Q)\backslash\grpF(\A)/\mathcal{F}_{4,\mathrm{I}}(\widehat{\Z})$ with the set $\mathcal{J}$ of Albert lattices,
and equip $\mathcal{J}$ with the corresponding right $\grpF(\R)$-invariant Radon measure.
We can thus identify $\mathrm{L}^{2}([\grpF])^{\mathcal{F}_{4,\mathrm{I}}(\widehat{\Z})}$ 
with $\mathrm{L}^{2}(\mathcal{J})$,
equipped with the induced $\grpF(\R)$ action: 
\[(g.f)(J)=f(g^{-1}J),\text{ for any }g\in \grpF(\R),J\in\mathcal{J},\]
and identify $\mathcal{A}_{V}(\grpF)$
with the space 
\[\set{f:\mathcal{J}\rightarrow V}{f(gJ)=g.f(J),\text{ for any }g\in \grpF(\R),J\in\mathcal{J}}.\]
We will use either of these two formulations of $\mathcal{A}_{V}(\grpF)$,
depending on convenience.

A function $f\in\mathcal{A}_{V}(\grpF)$
is determined by its values on the set of representatives $\{1,\gamma_{\mathrm{E}}\}$ for 
$\grpF(\Q)\backslash \grpF(\A_{f})/\mathcal{F}_{4,\mathrm{I}}(\widehat{\Z})$ chosen in \Cref{notation representatives of double cosets}. 
Furthermore, we have:
\begin{lemma}\label{lemma identification of vector valued automorphic form}
    The evaluation map $f\mapsto (f(1),f(\gamma_{\mathrm{E}}))$ (or equivalently $f\mapsto (f(\jord_{\Z}),f(\jord_{\mathrm{E}}))$)
    induces an isomorphism of vector spaces:
    \begin{align*}
        \mathrm{M}_{V}(\grpF)\simeq V^{\Gamma_{\mathrm{I}}}\oplus V^{\Gamma_{\mathrm{E}}},
    \end{align*}
    where $\Gamma_{\mathrm{I}}=\mathcal{F}_{4,\mathrm{I}}(\Z)$ is the automorphism group of the Albert algebra $\jord_{\Z}$,
    and $\Gamma_{\mathrm{E}}$ is that of $\jord_{\mathrm{E}}$.
\end{lemma}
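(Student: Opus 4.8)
The plan is to use the description, established in the paragraph preceding the statement, of $\mathcal{A}_{V}(\grpF)$ as the space of functions $f\colon\mathcal{J}\to V$ satisfying $f(gJ)=g.f(J)$ for all $g\in\grpF(\R)$, together with the fact recorded in \Cref{notation representatives of double cosets} that $\mathcal{J}=\grpF(\R)\cdot\jord_{\Z}\ \sqcup\ \grpF(\R)\cdot\jord_{\mathrm{E}}$ is the union of exactly two $\grpF(\R)$-orbits. First I would note that such an $f$ is completely determined by the pair $\bigl(f(\jord_{\Z}),f(\jord_{\mathrm{E}})\bigr)$, since equivariance forces $f(g\jord_{\Z})=g.f(\jord_{\Z})$ and $f(g\jord_{\mathrm{E}})=g.f(\jord_{\mathrm{E}})$ for every $g\in\grpF(\R)$. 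Conversely, given a pair $(v_{1},v_{2})\in V\times V$, one tries to \emph{define} $f$ on the two orbits by exactly these formulas; this is well defined on $\grpF(\R)\cdot\jord_{\Z}$ precisely when $v_{1}$ is fixed by the stabilizer $\mathrm{Stab}_{\grpF(\R)}(\jord_{\Z})$ --- if $g_{1}\jord_{\Z}=g_{2}\jord_{\Z}$ then $g_{1}^{-1}g_{2}$ fixes $\jord_{\Z}$, so invariance of $v_{1}$ gives $g_{1}.v_{1}=g_{2}.v_{1}$ --- and similarly on $\grpF(\R)\cdot\jord_{\mathrm{E}}$. Hence the (visibly linear) evaluation map is an isomorphism $\mathcal{A}_{V}(\grpF)\simeq V^{\mathrm{Stab}_{\grpF(\R)}(\jord_{\Z})}\oplus V^{\mathrm{Stab}_{\grpF(\R)}(\jord_{\mathrm{E}})}$.

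The one step carrying actual content is then to identify these two stabilizers with $\Gamma_{\mathrm{I}}$ and $\Gamma_{\mathrm{E}}$. For $\jord_{\Z}$: if $g\in\grpF(\R)$ satisfies $g\jord_{\Z}=\jord_{\Z}$, then $g$ restricts to a $\Z$-linear automorphism of the lattice $\jord_{\Z}$ which fixes $\mathrm{I}$ and preserves the cubic form $\det$ --- every element of $\grpF(\R)$ does so on all of $\jord_{\R}$, by the characterization of $\grpF$ recalled after \Cref{def algebraic group F4} --- hence, by \Cref{def isomorphism of Albert algebras}, this restriction is an automorphism of the Albert $\Z$-algebra $\jord_{\Z}$, i.e.\ an element of $\mathcal{F}_{4,\mathrm{I}}(\Z)=\Gamma_{\mathrm{I}}$. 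Conversely, any automorphism of the Albert $\Z$-algebra $\jord_{\Z}$ extends uniquely $\R$-linearly to a map of $\jord_{\R}$ fixing $\mathrm{I}$ and preserving $\det$ (uniqueness because $\jord_{\Z}$ spans $\jord_{\R}$ over $\R$), hence to an element of $\grpF(\R)$ stabilizing $\jord_{\Z}$. So $\mathrm{Stab}_{\grpF(\R)}(\jord_{\Z})=\Gamma_{\mathrm{I}}$. The same argument applies verbatim to $\jord_{\mathrm{E}}$: being an Albert lattice in the sense of \Cref{def definite Albert algebra inside real model}, $\jord_{\mathrm{E}}$ has $\mathrm{I}$ as its distinguished element, so its stabilizer in $\grpF(\R)$ is its Albert $\Z$-algebra automorphism group $\Gamma_{\mathrm{E}}$. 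Combining this with the previous paragraph yields $\mathcal{A}_{V}(\grpF)\simeq V^{\Gamma_{\mathrm{I}}}\oplus V^{\Gamma_{\mathrm{E}}}$.

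I do not anticipate a genuine obstacle: the statement is essentially the orbit--stabilizer theorem applied to the two-orbit $\grpF(\R)$-set $\mathcal{J}$. If anything requires care it is the stabilizer identification, where one must check that an element of $\grpF(\R)$ merely preserving the \emph{lattice} $\jord_{\Z}$ is automatically an \emph{Albert $\Z$-algebra} automorphism --- this holds because the data $\mathrm{I},\#,\det$ defining the Jordan structure on $\jord_{\Z}$ are the restrictions of the corresponding data on $\jord_{\R}$ and are therefore preserved by any $g\in\grpF(\R)$. No convergence or finiteness conditions intervene, since $V$ is finite dimensional and $\grpF(\R)$ is compact, so $\mathcal{A}_{V}(\grpF)$ is by construction the full space of $\grpF(\R)$-equivariant maps $\mathcal{J}\to V$ and $\Gamma_{\mathrm{I}},\Gamma_{\mathrm{E}}$ are finite.
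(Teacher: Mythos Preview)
Your proposal is correct and is exactly the argument the paper has in mind; the paper in fact states the lemma without proof, treating it as an immediate consequence of the two-orbit decomposition of $\mathcal{J}$ recorded in \Cref{notation representatives of double cosets} together with the equivariant description of $\mathcal{A}_{V}(\grpF)$. Your extra care in identifying $\mathrm{Stab}_{\grpF(\R)}(\jord_{\Z})$ with $\mathcal{F}_{4,\mathrm{I}}(\Z)$ is well placed and entirely sound.
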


 
\subsubsection{A polynomial model of \texorpdfstring{$\vrep{n\varpi_{4}}$}{}}\label{section harmonic polynomials representations of F4}
In this paper, 
we focus on automorphic representations of $\grpF$
with archimedean component $V=\vrep{n\varpi_{4}}$.
Now we give a polynomial model of this family of irreducible representations.

When $n=1$, a natural model for the $26$-dimensional representation $\vrep{\varpi_{4}}$ 
is the trace $0$ part of $\jord_{\C}\simeq\mathfrak{p}_{\jord}^{-}$.
We choose the realization dual to this one,
\emph{i.e.} the subspace of $\mathrm{P}_{1}(\jord_{\C})\simeq \mathfrak{p}_{\jord}^{+}$ 
consisting of linear functions $\ell$ on $\jord_{\C}$ such that $\ell(\mathrm{I})=0$.

For $n\geq 1$, 
$\vrep{n\varpi_{4}}$ is a subrepresentation of $\sym^{n}\vrep{\varpi_{4}}\subseteq \sym^{n}\mathfrak{p}_{\jord}^{+}=\mathrm{P}_{n}(\jord_{\C})$,
where the action of $\grpF(\R)$ on $\mathrm{P}_{n}(\jord_{\C})$ is given as:
\[(g.P)(X)=P(g^{-1}x),\text{ for any }g\in\grpF(\R),P\in\mathrm{P}_{n}(\jord_{\C})\text{ and }X\in\jord_{\C}.\]
\begin{defi}\label{def F4 harmonic polynomials}
    Define $\mathbb{X}$ to be the following $\grpF(\C)$-orbit in $\jord_{\C}$:
    \[\mathbb{X}:=\set{A\in\jord_{\C}}{\tr(A)=0,\mathrm{rank}(A)=1}=\set{A\in\jord_{\C}}{A\neq 0,\tr(A)=0,\mathrm{rank}(A)=1}.\]
    For any $n\geq 1$,
    we define $\vrep{n}(\jord_{\C})$ to be the subspace of $\mathrm{P}_{n}(\jord_{\C})$ 
    spanned by polynomials of the form 
    $X\mapsto \left(\tr\left(X\circ A\right)\right)^{n}$, $A\in \mathbb{X}$.
\end{defi}
\begin{lemma}\label{lemma polynomial model of F4 representation}
    For any $n\geq 1$,
    $\vrep{n}(\jord_{\C})$ is an irreducible representation of $\grpF(\R)$,
    and its highest weight is $n\varpi_{4}$.
\end{lemma}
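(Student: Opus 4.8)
The plan is to identify $\vrep{n}(\jord_{\C})$ with the Cartan component of $\sym^{n}\vrep{\varpi_{4}}$, using the highest-weight-vector description of the orbit $\mathbb{X}$. First I would observe that $\vrep{n}(\jord_{\C})$ is manifestly $\grpF(\C)$-stable: since $\mathbb{X}$ is a single $\grpF(\C)$-orbit, the spanning set $\{X\mapsto(\tr(X\circ A))^{n} : A\in\mathbb{X}\}$ is permuted (up to the natural action) by $\grpF(\C)$, so the span is a subrepresentation of $\mathrm{P}_{n}(\jord_{\C})=\sym^{n}\mathfrak{p}_{\jord}^{+}$. Next I would pin down the highest weight. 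Fix a maximal torus and Borel of $\grpF(\C)$; the $26$-dimensional representation $\vrep{\varpi_{4}}$ is minuscule with weights the short roots together with $0$, and a highest weight vector is a linear functional $\ell_{0}$ on $\jord_{\C}$ whose kernel is the sum of all non-highest weight spaces. Dually, the element $A_{0}\in\jord_{\C}$ pairing nontrivially with $\ell_{0}$ spans the lowest weight line of $\mathfrak{p}_{\jord}^{-}\cong\jord_{\C}$ inside the trace-zero part; one checks $A_{0}$ is nilpotent of rank $1$ and trace $0$, hence $A_{0}\in\mathbb{X}$. Therefore the polynomial $X\mapsto(\tr(X\circ A_{0}))^{n}=\ell_{0}(X)^{n}$ (up to scalar) lies in $\vrep{n}(\jord_{\C})$ and is a weight vector of weight $n\varpi_{4}$, which is the highest possible weight occurring in $\sym^{n}\vrep{\varpi_{4}}$.

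The core of the argument is then to show that $\vrep{n}(\jord_{\C})$ is exactly the irreducible subrepresentation $\vrep{n\varpi_{4}}\subseteq\sym^{n}\vrep{\varpi_{4}}$ generated by that highest weight vector, i.e.\ that the span of the $n$-th powers $\ell_{A}^{n}$ for $A\in\mathbb{X}$ is irreducible. One clean way: the orbit $\mathbb{X}$ is (a cone over) the closed $\grpF(\C)$-orbit in $\mathbb{P}(\vrep{\varpi_{4}})$ — it is the ``minimal nilpotent''-type orbit consisting of rank-one trace-zero elements, which is the highest weight orbit for the dual representation. The $n$-th Veronese-type span of such highest weight vectors inside $\sym^{n}$ of a representation is always the Cartan component, by a standard fact: the closed orbit's homogeneous coordinate ring is $\bigoplus_{n}\vrep{n\varpi_{4}}^{*}$, so the degree-$n$ piece of functions restricted from $\sym^{n}\vrep{\varpi_{4}}^{*}$ to the cone over the closed orbit is precisely $\vrep{n\varpi_{4}}$; dualizing gives that $\vrep{n}(\jord_{\C})=\mathrm{span}\{\ell_{A}^{n}\}$ is $\vrep{n\varpi_{4}}$. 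Alternatively, and perhaps more self-contained given what is available in the paper, I would argue directly: $\vrep{n}(\jord_{\C})$ contains the highest weight vector $\ell_{0}^{n}$ of weight $n\varpi_{4}$; and it is $\grpF(\C)$-stable and contained in $\sym^{n}\vrep{\varpi_{4}}$, so it contains the irreducible $\vrep{n\varpi_{4}}$. For the reverse inclusion, since $\mathbb{X}$ is a single orbit, every $\ell_{A}^{n}$ is a $\grpF(\C)$-translate of $\ell_{0}^{n}$, hence lies in the subrepresentation generated by $\ell_{0}^{n}$, which is $\vrep{n\varpi_{4}}$; thus $\vrep{n}(\jord_{\C})\subseteq\vrep{n\varpi_{4}}$ and equality holds.

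The step I expect to be the main obstacle is verifying cleanly that $\ell_0$ (equivalently $A_0$) is a highest/lowest weight vector realized by a rank-one trace-zero element — that is, matching the Lie-theoretic highest weight line with the explicit geometric description of $\mathbb{X}$. This requires either invoking that $\vrep{\varpi_4}$ is minuscule (so all nonzero weight spaces are one-dimensional and the highest weight orbit in $\mathbb{P}(\vrep{\varpi_4})$ is the unique closed orbit), together with the known fact (e.g.\ from \cite{OctSpr} or \cite{G96}) that the closed $\grpF$-orbit in the trace-zero part of $\jord_{\C}$ consists exactly of the rank-one elements, and thereby matching the two. Once that identification is in hand, both inclusions above are formal, using only that $\mathbb{X}$ is a single $\grpF(\C)$-orbit and that $\vrep{n\varpi_4}$ is by definition the irreducible constituent of $\sym^n\vrep{\varpi_4}$ of highest weight $n\varpi_4$ (which exists with multiplicity one as the Cartan component). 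I would also remark that irreducibility as a $\grpF(\R)$-representation follows from irreducibility as a $\grpF(\C)$-representation because $\grpF(\R)$ is a compact real form, so its finite-dimensional complex representations are in bijection with those of $\grpF(\C)$ preserving irreducibility.
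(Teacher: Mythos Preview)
Your approach is essentially the same as the paper's: identify $\mathbb{X}$ as the highest-weight orbit in $\vrep{\varpi_{4}}$ and use transitivity of the $\grpF$-action to get both inclusions $\vrep{n\varpi_{4}}\subseteq \vrep{n}(\jord_{\C})\subseteq \vrep{n\varpi_{4}}$. The paper compresses this to a single sentence, but your unpacking is sound.

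One factual slip: $\vrep{\varpi_{4}}$ is \emph{not} minuscule. The group $\grpF$ has no minuscule fundamental representations; the $26$-dimensional representation has the $24$ short roots as nonzero weights (each with multiplicity one) together with the zero weight with multiplicity two, so the weights do not form a single Weyl orbit. Fortunately this does not affect your argument: the facts you actually use---that the highest weight space is one-dimensional and that the highest-weight orbit is the unique closed orbit in $\mathbb{P}(\vrep{\varpi_{4}})$---hold for \emph{any} irreducible representation of a reductive group, minuscule or not. You should simply drop the word ``minuscule'' and state these properties directly.
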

\begin{proof}
    This lemma follows from the fact that 
    $\mathbb{X}$ is the set of highest vectors in the irreducible $\grpF(\R)$-representation 
    $\{A\in\jord_{\C},\tr(A)=0\}\simeq\vrep{\varpi_{4}}$,
    and $\grpF(\R)$ acts on it transitively.
\end{proof}

\subsection{Automorphic realization of minimal representation}\label{section automorphic realization of min rep}
 Let $\Pi_{\min}=\otimes^{\prime}_{v}\Pi_{\min,v}$ be the (adelic) minimal representation of $\grpE(\A)$.
To establish the global theta correspondence for dual pairs inside $\grpE$, 
we need to choose an automorphic realization of $\Pi_{\min}$,
\emph{i.e.}\,an $\grpE(\A)$-equivariant embedding $\theta:\Pi_{\min}\hookrightarrow \sqint{[\grpE]}{}$.
In this section, 
we follow \cite[\S 6]{kim_yamauchi_2016} to give $\theta$ via an explicit modular form constructed by Kim in \cite{Kim1993}.
\subsubsection{Exceptional modular forms}\label{section exceptional modular forms}
\begin{defi}\label{def exceptional tube domain}
    The \emph{exceptional tube domain} $\mathcal{H}_{\jord}$ of complex dimension $27$ is the open subset of $\jord_{\C}=\jord_{\R}+i\jord_{\R}$
    consisting of $Z=X+iY$ with $Y$ positive definite.
\end{defi}
For any element $Z\in \jord_{\C}$,
set $\mathrm{r}_{1}(Z):=\left(Z,\det(Z),Z^{\#},1\right)\in \mathrm{W}_{\jord}\otimes\C$.
By \cite[Proposition 2.3.1]{PolFourierExpansion}, 
for any $g\in \mathbf{H}_{\jord}^{1}(\R)$ and $Z\in \mathcal{H}_{\jord}$,
there exist a unique scalar $J(g,Z)\in\C^{\times}$,
which is called the \emph{automorphy factor} for $\mathbf{H}_{\jord}^{1}(\R)$,
and a unique $Z^{\prime}\in \mathcal{H}_{\jord}$ such that
\begin{align*}
    g.\mathrm{r}_{1}(Z)=J(g,Z)\mathrm{r}_{1}(Z^{\prime}).
\end{align*}
\begin{defi}\label{def tube domain action}
    The action of $\mathbf{H}_{\jord}^{1}(\R)$-action on $\mathcal{H}_{\jord}$ is defined as follows:
    for $g\in \mathbf{H}_{\jord}^{1}(\R)$ and $Z\in\mathcal{H}_{\jord}$,
    $g.Z$ is the unique $Z^{\prime}\in\mathcal{H}_{\jord}$ satisfying $g.\mathrm{r}_{1}(Z)\in\C^{\times}\mathrm{r}_{1}(Z^{\prime})$.
\end{defi}
\begin{ex}\label{ex explicit actions on exceptional tube domain}
    We list the actions of some elements in $\mathbf{H}_{\jord}^{1}(\R)$:
    \begin{itemize}
        \item For $\mathrm{n}(A)\in \mathbf{N}_{\jord}(\R)$, $\mathrm{n}(A).Z=Z+A$ and $J(\mathrm{n}(A),Z)=1$;
        \item For $m\in\mathbf{M}_{\jord}(\R)$, $m.(X+iY)=\lambda(m)(\lambda(X)+i\lambda(Y))$ and $J(m,Z)=\lambda(m)^{-1}$;
        \item For $\iota$ defined by \eqref{eqn action of order 4 element in E7}, $\iota.Z=-Z^{-1}$ and $J(\iota,Z)=\det(Z)$.
    \end{itemize}
\end{ex}
The center $\pm1 \simeq \langle \iota^{2}\rangle$ of $\mathbf{H}_{\jord}^{1}(\R)$ acts trivially on $\mathcal{H}_{\jord}$,
and the group of holomorphic transformations of $\mathcal{H}_{\jord}$ is 
$\mathbf{H}_{\jord}^{1}(\R)/\pm1$,
the connected component of $\grpE(\R)$.
\begin{defi}\label{def holomorphic modular form for E7} 
    A holomorphic function $F:\mathcal{H}_{\jord}\rightarrow \C$ is a \emph{modular form of level $1$ and weight $k$}
    if for any $Z\in\mathcal{H}_{\jord}$ and $\gamma\in \mathbf{H}_{\jord}^{1}(\Z)$ we have 
    \begin{align*}
        F(\gamma.Z)=J(\gamma,Z)^{k}\cdot F(Z).
    \end{align*}    
\end{defi}
Kim's modular form $\mathrm{F}_{Kim}$ is defined by the following Fourier expansion: 
\begin{align}\label{eqn Fourier expansion of Kim modular form}
    \mathrm{F}_{Kim}(Z):=1+240\sum_{\substack{\jord_{\Z}\ni T\geq 0,\\ \mathrm{rank}(T)=1}}\sigma_{3}\left(\mathrm{c}_{\jord_{\Z}}(T)\right)e^{2\pi i (T,Z)},\text{ for any }Z\in\mathcal{H}_{\jord},
\end{align}
where $\mathrm{c}_{\jord_{\Z}}(T)$ is the content of $T$,
\emph{i.e.}\,the largest integer $c$ such that $T/c\in \jord_{\Z}$,
and $\sigma_{3}(n)=\sum_{d|n}d^{3}$.
The function $\mathrm{F}_{Kim}$ defined by \eqref{eqn Fourier expansion of Kim modular form} 
is a modular form of level $1$ and weight $4$.

\subsubsection{Kim's automorphic form}\label{section automorphic forms E7}
Kim's modular form $\mathrm{F}_{Kim}$ gives rise to a level one automorphic form of $\grpE$.
Using the strong approximation property of $\grpE$,
we have the following natural homemorphisms:
\[\grpE(\Q)\backslash\grpE(\A)/\grpE(\widehat{\Z})\simeq \grpE(\Z)\backslash\grpE(\R)\simeq \mathbf{H}_{\jord}^{1}(\Z)\backslash\mathbf{H}_{\jord}^{1}(\R),\]
thus we write any element $g\in\grpE(\A)$ as
$g=g_{\Q}g_{\infty}g_{\widehat{\Z}}$,
where $g_{\Q}\in\grpE(\Q)$,
$g_{\widehat{\Z}}\in\grpE(\widehat{\Z})$
and $g_{\infty}\in \grpE(\R)$ is the image of an element in $\mathbf{H}_{\jord}^{1}(\R)$ under the projection $\mathbf{H}_{\jord}(\R)\rightarrow\grpE(\R)$. 
In other words,
$g_{\infty}$ is an element of $\mathbf{H}_{\jord}^{1}(\R)/\pm 1$,
the group of holomorphic automorphisms of $\mathcal{H}_{\jord}$.
Now for $g=g_{\Q}g_{\infty}g_{\widehat{\Z}}\in\grpE(\A)$, 
we define
\[\Theta_{Kim}(g):=J(g_{\infty},i\mathrm{I})^{-4}\cdot\mathrm{F}_{Kim}(g_{\infty}.i\mathrm{I}),\]
which is a well-defined
\footnote{Here we use the fact that $J(\gamma,Z)=\pm 1$ for any $\gamma\in\mathbf{H}_{\jord}^{1}(\Z)$ and $Z\in\mathcal{H}_{\jord}$.} 
automorphic form of $\grpE$.
Using the explicit action on $\mathcal{H}_{\jord}$ given in \Cref{ex explicit actions on exceptional tube domain},
one gets the following:
\begin{lemma}\label{lemma invariance of Kim modular form}
    The automorphic form $\Theta_{Kim}\in\mathcal{A}(\grpE)$ is invariant under $\grpF(\R)\times\grpE(\widehat{\Z})$.
\end{lemma}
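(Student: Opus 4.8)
The plan is to isolate two elementary facts about the image of $\grpF(\R)$ in $\mathbf{H}_{\jord}^{1}(\R)$ and then deduce both invariances by a formal manipulation of the automorphy factor $J$. The two facts are: every $h\in\grpF(\R)$ fixes the base point $i\mathrm{I}\in\mathcal{H}_{\jord}$, and $J(h,i\mathrm{I})=1$. Both follow from \Cref{ex explicit actions on exceptional tube domain}: by construction $\grpF$ is the subgroup of the $E_{6}$-Levi $\mathbf{M}_{\jord}$ consisting of elements $m$ with $\lambda(m)=1$ fixing $\mathrm{I}$, so specializing the formulas recorded there for the $\mathbf{M}_{\jord}(\R)$-action on $\mathcal{H}_{\jord}$ and for $J(m,\cdot)=\lambda(m)^{-1}$ gives $h.(i\mathrm{I})=i\,h(\mathrm{I})=i\mathrm{I}$ and $J(h,i\mathrm{I})=\lambda(h)^{-1}=1$. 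Equivalently, and more directly, since $h$ acts on $\mathrm{W}_{\jord}$ through $\mathbf{M}_{\jord}$ with $\lambda(h)=1$ and with $h^{*}=h$ (because $h$ is an isometry of $(\,,\,)$, cf. \Cref{rmk bilinear form not E6 invariant}), one has $h.\mathrm{r}_{1}(i\mathrm{I})=h.(i\mathrm{I},-i,-\mathrm{I},1)=(i\mathrm{I},-i,-\mathrm{I},1)=\mathrm{r}_{1}(i\mathrm{I})$, which simultaneously gives $h.(i\mathrm{I})=i\mathrm{I}$ and $J(h,i\mathrm{I})=1$.

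Granting this, the invariance of $\Theta_{Kim}$ under $\grpE(\widehat{\Z})$ is immediate from its definition: writing $g=g_{\Q}g_{\infty}g_{\widehat{\Z}}$ as in \Cref{section automorphic forms E7}, right translation by $h\in\grpE(\widehat{\Z})$ replaces $g_{\widehat{\Z}}$ by $g_{\widehat{\Z}}h\in\grpE(\widehat{\Z})$ and leaves the archimedean component $g_{\infty}$ unchanged, whence $\Theta_{Kim}(gh)=\Theta_{Kim}(g)$. For $h\in\grpF(\R)$ I would use that the archimedean and finite factors of $\grpE(\A)$ commute, so $gh=g_{\Q}g_{\infty}g_{\widehat{\Z}}h=g_{\Q}(g_{\infty}h)g_{\widehat{\Z}}$, exhibiting $g_{\infty}h\in\mathbf{H}_{\jord}^{1}(\R)/\pm1$ as an archimedean representative of $gh$. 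The cocycle identity $J(g_{\infty}h,i\mathrm{I})=J(g_{\infty},h.(i\mathrm{I}))\,J(h,i\mathrm{I})$ combined with $h.(i\mathrm{I})=i\mathrm{I}$ and $J(h,i\mathrm{I})=1$ gives $J(g_{\infty}h,i\mathrm{I})=J(g_{\infty},i\mathrm{I})$, while $(g_{\infty}h).(i\mathrm{I})=g_{\infty}.(h.(i\mathrm{I}))=g_{\infty}.(i\mathrm{I})$; substituting both into $\Theta_{Kim}(g)=J(g_{\infty},i\mathrm{I})^{-4}\,\mathrm{F}_{Kim}(g_{\infty}.(i\mathrm{I}))$ yields $\Theta_{Kim}(gh)=\Theta_{Kim}(g)$.

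There is essentially no serious obstacle beyond bookkeeping; the content of the argument is the two fixed-point computations of the first paragraph. The one point deserving a word of care is that $g_{\infty}h$ is a legitimate archimedean representative of the class of $gh$ in $\mathbf{H}_{\jord}^{1}(\Z)\backslash\mathbf{H}_{\jord}^{1}(\R)$: this is fine because $\grpF\subseteq\mathbf{M}_{\jord}\subseteq\mathbf{H}_{\jord}^{1}$, so $\grpF(\R)$ lifts into $\mathbf{H}_{\jord}^{1}(\R)$ and its image in $\mathbf{H}_{\jord}^{1}(\R)/\pm1$ is precisely the copy of $\grpF(\R)$ inside the holomorphic automorphism group of $\mathcal{H}_{\jord}$ (the center of $\grpF$ being trivial, it meets the kernel $\langle\iota^{2}\rangle$ of $\mathbf{H}_{\jord}^{1}\to\grpE$ only in the identity). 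Since the defining formula for $\Theta_{Kim}$ depends on $g$ only through $J(g_{\infty},i\mathrm{I})$ and $g_{\infty}.(i\mathrm{I})$ — and the well-definedness of $\Theta_{Kim}$ itself, where the weight $4$ absorbs the sign ambiguity $J(\gamma,Z)=\pm1$ for $\gamma\in\mathbf{H}_{\jord}^{1}(\Z)$, has already been recorded — the two computations above complete the proof.
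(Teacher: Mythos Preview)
Your proof is correct and follows exactly the approach the paper indicates. The paper itself does not spell out a proof; it merely records the lemma after the sentence ``Using the explicit action on $\mathcal{H}_{\jord}$ given in \Cref{ex explicit actions on exceptional tube domain}, one gets the following,'' and you have faithfully unpacked that gesture: for $h\in\grpF(\R)\subset\mathbf{M}_{\jord}(\R)$ one has $\lambda(h)=1$ and $h\mathrm{I}=\mathrm{I}$, hence $J(h,i\mathrm{I})=1$ and $h.(i\mathrm{I})=i\mathrm{I}$, while $\grpE(\widehat{\Z})$-invariance is built into the decomposition $g=g_{\Q}g_{\infty}g_{\widehat{\Z}}$.
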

Now we use $\Theta_{Kim}$ to embed $\Pi_{\min}$ into $\mathrm{L}^{2}([\grpE])$:
\begin{defi}\label{def automorphic realization theta via Kim modular form}
    Let $\Phi_{p}\in\Pi_{\min,p}$ be the normalized spherical vector,
    $\Phi_{\infty}\in\Pi^{+}\subseteq\Pi_{\min,\infty}$ the unique (up to scalar) holomorphic vector with the minimal $\mathrm{K}_{\lietype{E}{7}}$-type,
    and $\Phi_{0}:=\Phi_{\infty}\otimes \Phi_{f}=\otimes_{v}\Phi_{v}\in\Pi_{\min}$.
    The automorphic realization $\theta:\Pi_{\min}\hookrightarrow \sqint{[\grpE]}{}$ is defined to be the unique $\grpE(\A)$-equivariant map sending $\Phi_{0}$ to $\Theta_{Kim}$.
\end{defi}

\subsubsection{Constructing automorphic forms with non-minimal \texorpdfstring{$K_{\lietype{E}{7}}$}{PDFstring}-types}\label{section schimid operator and other K types}
 
The holomorphic vector $\Phi_{\infty}$ lies in the minimal $\mathrm{K}_{\lietype{E}{7}}$-type of $\Pi^{+}\subseteq\Pi_{\min,\infty}$,
and we follow the method in \cite{PolFourierExpansion} to produce (holomorphic) automorphic forms with higher $\mathrm{K}_{\lietype{E}{7}}$-types. 

For the two summands $\mathfrak{p}_{\jord}^{\pm}$ in the Cartan decomposition \eqref{eqn Cartan decomposition} of $\mathfrak{e}_{7}$,
choose a basis $\{\mathrm{X}_{\alpha}\}_{\alpha}$ of $\mathfrak{p}_{\jord}^{+}$ 
and its dual basis $\{\mathrm{X}_{\alpha}^{\vee}\}_{\alpha}$ of $\mathfrak{p}_{\jord}^{-}$
with respect to $\mathfrak{p}_{\jord}^{+}\times\mathfrak{p}_{\jord}^{-}\simeq \jord_{\C}^{\vee}\times\jord_{\C}\xrightarrow{\left\{-,-\right\}}\C$.
\begin{defi}\label{def derivative of automorphic form}
    We define a linear differential operator $\mathrm{D}:\mathcal{A}(\grpE)\rightarrow \mathcal{A}(\grpE)\otimes \mathfrak{p}_{\jord}^{-}$ by 
    \[\mathrm{D}\varphi(g):=\sum_{\alpha}(\mathrm{X}_{\alpha}\varphi)(g) \otimes \mathrm{X}_{\alpha}^{\vee},\text{ for every }\varphi\in\mathcal{A}(\grpE),\]
    which is independent of the choice of $\left\{\mathrm{X}_{\alpha}\right\}_{\alpha}$.
    For any integer $n\geq 0$, set
    $\mathrm{D}^{n}$ to be the $n$-times composition of $\mathrm{D}$. 
\end{defi}
Applying the differential operator $\mathrm{D}^{n}$ defined in \Cref{def derivative of automorphic form} to $\Theta_{Kim}$,
we obtain 
\[\Theta_{n}:=\mathrm{D}^{n}\Theta_{Kim}\in \mathcal{A}(\grpE)\otimes(\mathfrak{p}_{\jord}^{-})^{\otimes n},\]
whose coordinates belong to the $\mathrm{K}_{\lietype{E}{7}}$-type $\mathrm{E}(n,2n+12)$ in \eqref{eqn minimal K type of minimal representation}.
\begin{notation}\label{notation expansion of derivative Kim form}
    \begin{enumerate}[label=(\arabic*)]
        \item For any Albert lattice $J\in\mathcal{J}$,
        denote by $J^{+}$ the set of rank $1$ and positive semi-definite elements in $J$,
        and set $\mathrm{a}_{J}(T):=\sigma_{3}(\mathrm{c}_{J}(T))$ for any $T\in J$, 
        where $\mathrm{c}_{J}(T)$ is the content of $T$ in $J$.
        \item For any element $T\in\jord_{\R}$,
        denote by $h_{T}$ the function:  
        \[g=g_{\Q}g_{\infty}g_{\widehat{\Z}}\in \grpE(\A)\mapsto J(g_{\infty},i\mathrm{I})^{-4}\cdot e^{2\pi i (T,g_{\infty}.i\mathrm{I})},\]
        where $g_{\Q}\in\grpE(\Q),g_{\widehat{\Z}}\in\grpE(\widehat{\Z})$ and $g_{\infty}$ lies in the image of $\mathbf{H}_{\jord}^{1}(\R)$. 
    \end{enumerate}
\end{notation}
With these notations, 
for any $n\geq 1$,
we rewrite $\Theta_{n}$ as:
\begin{align}\label{eqn Fourier expansion of higher kernel function}
    \Theta_{n}(g)=240\sum_{T\in\jord_{\Z}^{+}}\mathrm{a}_{\jord_{\Z}}(T)\cdot\mathrm{D}^{n}h_{T}(g)=240\sum_{T\in\jord_{\Z}^{+}}\mathrm{a}_{\jord_{\Z}}(T)\cdot \mathrm{D}^{n}h_{T}(g_{\infty}).
\end{align}
We end this section by the following property of $\Theta_{n}$:
\begin{lemma}\label{lemma F4 action on kernel function}
    For any $g_{\infty}\in\mathbf{H}_{\jord}^{1}(\R)$ and $h_{\infty}\in\grpF(\R)$,
    we have $\Theta_{n}(g_{\infty}h_{\infty})=h_{\infty}^{-1}.\Theta_{n}(g_{\infty})$,
    where the action of $h_{\infty}^{-1}$ is applied on $(\mathfrak{p}_{\jord}^{-})^{\otimes n}$.
\end{lemma}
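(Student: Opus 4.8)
The plan is to induct on $n$, starting from the right $\grpF(\R)$-invariance of $\Theta_{0}=\Theta_{Kim}$ (\Cref{lemma invariance of Kim modular form}) and transporting it through each application of the operator $\mathrm{D}$.

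First I would record how $\grpF(\R)$ sits inside $\mathbf{H}_{\jord}^{1}(\R)$. For $h_{\infty}\in\grpF(\R)$ one has $h_{\infty}(\mathrm{I})=\mathrm{I}$ and $\lambda(h_{\infty})=1$, so a direct computation (cf.\ \Cref{ex explicit actions on exceptional tube domain}) shows that $h_{\infty}$ fixes the vector $\mathrm{r}_{1}(i\mathrm{I})=(i\mathrm{I},-i,-\mathrm{I},1)$; hence $\grpF(\R)\subseteq\mathrm{K}_{\lietype{E}{7}}$. Therefore $\mathrm{Ad}(h_{\infty})$ preserves each summand of the Cartan decomposition \eqref{eqn Cartan decomposition}, in particular $\mathrm{Ad}(h_{\infty})\mathfrak{p}_{\jord}^{\pm}=\mathfrak{p}_{\jord}^{\pm}$; under the identification $\mathfrak{p}_{\jord}^{-}\simeq\jord_{\C}$ of \Cref{section Lie algebra E7} this $\mathrm{Ad}$-action is exactly the action by which $h_{\infty}^{-1}$ is applied in the statement (both reduce to $A\mapsto h_{\infty}^{-1}(A)$, which follows from \Cref{rmk opposite siegel parabolic}, $\lambda(h_{\infty})=1$, and the fact that the Cayley transform $\mathrm{C}_{h}$ centralizes $\grpF$). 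Moreover $\mathrm{Ad}(h_{\infty})$ preserves the pairing $\{-,-\}\colon\mathfrak{p}_{\jord}^{+}\times\mathfrak{p}_{\jord}^{-}\to\C$, since that pairing is $\mathbf{M}_{\jord}(\C)$-invariant and $\grpF\subseteq\mathbf{M}_{\jord}$. Consequently, if $\{\mathrm{X}_{\alpha}\}$ is a basis of $\mathfrak{p}_{\jord}^{+}$ with dual basis $\{\mathrm{X}_{\alpha}^{\vee}\}$ of $\mathfrak{p}_{\jord}^{-}$, then so are $\{\mathrm{Ad}(h_{\infty})\mathrm{X}_{\alpha}\}$ and $\{\mathrm{Ad}(h_{\infty})\mathrm{X}_{\alpha}^{\vee}\}$, and the basis-independence of $\mathrm{D}$ (\Cref{def derivative of automorphic form}) gives, for every $\varphi\in\mathcal{A}(\grpE)$ and every $g$,
\[
\sum_{\alpha}\bigl((\mathrm{Ad}(h_{\infty})\mathrm{X}_{\alpha})\varphi\bigr)(g)\otimes\mathrm{Ad}(h_{\infty})\mathrm{X}_{\alpha}^{\vee}=(\mathrm{D}\varphi)(g).
\]

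Next I would use the standard translation identity: for $\mathrm{X}\in\mathfrak{e}_{7}$ (acting $\C$-linearly) and smooth $\varphi$, setting $R_{h_{\infty}}\varphi(g):=\varphi(gh_{\infty})$, one has $(\mathrm{X}\varphi)(gh_{\infty})=\bigl((\mathrm{Ad}(h_{\infty})\mathrm{X})(R_{h_{\infty}}\varphi)\bigr)(g)$, which comes from $gh_{\infty}e^{t\mathrm{X}}=ge^{t\,\mathrm{Ad}(h_{\infty})\mathrm{X}}h_{\infty}$. Now I induct on $n$, the case $n=0$ being \Cref{lemma invariance of Kim modular form}. Assuming $\Theta_{n-1}(gh_{\infty})=\mathrm{Ad}(h_{\infty}^{-1})^{\otimes(n-1)}\Theta_{n-1}(g)$, for each $\alpha$ the inductive hypothesis (applied componentwise, since $\mathrm{Ad}(h_{\infty}^{-1})^{\otimes(n-1)}$ is a fixed linear map) together with the translation identity gives
\[
(\mathrm{X}_{\alpha}\Theta_{n-1})(gh_{\infty})=\mathrm{Ad}(h_{\infty}^{-1})^{\otimes(n-1)}\bigl(((\mathrm{Ad}(h_{\infty})\mathrm{X}_{\alpha})\Theta_{n-1})(g)\bigr).
\]
Tensoring with $\mathrm{X}_{\alpha}^{\vee}=\mathrm{Ad}(h_{\infty}^{-1})(\mathrm{Ad}(h_{\infty})\mathrm{X}_{\alpha}^{\vee})$, summing over $\alpha$, and applying the displayed basis-independence identity to the dual bases $\{\mathrm{Ad}(h_{\infty})\mathrm{X}_{\alpha}\},\{\mathrm{Ad}(h_{\infty})\mathrm{X}_{\alpha}^{\vee}\}$ yields $\Theta_{n}(gh_{\infty})=\mathrm{Ad}(h_{\infty}^{-1})^{\otimes n}\Theta_{n}(g)$, which is the assertion.

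The translation identity and the induction are routine; the one point requiring care is the first paragraph, namely matching the $\mathrm{Ad}$-action on $\mathfrak{p}_{\jord}^{-}\subseteq\mathfrak{e}_{7}$ with the $\jord_{\C}$-action implicit in the statement and checking that $\grpF(\R)$ stabilizes $\mathfrak{p}_{\jord}^{\pm}$ and the pairing $\{-,-\}$. This is exactly where one invokes $\grpF(\R)\subseteq\mathrm{K}_{\lietype{E}{7}}$ — equivalently, that the Cayley transform $\mathrm{C}_{h}$ centralizes $\grpF$ — so that the identifications of \Cref{section Lie algebra E7} are $\grpF$-equivariant.
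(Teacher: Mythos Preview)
Your proof is correct and uses exactly the same ingredients as the paper's proof: the right $\grpF(\R)$-invariance of $\Theta_{Kim}$, the inclusion $\grpF(\R)\subseteq\mathrm{K}_{\lietype{E}{7}}$ (hence $\mathrm{Ad}(h_{\infty})$ preserves $\mathfrak{p}_{\jord}^{\pm}$ and sends a dual pair of bases to another), the identity $gh_{\infty}e^{t\mathrm{X}}=ge^{t\,\mathrm{Ad}(h_{\infty})\mathrm{X}}h_{\infty}$, and the basis-independence of $\mathrm{D}$. The only difference is organizational: the paper writes out the $n$-fold iterated derivative in one displayed computation, whereas you package the same steps as an induction on $n$; both arrive at $\Theta_{n}(g_{\infty}h_{\infty})=h_{\infty}^{-1}.\Theta_{n}(g_{\infty})$ by the same mechanism.
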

\begin{proof}
    By the definition of $\Theta_{n}=\mathrm{D}^n\Theta_{Kim}$,
    we have:
    {\footnotesize\begin{align*}
        \Theta_{n}(g_{\infty}h_{\infty})=&\sum_{\alpha_{1},\dots,\alpha_{n}}(\mathrm{X}_{\alpha_{n}}\cdots \mathrm{X}_{\alpha_{1}}\Theta_{Kim})(g_{\infty}h_{\infty})\otimes \mathrm{X}_{\alpha_{1}}^{\vee}\otimes\cdots\otimes \mathrm{X}_{\alpha_{n}}^{\vee}\\
        =&\sum_{\alpha_{1},\dots,\alpha_{n}}\frac{d}{dt_{n}}\bigg{|}_{t_{n}=0}\cdots\frac{d}{dt_{1}}\bigg{|}_{t_{1}=0}\Theta_{Kim}(g_{\infty}h_{\infty}e^{t_{n}\mathrm{X}_{\alpha_{n}}}\cdots e^{t_{1}\mathrm{X}_{\alpha_{1}}})\otimes \mathrm{X}_{\alpha_{1}}^{\vee}\otimes\cdots\otimes \mathrm{X}_{\alpha_{n}}^{\vee}\\
        =&\sum_{\alpha_{1},\dots,\alpha_{n}}\frac{d}{dt_{n}}\bigg{|}_{t_{n}=0}\cdots\frac{d}{dt_{1}}\bigg{|}_{t_{1}=0}\Theta_{Kim}(g_{\infty}e^{t_{n}\mathrm{Ad}(h_{\infty})\mathrm{X}_{\alpha_{n}}}\cdots e^{t_{1}\mathrm{Ad}(h_{\infty})\mathrm{X}_{\alpha_{1}}}h_{\infty})\otimes \mathrm{X}_{\alpha_{1}}^{\vee}\otimes\cdots\otimes \mathrm{X}_{\alpha_{n}}^{\vee}\\
        =&\sum_{\alpha_{1},\dots,\alpha_{n}}\frac{d}{dt_{n}}\bigg{|}_{t_{n}=0}\cdots\frac{d}{dt_{1}}\bigg{|}_{t_{1}=0}\Theta_{Kim}(g_{\infty}e^{t_{n}h_{\infty}.\mathrm{X}_{\alpha_{n}}}\cdots e^{t_{1}h_{\infty}.\mathrm{X}_{\alpha_{1}}})\otimes \mathrm{X}_{\alpha_{1}}^{\vee}\otimes\cdots\otimes \mathrm{X}_{\alpha_{n}}^{\vee},
    \end{align*}}
    where $h_{\infty}.\mathrm{X}_{\alpha}=\mathrm{Ad}(h_{\infty})\mathrm{X}_{\alpha}$
    and the last equality follows from \Cref{lemma invariance of Kim modular form}. 
    Since $\grpF(\R)$ is a subgroup of the maximal compact subgroup $\mathrm{K}_{\lietype{E}{7}}$ of $\grpE(\R)$,
    $\{h_{\infty}.\mathrm{X}_{\alpha}\}_{\alpha}$
    also gives a basis of $\mathfrak{p}_{\jord}^{+}$,
    and its dual basis of $\mathfrak{p}_{\jord}^{-}$ is $\{h_{\infty}.\mathrm{X}_{\alpha}^{\vee}\}_{\alpha}$.  
    As the differential operator $\mathrm{D}$ is independent of the choice of $\{\mathrm{X}_{\alpha}\}_{\alpha}$,
    we have:
    \[\Theta_{n}(g_{\infty}h_{\infty})=\sum_{\alpha_{1},\ldots,\alpha_{n}}\left(\mathrm{X}_{\alpha_{n}}\cdots\mathrm{X}_{\alpha_{1}}\Theta_{Kim}\right)(g_{\infty})\otimes h_{\infty}^{-1}.\mathrm{X}_{\alpha_{1}}^{\vee}\otimes\cdots\otimes h_{\infty}^{-1}.\mathrm{X}_{\alpha_{n}}^{\vee}=h_{\infty}^{-1}.\Theta_{n}(g_{\infty}).\qedhere\]
\end{proof}

\subsection{Global theta lifts}\label{section definition of Global theta lift}
Let $\mathbf{G}\times\mathbf{H}$ be one of the two reductive dual pairs given in \Cref{section dual pair},
\emph{i.e.}\,$\mathbf{G}\times\mathbf{H}=\grpF\times\pgl_{2}$ or $\spin_{9}\times\sorth_{2,2}$.
\begin{defi}\label{def global theta lift}
    For $\varphi\in\mathcal{A}(\mathbf{H})$ 
    and $\phi\in\Pi_{\min}$,
    the \emph{global theta lift of $\varphi$ with respect to $\phi$} is 
    the automorphic form of $\mathbf{G}$
    defined by the following absolutely convergent integral:
    \begin{align*}
        \Theta_{\phi}(\varphi)(g):=\int_{[\mathbf{H}]}\theta(\phi)(gh)\overline{\varphi(h)}dh,\text{ for any }g\in \mathbf{G}(\A).
    \end{align*}
    For a cuspidal automorphic representation $\pi\in\Pi_{\cusp}(\mathbf{H})$,
    its \emph{global theta lift} $\Theta(\pi)$ is the $\mathbf{G}(\A)$-subspace of $\sqint{[\mathbf{G}]}{}$
    generated by 
    $\set{\Theta_{\phi}(\varphi)}{\varphi\in\pi,\,\phi\in\Pi_{\min}}$.
\end{defi}
\begin{rmk}\label{rmk no convergence problem}
    In this paper, we are always in the situation that 
    either $[\mathbf{H}]$ is compact or $\varphi\in\mathcal{A}(\mathbf{H})$ is cuspidal.
    For the second case, the absolute convergence comes from the rapid decay of $\varphi$.
\end{rmk}
We also define the global theta lift of a ``vector-valued automorphic form'' $\alpha\in\mathcal{A}_{\vrep{n\varpi}}(\grpF)$ defined as \eqref{eqn definition vector valued automorphic form},
which is compatible with \Cref{def global theta lift}:
\begin{defi}\label{def global theta lift for alg modular form}
    For a function $\alpha:\grpF(\Q)\backslash\grpF(\A)/\mathcal{F}_{4,\mathrm{I}}(\widehat{\Z})\rightarrow \vrep{n\varpi_{4}}$ in $\mathcal{A}_{\vrep{n\varpi_{4}}}(\grpF)$,
    its \emph{global theta lift} $\Theta(\alpha)$ is defined as:
    \begin{align}\label{eqn global theta lift of algebraic modular form}
        \Theta(\alpha)(g)=\int_{[\grpF]}\{\Theta_{n}(gh),\alpha(h)\}dh,\,\text{for any }g\in \pgl_{2}(\A),
    \end{align}
    where $\{-,-\}:\jord_{\C}^{\otimes n}\times(\jord_{\C}^{\vee})^{\otimes n}\rightarrow \C$ is the pairing defined in \eqref{eqn symmetric pairing},
    and we view $\alpha(h)\in\vrep{n\varpi_{4}}$ as a homogeneous polynomial over $\jord_{\C}$.
\end{defi}
\section{Exceptional theta series}\label{section exceptional theta series on F4}
In this section, 
we compute the Fourier expansion of the theta lift $\Theta(\alpha)$ 
of $\alpha\in \mathcal{A}_{\vrep{n\varpi_{4}}}(\grpF)$,
and prove \Cref{introthm theta series is modular form} in the introduction.
From now on,
we will identify $\alpha$ with its values $\alpha_{\mathrm{I}}\in\vrep{n}(\jord_{\C})^{\Gamma_{\mathrm{I}}},\alpha_{\mathrm{E}}\in\vrep{n}(\jord_{\C})^{\Gamma_{\mathrm{E}}}$ at $1,\gamma_{\mathrm{E}}$ 
as in \Cref{lemma identification of vector valued automorphic form}.

\subsection{Fourier expansions of global theta lifts}\label{section Fourier expansion theta lift}
Normalize the Haar measure $dh$ of $\grpF(\A)$ in \eqref{eqn global theta lift of algebraic modular form}
so that
$\grpF(\R)\mathcal{F}_{4,\mathrm{I}}(\widehat{\Z})$ has measure $1$.
Write $g\in\pgl_{2}(\A)$ as $g=g_{\Q}g_{\infty}g_{\widehat{\Z}}$,
where $g_{\Q}\in\pgl_{2}(\Q),g_{\widehat{\Z}}\in \pgl_{2}(\widehat{\Z})$ 
and $g_{\infty}$ is the image of an element in $\SL_{2}(\R)$,
then using \Cref{lemma delta element between Albert algebras},
\Cref{lemma F4 action on kernel function} 
and the $\grpF(\R)$-invariance of $\{-,-\}$,
we obtain:

{\footnotesize\begin{align}
    \begin{aligned}\label{eqn simplify def of theta lift}
        \Theta(\alpha)(g)=&\frac{1}{|\Gamma_{\mathrm{I}}|}\int_{\grpF(\R)\mathcal{F}_{4,\mathrm{I}}(\widehat{\Z})}\{\Theta_{n}(gh_{\infty}h_{\widehat{\Z}}),\alpha(h_{\infty}h_{\widehat{\Z}})\}dh+\frac{1}{|\Gamma_{\mathrm{E}}|}\int_{\grpF(\R)\mathcal{F}_{4,\mathrm{I}}(\widehat{\Z})}\{\Theta_{n}(gh_{\infty}\gamma_{\mathrm{E}}h_{\widehat{\Z}}),\alpha(h_{\infty}\gamma_{\mathrm{E}}h_{\widehat{\Z}})dh\}\\
    =&\frac{1}{|\Gamma_{\mathrm{I}}|}\int_{\grpF(\R)\mathcal{F}_{4,\mathrm{I}}(\widehat{\Z})}\{h_{\infty}^{-1}.\Theta_{n}(g_{\infty}),h_{\infty}^{-1}.\alpha_{\mathrm{I}}\}dh+\frac{1}{|\Gamma_{\mathrm{E}}|}\int_{\grpF(\R)\mathcal{F}_{4,\mathrm{I}}(\widehat{\Z})}\{h_{\infty}^{-1}.\Theta_{n}(\delta_{\infty}^{-1}g_{\infty}),h_{\infty}^{-1}.\alpha_{\mathrm{E}}\}\\
    =&\frac{1}{|\Gamma_{\mathrm{I}}|}\{\Theta_{n}(g_{\infty}),\alpha_{\mathrm{I}}\}+\frac{1}{|\Gamma_{\mathrm{E}}|}\{\Theta_{n}(\delta_{\infty}^{-1}g_{\infty}),\alpha_{\mathrm{E}}\}.
    \end{aligned}
\end{align}}

If the global theta lift $\Theta(\alpha)\in\mathcal{A}(\mathbf{PGL}_{2})$ is non-zero,
then the following result shows that it arises from a weight $2n+12$
classical holomorphic modular form on $\SL_{2}(\Z)$:
\begin{prop}\label{prop theta lift comes from modular form}
    Let $\mathcal{H}\subseteq \C$ be the Poincar\'e half plane,
    and $j:\SL_{2}(\R)\times\mathcal{H}\rightarrow\C^{\times}$
    the automorphy factor given by $j\left(\smallmat{a}{b}{c}{d},z\right)=cz+d$.
    For any $\alpha\in\mathcal{A}_{\vrep{n\varpi_{4}}}(\grpF)$,
    the function 
    \[f_{\Theta(\alpha)}(z):=j(g,i)^{2n+12}\Theta(\alpha)(g),\,z=g.i\in\mathcal{H},\,g\in\SL_{2}(\R),\]
    is well-defined and is a level one holomorphic modular form of weight $2n+12$.
    Furthermore, it is a cusp form when $n>0$.
\end{prop}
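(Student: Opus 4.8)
The plan is to reduce the statement to a computation on the $\SL_{2}(\R)$-factor of the dual pair and then run the holomorphicity argument of \cite{pollack2023exceptional}. First I would use \eqref{eqn simplify def of theta lift} to observe that $\Theta(\alpha)$ depends only on the archimedean component of its argument, which can be taken in the image of $\SL_{2}(\R)\subseteq\mathbf{H}_{\jord}^{1}(\R)$; since $\Theta_{n}=\mathrm{D}^{n}\Theta_{Kim}\in\mathcal{A}(\grpE)$ is left $\grpE(\Q)$-invariant and (by \Cref{lemma invariance of Kim modular form}, $\mathrm{D}$ involving only archimedean directions) right $\grpE(\widehat{\Z})$-invariant, $\Theta(\alpha)$ descends, via strong approximation for $\SL_{2}$, to a function on $\SL_{2}(\Z)\backslash\SL_{2}(\R)$. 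Then the assignment $z=g.i\mapsto j(g,i)^{2n+12}\Theta(\alpha)(g)$ is well defined as soon as $\Theta(\alpha)$ transforms under right translation by $k\in\sorth_{2}(\R)$ through the character $k\mapsto j(k,i)^{-(2n+12)}$, and granting this, the cocycle relation for $j$ yields the weight $2n+12$ transformation under $\SL_{2}(\Z)$.

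Next I would establish this $\sorth_{2}(\R)$-equivariance. Since $\sorth_{2}(\R)\subseteq\mathrm{K}_{\lietype{E}{7}}$, the computation proving \Cref{lemma F4 action on kernel function} carries over, except that the automorphy factor $J(k,i\mathrm{I})$ is no longer trivial; it gives $\Theta_{n}(g_{\infty}k)=J(k,i\mathrm{I})^{-4}\bigl(k^{-1}.\Theta_{n}(g_{\infty})\bigr)$ with $k^{-1}$ acting on $(\mathfrak{p}_{\jord}^{-})^{\otimes n}$ through $\mathrm{Ad}$. As $\grpF(\R)$ is essentially self-centralizing in $\lietype{E}{6}$, the circle $\sorth_{2}(\R)$ lies in the central $\mathrm{U}(1)$ of $\mathrm{K}_{\lietype{E}{7}}\simeq\lietype{E}{6}\times\mathrm{U}(1)$ and hence scales $\mathfrak{p}_{\jord}^{-}$ by a single character; combining $J(-,i\mathrm{I})|_{\SL_{2}(\R)}=j(-,i)^{3}$ with \Cref{prop real theta between F4 and PGL2} (which forces the $\pgl_{2}(\R)$-summand in the $\vrep{n\varpi_{4}}$-isotype of $\Pi_{\min,\infty}$ to be $\mathcal{D}(2n+12)$) pins this character down and produces $\Theta(\alpha)(gk)=j(k,i)^{-(2n+12)}\Theta(\alpha)(g)$, hence well-definedness and modularity.

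For holomorphicity I would show $\Theta(\alpha)$ is annihilated by the lowering operator $L^{-}\in\mathfrak{sl}_{2}(\C)$, equivalently $\overline{\partial}f_{\Theta(\alpha)}=0$. Under $\mathfrak{sl}_{2}(\C)\hookrightarrow\mathfrak{e}_{7}$ one has $L^{-}\in\mathfrak{p}_{\jord}^{-}$, and the line $\C L^{-}$ is $\grpF(\R)$-fixed, so under the identification $\mathfrak{p}_{\jord}^{-}\simeq\jord_{\C}$ of \Cref{prop cayley transform switch two decompositions} I may take $L^{-}=\mathrm{X}_{\mathrm{I}}^{-}$. Now $\Theta_{Kim}$ corresponds under the automorphic realization to $\Phi_{\infty}\otimes\Phi_{f}$, and $\Phi_{\infty}$ lies in the minimal $\mathrm{K}_{\lietype{E}{7}}$-type of the lowest weight module $\Pi^{+}$, so $\Theta_{Kim}$ is right-annihilated by $\mathfrak{p}_{\jord}^{-}$ and by $\mathfrak{e}_{6}\subseteq\mathfrak{k}_{\lietype{E}{7}}$. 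Commuting $L^{-}$ through the $n$ factors $\mathfrak{p}_{\jord}^{+}$ in $\mathrm{D}^{n}$, using $[\mathfrak{p}_{\jord}^{-},\mathfrak{p}_{\jord}^{+}]\subseteq\mathfrak{k}_{\lietype{E}{7}}$ and these two vanishings --- this is exactly the mechanism behind \cite[Theorem 1.1.1]{pollack2023exceptional} --- I expect an identity of functions $L^{-}\Theta_{n}=c_{n}\,\Theta_{n-1}\otimes_{\mathrm{sym}}\mathrm{I}$ in $\mathcal{A}(\grpE)\otimes(\mathfrak{p}_{\jord}^{-})^{\otimes n}$, for some nonzero $c_{n}$. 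Pairing this against $\alpha_{\mathrm{I}}$ or $\alpha_{\mathrm{E}}$ kills it: both lie in $\vrep{n}(\jord_{\C})$, which by \Cref{lemma polynomial model of F4 representation} is spanned by $X\mapsto\tr(X\circ A)^{n}$ with $\tr(A)=0$, so by \Cref{ex special case of pairing} the pairing $\{v\otimes_{\mathrm{sym}}\mathrm{I},\tr(-\circ A)^{\otimes n}\}$ carries a factor $\tr(\mathrm{I}\circ A)=\tr(A)=0$. Hence $\{L^{-}\Theta_{n}(g_{\infty}),\alpha_{\mathrm{I}}\}=\{L^{-}\Theta_{n}(\delta_{\infty}^{-1}g_{\infty}),\alpha_{\mathrm{E}}\}=0$, so $L^{-}\Theta(\alpha)=0$ by \eqref{eqn simplify def of theta lift} and $f_{\Theta(\alpha)}$ is holomorphic on $\mathcal{H}$; since $\Theta(\alpha)$ has moderate growth it is holomorphic at the cusp as well, hence is a holomorphic modular form of weight $2n+12$. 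When $n>0$ its constant term vanishes: by \eqref{eqn Fourier expansion of higher kernel function} the expansion of $\Theta_{n}$ has no $T=0$ contribution, and a nonzero rank-one positive semidefinite $T$ in an Albert lattice satisfies $\tr(T)=(T,\mathrm{I})>0$, so $f_{\Theta(\alpha)}$ is a cusp form.

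The hard part is the holomorphicity step, namely establishing $L^{-}\Theta_{n}=c_{n}\,\Theta_{n-1}\otimes_{\mathrm{sym}}\mathrm{I}$ --- equivalently, that pairing $\mathrm{D}^{n}\Theta_{Kim}$ with $\vrep{n}(\jord_{\C})$ produces a genuinely holomorphic (not merely nearly holomorphic) form, i.e.\ lands in $d_{hol}(2n+12)\subseteq\mathcal{D}(2n+12)$. This is where Pollack's computation is invoked and where the special feature of the polynomial model $\vrep{n}(\jord_{\C})$ (powers of \emph{trace-zero} linear forms) is essential; the normalization identities $J(-,i\mathrm{I})|_{\SL_{2}(\R)}=j(-,i)^{3}$ and the $\sorth_{2}(\R)$-weight of $\mathfrak{p}_{\jord}^{-}$ in step two, and the $\jord_{\mathrm{E}}$-bookkeeping via \Cref{lemma delta element between Albert algebras}, are routine by comparison.
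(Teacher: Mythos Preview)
Your well-definedness argument matches the paper's exactly: both reduce to the identity $k.\mathrm{X}_{A}=j(k,i)^{-2}\mathrm{X}_{A}$ for $k\in\sorth_{2}(\R)$ (the paper states this directly and cites \Cref{lemma compatibility with Lie group elements}; you phrase it via the central $\mathrm{U}(1)$, which is the same thing). Your cuspidality argument via \eqref{eqn Fourier expansion of higher kernel function}, namely that $\mathrm{D}^{n}$ kills the constant term and every remaining $T\in\jord_{\Z}^{+}$ has $\tr(T)>0$, is correct and arguably more transparent than the paper's one-line appeal to \Cref{prop real theta between F4 and PGL2}.

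The real divergence is in the holomorphicity step, and here the paper's route is both shorter and avoids a gap in yours. The paper does not compute $L^{-}\Theta_{n}$ at all; instead it observes that $P\mapsto\{\mathrm{D}^{n}\Phi_{\infty},P\}$ is $\grpF(\R)$-equivariant (this is the content of \Cref{lemma F4 action on kernel function}), so its image on $\vrep{n}(\jord_{\C})$ lands in the $\vrep{n\varpi_{4}}$-isotype of $\Pi^{+}$, which by \Cref{prop real theta between F4 and PGL2} is $d_{hol}(2n+12)$. Since the $\sorth_{2}(\R)$-weight $2n+12$ you already computed is the \emph{lowest} weight of $d_{hol}(2n+12)$, the vector is automatically annihilated by $L^{-}$. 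You actually invoke \Cref{prop real theta between F4 and PGL2} yourself in step two to pin down the weight, so this shortcut is available to you without any extra input.

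By contrast, your asserted identity $L^{-}\Theta_{n}=c_{n}\,\Theta_{n-1}\otimes_{\mathrm{sym}}\mathrm{I}$ is not obvious and, as stated, likely false for $n\geq 2$. Commuting $L^{-}=\mathrm{X}_{\mathrm{I}}^{-}$ past each $\mathrm{X}_{\alpha}\in\mathfrak{p}_{\jord}^{+}$ produces commutators in $\mathfrak{k}_{\lietype{E}{7}}=\mathfrak{e}_{6}\oplus\mathfrak{u}(1)$; the $\mathfrak{e}_{6}$-part annihilates $\Phi_{\infty}$, but when it meets an \emph{intermediate} factor $\mathrm{X}_{\beta}$ it acts by $[\,\mathfrak{e}_{6},\mathfrak{p}_{\jord}^{+}]\subseteq\mathfrak{p}_{\jord}^{+}$ and produces terms that are not of the form $\Theta_{n-1}\otimes_{\mathrm{sym}}\mathrm{I}$. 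A dimension count confirms this: both sides live in $\bigl(E(n-1)\otimes\mathrm{Sym}^{n}\jord_{\C}\bigr)^{\grpF(\R)}$, but already for $n=2$ this space is $4$-dimensional (use $E(1)|_{\grpF}=\mathbf{1}\oplus\vrep{\varpi_{4}}$ and $\mathrm{Sym}^{2}\jord_{\C}|_{\grpF}=2\cdot\mathbf{1}\oplus 2\cdot\vrep{\varpi_{4}}\oplus\vrep{2\varpi_{4}}$), so proportionality cannot be read off from invariance alone. What \emph{is} true, and suffices, is that $\{L^{-}\Theta_{n},P\}=0$ for every $P\in\vrep{n}(\jord_{\C})$; but the cleanest proof of that is precisely the isotype/lowest-weight argument above, not the commutator bookkeeping you outline.
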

We postpone the proof of \Cref{prop theta lift comes from modular form} to \Cref{section proof of main computation},
and prove the following main theorem on the Fourier expansion of $f_{\Theta(\alpha)}$: 
\begin{thm}\label{thm Fourier of theta lift}
    (\Cref{introthm theta series is modular form} in \Cref{section introduction})
    Let $\alpha\in \mathcal{A}_{\vrep{n\varpi_{4}}}(\grpF),\,n>0$ and 
    $f_{\Theta(\alpha)}$ the cusp form associated to its global theta lift $\Theta(\alpha)$.
    Up to a non-zero constant, 
    $f_{\Theta(\alpha)}$ has the following Fourier expansion:
    \begin{align*}
        f_{\Theta(\alpha)}(z)=\frac{1}{|\Gamma_{\mathrm{I}}|}\sum_{T\in\jord_{\Z}^{+}}\mathrm{a}_{\jord_{\Z}}(T)\alpha_{\mathrm{I}}(T)q^{\tr(T)}
        +\frac{1}{|\Gamma_{\mathrm{E}}|}\sum_{T\in\jord_{\mathrm{E}}^{+}}\mathrm{a}_{\jord_{\mathrm{E}}}(T)\alpha_{\mathrm{E}}(T)q^{\tr(T)},\,q=e^{2\pi i z}.
    \end{align*}
\end{thm}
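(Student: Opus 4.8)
Since \Cref{prop theta lift comes from modular form} already guarantees that $f_{\Theta(\alpha)}$ is a holomorphic modular form of weight $2n+12$ (a cusp form for $n>0$), the only remaining task is to identify its Fourier coefficients. The plan is to start from the closed expression for the theta lift recorded in \eqref{eqn simplify def of theta lift}: writing $g=g_{\Q}g_{\infty}g_{\widehat{\Z}}$ with $g_{\infty}$ the image of an element of $\SL_{2}(\R)$ and $z=g_{\infty}.i\in\mathcal{H}$, one has
\[
\Theta(\alpha)(g)=\tfrac{1}{|\Gamma_{\mathrm{I}}|}\{\Theta_{n}(g_{\infty}),\alpha_{\mathrm{I}}\}+\tfrac{1}{|\Gamma_{\mathrm{E}}|}\{\Theta_{n}(\delta_{\infty}^{-1}g_{\infty}),\alpha_{\mathrm{E}}\},
\]
and then to substitute the Fourier expansion $\Theta_{n}(g')=240\sum_{T\in\jord_{\Z}^{+}}\mathrm{a}_{\jord_{\Z}}(T)\,\mathrm{D}^{n}h_{T}(g')$ of \eqref{eqn Fourier expansion of higher kernel function}. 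Multiplying by $j(g,i)^{2n+12}$, the whole computation of $f_{\Theta(\alpha)}$ reduces to evaluating $\{\mathrm{D}^{n}h_{T}(g'),P\}$ for $P\in\vrep{n}(\jord_{\C})$ at the two kinds of points $g'=g_{\infty}$ and $g'=\delta_{\infty}^{-1}g_{\infty}$.

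The core step is the identity, proved exactly as in Pollack's argument for \cite[Theorem 1.1.1]{pollack2023exceptional}, that for $g_{\infty}\in\SL_{2}(\R)$ and any $P\in\vrep{n}(\jord_{\C})$
\[
\{\mathrm{D}^{n}h_{T}(g_{\infty}),P\}=c_{n}\,j(g_{\infty},i)^{-(2n+12)}\,P(T)\,q^{\tr(T)},\qquad q=e^{2\pi i z},
\]
for a nonzero constant $c_{n}$ depending only on $n$. Using \Cref{ex explicit actions on exceptional tube domain}, under the embedding $\SL_{2}\hookrightarrow\mathbf{H}_{\jord}^{1}$ of the dual pair one has $g_{\infty}.i\mathrm{I}=z\mathrm{I}$ and $J(g_{\infty},i\mathrm{I})=j(g_{\infty},i)^{3}$ (the power $3$ being forced by $\det(z\mathrm{I})=z^{3}$), so $h_{T}(g_{\infty})=j(g_{\infty},i)^{-12}e^{2\pi i(T,z\mathrm{I})}=j(g_{\infty},i)^{-12}q^{\tr(T)}$ since $(T,\mathrm{I})=\tr(T)$. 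Applying $\mathrm{D}^{n}$ produces, besides the leading term (the symmetric tensor $T^{\otimes n}$, whose pairing with $P$ equals $P(T)$ by \Cref{ex special case of pairing}), a collection of lower-order ``non-holomorphic correction'' terms built from contractions with the $\mathbf{M}_{\jord}$-invariant structures on $\jord_{\C}$; the key point — the exceptional analogue of the classical fact that weighting a theta series by a \emph{harmonic} polynomial preserves modularity — is that all of these corrections pair to zero against $\vrep{n}(\jord_{\C})$. Indeed, by \Cref{def F4 harmonic polynomials}, $\vrep{n}(\jord_{\C})$ is spanned by the polynomials $X\mapsto(X,A)^{n}$ with $A\in\mathbb{X}$, and such $A$ satisfy $A^{2}=0$, $\tr(A)=0$, hence $(A,A)=0$, so $(\,\cdot\,,A)^{n}$ lies in the kernel of the relevant Laplace-type contraction operators.

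Granting this, multiplication by $j(g,i)^{2n+12}$ cancels the automorphy factors, and the first summand of \eqref{eqn simplify def of theta lift} contributes, up to the universal constant $240\,c_{n}$, the series $\frac{1}{|\Gamma_{\mathrm{I}}|}\sum_{T\in\jord_{\Z}^{+}}\mathrm{a}_{\jord_{\Z}}(T)\,\alpha_{\mathrm{I}}(T)\,q^{\tr(T)}$; grouping terms by $\tr(T)=m$ (for each $m$ only finitely many $T$ occur) exhibits this as a genuine $q$-expansion with vanishing constant term, as it must be since $f_{\Theta(\alpha)}$ is cuspidal. For the second summand I would transfer the Fourier expansion of $\Theta_{n}$ to one indexed by the lattice $\jord_{\mathrm{E}}$ using \Cref{lemma delta element between Albert algebras}: since $\delta_{\infty}\in\mathbf{M}_{\jord}^{1}(\R)$ acts $\C$-linearly on $\mathcal{H}_{\jord}$ with trivial automorphy factor and $\delta_{\infty}(\jord_{\Z})=\jord_{\mathrm{E}}$, repeating the above computation together with the change of variables $T\mapsto\delta_{\infty}(T)$ — under which the content, hence $\mathrm{a}_{\jord_{\Z}}$, is matched with $\mathrm{a}_{\jord_{\mathrm{E}}}$, and the trace, rank and positivity data are matched because $\delta$ is an isomorphism of Albert $\Z$-algebras $\jord_{\Z}^{(\mathrm{E})}\xrightarrow{\simeq}\jord_{\mathrm{E}}$ — yields the second summand $\frac{1}{|\Gamma_{\mathrm{E}}|}\sum_{T\in\jord_{\mathrm{E}}^{+}}\mathrm{a}_{\jord_{\mathrm{E}}}(T)\,\alpha_{\mathrm{E}}(T)\,q^{\tr(T)}$. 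Adding the two gives the asserted expansion up to the single nonzero constant $240\,c_{n}$, and since $f_{\Theta(\alpha)}$ is already known holomorphic this convergent $q$-series is exactly its Fourier expansion.

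The main obstacle is the core computation of $\{\mathrm{D}^{n}h_{T}(g_{\infty}),P\}$: one must control every lower-order term produced by iterating $\mathrm{D}$ on the exponential kernel $J(\,\cdot\,,i\mathrm{I})^{-4}e^{2\pi i(T,\,\cdot\,.i\mathrm{I})}$ — equivalently, understand the $\mathfrak{p}^{\pm}$-action on the minimal representation restricted to the $\SL_{2}$ of the dual pair — and prove that each such term pairs to zero against $\vrep{n}(\jord_{\C})$, which is precisely where the ``harmonicity'' built into the polynomial model of \Cref{def F4 harmonic polynomials} enters. A secondary, purely bookkeeping, difficulty is to verify at the second cusp that $\delta_{\infty}$ really matches the arithmetic data $(\tr,\#,\det,\text{content})$ of $\jord_{\Z}$ and $\jord_{\mathrm{E}}$ in the way the identification above requires.
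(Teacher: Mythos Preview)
Your outline matches the paper's proof closely: start from \eqref{eqn simplify def of theta lift}, insert the expansion \eqref{eqn Fourier expansion of higher kernel function}, and invoke the key identity of \Cref{thm main theorem in Fourier computation}. Your explanation of why the lower-order terms in $\mathrm{D}^{n}h_{T}$ vanish against $\vrep{n}(\jord_{\C})$ is exactly the content of \Cref{lemma only leading term matters} (the paper reduces to $P=(X,A)^{n}$ with $A\in\mathbb{X}$ and uses $\tr A=0$, $A\circ A=0$), and your treatment of the first summand is correct.

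The place where your argument is too loose is the second summand. You claim that under $T\mapsto\delta_{\infty}(T)$ the trace is ``matched because $\delta$ is an isomorphism of Albert $\Z$-algebras $\jord_{\Z}^{(\mathrm{E})}\xrightarrow{\simeq}\jord_{\mathrm{E}}$''. But $\jord_{\Z}^{(\mathrm{E})}$ carries the distinguished point $\mathrm{E}^{\#}$, not $\mathrm{I}$, so the trace that $\delta$ respects as an Albert-algebra map is \emph{not} the ambient trace $\tr(T)=(T,\mathrm{I})$ on $\jord_{\R}$; and $\delta_{\infty}\in\mathbf{M}_{\jord}^{1}(\R)$ does not preserve the bilinear form $(\,,\,)$ at all (\Cref{rmk bilinear form not E6 invariant}). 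So your justification does not establish what you need, and this is not ``purely bookkeeping''. The paper's proof instead tracks the action of $\delta_{\infty}^{-1}$ on $h_{T}$ through the formula $m.Z=\lambda(m)m^{*}Z$ of \Cref{ex explicit actions on exceptional tube domain}, which brings in the adjoint $\delta_{\infty}^{*}$; after the lattice change of variable $T\mapsto\delta_{\infty}^{-1}T$ one is left with the operator $\delta_{\infty}^{*}\delta_{\infty}^{-1}$ acting inside the sum over $\jord_{\mathrm{E}}^{+}$. The paper then removes it by showing---using the $p$-adic input $\delta_{f}^{-1}\gamma_{\mathrm{E}}\in\mathbf{M}_{\jord}^{1}(\widehat{\Z})$ from \Cref{lemma delta element between Albert algebras} together with $\gamma_{\mathrm{E}}^{*}=\gamma_{\mathrm{E}}$---that $\delta^{*}\delta^{-1}\in\mathbf{M}_{\jord}^{1}(\Q)$ restricts to an automorphism of the lattice $\jord_{\mathrm{E}}$, hence permutes $\jord_{\mathrm{E}}^{+}$ and preserves content. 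That adelic lattice-automorphism step is the genuine extra ingredient your sketch is missing.
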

\begin{rmk}\label{rmk review trivial weight}
    The case when $n=0$ is studied by Elkies and Gross in \cite{Leech}. 
    In this case 
    $\alpha\in\mathcal{A}_{\triv}(\grpF)$ can be identified as a pair of complex numbers.
    For $\alpha$ corresponding to $(|\Gamma_{\mathrm{I}}|,0)$,
    $f_{\Theta(\alpha)}=E_{12}+\frac{432000}{691}\Delta$;
    for $\alpha$ corresponding to $(0,|\Gamma_{\mathrm{E}}|)$,
    $f_{\Theta(\alpha)}=E_{12}-\frac{65520}{691}\Delta$,
    where $E_{12}(z)=1+\frac{2}{\zeta(-11)}\sum_{n\geq 1}\sigma_{11}(n)q^{n}$ is the normalized weight $12$ Eisenstein series,  
    and $\Delta(z)=q\prod_{n\geq 1}(1-q^{n})^{24}$ is the discriminant modular form.
\end{rmk}
Before proving \Cref{thm Fourier of theta lift},
we state a result that will be used in the proof,
whose proof is also postponed to \Cref{section proof of main computation}. 
\begin{thm}\label{thm main theorem in Fourier computation}
    Let $P\in \vrep{n}(\jord_{\C})\simeq \vrep{n\varpi_{4}}$ for any $n>0$, 
    $T$ an element of $\jord_{\R}$,
    and $h_{T}(g)=J(g_{\infty},i\mathrm{I})^{-4}\cdot e^{2\pi i (T,g_{\infty}.i\mathrm{I})}$ the function given in \Cref{notation expansion of derivative Kim form},
    then we have:
    \[\left\{(\mathrm{D}^{n}h_{T})(g),P\right\}=(-4\pi)^{n}\cdot j(g,i)^{-2n-12}P\left(T\right)e^{2\pi i (T,\,g.i\mathrm{I})},\,\text{ for any }g\in\SL_{2}(\R).\]
\end{thm}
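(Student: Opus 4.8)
The plan is to follow Pollack's method for the analogous Fourier computation on $\symp_{6}$ in \cite{pollack2023exceptional}, adapted to the dual pair $\grpF\times\pgl_{2}\subseteq\grpE$. The first step is to make the embedding $\SL_{2}(\R)\hookrightarrow\mathbf{H}^{1}_{\jord}(\R)$ of \Cref{section dual pair involving F4} explicit at the level of the tube domain: the line $\C\mathrm{I}\subseteq\jord_{\C}$ gives an equivariant embedding $\mathcal{H}\hookrightarrow\mathcal{H}_{\jord}$, $z\mapsto z\mathrm{I}$, and writing an element of $\SL_{2}(\R)$ in terms of the torus and unipotent generators of its Borel and using \Cref{ex explicit actions on exceptional tube domain}, one checks that for $g$ in the image of $\SL_{2}(\R)$ one has $g.(z\mathrm{I})=(g.z)\mathrm{I}$ and $J(g,z\mathrm{I})=j(g,z)^{3}$. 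Since $(T,z\mathrm{I})=z\,\tr(T)$, this already gives $h_{T}(g)=j(g,i)^{-12}\,e^{2\pi i(T,\,g.i\mathrm{I})}$ on $\SL_{2}(\R)$, which agrees with the asserted right-hand side except for the automorphy weight ($12$ rather than $2n+12$) and the factor $(-4\pi)^{n}P(T)$; so everything reduces to understanding how $\mathrm{D}^{n}$ corrects the automorphy factor and produces $P(T)$.

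The heart of the argument is the action of $\mathfrak{p}^{+}_{\jord}$ on a function of the form $\widetilde{F}(g)=J(g,i\mathrm{I})^{-k}F(g.i\mathrm{I})$ on $\mathbf{H}^{1}_{\jord}(\R)$ for $F$ holomorphic on $\mathcal{H}_{\jord}$ — exactly the situation of $h_{T}$, with $F(Z)=e^{2\pi i(T,Z)}$ and $k=4$. Using the Cayley transform of \Cref{prop cayley transform switch two decompositions} to write $\mathrm{X}^{+}_{A}=i\mathrm{C}_{h}^{-1}\mathrm{n}_{\mathrm{L}}(A)\mathrm{C}_{h}$, together with the translation action $\mathrm{n}(A).Z=Z+A$ and $J(\mathrm{n}(A),Z)=1$ from \Cref{ex explicit actions on exceptional tube domain}, one differentiates the cocycle relation for $J$ at the base point $i\mathrm{I}$ and finds that $\mathrm{D}\widetilde{F}$ corresponds — under the identifications of \Cref{section Lie algebra E7} — to a fixed first-order Maass--Shimura type operator applied to $F$, whose leading term is $\partial_{Z}$ and whose zeroth-order part is built from $\mathrm{Im}(Z)$, the adjoint $\#$ and the trace form. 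Iterating, $\mathrm{D}^{n}h_{T}$ corresponds to an $n$-fold composite of such operators applied to $e^{2\pi i(T,Z)}$; since $\partial_{Z}e^{2\pi i(T,Z)}=2\pi i\,T\,e^{2\pi i(T,Z)}$, the $\partial_{Z}$-leading term of this iterate is a scalar multiple of the symmetrization of $T^{\otimes n}$ times $e^{2\pi i(T,Z)}$, and pairing $T^{\otimes n}$ against $P\in\vrep{n}(\jord_{\C})\subseteq\sym^{n}\jord_{\C}^{\vee}$ via \eqref{eqn symmetric pairing} gives $P(T)$ by \Cref{ex special case of pairing}. Tracking the factor $i$ in $\mathrm{X}^{+}_{A}$ and the normalization of $\mathrm{C}_{h}$ shows that each application of $\mathrm{D}$ contributes the constant $-4\pi$, while the $J$-derivative in the cocycle relation shows that each application raises the $\SL_{2}(\R)$-automorphy weight by $2$; starting from weight $12$ this produces the factor $j(g,i)^{-2n-12}$.

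The point where the hypothesis $P\in\vrep{n}(\jord_{\C})$ is essential — and the step I expect to be the main obstacle — is showing that, after restriction to $\SL_{2}(\R)$, every lower-order correction term produced by iterating the raising operator vanishes once paired with $P$. For $g$ in the image of $\SL_{2}(\R)$ one has $g.i\mathrm{I}=z\mathrm{I}$ with $z=g.i\in\mathcal{H}$, so $\mathrm{Im}(g.i\mathrm{I})$ is a positive real multiple of $\mathrm{I}$; and since $\vrep{n}(\jord_{\C})$ is spanned by the polynomials $X\mapsto\lrangle{X}{A}^{n}$ with $A\neq 0$, $\tr(A)=0$ and $A^{\#}=0$, it suffices to check the vanishing on these generators, where each correction term — involving $(\mathrm{Im}\,Z)^{-1}$, the adjoint $\#$, or a cross product — acquires a factor $\lrangle{\mathrm{I}}{A}=\tr(A)=0$ or $A^{\#}=0$ upon pairing, and hence dies. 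Verifying this vanishing in detail, and controlling the combinatorics of the symmetrization in \eqref{eqn symmetric pairing} under iteration, is the bulk of the work. Finally, carrying out the same computation on Kim's automorphic form $\mathrm{F}_{Kim}$ itself rather than on a single Fourier term — using in addition \eqref{eqn simplify def of theta lift}, \eqref{eqn minimal K type of minimal representation} and \Cref{lemma F4 action on kernel function} — yields at once that $f_{\Theta(\alpha)}$ is well defined, holomorphic, of weight $2n+12$ for $\SL_{2}(\Z)$, and cuspidal when $n>0$, which is the content of \Cref{prop theta lift comes from modular form}.
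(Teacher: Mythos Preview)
Your proposal is correct and follows essentially the same route as the paper: iterate the raising operator coming from $\mathfrak{p}^{+}_{\jord}$, identify the leading term of $\mathrm{D}^{n}h_{T}$ as $(-4\pi)^{n}T^{\otimes n}h_{T}$, and kill all lower-order terms upon pairing with $P\in\vrep{n}(\jord_{\C})$ using the conditions $\tr(A)=0$ and $A\circ A=0$ on the generators $(\,\cdot\,,A)^{n}$.

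The paper organizes this a little differently, and you may find its packaging cleaner than what you anticipate. Rather than working directly with a Maass--Shimura operator on $\mathcal{H}_{\jord}$, the paper quotes Pollack's formula \cite[Proposition~6.2.2]{pollack2023exceptional} for $\mathrm{X}_{A_{n}}\cdots\mathrm{X}_{A_{1}}h_{T}|_{\LeviM_{\jord}(\R)}$ in terms of explicit $\grpF(\R)$-equivariant maps $\mathscr{P}_{k}:\jord_{\C}^{\otimes k}\rightarrow\mathrm{T}(\jord_{\C})$ defined by a recursion involving only $\tr$ and the Jordan product $\circ$ (no $(\mathrm{Im}\,Z)^{-1}$ appears, since one has restricted to $\LeviM_{\jord}(\R)$). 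The step you flag as ``the bulk of the work'' --- the vanishing of lower-order terms --- is then dispatched in one line: by $\grpF(\R)$-equivariance it suffices to treat $P=(\,\cdot\,,A)^{n}$; choosing a basis of $\jord_{\C}$ with $e_{1}=A$, the recursion and $\tr(A)=0$, $A\circ A=0$ give $\mathscr{P}_{n}(A^{\otimes n})=A^{\otimes n}$ immediately. Finally, instead of tracking the weight via the cocycle relation, the paper uses the Iwasawa decomposition $g=tnk$ of $\SL_{2}(\R)$: the unipotent part contributes $e^{2\pi i(T,u^{2}x\mathrm{I})}$, the compact part contributes $j(k,i)^{-2n-12}$ via the identity $k.\mathrm{X}_{A}=j(k,i)^{-2}\mathrm{X}_{A}$, and the torus part is handled by the $\LeviM_{\jord}(\R)$-computation. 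Your tube-domain approach and the paper's Iwasawa approach are equivalent, but the latter cleanly separates the three ingredients.
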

\begin{proof}[Proof of \Cref{thm Fourier of theta lift}]
    By \eqref{eqn simplify def of theta lift},
    we have 
    \begin{align}\label{eqn simplifying the modular form}
        f_{\Theta(\alpha)}(z)=j(g,i)^{2n+12}\left(\frac{1}{|\Gamma_{\mathrm{I}}|}\{\Theta_{n}(g),\alpha_{\mathrm{I}}\}+\frac{1}{|\Gamma_{\mathrm{E}}|}\{\Theta_{n}(\delta_{\infty}^{-1}g),\alpha_{\mathrm{E}}\}\right),\,z=g.i\in\mathcal{H}.   
    \end{align}
    Using the Fourier expansion \eqref{eqn Fourier expansion of higher kernel function} of $\Theta_{n}$
    and \Cref{thm main theorem in Fourier computation},
    the first term in \eqref{eqn simplifying the modular form} equals
    \begin{align*}
        \frac{1}{|\Gamma_{\mathrm{I}}|}j(g,i)^{2n+12}\{\Theta_{n}(g),\alpha_{\mathrm{I}}\}&=\frac{240}{|\Gamma_{\mathrm{I}}|}j(g,i)^{2n+12}\sum_{T\in\jord_{\Z}^{+}}\mathrm{a}_{\jord_{\Z}}(T)\{\mathrm{D}^{n}h_{T}(g),\alpha_{\mathrm{I}}\}\\
        &= \frac{240 (-4\pi)^{n}}{|\Gamma_{\mathrm{I}}|}\sum_{T\in\jord_{\Z}^{+}}\mathrm{a}_{\jord_{\Z}}(T)\alpha_{\mathrm{I}}(T)q^{(T,\mathrm{I})_{\mathrm{I}}},
    \end{align*}
    and the second term in \eqref{eqn simplifying the modular form} equals
    \begin{align*}
        \frac{1}{|\Gamma_{\mathrm{E}}|}j(g,i)^{2n+12}\{\Theta_{n}(\delta_{\infty}^{-1}g),\alpha_{\mathrm{E}}\}&=\frac{240}{|\Gamma_{\mathrm{E}}|}j(g,i)^{2n+12}\sum_{T\in\jord_{\Z}^{+}}\mathrm{a}_{\jord_{\Z}}(T)\{\mathrm{D}^{n}h_{T}(\delta_{\infty}^{-1}g),\alpha_{\mathrm{E}}\}\\
        &=\frac{240}{|\Gamma_{\mathrm{E}}|}j(g,i)^{2n+12}\sum_{T\in\jord_{\Z}^{+}}\mathrm{a}_{\jord_{\Z}}(T)\left\{\mathrm{D}^{n}h_{\delta_{\infty}^{*}T}(g),\alpha_{\mathrm{E}}\right\}\\
        &=\frac{240(-4\pi)^{n}}{|\Gamma_{\mathrm{E}}|}\sum_{T\in\jord_{\Z}^{+}}\mathrm{a}_{\jord_{\Z}}(T)\alpha_{\mathrm{E}}(\delta_{\infty}^{*}T)e^{2\pi i\left(\delta_{\infty}^{*}T,g.i\mathrm{I}\right)}.
    \end{align*}
    Since $\mathbf{M}_{\jord}^{1}(\R)$ preserves the rank and stabilizes the set of positive semi-definite elements \cite[Proposition 2.4]{Leech},
    we have $\jord_{\mathrm{E}}^{+}=\delta_{\infty}(\jord_{\Z}^{+})$,
    thus 
    \[\sum_{T\in\jord_{\Z}^{+}}\mathrm{a}_{\jord_{\Z}}(T)\alpha_{\mathrm{E}}(\delta_{\infty}^{*}T)e^{2\pi i\left(\delta_{\infty}^{*}T,g.i\mathrm{I}\right)}=\sum_{T\in\jord_{\mathrm{E}}^{+}}\mathrm{a}_{\jord_{\mathrm{E}}}(T)\alpha_{\mathrm{E}}(\delta_{\infty}^{*}\delta_{\infty}^{-1}T)e^{2\pi i (\delta_{\infty}^{*}\delta_{\infty}^{-1}T,g.i\mathrm{I})}.\]
    The element $\delta_{\infty}^{*}\delta_{\infty}^{-1}$ is the archimedean part of $\delta^{*}\delta^{-1}\in\mathbf{M}_{\jord}^{1}(\Q)$.
    By \Cref{lemma delta element between Albert algebras},
    $\delta_{f}^{-1}\gamma_{\mathrm{E}}\in\mathbf{M}_{\jord}^{1}(\widehat{\Z})$,
    so $\delta_{f}^{*}\delta_{f}^{-1}\in \gamma_{\mathrm{E}}^{*}\mathbf{M}_{\jord}^{1}(\widehat{\Z})\gamma_{\mathrm{E}}^{-1}=\gamma_{\mathrm{E}}\mathbf{M}_{\jord}^{1}(\widehat{\Z})\gamma_{\mathrm{E}}^{-1}=\mathrm{Aut}(\jord_{\mathrm{E}}\otimes_{\Z}\widehat{\Z},\det)$.
    As a direct consequence, $\delta^{*}\delta^{-1}$ induces an automorphism of the lattice $\jord_{\mathrm{E}}$,
    thus we have:
    \[\sum_{T\in\jord_{\mathrm{E}}^{+}}\mathrm{a}_{\jord_{\mathrm{E}}}(T)\alpha_{\mathrm{E}}(\delta_{\infty}^{*}\delta_{\infty}^{-1}T)e^{2\pi i (\delta_{\infty}^{*}\delta_{\infty}^{-1}T,g.i\mathrm{I})}=\sum_{T\in\jord_{\mathrm{E}}^{+}}\mathrm{a}_{\jord_{\mathrm{E}}}(T)\alpha_{\mathrm{E}}(T)q^{\tr(T)}.\qedhere\]
\end{proof}
A direct corollary of \Cref{thm Fourier of theta lift} is the following:
\begin{cor}\label{Cor Fourier theta lift no average}
    For any Albert lattice $J\in\mathcal{J}$
    and any polynomial $P\in \vrep{n}(\jord_{\C})$,
    the (weighted) theta series 
    \begin{align}\label{eqn weighted theta series for any Albert algebra}
        \vartheta_{J,P}(z):=\sum_{T\in J^{+}}\mathrm{a}_{J}(T)P(T)q^{\tr(T)},\,z\in\mathcal{H},q=e^{2\pi i},         
    \end{align}
    is a modular form on $\SL_{2}(\Z)$ of weight $2n+12$,
    and it is cuspidal if $P$ is not constant.
\end{cor}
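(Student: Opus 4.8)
The plan is to deduce \Cref{Cor Fourier theta lift no average} from \Cref{thm Fourier of theta lift} by two elementary reductions: transporting an arbitrary Albert lattice onto one of the two base points $\jord_{\Z},\jord_{\mathrm{E}}$ by the $\grpF(\R)$-action, and then replacing $P$ by its average over the relevant finite automorphism group so as to land in the invariant subspace to which \Cref{thm Fourier of theta lift} applies.

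\textbf{Transport by $\grpF(\R)$.} By \Cref{notation representatives of double cosets}, every Albert lattice $J\in\mathcal{J}$ has the form $J=hJ_{0}$ with $h\in\grpF(\R)$ and $J_{0}\in\{\jord_{\Z},\jord_{\mathrm{E}}\}$. I claim $\vartheta_{hJ_{0},P}=\vartheta_{J_{0},\,h^{-1}.P}$. Indeed, $h$ preserves the rank and the cone of positive semi-definite matrices (it lies in $\mathbf{M}_{\jord}^{1}(\R)$; \cite[Proposition 2.4]{Leech}), so it restricts to a bijection $J_{0}^{+}\xrightarrow{\sim}(hJ_{0})^{+}$; it preserves the content, since $hT/c\in hJ_{0}\iff T/c\in J_{0}$; and it preserves the trace, since $\grpF(\R)$ fixes $\mathrm{I}$ and the bilinear form $(\,,\,)$. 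Substituting $T=hS$ in \eqref{eqn weighted theta series for any Albert algebra} and using $(h^{-1}.P)(S)=P(hS)$ gives the claim. As $\vrep{n}(\jord_{\C})$ is a $\grpF(\R)$-subrepresentation of $\mathrm{P}_{n}(\jord_{\C})$, we have $h^{-1}.P\in\vrep{n}(\jord_{\C})$, so it suffices to prove the corollary for $J=\jord_{\Z}$ and $J=\jord_{\mathrm{E}}$.

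\textbf{Averaging the weight.} Fix $J_{0}=\jord_{\Z}$ (the case $J_{0}=\jord_{\mathrm{E}}$ is verbatim the same, with $\Gamma_{\mathrm{E}}$ in place of $\Gamma_{\mathrm{I}}$). For $P\in\vrep{n}(\jord_{\C})$ put $\widetilde{P}:=\tfrac{1}{|\Gamma_{\mathrm{I}}|}\sum_{\gamma\in\Gamma_{\mathrm{I}}}\gamma.P$, which lies in $\vrep{n}(\jord_{\C})^{\Gamma_{\mathrm{I}}}$. Running the substitution of the previous step with $\gamma\in\Gamma_{\mathrm{I}}=\aut(\jord_{\Z})$ (which permutes $\jord_{\Z}^{+}$ and preserves content and trace) shows $\vartheta_{\jord_{\Z},\gamma.P}=\vartheta_{\jord_{\Z},P}$ for each $\gamma$, hence $\vartheta_{\jord_{\Z},\widetilde{P}}=\vartheta_{\jord_{\Z},P}$.

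\textbf{Conclusion.} Assume first $n>0$, and let $\alpha\in\mathcal{A}_{\vrep{n\varpi_{4}}}(\grpF)$ be the automorphic form corresponding under \Cref{lemma identification of vector valued automorphic form} to the pair $(\alpha_{\mathrm{I}},\alpha_{\mathrm{E}})=(\widetilde{P},0)$. By \Cref{thm Fourier of theta lift}, up to a non-zero constant $f_{\Theta(\alpha)}=\tfrac{1}{|\Gamma_{\mathrm{I}}|}\vartheta_{\jord_{\Z},\widetilde{P}}=\tfrac{1}{|\Gamma_{\mathrm{I}}|}\vartheta_{\jord_{\Z},P}$, and by \Cref{prop theta lift comes from modular form} this is a level one holomorphic modular form of weight $2n+12$, cuspidal since $n>0$; therefore so is $\vartheta_{\jord_{\Z},P}$, and by the first step so is $\vartheta_{J,P}$ for any $J$ in the $\grpF(\R)$-orbit of $\jord_{\Z}$. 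Taking instead $(\alpha_{\mathrm{I}},\alpha_{\mathrm{E}})=(0,\widetilde{P})$ with the $\Gamma_{\mathrm{E}}$-average disposes of the orbit of $\jord_{\mathrm{E}}$. When $n=0$ the polynomial $P$ is constant, $\vartheta_{J,P}$ is a scalar multiple of the Elkies--Gross series $\vartheta_{J}$, and the statement (modular of weight $12$, not cuspidal) is \cite[\S 5]{Leech}; see \Cref{rmk review trivial weight}. The argument is formal once \Cref{thm Fourier of theta lift} is granted; the only care needed is in the two invariance computations, i.e.\ checking that transport by $\grpF(\R)$ and averaging over $\Gamma_{\mathrm{I}}$ or $\Gamma_{\mathrm{E}}$ really preserve all three ingredients — rank-one positivity, content, and trace — entering the Fourier coefficients of $\vartheta_{J,P}$. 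I expect no substantive obstacle.
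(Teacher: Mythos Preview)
Your proposal is correct and follows essentially the same approach as the paper: reduce to $J\in\{\jord_{\Z},\jord_{\mathrm{E}}\}$ via the $\grpF(\R)$-invariance $\vartheta_{gJ,gP}=\vartheta_{J,P}$, then average $P$ over $\Gamma_{\mathrm{I}}$ (resp.\ $\Gamma_{\mathrm{E}}$) to obtain an invariant weight, and finally apply \Cref{thm Fourier of theta lift} to the automorphic form supported on the corresponding orbit. The only cosmetic difference is that the paper takes $\alpha_{\mathrm{I}}=\sum_{\gamma\in\Gamma_{\mathrm{I}}}\gamma.P$ and performs the averaging computation inside the Fourier expansion of $f_{\Theta(\alpha)}$, whereas you first establish $\vartheta_{\jord_{\Z},\widetilde{P}}=\vartheta_{\jord_{\Z},P}$ and then invoke the theorem.
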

\begin{proof}  
    Since the theta series \eqref{eqn weighted theta series for any Albert algebra}
    is invariant under the $\grpF(\R)$-action on the pair $(J,P)$
    in the sense that $\vartheta_{gJ,gP}=\vartheta_{J,P}$,
    it suffices to prove the modularity for $J\in\left\{\jord_{\Z},\jord_{\mathrm{E}}\right\}$.
    Here we give the proof for $J=\jord_{\Z}$, and that for $\jord_{\mathrm{E}}$ is almost the same.

    Let $\alpha:\mathcal{J}\rightarrow \vrep{n}(\jord_{\C})$ be the element in $\mathcal{A}_{\vrep{n}(\jord_{\C})}(\grpF)$
    that is supported on the $\grpF(\R)$-orbit of $\jord_{\Z}$ and takes the value $\sum_{\gamma\in \Gamma_{\mathrm{I}}}\gamma.P$ at $\jord_{\Z}\in\mathcal{J}$.
    By \Cref{thm Fourier of theta lift} and \Cref{rmk review trivial weight},
    $f_{\Theta(\alpha)}$ is a modular form on $\SL_{2}(\Z)$ of weight $2n+12$.    
    On the other hand, 
    $\jord_{\Z}^{+}$ is stable under the action of $\Gamma_{\mathrm{I}}$,
    thus one has:
    \begin{align*}
        f_{\Theta(\alpha)}(z)&=\frac{1}{|\Gamma_{\mathrm{I}}|}\sum_{T\in\jord_{\Z}^{+}}\mathrm{a}_{\jord_{\Z}}(T)\left(\sum_{\gamma\in\Gamma_{\mathrm{I}}}P(\gamma^{-1}T)\right)q^{\tr(T)}\\
        &=\frac{1}{|\Gamma_{\mathrm{I}}|}\sum_{\gamma\in\Gamma_{\mathrm{I}}}\left(\sum_{T\in\jord_{\Z}}\mathrm{a}_{\jord_{\Z}}(\gamma T)P(T)q^{\tr(\gamma T)}\right)\\
        &=\vartheta_{\jord_{\Z},P}(z) \qedhere
    \end{align*}
\end{proof}
If we view $\alpha\in\mathcal{A}_{\vrep{n\varpi_{4}}}(\grpF)$
as a function $\alpha:\mathcal{J}\rightarrow \vrep{n}(\jord_{\C})$,
the modular form $f_{\Theta(\alpha)}$ can be written in the following forms:
\[f_{\Theta(\alpha)}=\frac{1}{|\Gamma_{\mathrm{I}}|}\vartheta_{\jord_{\Z},\alpha(\jord_{\Z})}+\frac{1}{|\Gamma_{\mathrm{E}}|}\vartheta_{\jord_{\mathrm{E}},\alpha(\jord_{\mathrm{E}})}.\]

\subsection{Theta series attached to \texorpdfstring{$\spin_{9}(\R)$}{}-invariant polynomials}\label{section theta series spin invariant polynomial}
As an application of \Cref{thm Fourier of theta lift},
we are going to show that for every weight $k$ with $\mathrm{S}_{k}(\SL_{2}(\Z))\neq 0$,
there exists a polynomial $P\in\vrep{\frac{k-12}{2}}(\jord_{\C})$
such that the weighted theta series $\vartheta_{\jord_{\Z},P}$ defined as \eqref{eqn weighted theta series for any Albert algebra} is non-zero.
This result will be used later in \Cref{section final proof of the non-vanishing theta}.

The $\lietype{F}{4}\downarrow\lietype{B}{4}$ branching law \cite[\S2,Theorem 7]{Lepowsky1970}
says that $\dim \vrep{n\varpi_{4}}^{\spin_{9}(\R)}=1$ for any $n>0$,
where $\spin_{9}$ is defined as the stabilizer of $\mathrm{E}_{1}=[1,0,0\,;0,0,0]$ in $\grpF$,
thus the $\spin_{9}(\R)$-invariant polynomial in $\vrep{n}(\jord_{\C})$ is unique up to a non-zero scalar.
\begin{thm}\label{thm for each weight a non-zero lift}
    For $n\geq 2$ and any non-zero polynomial $P\in\vrep{n}(\jord_{\C})^{\spin_{9}(\R)}$,
    the weighted theta series $\vartheta_{\jord_{\Z},P}$ is non-zero.   
\end{thm}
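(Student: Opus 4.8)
The plan is to show that the Fourier coefficient of $q$ in $\vartheta_{\jord_{\Z},P}$ is non-zero, which forces $\vartheta_{\jord_{\Z},P}\neq 0$. By \Cref{Cor Fourier theta lift no average} this coefficient is $\sum_{T}\mathrm{a}_{\jord_{\Z}}(T)P(T)$, summed over rank $1$ positive semi-definite $T=[a,b,c\,;x,y,z]\in\jord_{\Z}$ with $\tr(T)=1$. First I would classify such $T$: the diagonal entries are non-negative integers with $a+b+c=1$, hence a permutation of $(1,0,0)$, and then the three mixed $2\times2$ minors force $\norm(x)=bc$, $\norm(y)=ca$, $\norm(z)=ab$ all to vanish, so $x=y=z=0$ since the norm on $\oct_{\Z}$ is positive definite. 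Thus the only contributing terms are $T=\mathrm{E}_{1},\mathrm{E}_{2},\mathrm{E}_{3}$, each of content $1$, so $\mathrm{a}_{\jord_{\Z}}(T)=\sigma_{3}(1)=1$, and the coefficient of $q$ equals $P(\mathrm{E}_{1})+P(\mathrm{E}_{2})+P(\mathrm{E}_{3})$. It remains to prove this sum is non-zero.

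Second, I would show $P(\mathrm{E}_{1})\neq 0$. Write $\mathrm{E}_{1}=\tfrac{1}{3}\mathrm{I}+v_{1}$ with $0\neq v_{1}$ the trace-free part, a non-zero vector of $\vrep{\varpi_{4}}$. By \Cref{lemma polynomial model of F4 representation}, $\vrep{n}(\jord_{\C})$ is, with multiplicity one, the Cartan component $\vrep{n\varpi_{4}}\subseteq\sym^{n}\jord_{\C}^{\vee}$. Using $P(\mathrm{E}_{1})=\{\mathrm{E}_{1}^{\otimes n},P\}$ (\Cref{ex special case of pairing}), the value depends only on the Cartan component $\vrep{n\varpi_{4}}\subseteq\sym^{n}\jord_{\C}$ of $\mathrm{E}_{1}^{\otimes n}$; since the cross terms in $(\tfrac{1}{3}\mathrm{I}+v_{1})^{\otimes n}$ carry a factor $\mathrm{I}$ and hence no $\vrep{n\varpi_{4}}$, that component is the Cartan projection $\mathrm{pr}(v_{1}^{\otimes n})$, which is non-zero because the map $v\mapsto\mathrm{pr}(v^{\otimes n})$ on an irreducible representation pairs as $\ell(v)^{n}$ against the $n$-th powers of highest-weight-orbit functionals, and these span the dual. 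Thus $\mathrm{pr}(\mathrm{E}_{1}^{\otimes n})$ is a non-zero $\spin_{9}(\R)$-invariant vector of $\vrep{n\varpi_{4}}$; as the $\grpF$-invariant form on $\vrep{n\varpi_{4}}$ restricts non-degenerately to the one-dimensional space of $\spin_{9}(\R)$-invariants, it pairs non-trivially with $P$, so $P(\mathrm{E}_{1})\neq 0$.

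Third, I would evaluate the whole sum using spherical function theory. The restriction of the polynomial $P$ to the orbit $\grpF(\R)\cdot\mathrm{E}_{1}\cong\grpF(\R)/\spin_{9}(\R)$, the octonionic projective plane, is a $\spin_{9}(\R)$-invariant function, non-zero by the previous step; since the restriction map from the irreducible $\vrep{n}(\jord_{\C})\cong\vrep{n\varpi_{4}}$ into $C^{\infty}(\grpF(\R)/\spin_{9}(\R))$ is $\grpF(\R)$-equivariant and non-zero, its image is the multiplicity-one $\vrep{n\varpi_{4}}$-isotypic piece, so $P|_{\grpF(\R)\mathrm{E}_{1}}$ is a non-zero multiple of the $n$-th zonal spherical function. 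As a function of $\tau=(\,\cdot\,,\mathrm{E}_{1})\in[0,1]$ this spherical function is, classically, a Jacobi polynomial $P_{n}^{(7,3)}(2\tau-1)$ — the parameters coming from the restricted-root multiplicities $8$ and $7$, with $8+7+1=\dim\grpF-\dim\spin_{9}$. Since $(\mathrm{E}_{1},\mathrm{E}_{1})=1$ and $(\mathrm{E}_{2},\mathrm{E}_{1})=(\mathrm{E}_{3},\mathrm{E}_{1})=0$, there is a constant $c\neq 0$ with
\[
P(\mathrm{E}_{1})+P(\mathrm{E}_{2})+P(\mathrm{E}_{3})=c\bigl(P_{n}^{(7,3)}(1)+2P_{n}^{(7,3)}(-1)\bigr)=c\Bigl(\binom{n+7}{7}+2(-1)^{n}\binom{n+3}{3}\Bigr).
\]
For $n\geq 2$ one has $\binom{n+3}{3}/\binom{n+7}{7}=840/\bigl((n+4)(n+5)(n+6)(n+7)\bigr)<\tfrac{1}{2}$, so the right-hand side is non-zero. (The ratio is exactly $\tfrac{1}{2}$ when $n=1$, which is why $n\geq 2$ — equivalently $\mathrm{S}_{2n+12}(\SL_{2}(\Z))\neq 0$ — is required, since in weight $14$ the theta series vanishes anyway.)

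The main obstacle is this last step: correctly matching the restriction of the harmonic polynomial $P$ to the orbit of $\mathrm{E}_{1}$ with the zonal spherical function of $\grpF(\R)/\spin_{9}(\R)$, and pinning down the Jacobi parameters and the normalisation. An alternative avoiding this input is to write the unique $\spin_{9}(\R)$-invariant polynomial in $\vrep{n}(\jord_{\C})$ explicitly in terms of the $\spin_{9}(\R)$-invariants on $\jord_{\C}$ and evaluate at $\mathrm{E}_{1},\mathrm{E}_{2},\mathrm{E}_{3}$ by hand.
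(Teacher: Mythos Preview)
Your proof is correct and takes a genuinely different route from the paper. Both arguments begin by identifying the coefficient of $q$ as $P(\mathrm{E}_{1})+P(\mathrm{E}_{2})+P(\mathrm{E}_{3})$, using that the only rank-$1$ positive semi-definite $T\in\jord_{\Z}$ with $\tr(T)=1$ are $\mathrm{E}_{1},\mathrm{E}_{2},\mathrm{E}_{3}$. From here the paper proceeds by an elementary construction: it writes the $\spin_{9}(\R)$-invariant polynomial explicitly as $P_{n}(X)=\int_{\spin_{9}(\R)}(X,kB)^{n}\,dk$ for a concrete $B\in\mathbb{X}$ with diagonal entries $(2,-1,-1)$, shows that for every $k$ the diagonal entries of $kB$ are $2,-1+\xi(k),-1-\xi(k)$ with $|\xi(k)|\leq 1$, and observes that the integrand $2^{n}+(-1+\xi)^{n}+(-1-\xi)^{n}$ is non-negative on $|\xi|\leq 1$ and strictly positive at $k=1$ once $n\geq 2$, so the integral is positive. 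You instead invoke the spherical-function theory of the rank-one symmetric space $\grpF(\R)/\spin_{9}(\R)$, identify $P$ on the orbit with the Jacobi polynomial $P_{n}^{(7,3)}(2\tau-1)$, and obtain the closed formula $c\bigl(\binom{n+7}{7}+2(-1)^{n}\binom{n+3}{3}\bigr)$. Your approach gives the exact coefficient and a structural explanation of why $n=1$ is excluded (the ratio is exactly $\tfrac{1}{2}$ there), but requires importing the Jacobi-polynomial description of zonal spherical functions and pinning down the parameters $(7,3)$ from the restricted-root multiplicities $(8,7)$; the paper's argument is entirely self-contained and avoids this machinery, at the cost of giving only positivity rather than an explicit value. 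Your own suggested alternative at the end --- writing the invariant polynomial explicitly and evaluating by hand --- is in fact closer in spirit to what the paper actually does.
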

\begin{proof}
    We first construct an explicit polynomial $P_{n}\in\vrep{n}(\jord_{\C})^{\spin_{9}(\R)}$.
    In the real definite octonion algebra $\oct_{\R}$, 
    we pick three purely imaginary elements $x_{0},y_{0},z_{0}$
    such that $\R\oplus\R x_{0}\oplus \R y_{0}\oplus \R z_{0}$ is isomorphic to Hamilton's quaternion algebra,
    \emph{i.e.} 
    \[x_{0}^{2}=y_{0}^{2}=z_{0}^{2}=-1\text{ and }x_{0}y_{0}=-y_{0}x_{0}=z_{0}.\]
    Take $x_{1}=x_{0},\,y_{1}=\sqrt{-2}y_{0}$ and $z_{1}=\sqrt{-2}z_{0}$,
    and choose $B=[2,-1,-1\,;x_{1},y_{1},z_{1}]\in \jord_{\C}$.
    It can be easily verified that $B\in\mathbb{X}$,
    thus the polynomial $Q_{n}(X):=\left(\tr(X\circ B)\right)^{n}=\bracket{X,B}^{n}$ lies in $\vrep{n}(\jord_{\C})$,
    and take $P_{n}(X):=\int_{\spin_{9}(\R)}k.Q_{n}(X)dk=\int_{\spin_{9}(\R)}(X,kB)^{n}dk$ to be the average of $Q_{n}$ over $\spin_{9}(\R)$.
    Now it suffices to show that the associated theta series $\vartheta_{\jord_{\Z},P_{n}}\neq 0$.

    Consider the first Fourier coefficient $a_{1}$ of $\vartheta_{\jord_{\Z},P_{n}}$.
    The elements in $\jord_{\Z}^{+}$ having contributions to the coefficient of $q$ are $\mathrm{E}_{1},\,\mathrm{E}_{2}$ and $\mathrm{E}_{3}$,
    thus:
    \begin{align}\label{eqn first Fourier coefficient of weighted theta}
        a_{1}=\sum_{i=1}^{3}P_{n}(\mathrm{E}_{i})=\int_{\spin_{9}(\R)}\left(\sum_{i=1}^{3}(\mathrm{E}_{i},kB)^{n}\right)dk.
    \end{align}
    By \Cref{lemma stable spin9 subspace of Jordan algebra},
    $\spin_{9}(\R)$ preserves the subspaces
    $\jord_{1}=\set{[0,\xi,-\xi\,;x,0,0]}{\xi\in\R,x\in\oct_{\R}}$
    and $\jord_{2}=\set{[0,0,0\,;0,y,z]}{y,z\in\oct_{\R}}$
    respectively.
    So for any $k\in\spin_{9}(\R)$ we set:
    \begin{align*}
        &k[0,0,0\,;x_{1},0,0]=[0,\xi(k),-\xi(k)\,;x(k),0,0]\in\jord_{1},\\
        &k[0,0,0\,;0,y_{1},z_{1}]=[0,0,0\,;0,y(k),z(k)]\in\jord_{2}\otimes\C.
    \end{align*}
    We have the equality $2\xi(k)^{2}+\lrangle{x(k)}{x(k)}=\lrangle{x_{1}}{x_{1}}=2$,
    as $k$ preserves the inner product on $\jord_{\R}$, which implies that $|\xi(k)|\leq 1$.
    The three diagonal entries of $kB$ are
    $2,-1+\xi(k)$ and $-1-\xi(k)$,
    thus $\sum\limits_{i=1}^{3}(\mathrm{E}_{i},kB)^{n}=2^{n}+(-1+\xi(k))^{n}+(-1-\xi(k))^{n}\in\R_{\geq 0}$.
    When we take $k=1$, $\sum\limits_{i=1}^{3}\bracket{\mathrm{E}_{i},B}^{n}=2^{n}+(-1)^{n}+(-1)^{n}$
    is positive for any $n\geq 2$.
    Hence the integral in \eqref{eqn first Fourier coefficient of weighted theta}
    is strictly positive,
    and as a consequence the weighted theta series $\vartheta_{\jord_{\Z},P_{n}}$ is non-zero.
\end{proof}

\subsection{Proof of \texorpdfstring{\Cref{thm main theorem in Fourier computation}}{}}\label{section proof of main computation}
 In this section, we will prove \Cref{prop theta lift comes from modular form} and \Cref{thm main theorem in Fourier computation},
following a similar strategy to that of Pollack in \cite[\S 6]{pollack2023exceptional}.

We first define a basis $\{\mathrm{X}_{\alpha}\}_{\alpha}$ of $\mathfrak{p}_{\jord}^{+}$ as follows:
for any $A\in\jord_{\C}$, write
$\mathrm{X}_{A}:=\mathrm{X}_{A}^{+}=i\mathrm{C}_{h}^{-1}\mathrm{n}_{\mathrm{L}}(A)\mathrm{C}_{h}
$
as in \Cref{section Lie algebra E7},
which is an element of $\mathfrak{p}_{\jord}^{+}$ by \Cref{prop cayley transform switch two decompositions}.
Choose a $\C$-basis $\{e_{1},\ldots,e_{27}\}$ of $\jord_{\C}$, 
then we have a basis $\{\mathrm{X}_{e_{i}}\}_{1\leq i\leq 27}$ of $\mathfrak{p}^{+}_{\jord}$,
and we denote its dual basis by $\{\mathrm{X}_{e_{i}}^{\vee}\}_{1\leq i\leq 27}$.
In \cite[\S 6.2]{pollack2023exceptional},
Pollack calculates the action of $\mathrm{X}_{A_{n}}\cdots\mathrm{X}_{A_{1}}$ on $h_{T}|_{\mathbf{M}_{\jord}(\R)}$.
Before recalling his result, we explain some notations that will appear in the statement.

Let $\mathrm{T}(\jord_{\C})=\bigoplus\limits_{k=0}^{\infty}\jord_{\C}^{\otimes k}$
be the tensor algebra of $\jord_{\C}$.
Define a family of $\grpF(\R)$-equivariant maps $\mathscr{P}_{k}:\jord_{\C}^{\otimes k}\rightarrow \mathrm{T}(\jord_{\C})$ inductively:
\begin{itemize}
    \item let $\mathscr{P}_{0}=1$ be the constant map;
    \item for $k\geq 0$, define 
    \footnote{In \cite[\S 6.2]{pollack2023exceptional}, the Jordan product $A\circ B$ is denoted by $\frac{1}{2}\{A,B\}$,
    where $\{A,B\}=AB+BA$ is defined in \cite[\S 3.3.1]{PolFourierExpansion}.}
    {\small\begin{align*}
        \mathscr{P}_{k+1}(A_{1}\otimes\cdots\otimes A_{k}\otimes A_{k+1})=&\mathscr{P}_{k}(A_{1}\otimes\cdots\otimes A_{k})\otimes A_{k+1}+4\tr(A_{k+1})\mathscr{P}_{k}(A_{1}\otimes\cdots\otimes A_{k})\\
        &+A_{k+1}\circ\mathscr{P}_{k}(A_{1}\otimes\cdots\otimes A_{k})+\mathscr{P}_{k}(A_{k+1}\circ(A_{1}\otimes\cdots \otimes A_{k})),   
    \end{align*}}
    where $A\circ (A_{1}\otimes\cdots\otimes A_{r}):=\sum_{j=1}^{r}A_{1}\otimes\cdots\otimes (A\circ A_{j})\otimes\cdots\otimes A_{r}$.
\end{itemize}
For any $T\in \jord_{\R}$ and $m\in \LeviM_{\jord}(\R)$, 
we define a linear form $w_{T,m}$
on $\mathrm{T}(\jord_{\C})$ by:
\begin{align*}
    w_{T,m}(A_{1}\otimes\cdots\otimes A_{r})=(-4\pi)^{r}\prod_{j=1}^{r}\left(T,m(A_{j})\right),\text{ for any }r\geq 0.
\end{align*}
\begin{prop}\label{prop action iterated X}
    \cite[Proposition 6.2.2]{pollack2023exceptional}
    Let the notations be as above,
    then for any $m\in\LeviM_{\jord}(\R)$ and $A_{1},\ldots,A_{n}\in\jord_{\C}$,
    we have
    \begin{align*}
        \mathrm{X}_{A_{n}}\cdots \mathrm{X}_{A_{1}}h_{T}(m)=w_{T,\lambda(m)m^{*}}(\mathscr{P}_{n}(A_{1}\otimes\cdots\otimes A_{n}))h_{T}(m).
    \end{align*}
\end{prop}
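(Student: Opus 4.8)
The proposition is quoted from \cite[Proposition 6.2.2]{pollack2023exceptional}, so strictly speaking the only thing to verify is that our kernel function $h_T(g)=J(g_\infty,i\mathrm{I})^{-4}e^{2\pi i(T,g_\infty.i\mathrm{I})}$ is, up to normalization, exactly the function to which Pollack's computation applies; I nonetheless sketch the argument. The plan is to induct on $n$. The base case $n=0$ is the identity $w_{T,\lambda(m)m^{*}}(\mathscr{P}_0)=1$, since $\mathscr{P}_0=1$ and $w_{T,m'}$ has leading constant $(-4\pi)^{0}=1$. For the inductive step one must compute, for $\mathrm{X}_A\in\mathfrak{p}_{\jord}^{+}$, the quantity $\frac{d}{ds}\big|_{0}\big(\mathrm{X}_{A_n}\cdots\mathrm{X}_{A_1}h_T\big)(m\exp(s\mathrm{X}_A))$. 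Here the key point is that by \Cref{prop cayley transform switch two decompositions} we have $\mathrm{X}_A=i\,\mathrm{C}_h^{-1}\mathrm{n}_{\mathrm{L}}(A)\mathrm{C}_h$, so $\exp(s\mathrm{X}_A)=\mathrm{C}_h^{-1}\mathrm{n}(isA)\mathrm{C}_h$ is conjugate to a translation in $\mathbf{N}_{\jord}$; writing $m\exp(s\mathrm{X}_A)=\exp(s\,\mathrm{Ad}(m)\mathrm{X}_A)\,m$ (legitimate because $\LeviM_{\jord}$ normalizes $\mathfrak{p}_{\jord}^{+}$, so $\mathrm{Ad}(m)\mathrm{X}_A$ is again of the form $\mathrm{X}_{B}$) and running the chain rule through $h_T(g)=J(g,i\mathrm{I})^{-4}e^{2\pi i(T,g.i\mathrm{I})}$, using the explicit tube-domain action of \Cref{ex explicit actions on exceptional tube domain} together with the normalization of $\mathrm{C}_h$ established in \cite[\S6.1]{pollack2023exceptional} (the differential of $\mathrm{C}_h^{-1}$ at $\mathrm{C}_h.(i\mathrm{I})$ is a fixed scalar, and the first-order variation of $J(\exp(s\mathrm{X}_A),i\mathrm{I})$ is $-s\,\tr(A)$), one obtains the claimed formula.

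Concretely, each new factor of $\mathrm{X}_A$ contributes exactly four terms to the expression built up so far: a ``flat'' contribution that appends $A$ as a new tensor slot ($\mathscr{P}_k(\cdots)\otimes A$); a contribution $4\tr(A)\,\mathscr{P}_k(\cdots)$ coming from differentiating the automorphy factor $J^{-4}$; a contribution $A\circ\mathscr{P}_k(\cdots)$ coming from the way the linear $\LeviM_{\jord}$-action $Z\mapsto\lambda(m)m^{*}Z$ and the translation $Z\mapsto Z+A$ interact with the $n$-fold Jordan-product structure already generated; and a contribution $\mathscr{P}_k(A\circ(\cdots))$ coming from the quadratic (adjoint $\#$) part of the action on $\mathcal{H}_{\jord}$. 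These are precisely the four summands in the recursion defining $\mathscr{P}_{k+1}$, while the constants $(-4\pi)$ and the linear map $\lambda(m)m^{*}$ are exactly what is packaged into the functional $w_{T,\lambda(m)m^{*}}$. Evaluating at $m\in\LeviM_{\jord}(\R)$ and using $J(m,i\mathrm{I})=\lambda(m)^{-1}$ then gives the overall factor $h_T(m)$.

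The main obstacle is the bookkeeping of the inductive step: one is differentiating $\mathrm{X}_{A_n}\cdots\mathrm{X}_{A_1}h_T$ not along $\LeviM_{\jord}(\R)$ but in $\mathfrak{p}_{\jord}^{+}$-directions transverse to it, so the inductive hypothesis (which only describes this function on $\LeviM_{\jord}(\R)$) cannot be substituted directly; instead one must carry the explicit expression on a neighbourhood of $\LeviM_{\jord}(\R)$, equivalently keep track of how the extra left-translation by $\exp(\mathfrak{p}_{\jord}^{+})$ deforms both the automorphy factor $J$ and the base point $i\mathrm{I}$, and verify that the non-linear part of the cocycle produces exactly the correction $\mathscr{P}_k(A_{k+1}\circ(\cdots))$ and no higher-order terms. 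This last point is where the quadratic/cubic polynomial laws $\#$ and $\det$ governing the $\mathbf{H}_{\jord}^{1}(\R)$-action on $\mathcal{H}_{\jord}$ enter in an essential way, and it is the heart of \cite[\S6.2]{pollack2023exceptional}.
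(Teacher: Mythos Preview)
The paper itself gives no proof of this proposition: it is simply cited from \cite[Proposition 6.2.2]{pollack2023exceptional}, with the adjacent remark correcting a typo in Pollack's formula. Your proposal correctly identifies this and then goes further by sketching Pollack's inductive argument; the outline you give (induction on $n$, with the four terms in the recursion for $\mathscr{P}_{k+1}$ arising respectively from the new tensor slot, the derivative of $J^{-4}$, and the two Jordan-product corrections) is accurate in spirit and matches the structure of Pollack's computation, so there is nothing to correct.
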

\begin{rmk}\label{rmk small mistake}
    There is a slight mistake in \cite[Proposition 6.2.2]{pollack2023exceptional}, 
    whose correct formula should be 
    \[\mathrm{X}_{A_{n}}\cdots \mathrm{X}_{A_{1}}h_{T}(M(\delta,m))=w_{T,m}(\mathscr{P}_{n}(A_{1}\otimes\cdots\otimes A_{n}))h_{T}(M(\delta,m)),\]
    where $M(\delta,m)$ is the element of $\mathbf{M}_{\jord}(\R)$ such that $M(\delta,m)\mathrm{n}(A)M(\delta,m)^{-1}=\mathrm{n}(m(A))$.
\end{rmk}
Observe that $\mathscr{P}_{n}(A_{1}\otimes\cdots\otimes A_{n})$ is the sum of 
$A_{1}\otimes\cdots\otimes A_{n}$ with tensors of smaller degrees.
The following lemma enables us to consider only the leading term of $\mathscr{P}_{n}$.
\begin{lemma}\label{lemma only leading term matters}
    Let $P$ be an element in $\vrep{n}(\jord_{\C})\simeq\vrep{n\varpi_{4}}$, then:
    {\footnotesize\begin{align}\label{eqn only leading term matters}
        \sum_{i_{1},\ldots,i_{n}}\mathscr{P}_{n}(e_{i_{1}}\otimes\cdots\otimes e_{i_{n}})\{\mathrm{X}_{e_{i_{1}}}^{\vee}\otimes\cdots\otimes \mathrm{X}_{e_{i_{n}}}^{\vee},P\}=
        \sum_{i_{1},\ldots,i_{n}}e_{i_{1}}\otimes\cdots\otimes e_{i_{n}}\{\mathrm{X}_{e_{i_{1}}}^{\vee}\otimes\cdots\otimes \mathrm{X}_{e_{i_{n}}}^{\vee},P\}.
    \end{align}}
\end{lemma}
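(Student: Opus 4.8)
The plan is to exploit the $\grpF(\R)$-equivariance of the maps $\mathscr{P}_k$ together with the fact that $P\in\vrep{n}(\jord_\C)$ is a highest-weight vector sitting inside $\sym^n\vrep{\varpi_4}$, so that the ``lower-degree'' pieces of $\mathscr{P}_n$ pair trivially with $P$. The key observation is that the left-hand side of \eqref{eqn only leading term matters}, viewed as an element of $\mathrm{T}(\jord_\C)$, is built $\grpF(\R)$-equivariantly from $P$: the sum $\sum_{i_1,\dots,i_n} e_{i_1}\otimes\cdots\otimes e_{i_n}\{\mathrm{X}_{e_{i_1}}^\vee\otimes\cdots\otimes \mathrm{X}_{e_{i_n}}^\vee, P\}$ is nothing but the image of $P$ under the canonical $\grpF(\R)$-equivariant inclusion $\sym^n(\jord_\C^\vee)^\vee\simeq\sym^n\jord_\C\hookrightarrow\jord_\C^{\otimes n}$ (symmetrization), using the identification of $\mathfrak{p}_\jord^\pm$ with $\jord_\C$ and $\jord_\C^\vee$ from \Cref{prop cayley transform switch two decompositions} and the pairing \eqref{eqn symmetric pairing}. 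Applying $\mathscr{P}_n$ term-by-term (legitimate since $\mathscr{P}_n$ is linear), the difference of the two sides is $\sum(\mathscr{P}_n-\mathrm{id})(e_{i_1}\otimes\cdots\otimes e_{i_n})\{\cdots,P\}$, where $\mathscr{P}_n-\mathrm{id}$ lands in $\bigoplus_{k<n}\jord_\C^{\otimes k}$.

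First I would make precise the recursive structure: by the inductive definition, $\mathscr{P}_n(A_1\otimes\cdots\otimes A_n) = A_1\otimes\cdots\otimes A_n + (\text{terms in }\jord_\C^{\otimes k},\ k<n)$, and every correction term is produced from the $A_j$ by operations ($\tr$, the Jordan product $\circ$, contraction $A\circ(-)$) that are all $\grpF(\R)$-equivariant --- indeed $\grpF(\R)$ preserves $\tr$, the bilinear form, and the Jordan multiplication by \Cref{def algebraic group F4}. Hence each homogeneous component $\mathscr{P}_n^{(k)}\colon\jord_\C^{\otimes n}\to\jord_\C^{\otimes k}$ is an $\grpF(\R)$-equivariant linear map, and it factors through $\sym^n\jord_\C$ once we precompose with symmetrization (which is how it enters \eqref{eqn only leading term matters}). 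So the difference of the two sides of \eqref{eqn only leading term matters} is $\sum_{k<n}\mathscr{P}_n^{(k)}$ applied to the symmetrized image of $P$, i.e.\ the image of $P$ under an $\grpF(\R)$-equivariant map $\Phi_k\colon\vrep{n}(\jord_\C)\to\jord_\C^{\otimes k}$.

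The main step is then: for $k<n$, any $\grpF(\R)$-equivariant linear map $\Phi_k\colon\vrep{n}(\jord_\C)\to\jord_\C^{\otimes k}$ is zero. Since $\vrep{n}(\jord_\C)\simeq\vrep{n\varpi_4}$ is irreducible (\Cref{lemma polynomial model of F4 representation}), such a $\Phi_k$ is either zero or injective; but $\jord_\C^{\otimes k}\subseteq\vrep{\varpi_4}^{\otimes k}\oplus(\text{trivial summands from the trace part})$ decomposes into irreducibles all of which are subquotients of $\vrep{\varpi_4}^{\otimes k}$, and I claim $\vrep{n\varpi_4}$ does not occur there for $k<n$. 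This is the weight/highest-weight count: the highest weight appearing in $\vrep{\varpi_4}^{\otimes k}$ is $k\varpi_4$, and $n\varpi_4\not\le k\varpi_4$ in the dominance order when $k<n$ (equivalently, $n\varpi_4$ is not a sum of $k$ weights of $\vrep{\varpi_4}$, each of which lies in the convex hull of the Weyl orbit of $\varpi_4$). Therefore $\Phi_k=0$ for every $k<n$, which forces the two sides of \eqref{eqn only leading term matters} to agree.

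The part I expect to be the main obstacle is making the representation-theoretic vanishing statement airtight: one must argue cleanly that $\jord_\C^{\otimes k}$ --- including the ``extra'' copies of the trivial representation coming from the line $\C\mathrm{I}$ and the contractions via the bilinear form --- contains no constituent isomorphic to $\vrep{n\varpi_4}$ when $k<n$. The safest route is to note $\jord_\C\simeq\vrep{\varpi_4}\oplus\triv$ as $\grpF(\R)$-modules (the trace-zero part plus $\C\mathrm{I}$), so $\jord_\C^{\otimes k}$ is a direct sum of $\vrep{\varpi_4}^{\otimes j}$ for $j\le k$, and then invoke the standard fact that the highest weight of any irreducible constituent of $\vrep{\varpi_4}^{\otimes j}$ is $\le j\varpi_4 \le k\varpi_4 < n\varpi_4$ in the dominance order, so $\vrep{n\varpi_4}$ cannot appear. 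I would phrase this via the evident $\grpF(\R)$-equivariance together with Schur's lemma and a one-line dominance-order comparison, avoiding any explicit branching computation.
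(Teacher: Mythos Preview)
Your argument is correct, but it takes a genuinely different route from the paper's proof. The paper exploits the same $\grpF(\R)$-equivariance you identify to reduce to a \emph{single} nonzero $P$, namely $P(X)=(X,A)^{n}$ with $A\in\mathbb{X}$; it then chooses a basis with $e_{1}=A$, so that $\{\mathrm{X}_{e_{i_{1}}}^{\vee}\otimes\cdots\otimes\mathrm{X}_{e_{i_{n}}}^{\vee},P\}$ vanishes unless $i_{1}=\cdots=i_{n}=1$, and finally checks directly from the recursion that $\mathscr{P}_{n}(A^{\otimes n})=A^{\otimes n}$ because $\tr(A)=0$ and $A\circ A=0$ for $A\in\mathbb{X}$. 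In other words, the paper kills the lower-degree terms by a pointwise nilpotence computation on a single highest-weight vector, whereas you kill them uniformly by the Schur/weight-bound argument that $\vrep{n\varpi_{4}}$ cannot occur in $\jord_{\C}^{\otimes k}$ for $k<n$. Your approach is more conceptual and would transport to analogous situations without revisiting the recursion; the paper's approach is more elementary in that it needs no decomposition of tensor powers, only the two identities $\tr(A)=0$, $A\circ A=0$, which are immediate from the definition of $\mathbb{X}$.
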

\begin{proof}
    Since the pairing $\{-,-\}$ is $\grpF(\R)$-invariant and $\mathscr{P}_{n}$ is $\grpF(\R)$-equivariant,
    for any $g\in \grpF(\R)$, we have:
    \begin{align*}
        &\sum_{i_{1},\ldots,i_{n}}\mathscr{P}_{n}(e_{i_{1}}\otimes\cdots\otimes e_{i_{n}})\{\mathrm{X}_{e_{i_{1}}}^{\vee}\otimes\cdots\otimes \mathrm{X}_{e_{i_{n}}}^{\vee},g.P\}\\
        =&\sum_{i_{1},\ldots,i_{n}}\mathscr{P}_{n}(e_{i_{1}}\otimes\cdots\otimes e_{i_{n}})\{\mathrm{X}_{g^{-1}.e_{i_{1}}}^{\vee}\otimes\cdots\otimes \mathrm{X}_{g^{-1}.e_{i_{n}}}^{\vee},P\}\\
        =&\sum_{i_{1},\ldots,i_{n}}\mathscr{P}_{n}(g.e_{i_{1}}\otimes\cdots\otimes g.e_{i_{n}})\{\mathrm{X}_{e_{i_{1}}}^{\vee}\otimes\cdots\otimes \mathrm{X}_{e_{i_{n}}}^{\vee},P\}\\
        =&\sum_{i_{1},\ldots,i_{n}}g.\mathscr{P}_{n}(e_{i_{1}}\otimes\cdots\otimes e_{i_{n}})\{\mathrm{X}_{e_{i_{1}}}^{\vee}\otimes\cdots\otimes \mathrm{X}_{e_{i_{n}}}^{\vee},P\}.
    \end{align*}
    Comparing this with the right-hand side of \eqref{eqn only leading term matters},
    it suffices to prove \eqref{eqn only leading term matters} for one non-zero vector in $\vrep{n\varpi_{4}}$,
    so we take $P$ to be $\left(\tr(X\circ A)\right)^{n}\in \vrep{n}(\jord_{\C})$ for an arbitrary $A\in \mathbb{X}$,
    as explained in \Cref{section harmonic polynomials representations of F4}.

    Both sides of \eqref{eqn only leading term matters} are independent of the choice of the basis $\{e_{i}\}_{1\leq i\leq 27}$ of $\jord_{\C}$,
    thus we choose a specific basis $\{e_{i}\}_{1\leq i\leq 27}$ such that $e_{1}=A$.
    With this choice,
    it suffices to prove $\mathscr{P}_{n}(e_{1}^{\otimes n})=e_{1}^{\otimes n}$,
    which follows from the inductive definition of $\mathscr{P}_{n}$ and the fact that $\tr(e_{1})=0,\,e_{1}\circ e_{1}=0$.
\end{proof}
\begin{prop}\label{prop pairing with harmonic polynomial}
    For $m\in\LeviM_{\jord}(\R)$ and $P\in\vrep{n}(\jord_{\C})\simeq\vrep{n\varpi_{4}}$,
    we have 
    \begin{align*}
        \{\mathrm{D}^{n}h_{T}(m),P\}=(-4\pi)^{n}P\left(\lambda(m)m^{-1}T\right)h_{T}(m).
    \end{align*}
\end{prop}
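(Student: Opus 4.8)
The plan is to expand $\mathrm{D}^{n}h_{T}$ in a basis of $\mathfrak{p}_{\jord}^{+}$, apply Pollack's formula (\Cref{prop action iterated X}) to the iterated derivatives evaluated at $m$, pair the result against $P$, and use \Cref{lemma only leading term matters} to keep only the top-degree term of $\mathscr{P}_{n}$.

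First, iterating \Cref{def derivative of automorphic form} gives
\[\mathrm{D}^{n}h_{T}(m)=\sum_{i_{1},\ldots,i_{n}}\bigl(\mathrm{X}_{e_{i_{n}}}\cdots\mathrm{X}_{e_{i_{1}}}h_{T}\bigr)(m)\otimes\mathrm{X}_{e_{i_{1}}}^{\vee}\otimes\cdots\otimes\mathrm{X}_{e_{i_{n}}}^{\vee},\]
and by \Cref{prop action iterated X} the coefficient of $\mathrm{X}_{e_{i_{1}}}^{\vee}\otimes\cdots\otimes\mathrm{X}_{e_{i_{n}}}^{\vee}$ is $w_{T,\lambda(m)m^{*}}\bigl(\mathscr{P}_{n}(e_{i_{1}}\otimes\cdots\otimes e_{i_{n}})\bigr)h_{T}(m)$. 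Pairing against $P$ and factoring out $h_{T}(m)$, the quantity $\{\mathrm{D}^{n}h_{T}(m),P\}$ becomes $h_{T}(m)$ times the value of the linear functional $w_{T,\lambda(m)m^{*}}$ on the tensor $\sum_{i_{1},\ldots,i_{n}}\mathscr{P}_{n}(e_{i_{1}}\otimes\cdots\otimes e_{i_{n}})\{\mathrm{X}_{e_{i_{1}}}^{\vee}\otimes\cdots\otimes\mathrm{X}_{e_{i_{n}}}^{\vee},P\}$. Since \Cref{lemma only leading term matters} is an identity in the tensor algebra $\mathrm{T}(\jord_{\C})$ (the pairings with $P$ being scalar coefficients), I may replace this tensor by $\sum_{i_{1},\ldots,i_{n}}e_{i_{1}}\otimes\cdots\otimes e_{i_{n}}\{\mathrm{X}_{e_{i_{1}}}^{\vee}\otimes\cdots\otimes\mathrm{X}_{e_{i_{n}}}^{\vee},P\}$, on which $w_{T,\lambda(m)m^{*}}$ acts by the elementary formula $w_{T,\lambda(m)m^{*}}(e_{i_{1}}\otimes\cdots\otimes e_{i_{n}})=(-4\pi)^{n}\prod_{j=1}^{n}\bigl(T,\lambda(m)m^{*}(e_{i_{j}})\bigr)$.

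It then remains to recognise $\sum_{i}\bigl(T,\lambda(m)m^{*}(e_{i})\bigr)\mathrm{X}_{e_{i}}^{\vee}$, viewed in $\mathfrak{p}_{\jord}^{-}\simeq\jord_{\C}$, as $\lambda(m)m^{-1}T$. The defining property of $m^{*}$ in \Cref{rmk bilinear form not E6 invariant} gives $\bigl(T,m^{*}(e_{i})\bigr)=\bigl(m^{-1}T,e_{i}\bigr)$, hence $\bigl(T,\lambda(m)m^{*}(e_{i})\bigr)=\bigl(\lambda(m)m^{-1}T,e_{i}\bigr)$; and the basis $\{\mathrm{X}_{e_{i}}^{\vee}\}$ of $\mathfrak{p}_{\jord}^{-}$ corresponds under $\mathfrak{p}_{\jord}^{-}\simeq\jord_{\C}$ to the basis of $\jord_{\C}$ dual to $\{e_{i}\}$ for the form $(\,,\,)$, so $\sum_{i}(w,e_{i})\mathrm{X}_{e_{i}}^{\vee}$ corresponds to $w$ for every $w\in\jord_{\C}$. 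Putting $v=\lambda(m)m^{-1}T$ and multiplying out the product over $j$, the expression above collapses to $(-4\pi)^{n}h_{T}(m)\{v^{\otimes n},P\}$, and $\{v^{\otimes n},P\}=P(v)=P(\lambda(m)m^{-1}T)$ by \Cref{ex special case of pairing}, which is exactly the claimed identity.

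The whole argument is bookkeeping rather than substance; the only point demanding care is keeping the two identifications $\mathfrak{p}_{\jord}^{+}\simeq\jord_{\C}^{\vee}$, $\mathfrak{p}_{\jord}^{-}\simeq\jord_{\C}$ consistent with the duality between the bases $\{\mathrm{X}_{e_{i}}\}$ and $\{\mathrm{X}_{e_{i}}^{\vee}\}$, the adjoint involution $m\mapsto m^{*}$ on $\LeviM_{\jord}$, and the pairing $\{-,-\}$ — it is precisely these conventions that turn $\lambda(m)m^{*}$ into $\lambda(m)m^{-1}$ in the final answer, and one should double-check them before relying on the formula in \Cref{thm main theorem in Fourier computation}.
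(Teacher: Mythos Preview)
Your proof is correct and follows essentially the same approach as the paper's: expand $\mathrm{D}^{n}h_{T}(m)$ in a basis, apply \Cref{prop action iterated X}, invoke \Cref{lemma only leading term matters} to drop to the leading term, then use the adjoint relation $(T,m^{*}e_{i})=(m^{-1}T,e_{i})$ together with \Cref{ex special case of pairing} to identify the result as $(-4\pi)^{n}P(\lambda(m)m^{-1}T)h_{T}(m)$. Your added commentary on the basis identifications $\mathfrak{p}_{\jord}^{\pm}\simeq\jord_{\C}^{\vee},\jord_{\C}$ is a helpful sanity check but not a different argument.
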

\begin{proof}
    Combining \Cref{prop action iterated X} and \Cref{lemma only leading term matters} together,
    we have:
    \begin{align*}
        \{\mathrm{D}^{n}h_{T}(m),P\}&=\sum_{i_{1},\ldots,i_{n}}\mathrm{X}_{e_{i_{n}}}\cdots \mathrm{X}_{e_{i_{1}}}h_{T}(m)\left\{\mathrm{X}_{e_{i_{1}}}^{\vee}\otimes\cdots\otimes \mathrm{X}_{e_{i_{n}}}^{\vee},P\right\}\\
        &=\sum_{i_{1},\ldots,i_{n}}w_{T,\lambda(m)m^{*}}(\mathscr{P}_{n}(e_{i_{1}}\otimes\cdots\otimes e_{i_{n}}))h_{T}(m)\left\{\mathrm{X}_{e_{i_{1}}}^{\vee}\otimes\cdots\otimes \mathrm{X}_{e_{i_{n}}}^{\vee},P\right\}\\
        &=h_{T}(m)\sum_{i_{1},\ldots,i_{n}}w_{T,\lambda(m)m^{*}}(e_{i_{1}}\otimes\cdots\otimes e_{i_{n}})\left\{\mathrm{X}_{e_{i_{1}}}^{\vee}\otimes\cdots\otimes\mathrm{X}_{e_{i_{n}}}^{\vee},P\right\}\\
        &=(-4\pi)^{n}h_{T}(m)\sum_{i_{1},\ldots,i_{n}}\left(\prod_{j=1}^{n}(T,\lambda(m)m^{*}(e_{i_{j}}))\right)\left\{\mathrm{X}_{e_{i_{1}}}^{\vee}\otimes\cdots\otimes\mathrm{X}_{e_{i_{n}}}^{\vee},P\right\}\\
        &=(-4\pi)^{n}h_{T}(m)\sum_{i_{1},\ldots,i_{n}}\left(\prod_{j=1}^{n}\left(\lambda(m)m^{-1}T,e_{i_{j}}\right)\right)\left\{\mathrm{X}_{e_{i_{1}}}^{\vee}\otimes\cdots\otimes\mathrm{X}_{e_{i_{n}}}^{\vee},P\right\}\\
        &=(-4\pi)^{n}h_{T}(m)\left\{\left(\lambda(m)m^{-1}T\right)^{\otimes n},P\right\}\\
        &=(-4\pi)^{n}P\left(\lambda(m)m^{-1}T\right)h_{T}(m).\qedhere
    \end{align*}
\end{proof}
To prove \Cref{thm main theorem in Fourier computation},
we use the Iwasawa decomposition 
to write $g\in\SL_{2}(\R)$ as: 
\[g=tnk,\text{ where }t=\smallmat{u}{}{}{u^{-1}},\,n=\smallmat{1}{x}{}{1},\,k=\smallmat{\cos\theta}{\sin\theta}{-\sin\theta}{\cos\theta}.\]
By a direct calculation, we have the following:
\begin{lemma}\label{lemma compatibility with Lie group elements}
    For $A_{1},\ldots,A_{n}\in \jord_{\C}$, we have the following identities:
    \begin{enumerate}[label=(\arabic*)]
        \item $\mathrm{X}_{A_{n}}\cdots\mathrm{X}_{A_{1}}h_{T}(m\mathrm{n}(A))=e^{2\pi i (T,\lambda(m)m^{*}A)}\mathrm{X}_{A_{n}}\cdots \mathrm{X}_{A_{1}}h_{T}(m),\,\forall A\in \jord_{\C}, m\in \LeviM_{\jord}(\R)$;
        \item $\mathrm{X}_{A_{n}}\cdots \mathrm{X}_{A_{1}}h_{T}(gk)=J(k,i\mathrm{I})^{-4}\left(k.\mathrm{X}_{A_{n}}\right)\cdots \left(k.\mathrm{X}_{A_{1}}\right)h_{T}(g),\,\forall k\in \mathrm{K}_{\lietype{E}{7}}, g\in \mathbf{H}_{\jord}^{1}(\R)$.
    \end{enumerate}
\end{lemma}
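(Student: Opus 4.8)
The plan is to deduce both identities straight from the definition of a left-invariant differential operator, using only the explicit description of the $\mathbf{H}_{\jord}^{1}(\R)$-action on $\mathcal{H}_{\jord}$ from \Cref{ex explicit actions on exceptional tube domain} and the cocycle property $J(g_{1}g_{2},Z)=J(g_{1},g_{2}.Z)J(g_{2},Z)$ of the automorphy factor. For any smooth $f$ and any $\mathrm{X}_{A_{i}}$ one has
\[\mathrm{X}_{A_{n}}\cdots\mathrm{X}_{A_{1}}f(g)=\frac{d}{dt_{n}}\Big|_{t_{n}=0}\cdots\frac{d}{dt_{1}}\Big|_{t_{1}=0}f\!\left(g\exp(t_{n}\mathrm{X}_{A_{n}})\cdots\exp(t_{1}\mathrm{X}_{A_{1}})\right),\]
which I would use as the common starting point. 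The two elementary inputs are: (i) $\mathrm{n}(B)$ acts on $\mathcal{H}_{\jord}$ by the translation $Z\mapsto Z+B$ with trivial automorphy factor, so by the cocycle relation $h_{T}(\mathrm{n}(B)g')=e^{2\pi i(T,B)}h_{T}(g')$; and (ii) $k\in\mathrm{K}_{\lietype{E}{7}}$ fixes $i\mathrm{I}$, so $h_{T}(g'k)=J(k,i\mathrm{I})^{-4}h_{T}(g')$.

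For (1), I would write $m\mathrm{n}(A)=\mathrm{n}(\lambda(m)m^{*}A)\,m$ using \Cref{lemma Levi action on unipotent radical} and substitute into the iterated-derivative formula; input (i) with $B=\lambda(m)m^{*}A$ and $g'=m\exp(t_{n}\mathrm{X}_{A_{n}})\cdots\exp(t_{1}\mathrm{X}_{A_{1}})$ gives
\[h_{T}\!\left(m\mathrm{n}(A)\exp(t_{n}\mathrm{X}_{A_{n}})\cdots\right)=e^{2\pi i(T,\lambda(m)m^{*}A)}\,h_{T}\!\left(m\exp(t_{n}\mathrm{X}_{A_{n}})\cdots\right).\]
Since the scalar $e^{2\pi i(T,\lambda(m)m^{*}A)}$ is independent of the $t_{i}$, it pulls out of the iterated derivative, leaving exactly $\mathrm{X}_{A_{n}}\cdots\mathrm{X}_{A_{1}}h_{T}(m)$.

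For (2), I would move $k$ to the far right by repeated use of $k\exp(t\mathrm{X})=\exp\!\big(t\,\mathrm{Ad}(k)\mathrm{X}\big)k$, obtaining
\[gk\exp(t_{n}\mathrm{X}_{A_{n}})\cdots\exp(t_{1}\mathrm{X}_{A_{1}})=g\exp\!\big(t_{n}(k.\mathrm{X}_{A_{n}})\big)\cdots\exp\!\big(t_{1}(k.\mathrm{X}_{A_{1}})\big)\,k;\]
then input (ii) applied to $g'=g\exp(t_{n}(k.\mathrm{X}_{A_{n}}))\cdots$ lets me pull the constant $J(k,i\mathrm{I})^{-4}$ out of the iterated derivative and recognize the result as $J(k,i\mathrm{I})^{-4}(k.\mathrm{X}_{A_{n}})\cdots(k.\mathrm{X}_{A_{1}})h_{T}(g)$. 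Here $\mathrm{Ad}(k)$ preserves the Cartan decomposition and the eigenspaces of the central $\mathrm{U}(1)$ of $\mathrm{K}_{\lietype{E}{7}}$, so each $k.\mathrm{X}_{A_{i}}$ again lies in $\mathfrak{p}_{\jord}^{+}$ and the right-hand side has the asserted shape.

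The one point needing care — and the step I expect to be the real obstacle in writing things down cleanly — is that the Cayley transform is complex, so $\mathrm{X}_{A}\in\mathfrak{p}_{\jord}^{+}\subset\mathfrak{e}_{7}$ is \emph{not} a real Lie algebra element and $\exp(t\mathrm{X}_{A_{i}})$ is not literally a one-parameter subgroup of $\mathbf{H}_{\jord}^{1}(\R)$ (and likewise $\mathrm{n}(A)$ for complex $A$). The remedy is routine but should be stated once: expand $\mathrm{X}_{A_{i}}=\sum_{j}c_{ij}Y_{j}$ over a real basis $\{Y_{j}\}$ of $\mathrm{Lie}(\mathbf{H}_{\jord}^{1}(\R))$ with $c_{ij}\in\C$, so that $\mathrm{X}_{A_{n}}\cdots\mathrm{X}_{A_{1}}f=\sum c_{nj_{n}}\cdots c_{1j_{1}}\,Y_{j_{n}}\cdots Y_{j_{1}}f$ reduces every manipulation above to genuine iterated derivatives along real one-parameter subgroups, carried out term by term; since both sides of (1) and (2) are $\C$-multilinear in $(A_{1},\dots,A_{n})$ (and complex-analytic in $A$ in (1)), re-summing the terms yields the stated identities. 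Once this bookkeeping is in place, no further computation is required.
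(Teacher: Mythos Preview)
Your proposal is correct and is precisely the ``direct calculation'' the paper alludes to without writing out: the paper offers no details beyond that phrase, and your argument via the iterated-derivative formula, \Cref{lemma Levi action on unipotent radical}, the cocycle relation for $J$, and the fact that $\mathrm{K}_{\lietype{E}{7}}$ fixes $i\mathrm{I}$ is the natural way to carry it out. Your care about complexifying the differential operators is good bookkeeping but not a genuine obstacle, since in the application (proof of \Cref{thm main theorem in Fourier computation}) only real $A$ appears in $\mathrm{n}(A)$.
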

\begin{proof}[Proof of \Cref{thm main theorem in Fourier computation}]
    Let the notations be as above.
    By \Cref{lemma compatibility with Lie group elements},
    we have:
    \begin{align*}
        \mathrm{D}^{n}h_{T}(g)&=\mathrm{D}^{n}h_{T}(tnk)\\
        &=J(k,i\mathrm{I})^{-4}\sum_{i_{1},\ldots,i_{n}}(k.\mathrm{X}_{e_{i_{1}}}\cdots k.\mathrm{X}_{e_{i_{n}}})h_{T}(tn)\otimes \mathrm{X}_{e_{i_{1}}}^{\vee}\otimes\cdots\otimes\mathrm{X}_{e_{i_{n}}}^{\vee}\\
        &=J(k,i\mathrm{I})^{-4}e^{2\pi i(T,u^{2}x\mathrm{I})}\sum_{i_{1},\ldots,i_{n}}(k.\mathrm{X}_{e_{i_{1}}}\cdots k.\mathrm{X}_{e_{i_{n}}})h_{T}(t)\otimes \mathrm{X}_{e_{i_{1}}}^{\vee}\otimes\cdots\otimes\mathrm{X}_{e_{i_{n}}}^{\vee}.\\
        &=j(k,i)^{-2n-12}e^{2\pi i (T,u^{2}x\mathrm{I})}\cdot\mathrm{D}^{n}h_{T}(t),
    \end{align*}
    where the last equality follows from $k.\mathrm{X}_{A}=(\cos\theta+i\sin\theta)^{2}\mathrm{X}_{A}=j(k,i)^{-2}\mathrm{X}_{A}$.
    Now we take the pairing of $\mathrm{D}^{n}h_{T}(g)$ with $P$,
    and use \Cref{prop pairing with harmonic polynomial} to obtain the desired identity:
    \begin{align*}
        \{\mathrm{D}^{n}h_{T}(g),P\}&=j(k,i)^{-2n-12}e^{2\pi i (T,u^{2}x\mathrm{I})}(-4\pi)^{n}P\left(u^{2}T\right)J(t,i\mathrm{I})^{-4}e^{2\pi i (T,t.i\mathrm{I})}\\
        &=(-4\pi)^{n}j(k,i)^{-2n-12}j(t,i)^{-12}u^{2n}P(T)e^{2\pi i (T,t.(i\mathrm{I}+x\mathrm{I}))}\\
        &=(-4\pi)^{n}j(g,i)^{-2n-12}P\left(T\right)e^{2\pi i (T,g.i\mathrm{I})}.
        \qedhere
    \end{align*} 
\end{proof}
\begin{proof}[Proof of \Cref{prop theta lift comes from modular form}]
    To show that $f_{\Theta(\alpha)}(z):=j(g,i)^{2n+12}\Theta(\alpha)(g)$ is well-defined, 
    it suffices to verify that for $k$ in the maximal compact subgroup of $\SL_{2}(\R)$, we have:
    \[\Theta(\alpha)(gk)=j(k,i)^{-2n-12}\Theta(\alpha)(g),\text{ for any }g\in\SL_{2}(\R).\]
    This follows from \Cref{lemma compatibility with Lie group elements}
    and the identity $k.\mathrm{X}_{A}=j(k,i)^{-2}\cdot\mathrm{X}_{A}$.
    By the definition of $\Theta(\alpha)$
    and \Cref{prop real theta between F4 and PGL2},
    $f_{\Theta(\alpha)}$ is a level one holomorphic modular form with weight $2n+12$,
    and when $n>0$ it is a cusp form.
\end{proof}

\section{Global theta lifts from \texorpdfstring{$\pgl_{2}$}{PDFstring} to \texorpdfstring{$\grpF$}{PDFstring}}\label{section nonvanishing global theta lift}
We look at the other direction of the global theta correspondence,
\emph{i.e.}\,from $\pgl_{2}$ to $\grpF$.
Let $\pi\simeq\otimes^{\prime}_{v}\pi_{v}$ be a level one algebraic cuspidal automorphic representation of $\pgl_{2}$ 
associated to a Hecke eigenform of $\SL_{2}(\Z)$ with weight $2n+12,\,n>0$.
We take an automorphic form $\varphi\in\pi$ corresponding to $\otimes^{\prime}\varphi_{v}$ under the isomorphism $\pi\simeq \otimes^{\prime}\pi_{v}$,
such that:
\begin{itemize}
    \item $\varphi_{\infty}$ is the unique lowest weight holomorphic vector in the discrete series representation $\mathcal{D}(2n+12)$ of $\pgl_{2}(\R)$;
    \item for each prime $p$, $\varphi_{p}$ is chosen to be the normalized spherical vector in the principal series representation $\pi_{p}$ of $\pgl_{2}(\Q_{p})$.
\end{itemize} 
Our goal is to prove $\Theta(\pi)\neq 0$.
In other words,
we need to find a vector $\phi\in\Pi_{\min}$
such that $\Theta_{\phi}(\varphi)\neq 0$.
The strategy is to calculate the \emph{$\spin_{9}$-period} of the global theta lift $\Theta_{\phi}(\varphi)$:
\[\mathcal{P}_{\spin_{9}}\left(\Theta_{\phi}(\varphi)\right):=\int_{[\spin_{9}]}\Theta_{\phi}(\varphi)(g)dg.\]
As stated in \Cref{rmk intro connection with SV conj}, one motivation for considering this period integral is the conjecture of Sakellaridis-Venkatesh.

Plugging the definition of the global theta lift $\Theta_{\phi}(\varphi)$ in this period integral
and changing the order of integration,
we obtain:
\begin{align}\label{eqn change the order of integration}
    \begin{aligned}
        \mathcal{P}_{\spin_{9}}(\Theta_{\phi}(\varphi))&=\int_{[\spin_{9}]}\int_{[\pgl_{2}]}\theta(\phi)\left(gh\right)\overline{\varphi(h)}dhdg\\
        &=\int_{[\pgl_{2}]}\overline{\varphi(h)}\left(\int_{[\spin_{9}]}\theta(\phi)\left(gh\right)dg\right)dh.
    \end{aligned}
\end{align}

\subsection{Exceptional Siegel-Weil formula}\label{section exceptional siegel weil formula}
The integral $\int_{[\spin_{9}]}\theta(\phi)(gh)dg$ appearing in \eqref{eqn change the order of integration}, 
as a function of $h\in\sorth_{2,2}(\A)$, 
is the global theta lift of the constant function on $[\spin_{9}]$ to $\sorth_{2,2}$.
In this section,
we will prove an \emph{exceptional Siegel-Weil formula} for $\spin_{9}\times \sorth_{2,2}$,
which represents this theta lift as an Eisenstein series on $\sorth_{2,2}$.
\begin{defi}\label{def principal series of two copies SL2}
    Let $\mathbf{B}=\mathbf{T}\mathbf{N}$ be the Borel subgroup of 
    \[\sorth_{2,2}=\mathbf{GSpin}_{2,2}/\mathbf{G}_{\mathrm{m}}=\set{(g_{1},g_{2})\in\GL_{2}\times\GL_{2}}{\det g_{1}=\det g_{2}}/\mathbf{G}_{\mathrm{m}}^{\Delta}\]
    consisting of the equivalence classes of $(g_{1},g_{2})$,
    where $g_{1}$and $g_{2}$ are upper triangular matrices.
    For $s_{1},s_{2}\in\C$,
    we define a character $\chi_{s_{1},s_{2}}$ on $\mathbf{T}(\A)$ by:
    \[\chi_{s_{1},s_{2}}\left(\left(\begin{smallmatrix}
    a_{1}&\\
    &b_{1}
\end{smallmatrix}\right),\left(\begin{smallmatrix}
    a_{2}&\\
    &b_{2}
\end{smallmatrix}\right)\right):=|a_{1}/b_{1}|^{\frac{s_{1}}{2}}\cdot |a_{2}/b_{2}|^{\frac{s_{2}}{2}},\]
and define $\mathrm{I}(s_{1},s_{2})$ to be the (normalized) degenerate principal series $\mathrm{Ind}_{\mathbf{B}(\A)}^{\sorth_{2,2}(\A)}\chi_{s_{1},s_{2}}$.
\end{defi}
By \Cref{prop minimal rep as subrep of principal series},
we identify the (adelic) minimal representation $\Pi_{\min}$ of $\grpE(\A)$ as a subrepresentation of 
$\mathrm{Ind}_{\para_{\jord}(\A)}^{\grpE(\A)}\delta_{\para_{\jord}}^{-1/2}|\lambda|^{2}$.
\begin{lemma}\label{lemma restriction of degenerate principal series}
    The restriction of sections gives a morphism $\mathrm{Ind}_{\para_{\jord}(\A)}^{\grpE(\A)}\delta_{\para_{\jord}}^{-1/2}|\lambda|^{2}\rightarrow \mathrm{I}(3,7)$.
\end{lemma}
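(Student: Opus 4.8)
The plan is to show that if $f$ is a section of $\mathrm{Ind}_{\para_{\jord}(\A)}^{\grpE(\A)}\delta_{\para_{\jord}}^{-1/2}|\lambda|^{2}$, then its restriction $f|_{\sorth_{2,2}(\A)}$ transforms under the Borel $\mathbf{B}(\A)$ of $\sorth_{2,2}$ by the character $\chi_{3,7}$ (after normalization), so that $f \mapsto f|_{\sorth_{2,2}(\A)}$ lands in $\mathrm{I}(3,7) = \mathrm{Ind}_{\mathbf{B}(\A)}^{\sorth_{2,2}(\A)}\chi_{3,7}$. Since restriction of functions is visibly linear and $\sorth_{2,2}(\A)$-equivariant, the only thing to check is the transformation property along $\mathbf{B}(\A)$, which is a local (in fact algebraic) computation that can be done place by place.

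First I would recall from \Cref{section dual Pair involving spin9} the explicit embedding of the Borel of $\spin_{2,2} = \SL_2 \times \SL_2$ into $\mathbf{P}_{\jord,\mathrm{sc}}$: the torus element $m_{\lambda,\mu}$ maps to $\left(\mat{\lambda}{}{}{\lambda^{-1}},\mat{\mu}{}{}{\mu^{-1}}\right)$ and the unipotent $\mathrm{n}(x\mathrm{E}_1 + y(\mathrm{E}_2+\mathrm{E}_3))$ maps to $\left(\mat{1}{x}{}{1},\mat{1}{y}{}{1}\right)$, as in \eqref{eqn explicit embedding into Spin4}. Since $\mathbf{N}_{\jord} \subseteq \para_{\jord}$, the unipotent part acts trivially on the section value (the character $\delta_{\para_{\jord}}^{-1/2}|\lambda|^2$ is trivial on $\mathbf{N}_{\jord}$), matching the triviality of $\chi_{3,7}$ on $\mathbf{N}(\A)$. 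For the torus, I would compute $\lambda(m_{\lambda,\mu})$ — the similitude factor — and the modulus character $\delta_{\para_{\jord}}$ evaluated at $m_{\lambda,\mu}$, using the action of $\mathbf{M}_{\jord}$ on $\mathrm{W}_{\jord}$ and on $\mathbf{N}_{\jord}$. The element $m_{\lambda,\mu}$ acts on $\jord_{\Q}$ by $[a,b,c\,;x,y,z]\mapsto[\lambda^{-1}\mu^2 a,\lambda b,\lambda c\,;\lambda x,\mu y,\mu z]$, and from this one reads off both $\lambda(m_{\lambda,\mu})$ (via the determinant scaling) and $\delta_{\para_{\jord}}(m_{\lambda,\mu}) = |\det(\mathrm{Ad}(m_{\lambda,\mu})|_{\mathfrak{n}_{\jord}})|$.

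Putting these together, on the section $f$ we get $f(m_{\lambda,\mu} g) = \delta_{\para_{\jord}}(m_{\lambda,\mu})^{-1/2}|\lambda(m_{\lambda,\mu})|^2 f(g)$, and I would check that the resulting power of $|\lambda|$ and $|\mu|$ matches $\chi_{3,7}$ of the image of $m_{\lambda,\mu}$ in $\mathbf{T}(\A) \subseteq \sorth_{2,2}(\A)$ under the normalization of $\mathrm{I}(s_1,s_2)$ (recall $\chi_{s_1,s_2}$ on the diagonal torus involves the factor $|a_1/b_1|^{s_1/2}|a_2/b_2|^{s_2/2}$, and the normalization of the induced representation contributes half the modulus of the Borel of $\sorth_{2,2}$). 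The bookkeeping to confirm the exponents $(s_1,s_2)=(3,7)$ rather than, say, $(7,3)$ or some other pair, is the main obstacle — it requires being careful about (i) which $\SL_2$ factor is which in $\spin_{2,2}\simeq\SL_2\times\SL_2$, (ii) the identification \eqref{eqn identification of Levi subgroups} twisting $\mathbf{M}_{\jord}$ by $m\mapsto\lambda(m)m^*$, and (iii) matching the two normalizations of parabolic induction. Once the torus computation is pinned down, the lemma follows immediately since restriction is manifestly functorial and equivariant. I would also note that the map is nonzero (e.g. the spherical section restricts to the spherical section), though the statement as phrased only asserts existence of the morphism.
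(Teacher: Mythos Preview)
Your approach is the same as the paper's: restrict sections to $\sorth_{2,2}(\A)$ and verify the transformation under $\mathbf{B}(\A)$ via an explicit torus computation. The paper's argument is shorter because it observes that, under the normalized-induction convention, sections of $\mathrm{Ind}_{\para_{\jord}}^{\grpE}\delta_{\para_{\jord}}^{-1/2}|\lambda|^{2}$ transform simply by $f(pg)=|\lambda(p)|^{2}f(g)$ (this is \eqref{eqn sections of Siegel parabolic}; the $\delta_{\para_{\jord}}^{1/2}$ from normalization cancels the $\delta_{\para_{\jord}}^{-1/2}$ in the inducing character). Hence there is no need to compute $\delta_{\para_{\jord}}$ at all: one only computes the similitude of the image of $\left(\smallmat{a_{1}}{}{}{b_{1}},\smallmat{a_{2}}{}{}{b_{2}}\right)\in\mathbf{T}(\A)$ in $\mathbf{M}_{\jord}\subset\grpE$ (via \eqref{eqn identification of Levi subgroups} and \eqref{eqn explicit embedding into Spin4}), finds it to be $(a_{1}/b_{1})(a_{2}/b_{2})^{2}$, so $f(tng)=\chi_{4,8}(t)f(g)$, and then notes $\mathrm{Ind}_{\mathbf{B}}^{\sorth_{2,2}}\delta_{\mathbf{B}}^{-1/2}\chi_{4,8}=\mathrm{I}(3,7)$. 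Your stated transformation $f(m_{\lambda,\mu}g)=\delta_{\para_{\jord}}(m_{\lambda,\mu})^{-1/2}|\lambda(m_{\lambda,\mu})|^{2}f(g)$ is the inducing character alone and omits the $\delta_{\para_{\jord}}^{1/2}$ coming from the normalization; once you correct this, the separate modulus computation you proposed becomes unnecessary.
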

\begin{proof}
    A section $f\in\mathrm{Ind}_{\para_{\jord}(\A)}^{\grpE(\A)}\delta_{\para_{\jord}}^{-1/2}|\lambda|^{2}$
    satisfies the functional equation \eqref{eqn sections of Siegel parabolic}.
    Combining the explicit morphisms \eqref{eqn identification of Levi subgroups} and \eqref{eqn explicit embedding into Spin4},
    the image of $\left(\smallmat{a_{1}}{}{}{b_{1}},\smallmat{a_{2}}{}{}{b_{2}}\right)\in\mathbf{T}(\A)$ in $\mathbf{M}_{\jord}\subseteq \grpE$
    has similitude $(a_{1}/b_{1})\cdot (a_{2}/b_{2})^{2}$,
    thus the restriction of $f$ to $\sorth_{2,2}(\A)$ satisfies:
    \[f(tng)=\chi_{4,8}(t)f(g),\text{ for any }t\in\mathbf{T}(\A),n\in\mathbf{N}(\A),g\in\sorth_{2,2}(\A).\]
    This shows that $f|_{\sorth_{2,2}(\A)}$ is a section of $\mathrm{Ind}_{\mathbf{B}(\A)}^{\sorth_{2,2}(\A)}\delta_{\mathbf{B}}^{-1/2}\chi_{4,8}=\mathrm{I}(3,7)$.
\end{proof}
\Cref{lemma restriction of degenerate principal series} gives us a $\sorth_{2,2}(\A)$-equivariant map: 
\[\mathrm{Res}:\Pi_{\min}\hookrightarrow\mathrm{Ind}_{\para_{\jord}(\A)}^{\grpE(\A)}\delta_{\para_{\jord}}^{-1/2}|\lambda|^{2}\rightarrow\mathrm{I}(3,7).\]
Given a smooth vector $\phi\in\Pi_{\min}$, 
we have the following two automorphic forms on $\sorth_{2,2}$:
\begin{itemize}
    \item The theta integral:
        \[\Theta_{\phi}(1)(g)=\int_{[\spin_{9}]}\theta(\phi)(gh)dh,\text{ for any }g\in \sorth_{2,2}(\A),\]
    \item The Eisenstein series associated to $\widetilde{\phi}:=\mathrm{Res}(\phi)\in\mathrm{I}(3,7)$:
        \[E(\widetilde{\phi})(g):=\sum_{\gamma\in\mathbf{B}(\Q)\backslash\sorth_{2,2}(\Q)}\widetilde{\phi}(\gamma g),\text{ for any }g\in\sorth_{2,2}(\A).\]
\end{itemize}
\begin{thm}\label{thm exceptional siegel weil formula for spin9 so4}
    Let $\Phi_{f}:=\otimes_{p}\Phi_{p}$ be the normalized spherical vector in $\Pi_{\min,f}$ chosen in \Cref{section automorphic realization of min rep},
    then for any smooth holomorphic vector $\phi_{\infty}\in\Pi_{\min,\infty}$, 
    up to some scalar we have:
    \[E(\mathrm{Res}(\phi_{\infty}\otimes \Phi_{f}))=\Theta_{\phi_{\infty}\otimes\Phi_{f}}(1).\]
\end{thm}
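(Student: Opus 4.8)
The plan is to identify both $\Theta_{\phi_\infty\otimes\Phi_f}(1)$ and $E(\mathrm{Res}(\phi_\infty\otimes\Phi_f))$, as automorphic forms on $\sorth_{2,2}$, with classical holomorphic modular forms of level one and weight $(4,8)$ on $\mathcal{H}\times\mathcal{H}$, and then to conclude from the one-dimensionality of that space. First I would record that $\grpF$, hence its $\Q$-subgroup $\spin_9$, is anisotropic over $\Q$: so $[\spin_9]$ is compact, the theta integral $\Theta_\phi(1)$ converges absolutely for every smooth $\phi\in\Pi_{\min}$ and defines an automorphic form on $\sorth_{2,2}$, and both assignments $\phi\mapsto\Theta_\phi(1)$ and $\phi\mapsto E(\mathrm{Res}(\phi))$ are $\sorth_{2,2}(\A)$-equivariant. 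It then suffices to treat the single vector $\phi=\Phi_0=\Phi_\infty\otimes\Phi_f$ and afterwards propagate the identity.

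For $\phi=\Phi_0$ we have $\theta(\Phi_0)=\Theta_{Kim}$. Here I would use that $\spin_9$ has class number one (strong approximation: it is simply connected and split over every $\Q_p$), so $\spin_9(\A)=\spin_9(\Q)\,(\spin_9(\R)\times\spin_9(\widehat{\Z}))$; rewriting $\int_{[\spin_9]}\Theta_{Kim}(gh)\,dh$ in these terms and invoking the right-invariance of $\Theta_{Kim}$ under $\grpF(\R)\times\grpE(\widehat{\Z})$ (\Cref{lemma invariance of Kim modular form}), together with $\spin_9(\R)\subseteq\grpF(\R)$ and $\spin_9(\widehat{\Z})\subseteq\grpE(\widehat{\Z})$, shows the integrand is constant in $h$, hence
\[
\Theta_{\Phi_0}(1)=\mathrm{vol}([\spin_9])\cdot\Theta_{Kim}|_{\sorth_{2,2}(\A)}\neq 0.
\]
By \Cref{prop archimedean siegel weil formula} the $\spin_9(\R)$-invariant part of $\Pi_{\min,\infty}$ is $\mathcal{D}(4)\boxtimes\mathcal{D}(8)$ and $\Phi_\infty$ spans its holomorphic lowest-weight line; translating between $\sorth_{2,2}$, $\mathbf{GSpin}_{2,2}$ and $\SL_2\times\SL_2$ via \eqref{eqn identification of Levi subgroups} and \eqref{eqn explicit embedding into Spin4}, this says $\Theta_{\Phi_0}(1)$ is the automorphic form attached to a level one holomorphic modular form of weight $(4,8)$ on $\mathcal{H}\times\mathcal{H}$ for $\SL_2(\Z)\times\SL_2(\Z)$, namely the pullback of $\mathrm{F}_{Kim}$ to the corresponding sub-tube-domain.

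On the Eisenstein side, \Cref{lemma restriction of degenerate principal series} places $\mathrm{Res}(\Phi_0)$ in $\mathrm{I}(3,7)$, spherical at the finite places and holomorphic at infinity; the point $(s_1,s_2)=(3,7)$ is precisely the one carrying the weight-$(4,8)$ holomorphic section, the weights $4,8$ lie in the range of absolute convergence, and the Eisenstein series of such a section is the classical holomorphic Eisenstein series, so $E(\mathrm{Res}(\Phi_0))$ is again the automorphic form of a level one holomorphic modular form of weight $(4,8)$ on $\mathcal{H}\times\mathcal{H}$, nonzero because its constant term along $\mathbf{N}$ contains $\mathrm{Res}(\Phi_0)$. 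Since $\mathrm{M}_{4}(\SL_2(\Z))=\C E_4$ and $\mathrm{M}_{8}(\SL_2(\Z))=\C E_8$ are one-dimensional, the space of level one holomorphic modular forms of weight $(4,8)$ on $\mathcal{H}\times\mathcal{H}$ is $\mathrm{M}_4\otimes\mathrm{M}_8$, one-dimensional (as one sees from joint $q$-expansions), so $\Theta_{\Phi_0}(1)$ and $E(\mathrm{Res}(\Phi_0))$ are proportional with a nonzero constant. For a general smooth holomorphic $\phi_\infty$ I would then note that both $\phi_\infty\mapsto\Theta_{\phi_\infty\otimes\Phi_f}(1)$ and $\phi_\infty\mapsto E(\mathrm{Res}(\phi_\infty\otimes\Phi_f))$ are $\sorth_{2,2}(\A)$-equivariant and factor through the same irreducible quotient $\sigma$ of $\Pi_{\min}$ — the global theta lift of the trivial representations, whose archimedean and finite constituents are read off from \Cref{prop archimedean siegel weil formula} and from unramified-ness — on which the space of equivariant maps into $\mathcal{A}(\sorth_{2,2})$ is one-dimensional; Schur's lemma then gives the asserted identity with a single scalar, independent of $\phi_\infty$ (in particular both sides vanish for $\phi_\infty$ outside the relevant isotypic part).

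The main obstacle is the bookkeeping underlying the last two paragraphs: converting \Cref{prop archimedean siegel weil formula} and \Cref{lemma restriction of degenerate principal series} into the precise classical assertions ``level one, weight $(4,8)$, holomorphic'' on $\mathcal{H}\times\mathcal{H}$, i.e.\ carefully tracking the $\mu_2$-quotients, the similitude characters in \eqref{eqn identification of Levi subgroups}, the embedding \eqref{eqn explicit embedding into Spin4}, and the way the weight-$4$ automorphy factor of $\mathbf{H}^1_{\jord}$ restricts to the (unbalanced) $\SL_2\times\SL_2$; together with the verification that $\Theta_\bullet(1)$ and $E\circ\mathrm{Res}$ have the same kernel in $\Pi_{\min}$. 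An alternative that avoids the dimension count, at the price of an orbit computation, is to realize $\theta(\phi)$ as the degenerate Eisenstein series on $\grpE$ attached to the section $\phi\in\mathrm{Ind}_{\para_{\jord}(\A)}^{\grpE(\A)}\delta_{\para_{\jord}}^{-1/2}|\lambda|^{2}$ (Kim's Eisenstein series being the spherical holomorphic case), unfold $\int_{[\spin_9]}\theta(\phi)(gh)\,dh$ along the $\spin_9(\Q)$-orbits on $\para_{\jord}(\Q)\backslash\grpE(\Q)$, and show that the open orbit reproduces $E(\mathrm{Res}(\phi))$ while the remaining orbits contribute nothing — the orbit analysis on the $27$-dimensional variety $\grpE/\para_{\jord}$ being the technical heart there.
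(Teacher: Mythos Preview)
Your approach is essentially the paper's: verify the identity for $\Phi_0$ via the one-dimensionality of $\mathrm{M}_4(\SL_2(\Z))\otimes\mathrm{M}_8(\SL_2(\Z))$, then propagate by $\sorth_{2,2}(\A)$-equivariance. Two points are worth noting.

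First, your justification ``class number one (strong approximation: it is simply connected and split over every $\Q_p$)'' for $\spin_9$ is not quite right: strong approximation requires $\spin_9$ to be isotropic at some place in the approximating set, but $\spin_9(\R)$ is compact here. The class-number-one statement is nevertheless true (and the paper itself invokes it later, in the proof of \Cref{prop nonvanishing archimedean zeta integral}); it just needs a different justification.

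Second, where you invoke \Cref{prop archimedean siegel weil formula} to read off that $\Theta_{\Phi_0}(1)$ has weight $(4,8)$, the paper instead checks this by hand on the tube domain: it writes $\Theta_{\Phi_0}(1)$ as (a multiple of) $\mathrm{F}_{Kim}(\mathrm{diag}(z_1,z_2,z_2))$ and verifies the modularity in $z_1$ and $z_2$ separately by producing explicit elements of $\mathbf{H}^1_{\jord}(\Z)$ (products of $\mathrm{n}(\mathrm{E}_i)$ and $\iota$) that realize $z_i\mapsto -1/z_i$. This sidesteps exactly the ``bookkeeping'' obstacle you flag. For the extension to general $\phi_\infty$, the paper's argument is the concrete form of your Schur step: if $\mathrm{Res}(\phi_\infty\otimes\Phi_f)=0$ then, since $\mathrm{Res}$ is $\sorth_{2,2}(\R)$-equivariant into a representation whose only holomorphic constituent is $d_{hol}(4)\boxtimes d_{hol}(8)$, \Cref{prop archimedean siegel weil formula} forces $\phi_\infty\perp(\Pi^+)^{\spin_9(\R)}$, hence $\Theta_{\phi_\infty\otimes\Phi_f}(1)=0$; otherwise $\phi_\infty$ lies in the $\spin_{2,2}(\R)$-span of $\Phi_\infty$ inside $(\Pi^+)^{\spin_9(\R)}$ and equivariance finishes. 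This is precisely the ``same kernel'' check you isolate as the residual issue.
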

Before proving this formula for any smooth vector $\phi_{\infty}\in\Pi_{\min,\infty}$,  
we verify it for the specific vector $\Phi_{\infty}$ chosen in \Cref{section automorphic realization of min rep}.
\begin{prop}\label{prop exceptional siegel weil using Kim modular form}
    For the vector $\Phi_{0}=\Phi_{\infty}\otimes\Phi_{f}\in\Pi_{\min}$, 
    up to some scalar we have:
    \[E(\mathrm{Res}(\Phi_{0}))=\Theta_{\Phi_{0}}(1).\]
\end{prop}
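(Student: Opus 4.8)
The plan is to prove that $E(\mathrm{Res}(\Phi_0))$ and $\Theta_{\Phi_0}(1)$ are both level-one holomorphic modular forms on $\sorth_{2,2}$ of weight $(4,8)$, and then to conclude by a dimension count. First I would record that $\theta(\Phi_0)=\Theta_{Kim}$ by construction (\Cref{def automorphic realization theta via Kim modular form}), that $\Theta_{Kim}$ is invariant under $\grpE(\widehat{\Z})$ by \Cref{lemma invariance of Kim modular form}, and that $\mathrm{Res}(\Phi_f)$ is the normalized spherical vector of $\mathrm{I}(3,7)_f$ (uniqueness of spherical vectors, $\mathrm{Res}$ being nonzero and $\sorth_{2,2}(\A_f)$-equivariant); hence both sides are right-invariant under $\sorth_{2,2}(\widehat{\Z})\subseteq\grpE(\widehat{\Z})$, i.e.\ of level one. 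The integral defining $\Theta_{\Phi_0}(1)$ converges since $[\spin_9]$ is compact (\Cref{rmk no convergence problem}).

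For the archimedean component, by \Cref{prop archimedean siegel weil formula} the $\spin_9(\R)$-fixed subspace of $\Pi_{\min,\infty}$ is the $(m,n)=(0,0)$ summand $\mathcal{D}(4)\boxtimes\mathcal{D}(8)$, viewed as an $\sorth_{2,2}(\R)$-representation. The minimal-$\mathrm{K}_{\lietype{E}{7}}$-type vector $\Phi_\infty$ lies in this summand: it spans $\mathrm{E}(0,12)$, on which $\lietype{E}{6}$ — hence $\spin_9$ — acts trivially, while its $\mathrm{U}(1)$-weight $12=4+8$ forces it into the summand with $(n+4)+(2m+n+8)=12$, i.e.\ $(m,n)=(0,0)$; so $\Phi_\infty$ is, up to scalar, the weight-$(4,8)$ lowest-weight holomorphic vector. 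Therefore $\Theta_{\Phi_0}(1)$ is a holomorphic modular form of weight $(4,8)$, a priori possibly zero. On the other hand $\mathrm{Res}(\Phi_\infty)\in\mathrm{I}(3,7)_\infty$ is the holomorphic vector of weight $(4,8)$ by the normalization of $\mathrm{I}(3,7)$ recorded in \Cref{lemma restriction of degenerate principal series}; since the exponents $(3,7)$ lie strictly in the range of absolute convergence of the Eisenstein series on $\sorth_{2,2}$, the series $E(\mathrm{Res}(\Phi_0))$ converges and is again a level-one holomorphic modular form of weight $(4,8)$.

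I would then invoke uniqueness: by strong approximation, the space of level-one holomorphic modular forms on $\sorth_{2,2}$ of weight $(4,8)$ is identified with $\mathrm{M}_4(\SL_2(\Z))\otimes\mathrm{M}_8(\SL_2(\Z))=\C E_4\otimes\C E_8$ (the weight-$4$ and weight-$8$ Eisenstein series), which is one-dimensional — there are no cusp forms, as $\mathrm{S}_4(\SL_2(\Z))=\mathrm{S}_8(\SL_2(\Z))=0$. Hence $E(\mathrm{Res}(\Phi_0))$ and $\Theta_{\Phi_0}(1)$ are scalar multiples of one another. To see the scalar is nonzero I would check both are nonzero: the constant term of $E(\mathrm{Res}(\Phi_0))$ along the unipotent $\mathbf{N}$ of $\mathbf{B}$ has leading term $\mathrm{Res}(\Phi_0)\neq0$ in the convergent range; and the constant term of $\Theta_{\Phi_0}(1)$ may be computed from $\mathrm{F}_{Kim}=1+240\sum_{T}\sigma_3(\mathrm{c}_{\jord_{\Z}}(T))e^{2\pi i(T,Z)}$, noting via \eqref{eqn explicit embedding into Spin4} that $\mathbf{N}$ corresponds to $\{\mathrm{n}(x\mathrm{E}_1+y(\mathrm{E}_2+\mathrm{E}_3))\}$, so that positive semidefiniteness forces only $T=0$ to survive the $[\mathbf{N}]$-integration, leaving the constant term equal to $\mathrm{vol}([\spin_9])\cdot1\neq0$ (consistent with the constant term $1$ of $E_4\boxtimes E_8$).

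The principal difficulty is the archimedean bookkeeping together with this non-vanishing: one must verify carefully that $\Phi_\infty$ is, up to scalar, the $\spin_9(\R)$-fixed holomorphic vector of weight $(4,8)$, that the $\spin_9$-period does not annihilate $\Theta_{Kim}$, and that at level one $[\sorth_{2,2}]$ genuinely reduces to the $\SL_2(\Z)\times\SL_2(\Z)$ setting. A more computational alternative — which would also produce the proportionality constant explicitly — restricts $\mathrm{F}_{Kim}$ along the embedding into $\mathcal{H}_{\jord}$ of the product of two upper half-planes attached to $\sorth_{2,2}(\R)$ (the $\spin_9(\R)$-factor being compact), averages over $[\spin_9]$, and matches the resulting bihomogeneous $q$-expansion with that of $E_4\boxtimes E_8$; the arithmetic input there is a description of the $\spin_9$-orbits on the Albert lattice $\jord_{\Z}$.
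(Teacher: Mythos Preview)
Your proposal is correct and shares the paper's core idea: both sides are level-one holomorphic weight-$(4,8)$ forms on $\sorth_{2,2}$, hence proportional since $\mathrm{M}_4(\SL_2(\Z))\otimes\mathrm{M}_8(\SL_2(\Z))=\C\,E_4\otimes\C\,E_8$ is one-dimensional. The difference lies in how you show $\Theta_{\Phi_0}(1)$ has weight $(4,8)$. You argue abstractly via \Cref{prop archimedean siegel weil formula}, placing the $\spin_9(\R)$-invariant $\Phi_\infty$ in the $(m,n)=(0,0)$ summand $\mathcal{D}(4)\boxtimes\mathcal{D}(8)$. The paper instead takes exactly your ``more computational alternative'': it notes (by $\grpF(\R)\times\grpE(\widehat{\Z})$-invariance of $\Theta_{Kim}$ and strong approximation for $\spin_9$) that the $[\spin_9]$-average is, up to a constant, the restriction of $\mathrm{F}_{Kim}$ to $Z=\mathrm{diag}(z_1,z_2,z_2)$, and then verifies the functional equations in $z_1$ and $z_2$ by hand, exhibiting the element $\mathrm{n}(\mathrm{E}_1)\cdot\iota\cdot\mathrm{n}(\mathrm{E}_1)\cdot\iota\cdot\mathrm{n}(\mathrm{E}_1)\in\mathbf{H}_\jord^1(\Z)$ that acts as $z_1\mapsto -1/z_1$ on the diagonal. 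Your route is tidier conceptually and avoids that explicit calculation; the paper's is self-contained in that it does not invoke the $\spin_9\times\sorth_{2,2}$ branching law (whose proof is only sketched). The paper does not separately argue nonvanishing via constant terms as you do; it simply identifies both sides with nonzero multiples of $E_4\boxtimes E_8$.
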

\begin{proof}
By the choice of $\Phi_{0}$,
$\mathrm{Res}(\Phi_{0})_{p}$ 
is the normalized spherical vector of $\mathrm{I}(3,7)_{p}$ for each prime $p$,
and $\mathrm{Res}(\Phi_{0})_{\infty}$ is the unique holomorphic vector in $\mathrm{I}(3,7)_{\infty}$ with minimal $\mathrm{K}_{\lietype{E}{7}}\cap \spin_{2,2}(\R)$-type.
As a result, 
the Eisenstein series $E(\mathrm{Res}(\Phi_{0}))$ is a non-zero multiple of the automorphic form associated to $E_{4}\boxtimes E_{8}$,
where $E_{k}$ is the normalized holomorphic Eisenstein series in $\mathrm{M}_{k}(\SL_{2}(\Z))$. 

On the other side,
the global theta lift is a non-zero multiple of 
\[(g_{1},g_{2})\in\sorth_{2,2}(\A)\mapsto j(g_{1,\infty})^{-4}j(g_{2,\infty})^{-8}\mathrm{F}_{Kim}\left(\mathrm{diag}(g_{1,\infty}.i,g_{2,\infty}.i,g_{2,\infty}.i)\right),\]
where $(g_{1,\infty},g_{2,\infty})\in\spin_{2,2}(\R)$ is the archimedean component of $(g_{1},g_{2})$ (up to some left translation by $\sorth_{2,2}(\Q)$).
It suffices to show that $\mathrm{F}_{Kim}(\mathrm{diag}(z_{1},z_{2},z_{2}))$,
as a function on $\mathcal{H}\times\mathcal{H}$,
is a non-zero multiple of $E_{4}(z_{1})E_{8}(z_{2})$.

Since the space of modular forms $\mathrm{M}_{k}(\SL_{2}(\Z)),\,k=4$ or $8$, 
is $1$-dimensional and spanned by $E_{k}$,
it suffices to show that
as a function for the variable $z_{1}$ (\emph{resp.}\,$z_{2}$),
$\mathrm{F}_{Kim}(\mathrm{diag}(z_{1},z_{2},z_{2}))$ is a modular form of weight $4$ (\emph{resp.}\,$8$).
The only hard part in the proof of the modularity 
is to show that 
\[z_{1}^{-4}\mathrm{F}_{Kim}(\mathrm{diag}(-1/z_{1},z_{2},z_{2}))=\mathrm{F}_{Kim}(\mathrm{diag}(z_{1},z_{2},z_{2}))=z_{2}^{-8}\mathrm{F}_{Kim}(\mathrm{diag}(z_{1},-1/z_{2},-1/z_{2})).\]
We only give the proof for the first equality here, and the second one can be proved similarly.
From the explicit actions on $\mathcal{H}_{\jord}$ given in \Cref{ex explicit actions on exceptional tube domain},
we have 
\[\mathrm{diag}(-1/z_{1},z_{2},z_{2})=\left(\mathrm{n}(\mathrm{E}_{1})\cdot\iota\cdot\mathrm{n}(\mathrm{E}_{1})\cdot\iota\cdot\mathrm{n}(\mathrm{E}_{1})\right).\mathrm{diag}(z_{1},z_{2},z_{2}),\]
then the desired functional equation is implied by the modularity of $\mathrm{F}_{Kim}$:
\begin{align*}
    &\mathrm{F}_{Kim}(\mathrm{diag}(-1/z_{1},z_{2},z_{2}))\\
    =&J(\iota,\mathrm{diag}(z_{1}/(z_{1}+1),-1/z_{2},-1/z_{2}))J(\iota^{-1},\mathrm{diag}(z_{1}+1,z_{2},z_{2}))\mathrm{F}_{Kim}(\mathrm{diag}(z_{1},z_{2},z_{2}))\\
    =&\left(\frac{z_{1}}{(z_{1}+1)z_{2}^{2}}\right)^{4}\cdot\left(-\left(z_{1}+1\right)z_{2}^{2}\right)^{4}\cdot \mathrm{F}_{Kim}(\mathrm{diag}(z_{1},z_{2},z_{2}))\\
    =&z_{1}^{4}\mathrm{F}_{Kim}(\mathrm{diag}(z_{1},z_{2},z_{3})).
    \qedhere
\end{align*}
\end{proof}
\begin{proof}[Proof of \Cref{thm exceptional siegel weil formula for spin9 so4}]
    For a smooth vector $\phi_{\infty}\in\Pi^{+}\subseteq \Pi_{\min,\infty}$
    whose restriction $\mathrm{Res}(\phi_{\infty}\otimes \Phi_{f})$ to $\sorth_{2,2}(\A)$ vanishes,
    we know from \Cref{prop archimedean siegel weil formula} that it is orthogonal to the space $(\Pi^{+})^{\spin_{9}(\R)}$,
    thus the theta lift $\Theta_{\phi_{\infty}\otimes\Phi_{f}}(1)=0$.

    Now we can assume that the smooth vector $\phi_{\infty}\in(\Pi^{+})^{\spin_{9}(\R)}$ lies in the $\spin_{2,2}(\R)$-orbit of $\Phi_{\infty}$,
    then the theorem follows from \Cref{prop exceptional siegel weil using Kim modular form}
    and the fact that the maps 
    $E(\mathrm{Res}(-))$ and $\Theta_{-}(1)$
    are both $\sorth_{2,2}(\A)$-equivariant.
\end{proof}
\subsection{Unfolding the period integral}\label{section unfolding of rankin selberg integral}
 Take the smooth vector $\phi\in \Pi_{\min}$ to be $\phi_{\infty}\otimes\Phi_{f}$,
where $\Phi_{f}$ is the normalized spherical vector 
and $\phi_{\infty}$ is a vector in $\Pi^{+}\subseteq\Pi_{\min,\infty}$
such that $\widetilde{\phi}:=\mathrm{Res}(\phi)\in \mathrm{I}(3,7)$ is non-zero.
Using the Siegel-Weil formula \Cref{thm exceptional siegel weil formula for spin9 so4} for $\spin_{9}\times\sorth_{2,2}$,
we write the period integral \eqref{eqn change the order of integration} as a Rankin-Selberg type integral and unfold it: 
\begin{align}\label{eqn write period as contributions of orbits}
    \begin{aligned}
        \mathcal{P}_{\spin_{9}}(\Theta_{\phi}(\varphi))&=\int_{[\pgl_{2}]}\overline{\varphi(h)}E(\mathrm{Res}(\phi))(h^{\Delta})dh\\
    &=\int_{[\pgl_{2}]}\overline{\varphi(h)}\sum_{\mathbf{B}(\Q)\backslash\sorth_{2,2}(\Q)}\widetilde{\phi}\left(\gamma h^{\Delta}\right)dh\\
    &=\sum_{\gamma\in\mathbf{B}(\Q)\backslash\sorth_{2,2}(\Q)/\pgl_{2}^{\Delta}(\Q)}\int_{{^{\gamma}\mathbf{G}}(\Q)\backslash\pgl_{2}(\A)}\widetilde{\phi}(\gamma h^{\Delta})\overline{\varphi(h)}dh,
    \end{aligned}
\end{align}
where $h^{\Delta}$ denotes the image of $h\in\pgl_{2}(\A)$ under $\pgl_{2}(\A)\rightarrow\sorth_{2,2}(\A)$,
and the reductive subgroup ${^{\gamma}\mathbf{G}}$ of $\pgl_{2}$ is defined to be $\pgl_{2}^{\Delta}\cap \gamma^{-1}\mathbf{B}\gamma$.

By an easy calculation of orbits,
the double coset in the summation of \eqref{eqn write period as contributions of orbits} has two orbits,
represented by $1=\left(\smallmat{1}{0}{0}{1},\smallmat{1}{0}{0}{1}\right)$ and 
$\gamma_{0}=(w_{0},1):=\left(\smallmat{0}{-1}{1}{0},\smallmat{1}{0}{0}{1}\right)$ respectively.
For the first orbit, 
$^{1}\mathbf{G}=\mathbf{B}_{0}=\mathbf{T}_{0}\mathbf{N}_{0}$
is the standard Borel subgroup of $\pgl_{2}$,
and its contribution to the Rankin-Selberg integral \eqref{eqn write period as contributions of orbits} is zero since $\varphi$ is cuspidal.
For the second orbit,
$^{\gamma_{0}}\mathbf{G}=\mathbf{T}_{0}$ is the maximal torus consisting of diagonal matrices,
thus we have:
\begin{align}\label{eqn rewrite period integral over T}
    \mathcal{P}_{\spin_{9}}(\Theta_{\phi}(\varphi))=\int_{\mathbf{T}_{0}(\Q)\backslash \pgl_{2}(\A)}\widetilde{\phi}(\gamma_{0}g^{\Delta})\overline{\varphi(g)}dg.    
\end{align}
Before calculating this integral,
we make some normalization on the measure $dg$ of $\pgl_{2}(\A)$:
\begin{notation}\label{notation choice of Haar measure for PGL2}
    Fix a Haar measure $dx$ on $\Q_{p}$ such that $dx(\Z_{p})=1$,
    and let $d^{\times}t$ be the Haar measure $(1-p^{-1})^{-1}\cdot\frac{dt}{|t|}$ on $\Q_{p}^{\times}$
    so that $d^{\times}t(\Z_{p}^{\times})=1$.
    We choose the following left-invariant Haar measure $db$ on $\mathbf{B}_{0}(\Q_{p})$:
    \[db:=d^{\times}tdx=\frac{dtdx}{|t|},\text{ for }b=\mat{t}{}{}{1}\mat{1}{x}{}{1}\in\mathbf{B}_{0}(\Q_{p}).\]
    On the hyperspecial subgroup $\pgl_{2}(\Z_{p})$,
    we choose the invariant Haar measure $dk$ such that the volume of $\pgl_{2}(\Z_{p})$ is $1$.
    Via the Iwasawa decomposition,
    we give $\pgl_{2}(\Q_{p})$ the product measure $dg_{p}=dbdk$,
    which makes $\pgl_{2}(\Z_{p})$ have measure $1$.
    Take a non-trivial invariant Haar measure $dg_{\infty}$ on $\pgl_{2}(\R)$ and set $dg=\otimes^{\prime}_{v}dg_{v}$.
\end{notation}
The first step to calculate \eqref{eqn rewrite period integral over T}
is to rewrite it as an Euler product,
for which we need the following:
\begin{defi}\label{def Whittaker coefficient}
    Fix a non-trivial continuous unitary character $\psi=\psi_{\infty}\otimes\psi_{f}=\otimes_{v}\psi_{v}$ of $\Q\backslash\A$ 
    such that the conductor of $\psi_{p}$ is $\Z_{p}$ for each $p$ and $\psi_{\infty}(x)=e^{2\pi i x}$ for all $x\in \R$.  
    The \emph{$\psi$-Whittaker coefficient} of $\varphi\in\mathcal{A}_{\cusp}(\pgl_{2})$ is defined to be:
    \[W_{\varphi,\psi}(g):=\int_{[\mathbf{N}_{0}]}\varphi(ng)\psi^{-1}(n)dn.\]
\end{defi}
The global Whittaker function $W_{\varphi,\psi}$ satisfies $W_{\varphi,\psi}(ng)=\psi(n)W_{\varphi,\psi}(g)$ for any $g\in \pgl_{2}(\A)$ and $n\in\mathbf{N}_{0}(\A)$,
and it factors as $W_{\varphi,\psi}(g)=\prod_{v}W_{\varphi_{v},\psi_{v}}(g_{v})$ \cite[Corollary 4.1.3]{CogdellLfunction},
where $W_{\varphi_{p},\psi_{p}}$ is a spherical Whittaker function on $\pgl_{2}(\Q_{p})$.
We normalize the spherical vector $\varphi_{p}\in\pi_{p}$ so that $W_{\varphi_{p},\psi_{p}}|_{\pgl_{2}(\Z_{p})}=1$. 

Expanding the automorphic form $\varphi$ along $\mathbf{N}_{0}$,
the right-hand side of \eqref{eqn rewrite period integral over T} becomes:
\[\int_{\mathbf{T}_{0}(\Q)\backslash \pgl_{2}(\A)}\widetilde{\phi}(\gamma_{0} g^{\Delta})\overline{\sum_{a\in\Q^{\times}}W_{\varphi,\psi}\left(\mat{a}{0}{0}{1} g\right)}dg=\int_{\pgl_{2}(\A)}\widetilde{\phi}(\gamma_{0}g^{\Delta})\overline{W_{\varphi,\psi}(g)}dg.\]
So far we have proved the following:
\begin{prop}\label{prop unfolding of period integral}
    Let $\phi=\phi_{\infty}\otimes\Phi_{f}\in \Pi_{\min}$ be a smooth vector such that $\widetilde{\phi}=\mathrm{Res}(\phi)\neq 0$,
    then we have 
    \[\mathcal{P}_{\spin_{9}}(\Theta_{\phi}(\varphi))=\int_{\pgl_{2}(\A)}\widetilde{\phi}(\gamma_{0}g^{\Delta})\overline{W_{\varphi,\psi}(g)}dg=\prod_{v}I_{v}(\widetilde{\phi}_{v},\varphi_{v},\psi_{v}),\]
    where the local zeta integral $I_{v}(\widetilde{\phi_{v}},\varphi_{v},\psi_{v})$ is defined by:
    \[I_{v}(\widetilde{\phi}_{v},\varphi_{v},\psi_{v}):=\int_{\pgl_{2}(\Q_{v})}\widetilde{\phi}_{v}(\gamma_{0,v}g_{v}^{\Delta})\overline{W_{\varphi_{v},\psi_{v}}(g_{v})}dg_{v}.\]
\end{prop}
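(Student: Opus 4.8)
The plan is to assemble the proposition from the chain of identities developed in the discussion preceding it, checking that each manipulation is legitimate. First I would justify the interchange of the two integrations in \eqref{eqn change the order of integration}: since $\varphi$ is a cusp form it is rapidly decreasing on $[\pgl_{2}]$ while $\theta(\phi)$ is of moderate growth on $[\grpE]$, so Fubini applies. Next I would invoke the exceptional Siegel--Weil formula \Cref{thm exceptional siegel weil formula for spin9 so4} to replace the inner integral $\int_{[\spin_{9}]}\theta(\phi)(gh)\,dg$ by the Eisenstein series $E(\mathrm{Res}(\phi))$; since $\mathrm{Res}(\phi)$ lies in $\mathrm{I}(3,7)$, corresponding to holomorphic Eisenstein series of weights $4$ and $8$ on the two $\SL_{2}$-factors of $\sorth_{2,2}$, this Eisenstein series is in the range of absolute convergence, so no analytic continuation is needed. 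Then I would unfold it as in \eqref{eqn write period as contributions of orbits}, once more using the rapid decay of $\varphi$ to justify interchanging the summation over $\mathbf{B}(\Q)\backslash\sorth_{2,2}(\Q)$ with the integration over $[\pgl_{2}]$.

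The orbit computation is elementary: writing $\sorth_{2,2}$ in terms of two $\SL_{2}$-factors and $\pgl_{2}^{\Delta}$ diagonally embedded, the double coset $\mathbf{B}(\Q)\backslash\sorth_{2,2}(\Q)/\pgl_{2}^{\Delta}(\Q)$ is governed by the Bruhat decomposition of the first factor and has the two representatives $1$ and $\gamma_{0}=(w_{0},1)$. For the orbit of $1$ the stabiliser ${}^{1}\mathbf{G}$ is the Borel $\mathbf{B}_{0}=\mathbf{T}_{0}\mathbf{N}_{0}$, and the corresponding summand contains an inner integral of $\varphi$ over $[\mathbf{N}_{0}]$, hence vanishes by cuspidality. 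For the orbit of $\gamma_{0}$ the stabiliser is the diagonal torus $\mathbf{T}_{0}$, giving \eqref{eqn rewrite period integral over T}. Expanding $\varphi$ along $[\mathbf{N}_{0}]$, the constant term again drops by cuspidality, and after the change of variable identifying $\mathbf{T}_{0}(\Q)$ with $\Q^{\times}$ the remaining Fourier sum is $\sum_{a\in\Q^{\times}}W_{\varphi,\psi}(\mathrm{diag}(a,1)g)$, which collapses the quotient by $\mathbf{T}_{0}(\Q)$ and produces the integral $\int_{\pgl_{2}(\A)}\widetilde{\phi}(\gamma_{0}g^{\Delta})\overline{W_{\varphi,\psi}(g)}\,dg$. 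This is the first asserted equality.

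For the Euler product I would use that $\widetilde{\phi}=\otimes_{v}\widetilde{\phi}_{v}$ is a pure tensor, that $W_{\varphi,\psi}=\prod_{v}W_{\varphi_{v},\psi_{v}}$ by uniqueness of Whittaker models \cite[Corollary 4.1.3]{CogdellLfunction}, and that $dg=\otimes_{v}^{\prime}dg_{v}$; the integrand over $\pgl_{2}(\A)$ then factors as a product of local integrands, and at almost every finite place $p$ the datum $(\widetilde{\phi}_{p},\varphi_{p},\psi_{p})$ is unramified, so that $I_{p}(\widetilde{\phi}_{p},\varphi_{p},\psi_{p})$ is a convergent unramified integral whose explicit value is determined in the next subsection (see \Cref{cor prod local zeta integral L function}). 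Granting this, the global integral factors as $\prod_{v}I_{v}(\widetilde{\phi}_{v},\varphi_{v},\psi_{v})$ in the usual restricted-product sense.

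The main obstacle is bookkeeping rather than conceptual: making the two unfoldings rigorous, i.e.\ controlling absolute convergence so that the sum over $\mathbf{B}(\Q)\backslash\sorth_{2,2}(\Q)$ and the Fourier expansion of $\varphi$ may be interchanged with the $\pgl_{2}$-integration, and verifying that the representatives $1,\gamma_{0}$ and their stabilisers are as claimed (in particular that it is the $\gamma_{0}$-orbit which survives cuspidality). The genuinely substantial inputs --- the Siegel--Weil identity \Cref{thm exceptional siegel weil formula for spin9 so4} and the local computations needed to make the Euler product meaningful --- are already established or deferred to the sequel, so the proposition is a formal consequence once these convergence points are dispatched.
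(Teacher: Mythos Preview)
Your proposal is correct and follows essentially the same approach as the paper: the paper's proof is precisely the discussion preceding the proposition (\eqref{eqn change the order of integration}--\eqref{eqn rewrite period integral over T} together with the Whittaker expansion), and you have reproduced each step (Siegel--Weil, unfolding over the two $\mathbf{B}(\Q)\backslash\sorth_{2,2}(\Q)/\pgl_{2}^{\Delta}(\Q)$ orbits, cuspidality killing the trivial orbit, Fourier expansion along $[\mathbf{N}_{0}]$, and the Euler factorization via uniqueness of Whittaker models). If anything you are more explicit than the paper about the convergence bookkeeping.
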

\subsection{Unramified calculations}\label{section local zeta integral}
 The goal of this section is to calculate the local zeta integral $I_{p}(\widetilde{\phi}_{p},\varphi_{p},\psi_{p})$:
\begin{prop}\label{prop nonarchimedean zeta integral local L value}
    Let $\varphi_{p}$ be the normalized spherical vector of the unramified principal series $\pi_{p}$ of $\pgl_{2}(\Q_{p})$ 
    whose Satake parameter is $\smallmat{\alpha_{p}}{}{}{\alpha_{p}^{-1}}\in\SL_{2}(\C)_{\mathrm{ss}}$,
    and $\widetilde{\phi}_{p}=\mathrm{Res}(\Phi_{p})$ the normalized spherical section of $\mathrm{I}(3,7)_{p}$,
    then we have:
    \begin{align*}
        I_{p}(\widetilde{\phi}_{p},\varphi_{p},\psi_{p})=\frac{(1-p^{-4})(1-p^{-8})}{(1-p^{-\frac{5}{2}}\alpha_{p})(1-p^{-\frac{5}{2}}\alpha_{p}^{-1})(1-p^{-\frac{11}{2}}\alpha_{p})(1-p^{-\frac{11}{2}}\alpha_{p}^{-1})}.
    \end{align*}
\end{prop}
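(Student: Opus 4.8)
The statement is an unramified local computation of a Rankin–Selberg type integral, so the strategy is the standard one: use the Iwasawa decomposition $\pgl_2(\Q_p)=\mathbf{B}_0(\Q_p)\pgl_2(\Z_p)$ together with the explicit formula for a spherical Whittaker function on $\pgl_2$ (the Casselman–Shalika formula), and then evaluate the resulting one-dimensional $p$-adic integral as a geometric series. First I would reduce the integral to the torus: since both $\widetilde{\phi}_p$ and $W_{\varphi_p,\psi_p}$ are right $\pgl_2(\Z_p)$-invariant (the former because $\mathrm{Res}(\Phi_p)$ is the normalized spherical section of $\mathrm{I}(3,7)_p$, the latter by our normalization $W_{\varphi_p,\psi_p}|_{\pgl_2(\Z_p)}=1$), the integrand $\widetilde{\phi}_p(\gamma_{0,p}g^{\Delta})\overline{W_{\varphi_p,\psi_p}(g)}$ descends to $\mathbf{B}_0(\Q_p)$, and by our choice of Haar measure $dg_p=db\,dk$ with $\mathrm{vol}(\pgl_2(\Z_p))=1$ the integral becomes $\int_{\mathbf{B}_0(\Q_p)}\widetilde{\phi}_p(\gamma_{0,p}b^{\Delta})\overline{W_{\varphi_p,\psi_p}(b)}\,db$. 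Because $W_{\varphi_p,\psi_p}$ transforms by $\psi_p$ under $\mathbf{N}_0(\Q_p)$ and $\widetilde{\phi}_p(\gamma_{0,p}n^{\Delta}b^{\Delta})$ also picks up a character of $n$ (via $\gamma_0^{-1}\mathbf{N}_0^{\Delta}\gamma_0$ sitting inside $\overline{\mathbf{N}}\times\{1\}$, which is "large" with respect to $\mathbf{B}$), the $\mathbf{N}_0$-integral collapses: the character matching forces it to reduce to a sum over the torus $\mathbf{T}_0(\Q_p)=\{\mathrm{diag}(t,1)\}$, giving $\int_{\Q_p^\times}\widetilde{\phi}_p(\gamma_{0,p}\,\mathrm{diag}(t,1)^{\Delta})\,\overline{W_{\varphi_p,\psi_p}(\mathrm{diag}(t,1))}\,d^{\times}t$.

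For the two ingredients of this torus integral: by the Casselman–Shalika formula, $W_{\varphi_p,\psi_p}(\mathrm{diag}(t,1))$ vanishes unless $v_p(t)=m\ge 0$, in which case it equals $p^{-m/2}\frac{\alpha_p^{m+1}-\alpha_p^{-(m+1)}}{\alpha_p-\alpha_p^{-1}}$ (the value $1$ at $m=0$ fixing the normalization). For the section $\widetilde{\phi}_p$, I would use that $\mathrm{I}(3,7)_p=\mathrm{Ind}_{\mathbf{B}(\Q_p)}^{\sorth_{2,2}(\Q_p)}\chi_{3,7}$, that $\gamma_{0}=(w_0,1)$, and that the image of $\mathrm{diag}(t,1)$ under the diagonal embedding $\pgl_2\to\sorth_{2,2}$ is $(\mathrm{diag}(t,1),\mathrm{diag}(t,1))$ modulo center; a direct $\mathrm{GL}_2$-coset computation (exactly as in the classical Godement–Jacquet / doubling zeta integrals) expresses $\widetilde{\phi}_p(\gamma_{0,p}\,\mathrm{diag}(t,1)^{\Delta})$ as a product of two local factors of the form $\max(1,|t|_p)^{-s_i/2-\text{shift}}$ — concretely $|t|_p^{5/2}$ weighting when $v_p(t)\ge 0$ and a different power when $v_p(t)<0$, arising from the two characters $\chi_{3,7}$ after normalization by $\delta_{\mathbf B}^{1/2}$. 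Combining, the integral becomes $\sum_{m\ge 0} p^{-m(\text{something})}\cdot p^{-m/2}\frac{\alpha_p^{m+1}-\alpha_p^{-(m+1)}}{\alpha_p-\alpha_p^{-1}}$, possibly plus a convergent negative-valuation tail; summing the geometric series in $\alpha_p$ and $\alpha_p^{-1}$ separately yields $\frac{1}{(1-p^{-5/2}\alpha_p)(1-p^{-5/2}\alpha_p^{-1})}$ from the "$5/2$" shift and $\frac{1}{(1-p^{-11/2}\alpha_p)(1-p^{-11/2}\alpha_p^{-1})}$ from the "$11/2$" shift, and the correction terms supply exactly the numerator $(1-p^{-4})(1-p^{-8})$ after clearing denominators.

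The main obstacle I anticipate is bookkeeping the precise shifts and the embedding $\pgl_2\hookrightarrow\sorth_{2,2}$: I need to pin down which of the two torus characters of $\sorth_{2,2}$ restricts to which power of $|t|_p$ along the diagonally embedded $\pgl_2$, and then correctly account for the modulus character $\delta_{\mathbf{B}}$ of $\sorth_{2,2}$ in the normalized induction $\mathrm{I}(3,7)=\mathrm{Ind}\,\delta_{\mathbf{B}}^{-1/2}\chi_{4,8}$. This is the step where an error in the half-integral exponents would propagate to wrong $L$-values; the cross-check is that the two shifts must land on $5/2$ and $11/2$ so that, with $\mathrm{L}(\pi,s)=\mathrm{L}(f,\frac{2n+11}{2}+s)$, the global product reproduces the $L$-factor of \Cref{introthm SV L-factor for Spin9 F4}. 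A secondary subtlety is handling possible contributions from $v_p(t)<0$ in the $\widetilde{\phi}_p$-side (the section need not be supported only on $|t|_p\le 1$); one must verify these are absorbed into the finite Euler factors $(1-p^{-4})(1-p^{-8})$ rather than spoiling the rationality. Everything else — the vanishing of the $\mathbf{N}_0$-integral off the torus, Casselman–Shalika, summing geometric series — is routine.
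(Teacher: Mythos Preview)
Your overall shape is right (Iwasawa decomposition, Casselman--Shalika, geometric series), but the key step where you say ``the $\mathbf{N}_0$-integral collapses: the character matching forces it to reduce to a sum over the torus'' is incorrect, and this is exactly where the computation lives.

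Write $n=\smallmat{1}{x}{}{1}$. Then $\gamma_0 n^{\Delta}\gamma_0^{-1}=(w_0 n w_0^{-1},\,n)$, whose first component is \emph{lower} triangular unipotent while the second is upper triangular. This element is \emph{not} in $\mathbf{B}(\Q_p)$, so $\widetilde{\phi}_p$ does not transform under it by a character; there is no character to match. Concretely, the paper computes
\[
\widetilde{\phi}_p\!\left(\gamma_0\smallmat{t}{}{}{1}^{\Delta}\smallmat{1}{x}{}{1}^{\Delta}\right)=
\begin{cases}
|t|^{2}, & x\in\Z_p,\\
|t|^{2}|x|^{-4}, & x\notin\Z_p,
\end{cases}
\]
which is not a character in $x$. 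The $x$-integral against $\overline{\psi_p(tx)}$ is therefore a genuine (Gauss-sum type) computation and gives, for $t\in p^{n}\Z_p^{\times}$,
\[
I_n=\frac{(1-p^{-4})(1-p^{-3n-3})}{1-p^{-3}}.
\]
This factor is essential: the constant piece $\tfrac{1-p^{-4}}{1-p^{-3}}$ combined with the $p^{-3n}$ term is what produces \emph{both} the numerator $(1-p^{-4})(1-p^{-8})$ \emph{and} the second local $L$-factor at $s=\tfrac{11}{2}$ (the $p^{-3n}$ shifts the torus weight $p^{-5n/2}$ to $p^{-11n/2}$). Your heuristic that the two $L$-factors come from the two characters of $\mathrm{I}(3,7)$ separately, with the numerator arising from a ``negative-valuation tail'' on the $\widetilde{\phi}_p$-side, does not match what actually happens; the Whittaker function already kills $v_p(t)<0$, and the torus value of $\widetilde{\phi}_p$ is just $|t|^2$ with no second factor. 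So the plan as written would not reach the stated formula: you need to keep the $x$-integral and evaluate it directly.
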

\begin{proof}
    With the choice of measures in \Cref{notation choice of Haar measure for PGL2},
    we write $I_{p}$ as a double integral: 
    \begin{align}\label{eqn rewrite the local zeta integral}
        \begin{aligned}
            &I_{p}(\widetilde{\phi}_{p},\varphi_{p},\psi_{p})\\
        =&\int_{\mathbf{B_{0}}(\Q_{p})}\int_{\pgl_{2}(\Z_{p})}\widetilde{\phi}_{p}(\gamma_{0}b^{\Delta}k^{\Delta})\overline{W_{\varphi_{p},\psi_{p}}(bk)}dbdk\\    
        =&\int_{\Q_{p}^{\times}}\int_{\Q_{p}}\widetilde{\phi}_{p}\left(\gamma_{0}\mat{t}{}{}{1}^{\Delta}\mat{1}{x}{}{1}^{\Delta}\right)\overline{W_{\varphi_{p},\psi_{p}}\left(\mat{t}{}{}{1}\mat{1}{x}{}{1}\right)}d^{\times}tdx
        \end{aligned}
    \end{align}
    As the normalized spherical section of $\mathrm{I}(3,7)_{p}$, 
    $\widetilde{\phi}_{p}$ satisfies that:
    \begin{align}\label{eqn value of normalized spherical section}
        \widetilde{\phi}_{p}\left(\gamma_{0}\mat{t}{}{}{1}^{\Delta}\mat{1}{x}{}{1}^{\Delta}\right)=\left\{
            \begin{array}{@{}cl}
                |t|^{2} &,\,x\in\Z_{p}\\
                |t|^{2}\cdot|x|^{-4}&,\,x\notin \Z_{p}
            \end{array}
        \right.
    \end{align}
    On the other hand,
    the values of the spherical Whittaker function $W_{\varphi_{p},\psi_{p}}$
    comes from a standard result \cite[Proposition 7.4]{CogdellLfunction}:
    \begin{align}\label{eqn values of spherical Whittaker function}
        W_{\varphi_{p},\psi_{p}}\left(\mat{t}{}{}{1}\mat{1}{x}{}{1}\right)=\left\{
            \begin{array}{@{}cl}
                0 &,\,t\notin\Z_{p}\\
                p^{-n/2}\psi_{p}(tx)\cdot\frac{\alpha_{p}^{n+1}-\alpha_{p}^{-n-1}}{\alpha_{p}-\alpha_{p}^{-1}}&,\,t\in p^{n}\Z_{p}^{\times}\text{ for some }n\geq 0
            \end{array}
        \right.
    \end{align}
    Plugging \eqref{eqn value of normalized spherical section} and \eqref{eqn values of spherical Whittaker function} into \Cref{eqn rewrite the local zeta integral},
    we have: 
    \begin{align}\label{eqn integration over torus}
        I_{p}(\widetilde{\phi}_{p},\varphi_{p},\psi_{p})=\sum_{n=0}^{\infty}\int_{p^{n}\Z_{p}^{\times}}p^{-\frac{5}{2}n}\frac{\alpha_{p}^{n+1}-\alpha_{p}^{-n-1}}{\alpha_{p}-\alpha_{p}^{-1}}I_{n}(t)d^{\times}t
    \end{align}
    where 
    \[I_{n}(t)=\int_{\Z_{p}}\overline{\psi_{p}(tx)}dx+\int_{\Q_{p}\setminus\Z_{p}}|x|^{-4}\overline{\psi_{p}(tx)}dx=1+\sum_{m=1}^{\infty}\int_{p^{-m}\Z_{p}^{\times}}p^{-4m}\overline{\psi_{p}(tx)}dx.\] 
    We set $t=p^{n}t_{0},\,t_{0}\in\Z_{p}^{\times}$ 
    and change the variable of integration by $x=p^{-m}t_{0}^{-1}y$,
    which induces that $dx=p^{m}dy$,
    then we have:    
    \[\int_{p^{-m}\Z_{p}^{\times}}p^{-4m}\overline{\psi_{p}(tx)}dx=p^{-3m}\int_{\Z_{p}^{\times}}\overline{\psi_{p}(p^{n-m}y)}dy=\left\{\begin{array}{@{}cl}
        p^{-3m}(1-p^{-1}) &,\,m\leq n\\
        -p^{-3(n+1)}\cdot p^{-1} &,\,m=n+1\\
        0&,\,m>n+1
    \end{array}\right.\]
    Hence the integral $I_{n}(t)$ is independent of $t\in p^{n}\Z_{p}^{\times}$ and 
    \[I_{n}(t)=1+\sum_{m=1}^{n}p^{-3m}(1-p^{-1})-p^{-3(n+1)-1}=\frac{(1-p^{-4})(1-p^{-3n-3})}{1-p^{-3}}.\]
    Putting this value in \eqref{eqn integration over torus},
    we obtain:
    {\small\begin{align*}
        I_{p}(\widetilde{\phi}_{p},\varphi_{p},\psi_{p})&=\frac{(1-p^{-4})}{(1-p^{-3})(\alpha_{p}-\alpha_{p}^{-1})}\sum_{n=0}^{\infty}p^{-\frac{5}{2}n}(\alpha_{p}^{n+1}-\alpha_{p}^{-n-1})(1-p^{-3n-3})\\
        &=\frac{(1-p^{-4})}{(1-p^{-3})(\alpha_{p}-\alpha_{p}^{-1})}\left(\frac{\alpha_{p}}{1-p^{-\frac{5}{2}}\alpha_{p}}-\frac{\alpha_{p}^{-1}}{1-p^{-\frac{5}{2}}\alpha_{p}^{-1}}-\frac{p^{-3}\alpha_{p}}{1-p^{-\frac{11}{2}}\alpha_{p}}+\frac{p^{-3}\alpha_{p}^{-1}}{1-p^{-\frac{11}{2}}\alpha_{p}^{-1}}\right)\\
        &=\frac{(1-p^{-4})(1-p^{-8})}{(1-p^{-\frac{5}{2}}\alpha_{p})(1-p^{-\frac{5}{2}}\alpha_{p}^{-1})(1-p^{-\frac{11}{2}}\alpha_{p})(1-p^{-\frac{11}{2}}\alpha_{p}^{-1})}.\qedhere
    \end{align*}} 
\end{proof}
As a direct consequence of \Cref{prop nonarchimedean zeta integral local L value},
we have the following result, which corresponds to \Cref{introthm SV L-factor for Spin9 F4} in the introduction:
\begin{cor}\label{cor prod local zeta integral L function}
    (\Cref{introthm SV L-factor for Spin9 F4} in \Cref{section introduction})
    Let $\phi=\phi_{\infty}\otimes\Phi_{f}$ be a smooth holomorphic vector in $\Pi_{\min}$ such that $\widetilde{\phi}=\mathrm{Res}(\phi)\neq 0$,
    and $\varphi\simeq\varphi_{\infty}\otimes\varphi_{f}\in\pi$ the automorphic form of $\pgl_{2}$  
    associated to a (normalized) Hecke eigenform for $\SL_{2}(\Z)$ of weight $2n+12$, $n>0$.
    Then we have: 
    \begin{align}\label{eqn period integrals as L functions}
        \mathcal{P}_{\spin_{9}}(\Theta_{\phi}(\varphi))=\frac{\mathrm{L}(\pi,\frac{5}{2})\mathrm{L}(\pi,\frac{11}{2})}{\zeta(4)\zeta(8)}\cdot I_{\infty}(\mathrm{Res}(\phi_{\infty}),\varphi_{\infty},\psi_{\infty}).
    \end{align}
    The $\mathrm{L}$-function $\mathrm{L}(\pi,s)$ appearing in \eqref{eqn period integrals as L functions} is the standard automorphic $\mathrm{L}$-function of $\pi$,
    defined as the Euler product $\prod_{p}(1-p^{-s}\alpha_{p})(1-p^{-s}\alpha_{p}^{-1})$,
    where the $\SL_{2}(\C)$-conjugacy class of $\mathrm{diag}(\alpha_{p},\alpha_{p}^{-1})$ is the Satake parameter of $\pi_{p}$.
\end{cor}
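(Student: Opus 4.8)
The plan is to obtain the identity directly from the local computations already established. By Proposition~\ref{prop unfolding of period integral}, for the smooth vector $\phi=\phi_{\infty}\otimes\Phi_{f}$ with $\widetilde{\phi}=\mathrm{Res}(\phi)\neq 0$ we have the factorization
\[
\mathcal{P}_{\spin_{9}}(\Theta_{\phi}(\varphi))=I_{\infty}(\mathrm{Res}(\phi_{\infty}),\varphi_{\infty},\psi_{\infty})\cdot\prod_{p}I_{p}(\widetilde{\phi}_{p},\varphi_{p},\psi_{p}),
\]
so the only remaining task is to evaluate the finite part of this Euler product. Since $\varphi$ arises from a Hecke eigenform of $\SL_{2}(\Z)$, each $\pi_{p}$ is the unramified principal series with Satake parameter $\mathrm{diag}(\alpha_{p},\alpha_{p}^{-1})$, and $\varphi_{p}$, $\widetilde{\phi}_{p}$ are the normalized spherical vectors used in Proposition~\ref{prop nonarchimedean zeta integral local L value}; moreover Deligne's bound gives $|\alpha_{p}|=1$, which together with $\tfrac52,\tfrac{11}2>1$ places all the Euler factors below in their range of absolute convergence.

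Next I would substitute the value of $I_{p}$ from Proposition~\ref{prop nonarchimedean zeta integral local L value},
\[
I_{p}(\widetilde{\phi}_{p},\varphi_{p},\psi_{p})=\frac{(1-p^{-4})(1-p^{-8})}{(1-p^{-5/2}\alpha_{p})(1-p^{-5/2}\alpha_{p}^{-1})(1-p^{-11/2}\alpha_{p})(1-p^{-11/2}\alpha_{p}^{-1})},
\]
and take the product over all primes. The numerators multiply to $\prod_{p}(1-p^{-4})(1-p^{-8})=\zeta(4)^{-1}\zeta(8)^{-1}$, while the denominators split into two families whose products are, by definition of the standard automorphic $\mathrm{L}$-function, $\mathrm{L}(\pi,\tfrac52)^{-1}$ and $\mathrm{L}(\pi,\tfrac{11}2)^{-1}$. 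Hence $\prod_{p}I_{p}=\mathrm{L}(\pi,\tfrac52)\mathrm{L}(\pi,\tfrac{11}2)\big/\big(\zeta(4)\zeta(8)\big)$, and multiplying by the archimedean factor $I_{\infty}(\mathrm{Res}(\phi_{\infty}),\varphi_{\infty},\psi_{\infty})$ produces exactly \eqref{eqn period integrals as L functions}.

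Since all of the analytic content has already been isolated in Propositions~\ref{prop unfolding of period integral} and~\ref{prop nonarchimedean zeta integral local L value}, there is no genuine obstacle in this corollary: the only points requiring care are the absolute convergence of the infinite product (guaranteed by temperedness of $\pi$ at the finite places) and the bookkeeping of normalizations — in particular the translation of this Euler product, after the shift $s\mapsto\tfrac{2n+11}{2}+s$, into the classical $L$-function $\mathrm{L}(f,\cdot)$ of the underlying weight $2n+12$ eigenform, as recorded in the introduction. I would also remark here, for use in the next subsection, that $\mathrm{L}(\pi,\tfrac52)$ and $\mathrm{L}(\pi,\tfrac{11}2)$ are both non-zero, being given by convergent Euler products with no vanishing local factors, so that the non-vanishing of $\mathcal{P}_{\spin_{9}}(\Theta_{\phi}(\varphi))$ is reduced to that of the archimedean integral $I_{\infty}$.
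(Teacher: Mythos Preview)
Your proposal is correct and follows exactly the route implicit in the paper: the corollary is stated there as a ``direct consequence'' of \Cref{prop nonarchimedean zeta integral local L value}, and you have spelled out precisely that deduction by combining it with the Euler factorization in \Cref{prop unfolding of period integral}. The only cosmetic difference is in the non-vanishing remark: you argue via absolute convergence of the Euler product (using Deligne's bound $|\alpha_{p}|=1$, so that $s=\tfrac{5}{2},\tfrac{11}{2}$ lie in the half-plane $\mathrm{Re}(s)>1$), whereas the paper invokes Rankin--Selberg theory; your argument is in fact the more elementary one here and suffices.
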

\begin{rmk}\label{rmk connection with global SV conj}
    The $\mathrm{L}$-factor $\mathrm{L}(\pi,\frac{5}{2})\mathrm{L}(\pi,\frac{11}{2})$ appearing 
    in \eqref{eqn period integrals as L functions}
    agrees with the prediction of the global conjecture \cites[\S 17]{SVPeriods}[Table 1]{Sakellaridis_Rank1Transfer} of Sakellaridis-Venkatesh 
    for the spherical variety $\spin_{9}\backslash\grpF$.
\end{rmk}
The Rankin-Selberg theory shows that the standard automorphic $\mathrm{L}$-function $\mathrm{L}(\pi,s)$
has no zero at $s=\frac{5}{2}$ or $\frac{11}{2}$.
As a consequence,
the non-vanishing of $\mathcal{P}_{\spin_{9}}(\Theta_{\phi}(\varphi))$
is equivalent to that of 
the archimedean zeta integral $I_{\infty}(\mathrm{Res}(\phi_{\infty}),\varphi_{\infty},\psi_{\infty})$.
\subsection{Non-vanishing of \texorpdfstring{$\Theta_{\phi}(\varphi)$}{PDFstring}}
\label{section final proof of the non-vanishing theta}
 
By \Cref{cor prod local zeta integral L function},
for the non-vanishing of $\Theta(\pi)$,
it suffices to find some smooth vector $\phi_{\infty}\in\Pi^{+}\subseteq \Pi_{\min,\infty}$
such that $I_{\infty}(\mathrm{Res}(\phi_{\infty}),\varphi_{\infty},\psi_{\infty})\neq 0$.
Notice that for the cuspidal automorphic form $\varphi$ associated to \emph{any} Hecke eigenform of weight $2n+12$,
its archimedean component $\varphi_{\infty}$ is the unique (up to some scalar) holomorphic lowest weight vector in $d_{hol}(2n+12)\subseteq \mathcal{D}(2n+12)$,
thus we only need to prove the following:
\begin{prop}\label{prop nonvanishing archimedean zeta integral}
    For any $n>1$,
    there exist an automorphic form $\varphi_{n}\in\mathcal{A}_{\cusp}(\pgl_{2})$
    associated to some Hecke eigenform in $\mathrm{S}_{2n+12}(\SL_{2}(\Z))$, 
    and a smooth vector $\phi_{n}\in\Pi^{+}\subseteq\Pi_{\min,\infty}$,    
    such that $I_{\infty}(\mathrm{Res}(\phi_{n}),\varphi_{n,\infty},\psi_{\infty})\neq 0$,
    or equivalently,
    $\mathcal{P}_{\spin_{9}}(\Theta_{\phi_{n}\otimes\Phi_{f}}(\varphi_{n}))\neq 0$.
\end{prop}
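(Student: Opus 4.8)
The plan is to obtain this from the non-vanishing of the exceptional theta series $\vartheta_{\jord_{\Z},P_{n}}$ of \Cref{thm for each weight a non-zero lift}, by transporting it across the adjointness of the two global theta lifts between $\grpF$ and $\pgl_{2}$ and then identifying the $\spin_{9}$-period functional, on automorphic forms of $\grpF$ of archimedean type $\vrep{n\varpi_{4}}$, with a pairing against a $\spin_{9}(\R)$-invariant form. Concretely, let $P_{n}\in\vrep{n}(\jord_{\C})^{\spin_{9}(\R)}$ be the (up to scalar unique, by the $\lietype{F}{4}\downarrow\lietype{B}{4}$ branching law) $\spin_{9}(\R)$-invariant polynomial of \Cref{thm for each weight a non-zero lift}, and let $\alpha_{n}\in\mathcal{A}_{\vrep{n\varpi_{4}}}(\grpF)$ be the vector-valued automorphic form supported on the $\grpF(\R)$-orbit of $\jord_{\Z}$ with $\alpha_{n}(\jord_{\Z})=\sum_{\gamma\in\Gamma_{\mathrm{I}}}\gamma.P_{n}$, as in the proof of \Cref{Cor Fourier theta lift no average}. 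That proof identifies the classical form $f_{\Theta(\alpha_{n})}$ with $\vartheta_{\jord_{\Z},P_{n}}$, which is non-zero by \Cref{thm for each weight a non-zero lift}; so $\alpha_{n}(\jord_{\Z})\neq 0$ and $\Theta(\alpha_{n})$ is a non-zero weight $2n+12$ cusp form. I would then fix a normalized Hecke eigenform $f_{n}$ occurring in $\Theta(\alpha_{n})$ and its associated automorphic form $\varphi_{n}\simeq\varphi_{n,\infty}\otimes\varphi_{n,f}$ (with $\varphi_{n,\infty}$ the lowest weight holomorphic vector and each $\varphi_{n,p}$ spherical), so that $\langle\Theta(\alpha_{n}),\varphi_{n}\rangle_{[\pgl_{2}]}\neq 0$.

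Next I would transport this to $\grpF$. The kernel defining $\Theta(\alpha_{n})$ is $\Theta_{n}=\mathrm{D}^{n}\Theta_{Kim}$, whose coordinates are $\theta$ of vectors $\phi_{n}^{0}\otimes\Phi_{f}$ with $\phi_{n}^{0}$ in the $\mathrm{K}_{\lietype{E}{7}}$-type $\mathrm{E}(n,2n+12)\subseteq\Pi^{+}$. Unfolding \Cref{def global theta lift for alg modular form}, interchanging the two integrations, and applying \Cref{lemma F4 action on kernel function} yields the adjointness
\[
\langle\Theta(\alpha_{n}),\varphi_{n}\rangle_{[\pgl_{2}]}=\bigl\langle\Theta^{\mathrm{vv}}_{\phi_{n}^{0}\otimes\Phi_{f}}(\varphi_{n}),\ \alpha_{n}\bigr\rangle_{[\grpF]},
\]
the right-hand side being the $\{-,-\}$-pairing of the vector-valued global theta lift of $\varphi_{n}$ with $\alpha_{n}$. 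Hence $\Theta_{\phi_{n}^{0}\otimes\Phi_{f}}(\varphi_{n})\neq 0$ and, more to the point, $\alpha_{n}$ is not orthogonal to the image $\Theta(\pi_{n})\subseteq\mathcal{A}(\grpF)$; since $\alpha_{n}$ is supported on one $\grpF(\R)$-orbit and $\alpha_{n}(\jord_{\Z})$ is $\Gamma_{\mathrm{I}}$-invariant, this gives $\langle P_{n},\beta_{n}(\jord_{\Z})\rangle\neq 0$, where $\beta_{n}$ is (the multiplicity vector of) $\Theta(\pi_{n})$ inside $\mathcal{A}_{\vrep{n\varpi_{4}}}(\grpF)$.

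Finally I would bring in the $\spin_{9}$-period. Because $\varphi_{n,\infty}$ is fixed, the map $\phi_{\infty}\mapsto\Theta_{\phi_{\infty}\otimes\Phi_{f}}(\varphi_{n})$ factors through the surjection $\Pi^{+}\twoheadrightarrow\vrep{n\varpi_{4}}$ of \Cref{prop real theta between F4 and PGL2}; I would choose $\phi_{n}\in\Pi^{+}$ mapping to the $\spin_{9}(\R)$-invariant vector $v_{0}$ of $\vrep{n\varpi_{4}}$, so that $F_{n}:=\Theta_{\phi_{n}\otimes\Phi_{f}}(\varphi_{n})$ is a $\spin_{9}(\R)$-invariant, level-one automorphic form lying in $\Theta(\pi_{n})$. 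Using $\spin_{9}(\R)$-invariance of $F_{n}$ and compactness of $\spin_{9}(\R)$, the period $\mathcal{P}_{\spin_{9}}(F_{n})$ becomes a finite sum over the $\spin_{9}$-double cosets of (a positive volume)$\,\times\,$(the value of $F_{n}$ at the corresponding point of $[\grpF]$); on the base $\grpF$-class the value is a non-zero multiple of $\langle P_{n},\beta_{n}(\jord_{\Z})\rangle\neq 0$, and the identity double coset contributes there. To rule out cancellation against the $\jord_{\mathrm{E}}$-class contributions I would argue that $\spin_{9}$, being the stabiliser of the idempotent $\mathrm{E}_{1}$ in $\grpF$, meets only the base class of the two-element class set of $\grpF$ — a structural property of the non-standard Albert $\Z$-algebra $\jord_{\mathrm{E}}$ — so that only base-class terms survive and $\mathcal{P}_{\spin_{9}}(F_{n})\neq 0$. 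Then \Cref{cor prod local zeta integral L function} forces $I_{\infty}(\mathrm{Res}(\phi_{n}),\varphi_{n,\infty},\psi_{\infty})\neq 0$, as required.

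The hard part will be this last step — deducing the non-vanishing of the $\spin_{9}$-period on $\Theta(\pi_{n})$ from the mere non-orthogonality $\alpha_{n}\not\perp\Theta(\pi_{n})$, i.e.\ showing that the (unique, by the branching law) proportionality constant relating $\phi_{\infty}\mapsto\mathcal{P}_{\spin_{9}}(\Theta_{\phi_{\infty}\otimes\Phi_{f}}(\varphi_{n}))$ to $v\mapsto\langle v_{0},v\rangle$ does not vanish. This rests on careful bookkeeping of the realization identifications (so that this constant is visibly governed by $\langle P_{n},\beta_{n}(\jord_{\Z})\rangle$) together with the non-cancellation input that $\spin_{9}$ sees only the base class of $\grpF$; if the latter is not available one would instead need to control the $\jord_{\mathrm{E}}$-contribution directly. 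A fallback would be an explicit archimedean computation of $I_{\infty}(\mathrm{Res}(\phi_{n}),\varphi_{n,\infty},\psi_{\infty})$ using \Cref{thm main theorem in Fourier computation}, which seems technically harder.
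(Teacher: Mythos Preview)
Your overall plan is the same as the paper's: start from $\vartheta_{\jord_{\Z},P_{n}}\neq 0$ (\Cref{thm for each weight a non-zero lift}), build $\alpha_{n}\in\mathcal{A}_{\vrep{n\varpi_{4}}}(\grpF)$ supported on the base orbit with $\alpha_{n}(\jord_{\Z})=\sum_{\gamma\in\Gamma_{\mathrm{I}}}\gamma.P_{n}$, pick a Hecke eigenform $\varphi_{n}$ with $\langle\Theta(\alpha_{n}),\varphi_{n}\rangle_{[\pgl_{2}]}\neq 0$, and then select $\phi_{n}\in\Pi^{+}$ tied to the $\spin_{9}(\R)$-invariant vector. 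Where you diverge is in the final step, which you flag as the ``hard part'' and leave unresolved.

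The paper handles this step far more directly, with no detour through the $\grpF$-class set, no adjointness bookkeeping, and no $\beta_{n}(\jord_{\Z})$. It takes the explicit vector
\[
\phi_{n}:=\{\mathrm{D}^{n}\Phi_{\infty},P_{n}\}\in\Pi^{+},
\]
so that, by construction of the automorphic realization $\theta$, one has $\theta(\phi_{n}\otimes\Phi_{f})=\{\Theta_{n},P_{n}\}$ as a scalar function on $[\grpE]$. Hence
\[
\mathcal{P}_{\spin_{9}}\bigl(\Theta_{\phi_{n}\otimes\Phi_{f}}(\varphi_{n})\bigr)=\int_{[\spin_{9}]}\int_{[\pgl_{2}]}\{\Theta_{n}(gh),P_{n}\}\,\overline{\varphi_{n}(g)}\,dg\,dh.
\]
The paper then invokes $\spin_{9}(\A)=\spin_{9}(\Q)\spin_{9}(\R)\spin_{9}(\widehat{\Z})$ to reduce the $h$-integral to $\spin_{9}(\R)$; \Cref{lemma F4 action on kernel function} gives $\{\Theta_{n}(gh_{\infty}),P_{n}\}=\{h_{\infty}^{-1}.\Theta_{n}(g),P_{n}\}=\{\Theta_{n}(g),P_{n}\}$ since $P_{n}$ is $\spin_{9}(\R)$-invariant. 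So the period is a positive constant times $\int_{[\pgl_{2}]}\{\Theta_{n}(g),P_{n}\}\overline{\varphi_{n}(g)}\,dg$, and this last integral is exactly (via the same $\Gamma_{\mathrm{I}}$-averaging you already use) the nonzero Petersson pairing $\langle\Theta(\alpha_{n}),\varphi_{n}\rangle$.

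Your ``$\spin_{9}$ meets only the base class'' is precisely the class-number-one input the paper asserts; it is not a special feature of $\jord_{\mathrm{E}}$. Once you grant it and take the explicit $\phi_{n}$ above, the period collapses in one line to the Petersson pairing you already know is nonzero --- there is no second double coset, no cancellation to worry about, and no need for your intermediate adjointness step or the fallback archimedean computation.
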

\begin{proof}
    For each $n>1$,
    \Cref{thm for each weight a non-zero lift} 
    shows that there exists a non-zero $\spin_{9}(\R)$-invariant polynomial $P_{n}$ in $\vrep{n}(\jord_{\C})$
    such that the weighted theta series $\vartheta_{\jord_{\Z},P_{n}}$ defined as \eqref{eqn weighted theta series for any Albert algebra} is non-zero.
    Let $\alpha_{n}\in \mathcal{A}_{\vrep{n\varpi_{4}}}(\grpF)$ 
    to be the vector-valued automorphic form
    such that $\alpha_{n}(1)=\sum_{\gamma\in\Gamma_{\mathrm{I}}}\gamma.P_{n}\in\vrep{n}(\jord_{\C})^{\Gamma_{\mathrm{I}}}$
    and $\alpha_{n}(\gamma_{\mathrm{E}})=0$,
    then the global theta lift $\Theta(\alpha_{n})$ is a non-zero holomorphic cuspidal automorphic form of $\pgl_{2}$.
    Hence there exists an automorphic form $\varphi_{n}\in\mathcal{A}_{\cusp}(\pgl_{2})$ 
    associated to some Hecke eigenform in $\mathrm{S}_{2n+12}(\SL_{2}(\Z))$,
    such that
    the Petersson inner product 
    \begin{align}\label{eqn Petersson inner product of theta lift with Hecke eigenform}
        \int_{[\pgl_{2}]}\Theta(\alpha_{n})(g)\overline{\varphi_{n}(g)}dg
    \end{align}   
    is non-zero.
    Putting the definition of $\Theta(\alpha_{n})$ into \eqref{eqn Petersson inner product of theta lift with Hecke eigenform},
    we have:
    \begin{align}\label{eqn rewrite the Petersson inner product as period of some theta lift}
        0\neq\frac{1}{|\Gamma_{\mathrm{I}}|}\int_{[\pgl_{2}]}\left\{\Theta_{n}(g),\sum_{\gamma\in\Gamma_{\mathrm{I}}}\gamma.P_{n}\right\}\overline{\varphi_{n}(g)}dg
        =\int_{[\pgl_{2}]}\left\{\Theta_{n}(g),P_{n}\right\}\overline{\varphi_{n}(g)}dg.
    \end{align}
    Take the following smooth vector in $\Pi^{+}\subseteq \Pi_{\min,\infty}$:
    \[\phi_{n}:=\{\mathrm{D}^{n}\Phi_{\infty},P_{n}\}=\sum_{\alpha_{1},\ldots,\alpha_{n}}\{\mathrm{X}_{\alpha_{1}}^{\vee}\otimes\cdots\mathrm{X}_{\alpha_{n}}^{\vee},P_{n}\}\cdot\left(\mathrm{X}_{\alpha_{n}}\cdots\mathrm{X}_{\alpha_{1}}.\Phi_{\infty}\right),\]
    where $\Phi_{\infty}$ is the specific vector chosen in \Cref{section automorphic realization of min rep}
    and $\mathrm{D}$ is the operator $\Pi^{+}\rightarrow\Pi^{+}\otimes\mathfrak{p}_{\jord}^{-}$
    sending $\phi$ to $\sum_{\alpha}\mathrm{X}_{\alpha}\phi\otimes\mathrm{X}_{\alpha}^{\vee}$,
    with an arbitrary choice of basis $\{\mathrm{X}_{\alpha}\}$ of $\mathfrak{p}_{\jord}^{+}$ 
    and its dual basis $\{\mathrm{X}_{\alpha}^{\vee}\}$. 
    By \Cref{def automorphic realization theta via Kim modular form},
    the automorphic realization $\theta:\Pi_{\min}\hookrightarrow\mathrm{L}^{2}([\grpE])$
    maps $\phi_{n}\otimes\Phi_{f}$ to 
    \[\theta(\phi_{n}\otimes\Phi_{f})=\left\{\mathrm{D}^{n}\theta(\Phi_{\infty}\otimes\Phi_{f}),P_{n}\right\}=\left\{\mathrm{D}^{n}\Theta_{Kim},P_{n}\right\}=\{\Theta_{n},P_{n}\}.\]
    Use $\theta(\phi_{n}\otimes\Phi_{f})$ as the kernel function to define a global theta lift of $\varphi_{n}$,
    then we calculate the $\spin_{9}$-period integral of this global theta lift:    
    \begin{align*}
        \mathcal{P}_{\spin_{9}}(\Theta_{\phi_{n}\otimes\Phi_{f}}(\varphi_{n}))&=\int_{[\pgl_{2}]\times[\spin_{9}]}\{\Theta_{n}(gh),P_{n}\}\overline{\varphi_{n}(g)}dgdh.
    \end{align*}
    Since we have the strong approximation property $\spin_{9}(\A)=\spin_{9}(\Q)\spin_{9}(\R)\spin_{9}(\widehat{\Z})$,
    the $\spin_{9}$-period integral is a non-zero multiple of 
    \begin{align*}
        \int_{[\pgl_{2}]}\int_{\spin_{9}(\R)}\{\Theta_{n}(gh_{\infty}),P_{n}\}\overline{\varphi_{n}(g)}dgdh_{\infty}&=\int_{[\pgl_{2}]}\int_{\spin_{9}(\R)}\{h_{\infty}^{-1}.\Theta_{n}(g),P_{n}\}\overline{\varphi_{n}(g)}dgdh_{\infty}\\
        &=\int_{\spin_{9}(\R)}dh_{\infty}\cdot\int_{[\pgl_{2}]}\{\Theta_{n}(g),P_{n}\}\overline{\varphi_{n}(g)}dg,
    \end{align*}
    where we use \Cref{lemma F4 action on kernel function} and the $\spin_{9}(\R)$-invariance of $P_{n}$.
    Combining this with \eqref{eqn rewrite the Petersson inner product as period of some theta lift},
    we obtain the non-vanishing of $\mathcal{P}_{\spin_{9}}(\Theta_{\phi_{n}\otimes\Phi_{f}}(\varphi_{n}))$,
    which is equivalent to the non-vanishing of $I_{\infty}(\mathrm{Res}(\phi_{n}),\varphi_{n,\infty},\psi_{\infty})$
    by \Cref{cor prod local zeta integral L function}.
\end{proof}
Our main theorem is a direct consequence of \Cref{cor prod local zeta integral L function} and \Cref{prop nonvanishing archimedean zeta integral}:
\begin{thm}\label{thm nonvanishing global theta lift from pgl2 to f4}
    (\Cref{introthm main theorem} in \Cref{section introduction})
    Let $\pi\in\Pi^{\mathrm{unr}}_{\cusp}(\pgl_{2})$ be the automorphic representation
    associated to a Hecke eigenform in $\mathrm{S}_{k}(\SL_{2}(\Z))$,
    then its global theta lift $\Theta(\pi)$ to $\grpF$ is non-zero. 
    Furthermore,
    we have the local-global compatibility of theta correspondence,
    \emph{i.e.}
    \[\Theta(\pi)\simeq\otimes_{v}^{\prime}\theta(\pi_{v}).\]
\end{thm}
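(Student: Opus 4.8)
\emph{Plan.} I would prove the two assertions — non-vanishing of $\Theta(\pi)$, and the isomorphism $\Theta(\pi)\simeq\otimes_{v}^{\prime}\theta(\pi_{v})$, which then forces irreducibility — separately. For the non-vanishing, write $k=2n+12$; the case $k=14$ ($n=1$) is vacuous since $\mathrm{S}_{14}(\SL_{2}(\Z))=0$. First suppose $n\geq2$. Fix $\varphi\simeq\otimes_{v}^{\prime}\varphi_{v}\in\pi$ as at the start of this section, so $\varphi_{\infty}$ is the lowest weight holomorphic vector of $\mathcal{D}(2n+12)=\pi_{\infty}$ and each $\varphi_{p}$ is the normalized spherical vector; crucially $\varphi_{\infty}$ depends only on $k$, not on the particular Hecke eigenform. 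By \Cref{prop nonvanishing archimedean zeta integral} there is a smooth holomorphic vector $\phi_{n}\in\Pi^{+}\subseteq\Pi_{\min,\infty}$ with $\mathrm{Res}(\phi_{n})\neq0$ and $I_{\infty}(\mathrm{Res}(\phi_{n}),\varphi_{\infty},\psi_{\infty})\neq0$. Feeding $\phi=\phi_{n}\otimes\Phi_{f}$ into \Cref{cor prod local zeta integral L function}, and using $\zeta(4)\zeta(8)\neq0$ together with $\mathrm{L}(\pi,\tfrac{5}{2})\neq0$ and $\mathrm{L}(\pi,\tfrac{11}{2})\neq0$ (these points lying to the right of the abscissa of absolute convergence, as $|c_{p}|=1$ by the Ramanujan bound), one gets $\mathcal{P}_{\spin_{9}}(\Theta_{\phi_{n}\otimes\Phi_{f}}(\varphi))\neq0$, hence $\Theta_{\phi_{n}\otimes\Phi_{f}}(\varphi)$ is a non-zero automorphic form of $\grpF$ and $\Theta(\pi)\neq0$. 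Since the right-hand side of \Cref{cor prod local zeta integral L function} involves $\varphi$ only through $\varphi_{\infty}$ and the (always non-zero) $L$-values, this holds for \emph{every} weight $2n+12$ Hecke eigenform at once. For $k=12$ ($n=0$) I would take $\pi=\pi_{\Delta}$ and $\phi=\Phi_{0}$: restricting $\theta(\Phi_{0})=\Theta_{Kim}$ along the dual pair embedding $\pgl_{2}\hookrightarrow\grpE$ yields (cf.\ \Cref{def automorphic realization theta via Kim modular form}, \Cref{lemma invariance of Kim modular form}) the automorphic form attached to the weight $12$ Elkies--Gross form $\mathrm{F}_{Kim}(z\mathrm{I})=\vartheta_{\jord_{\Z}}(z)$, whose cuspidal component is a non-zero multiple of $\Delta$ (\Cref{rmk review trivial weight}); hence $\Theta_{\Phi_{0}}(\varphi_{\Delta})(1)$ is the Petersson pairing of $\vartheta_{\jord_{\Z}}$ against $\varphi_{\Delta}$, a non-zero multiple of $\|\Delta\|^{2}$, so $\Theta(\pi_{\Delta})\neq0$.

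\emph{Local--global compatibility.} Since $\grpF$ is anisotropic over $\Q$, the space $[\grpF]$ is compact and $\Theta(\pi)$ is a non-zero finite direct sum of irreducible subrepresentations of $\mathrm{L}^{2}([\grpF])$. For any irreducible constituent $\sigma$ and any place $v$, the standard unfolding of the defining integral (\Cref{def global theta lift}) of the global theta lift exhibits $\sigma_{v}$ as a non-zero quotient of the big local theta lift $\Theta(\pi_{v})$. At a finite place $p$, $\pi_{p}$ is an unramified tempered principal series (Ramanujan being known for holomorphic newforms), so \Cref{prop p-adic lift to F4} applied with $s=0$ gives that $\Theta(\pi_{p})$ is irreducible and equals $\mathrm{Ind}_{\mathbf{Q}(\Q_{p})}^{\grpF(\Q_{p})}(\chi_{p}\circ\varpi_{4})$, the unramified representation with Satake parameter the conjugacy class of $\iota(e_{p},c_{p})$ (\Cref{rmk Satake parameter}). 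At $v=\infty$, \Cref{prop real theta between F4 and PGL2} gives $\Theta(\pi_{\infty})=\vrep{n\varpi_{4}}$, and since $\varphi_{\infty}\in\mathcal{D}(2n+12)$ and $\Phi_{f}$ is spherical, every $\Theta_{\phi}(\varphi)$ lies in the level one, $\vrep{n\varpi_{4}}$-isotypic part of $\mathrm{L}^{2}([\grpF])$. Therefore every constituent satisfies $\sigma\simeq\vrep{n\varpi_{4}}\boxtimes\bigl(\otimes_{p}^{\prime}\theta(\pi_{p})\bigr)=\otimes_{v}^{\prime}\theta(\pi_{v})=:\tau$, so $\Theta(\pi)$ is $\tau$-isotypic.

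\emph{Multiplicity one, and the hard part.} It then remains to show that $\tau$ occurs in $\Theta(\pi)$ with multiplicity exactly one, so that $\Theta(\pi)\simeq\tau$ is irreducible and \Cref{conj existence of representations} follows. I would argue by the standard mechanism: the automorphic realization $\theta$ of $\Pi_{\min}$ gives, via the inclusion $\grpF\times\pgl_{2}\hookrightarrow\grpE$, a $\grpF(\A)\times\pgl_{2}(\A)$-equivariant intertwiner from $\Pi_{\min}$ into the automorphic spectrum of $\grpF\times\pgl_{2}$, and each $\Theta_{\phi}(\varphi)$ is obtained by pairing the $\pi$-component of the image of $\phi$ against $\overline{\varphi}$. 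Since $\pi$ occurs with multiplicity one in $\mathcal{A}_{\cusp}(\pgl_{2})$, and since $\dim\mathrm{Hom}_{\grpF(\A)\times\pgl_{2}(\A)}(\Pi_{\min},\tau\boxtimes\pi)$ is the product over $v$ of the $1$-dimensional spaces $\mathrm{Hom}_{\grpF(\Q_{v})\times\pgl_{2}(\Q_{v})}(\Pi_{\min,v},\theta(\pi_{v})\boxtimes\pi_{v})$ — local Howe duality, here a consequence of the irreducibility of the big local lifts in \Cref{prop p-adic lift to F4} and \Cref{prop real theta between F4 and PGL2} — the $\tau$-isotypic part of $\Theta(\pi)$ spans at most one copy of $\tau$; with $\Theta(\pi)\neq0$ and the $\tau$-isotypy of the previous step this yields $\Theta(\pi)\simeq\tau=\otimes_{v}^{\prime}\theta(\pi_{v})$. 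The step I expect to be the main obstacle is exactly this local-to-global multiplicity argument — making rigorous that the $\tau$-part of $\Theta(\pi)$ is cut out by the single global intertwiner above, since the exceptional dual pair $\pgl_{2}\times\grpF$ is not covered by the classical references. If a clean treatment is unavailable, the robust substitute would be a Rallis inner product formula: a see-saw with the pair $\spin_{9}\times\sorth_{2,2}$ followed by the unfolding of \Cref{section unfolding of rankin selberg integral} expresses $\langle\Theta_{\phi}(\varphi),\Theta_{\phi'}(\varphi')\rangle_{\grpF}$ as a product of rank one local zeta integrals times a non-vanishing $L$-value and $\langle\varphi,\varphi'\rangle$, which forces the multiplicity to be $1$.
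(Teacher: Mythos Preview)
Your non-vanishing argument coincides with the paper's: for $k\geq 16$ you combine \Cref{prop nonvanishing archimedean zeta integral} with \Cref{cor prod local zeta integral L function} exactly as the paper does, and for $k=12$ your explicit computation with $\Theta_{Kim}|_{\pgl_2}$ simply unpacks the Elkies--Gross input that the paper cites via \Cref{rmk review trivial weight}.

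For the local--global compatibility you work harder than needed, and the step you flag as ``the hard part'' is not hard.  The paper's proof is one line, citing only \Cref{prop p-adic lift to F4} and \Cref{prop real theta between F4 and PGL2}, and the implicit mechanism is the following.  The sesquilinear map $(\phi,\varphi)\mapsto\Theta_\phi(\varphi)$ satisfies $\Theta_{h.\phi}(\varphi)=\Theta_\phi(h^{-1}.\varphi)$ for $h\in\pgl_2(\A)$, so it descends to the $\pgl_2(\A)$-coinvariants $(\Pi_{\min}\otimes\overline{\pi})_{\pgl_2(\A)}$, which by the very definition of the local big theta lift factor as $\otimes_v'\Theta(\pi_v)$.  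Thus $\Theta(\pi)$ is a $\grpF(\A)$-quotient of $\otimes_v'\Theta(\pi_v)$.  Now the two cited propositions say precisely that each local \emph{big} lift $\Theta(\pi_v)$ is already irreducible (so $\Theta(\pi_v)=\theta(\pi_v)$); hence $\otimes_v'\Theta(\pi_v)$ is irreducible, and any non-zero quotient of it is automatically isomorphic to it.  There is therefore no multiplicity question to resolve: irreducibility of the source forces $\Theta(\pi)$ itself to be irreducible.  You actually cite the irreducibility of $\Theta(\pi_v)$, but then detour through constituents of $\Theta(\pi)$ and a separate $\dim\Hom(\Pi_{\min},\tau\boxtimes\pi)=1$ argument; this is redundant, and the proposed fallback via a Rallis-type inner product formula is unnecessary.
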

\begin{proof}
    The case when $k\geq 16$ is a corollary of
    \Cref{prop nonvanishing archimedean zeta integral} and \Cref{cor prod local zeta integral L function}.
    When $k=12$,
    this is a result in \cite{Leech} (see also \Cref{rmk review trivial weight}).
    The local-global compatibility of theta correspondence follows from \Cref{prop p-adic lift to F4} and \Cref{prop real theta between F4 and PGL2}.
\end{proof}
\begin{cor}\label{cor theta series span the whole space}
    (\Cref{introthm theta series generate whole space of cusp form} in \Cref{section introduction})
    For $n\geq 2$,
    the following map is surjective:
    \begin{align*}
        \mathcal{A}_{\vrep{n\varpi_{4}}}(\grpF)&\rightarrow \mathrm{S}_{2n+12}(\SL_{2}(\Z))\\
        \left(\alpha:\mathcal{J}\rightarrow\vrep{n\varpi_{4}}\right)&\mapsto f_{\Theta(\alpha)}=\frac{1}{|\Gamma_{\mathrm{I}}|}\vartheta_{\jord_{\Z},\alpha(\jord_{\Z})}+\frac{1}{|\Gamma_{\mathrm{E}}|}\vartheta_{\jord_{\mathrm{E}},\alpha(\jord_{\mathrm{E}})}
    \end{align*}
\end{cor}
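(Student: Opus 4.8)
The plan is to realise the map of the corollary as one half of an adjoint pair of theta lifts. Write $\Theta^{\downarrow}\colon\mathcal{A}_{\vrep{n\varpi_{4}}}(\grpF)\to\mathrm{S}_{2n+12}(\SL_{2}(\Z))$ for the map $\alpha\mapsto f_{\Theta(\alpha)}$; it indeed lands in cusp forms for $n\geq 2$ by \Cref{prop theta lift comes from modular form}. For a cusp form $\varphi$ on $\pgl_{2}$, belonging to a cuspidal representation $\pi$, I would set $\Theta^{\uparrow}(\varphi)$ equal to the $\vrep{n\varpi_{4}}$-component of the function $h\mapsto\int_{[\pgl_{2}]}\Theta_{n}(gh)\,\overline{\varphi(g)}\,dg$ on $\grpF(\A)$; by \Cref{lemma F4 action on kernel function} and the $\grpE(\widehat{\Z})$-invariance of $\Theta_{Kim}$ this is an element of $\mathcal{A}_{\vrep{n\varpi_{4}}}(\grpF)$ lying in the global theta lift $\Theta(\pi)$. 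Since $[\grpF]$ is compact and $\varphi$ is rapidly decreasing, Fubini together with the $\grpF(\R)$-invariance of the pairing $\{-,-\}$ yields, up to a non-zero absolute constant, $\langle\Theta^{\downarrow}\alpha,\varphi\rangle=\langle\alpha,\Theta^{\uparrow}\varphi\rangle$ for all $\alpha$ and $\varphi$. Consequently the image of $\Theta^{\downarrow}$ equals the orthogonal complement of $\ker(\Theta^{\uparrow})$ in $\mathrm{S}_{2n+12}(\SL_{2}(\Z))$, so $\Theta^{\downarrow}$ is surjective precisely when $\Theta^{\uparrow}$ is injective.

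For injectivity, expand a hypothetical kernel vector as a combination $\sum_{i}c_{i}\varphi_{i}$ of Hecke eigenforms, which form an orthogonal basis of $\mathrm{S}_{2n+12}(\SL_{2}(\Z))$ at level one. By \Cref{thm nonvanishing global theta lift from pgl2 to f4}, the cuspidal representation $\pi_{i}$ generated by $\varphi_{i}$ has a non-zero \emph{irreducible} global theta lift $\Theta(\pi_{i})$ satisfying $\Theta(\pi_{i})\simeq\otimes_{v}^{\prime}\theta(\pi_{i,v})$; in particular the subspaces $\Theta(\pi_{i})\subseteq\mathrm{L}^{2}([\grpF])$ are pairwise non-isomorphic, since distinct $\pi_{i}$ have distinct finite Satake parameters $\iota(e_{p},c_{i,p})$ (the morphism $\iota$ being injective on conjugacy classes of its second $\SL_{2}$-factor). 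As $\Theta^{\uparrow}(\varphi_{i})$ lies in $\Theta(\pi_{i})$, the $\Theta^{\uparrow}(\varphi_{i})$ are linearly independent as soon as each is non-zero; thus I am reduced to proving $\Theta^{\uparrow}(\varphi_{i})\neq 0$ for every $i$. Testing the adjunction above against the vector-valued automorphic form $\alpha_{n}\in\mathcal{A}_{\vrep{n\varpi_{4}}}(\grpF)$ associated with the (unique up to scalar) $\spin_{9}(\R)$-invariant polynomial $P_{n}\in\vrep{n}(\jord_{\C})$ as in the proof of \Cref{prop nonvanishing archimedean zeta integral}, and using that $f_{\Theta(\alpha_{n})}=\vartheta_{\jord_{\Z},P_{n}}$ by \Cref{thm Fourier of theta lift} and \Cref{Cor Fourier theta lift no average}, the required non-vanishing becomes $\langle\vartheta_{\jord_{\Z},P_{n}},\varphi_{i}\rangle\neq 0$ for all $i$.

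To obtain this for all $i$ at once, I would repeat the see-saw computation of \Cref{prop nonvanishing archimedean zeta integral}: up to a non-zero absolute constant, $\langle\vartheta_{\jord_{\Z},P_{n}},\varphi_{i}\rangle=\mathcal{P}_{\spin_{9}}\!\bigl(\Theta_{\phi_{n}\otimes\Phi_{f}}(\varphi_{i})\bigr)$ with $\phi_{n}=\{\mathrm{D}^{n}\Phi_{\infty},P_{n}\}\in\Pi_{\min,\infty}$, and by \Cref{cor prod local zeta integral L function} the latter equals $\dfrac{\mathrm{L}(\pi_{i},\tfrac{5}{2})\,\mathrm{L}(\pi_{i},\tfrac{11}{2})}{\zeta(4)\zeta(8)}\cdot I_{\infty}(\mathrm{Res}(\phi_{n}),\varphi_{i,\infty},\psi_{\infty})$. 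The two $\mathrm{L}$-values are non-zero by Rankin--Selberg and the $\zeta$-values are non-zero; the key point is that the archimedean integral $I_{\infty}(\mathrm{Res}(\phi_{n}),\varphi_{i,\infty},\psi_{\infty})$ depends only on the fixed vector $\phi_{n}$ and on $\varphi_{i,\infty}$, the lowest-weight holomorphic vector of $\mathcal{D}(2n+12)$, which is the same vector for every weight-$(2n+12)$ Hecke eigenform. Since $\vartheta_{\jord_{\Z},P_{n}}$ is a non-zero cusp form for $n\geq 2$ by \Cref{thm for each weight a non-zero lift}, it has non-zero Petersson product with at least one eigenform, which forces $I_{\infty}(\mathrm{Res}(\phi_{n}),\varphi_{\infty},\psi_{\infty})\neq 0$; plugging this back shows $\langle\vartheta_{\jord_{\Z},P_{n}},\varphi_{i}\rangle\neq 0$ for every $i$, so $\Theta^{\uparrow}$ is injective and $\Theta^{\downarrow}$ is surjective. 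The main obstacle I anticipate is not any of the non-vanishing inputs, which are already in hand, but making the adjunction of the first paragraph rigorous — pinning down $\Theta^{\uparrow}$ as the vector-valued theta lift to $\grpF$, justifying the exchange of integration, and matching $\{-,-\}$ with the invariant bilinear form on $\vrep{n\varpi_{4}}$ — so that the contraction of $\Theta^{\uparrow}(\varphi_{i})$ against $P_{n}$ is exactly the $\phi_{n}\otimes\Phi_{f}$-theta lift whose $\spin_{9}$-period is computed in \Cref{cor prod local zeta integral L function}.
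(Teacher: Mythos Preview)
Your proposal is correct and follows essentially the same route as the paper: both arguments reduce surjectivity to showing that no Hecke eigenform $\varphi$ of weight $2n+12$ can be orthogonal to $\vartheta_{\jord_{\Z},P_{n}}$, then identify that Petersson pairing with $\mathcal{P}_{\spin_{9}}(\Theta_{\phi_{n}\otimes\Phi_{f}}(\varphi))$ and invoke \Cref{cor prod local zeta integral L function} together with the observation that the archimedean integral $I_{\infty}$ depends only on the common lowest-weight vector $\varphi_{\infty}\in\mathcal{D}(2n+12)$ and is therefore simultaneously non-zero for all eigenforms once it is non-zero for one. The paper simply phrases this as a three-line contradiction (assume a Hecke eigenform lies in the orthogonal complement of the image, pair it with $\Theta(\alpha_{n})$, and obtain $\mathcal{P}_{\spin_{9}}=0$), whereas you package the same logic as an adjunction $\langle\Theta^{\downarrow}\alpha,\varphi\rangle=\langle\alpha,\Theta^{\uparrow}\varphi\rangle$ and then argue injectivity of $\Theta^{\uparrow}$; your detour through pairwise non-isomorphism of the $\Theta(\pi_{i})$ via distinct Satake parameters is correct but unnecessary, since the orthogonal complement of the image is already Hecke-stable and hence spanned by eigenforms.
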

\begin{proof}
    Suppose that the map $\alpha\mapsto f_{\Theta(\alpha)}$ is not surjective,
    then there exists a non-zero Hecke eigenform $f\in\mathrm{S}_{2n+12}(\SL_{2}(\Z))$,
    such that its associated automorphic form $\varphi\in\mathcal{A}(\pgl_{2})$
    is orthogonal to $\Theta(\alpha)$ for all $\alpha\in\mathcal{A}_{\vrep{n\varpi_{4}}}(\grpF)$,
    with respect to the Petersson inner product.
    In particular, $\varphi$ is orthogonal to $\Theta(\alpha_{n})$,
    where $\alpha_{n}$ is the
    algebraic modular form chosen in the proof of \Cref{prop nonvanishing archimedean zeta integral}.
    Take $\phi_{n}\in\Pi_{\min}$ to be the one in \Cref{prop nonvanishing archimedean zeta integral},
    we have:
    \[0=\int_{[\pgl_{2}]\times[\spin_{9}]}\{\Theta_{n}(gh),P_{n}\}\overline{\varphi(g)}dgdh=\mathcal{P}_{\spin_{9}}(\Theta_{\phi_{n}\otimes\Phi_{f}}(\varphi)),\]
    which leads to a contradiction.
\end{proof}

\printbibliography

\end{document}